\let\ab\allowbreak
\newcounter{marginnotes}
\let\oldmarginpar\marginpar
\renewcommand{\marginpar}[1]{\addtocounter{marginnotes}{1}
\oldmarginpar[\raggedleft\footnotesize #1]{\raggedright\footnotesize #1}}
\newcommand{\countnotes}{\ifnum\value{marginnotes}=0 {}
 \else
   \begin{center}
   \color{red}\bf\Huge
   \ifnum\value{marginnotes}=1 {There is 1 margin note in total.}
   \else {There are \arabic{marginnotes} margin notes in total.}
   \fi
    \end{center}
 \fi}
\newtheorem{theorem}{Theorem}[section]
\newtheorem*{theorem*}{Theorem}
\newtheorem{proposition}[theorem]{Proposition}
\newtheorem{lemma}[theorem]{Lemma}
\newtheorem*{lemma*}{Lemma}
\newtheorem{corollary}[theorem]{Corollary}
\newtheorem*{corollary*}{Corollary}
\theoremstyle{definition}
\newtheorem*{notation}{Notation}
\newtheorem{definition}[theorem]{Definition}
\newtheorem*{definition*}{Definition}
\newtheorem{exmp}[theorem]{Example}
\newtheorem{remark}[theorem]{Remark}
\numberwithin{equation}{section} 
\numberwithin{figure}{section}   
\begin{document}
\begin{titlepage}
    \begin{center}

    \vspace*{3cm}
    
        \huge\textbf{Pattern-Equivariant Homology of Finite Local Complexity Patterns}
        
        \vspace{2cm}
        
        \normalsize
        
        Thesis submitted for the degree\\
        of Doctor of Philosophy at the\\
        University of Leicester
        
        \vspace{3cm}
        
        \large \textbf{James Jonathan Walton}
               
        \vspace{1cm}
        \small
        \emph{Department of Mathematics\\
        University of Leicester\\}     
        
        \vspace{3cm}     
        
	    \large\textbf{April 2014}
    \end{center}
\end{titlepage}

\pagenumbering{roman}

\chapter*{Abstract} \singlespacing
	\begin{center} \vspace{-0.5cm} \large{\textbf{Pattern-equivariant homology of \\ finite local complexity patterns}}

\vspace{0.5cm}

\normalsize

James J.\ Walton

\vspace{1cm}
	\end{center}

This thesis establishes a generalised setting with which to unify the study of finite local complexity (FLC) patterns. The abstract notion of a \emph{pattern} is introduced, which may be seen as an analogue of the space group of isometries preserving a tiling but where, instead, one considers partial isometries preserving portions of it. These inverse semigroups of partial transformations are the suitable analogue of the space group for patterns with FLC but few global symmetries. In a similar vein we introduce the notion of a \emph{collage}, a system of equivalence relations on the ambient space of a pattern, which we show is capable of generalising many constructions applicable to the study of FLC tilings and Delone sets, such as the expression of the tiling space as an inverse limit of approximants.

An invariant is constructed for our abstract patterns, the so called \emph{pattern-equivariant} (\emph{PE}) \emph{homology}. These homology groups are defined using infinite singular chains on the ambient space of the pattern, although we show that one may define cellular versions which are isomorphic under suitable conditions. For FLC tilings these cellular PE chains are analogous to the PE cellular cochains \cite{Sadun1}. The PE homology and cohomology groups are shown to be related through Poincar\'{e} duality.

An efficient and highly geometric method for the computation of the PE homology groups for hierarchical tilings is presented. The rotationally invariant PE homology groups are shown not to be a topological invariant for the associated tiling space and seem to retain extra information about global symmetries of tilings in the tiling space. We show how the PE homology groups may be incorporated into a spectral sequence converging to the \v{C}ech cohomology of the rigid hull of a tiling. These methods allow for a simple computation of the \v{C}ech cohomology of the rigid hull of the Penrose tilings.

\chapter*{Acknowledgements}
Foremost, I wish to thank my supervisor, John Hunton. His expert and patient guidance was not only invaluable to me during my time as a PhD student, but also before it in inspiring me to continue studying mathematics. I am also greatly appreciative of his constant support following his departure from Leicester, shortly before the finish of my PhD, which has ensured that his leaving has caused no inconvenience to me. I express my deepest gratitude to Alex Clark for taking John's place as my supervisor in Leicester. I thank him for his excellent advice in the final stages of my PhD, which I am sure must always be far less interesting than those in which the content of a project are still in formation.

I thank my wonderful family for their love and support, in particular my mother and father. I can barely imagine the task of completing a PhD without the company of good friends. So I thank them for their interesting mathematical discussions -- particular mention goes to James Cranch and Dan Rust -- but especially for the non-mathematical ones (I can assure my friends that those in the former class are all also members of the latter).

I would also like to thank the staff of the Department of Mathematics at Leicester, whose efforts make it such a friendly and welcoming place to do mathematics. I am grateful to the EPSRC, whose financial support made this PhD possible.

\chapter*{Dedication}
\centerline{
I dedicate this thesis to Kristina, for finding those extra smiles in me which I could not.
}
\tableofcontents

\doublespacing

\listoffigures

\chapter*{Introduction} \addcontentsline{toc}{section}{Introduction} \markright{Introduction}
When should one consider a pattern of Euclidean space as \emph{ordered}? This is a question with many potential answers, and many of these answers lead to interesting and important mathematics.

An important (although perhaps not interesting) suggestion is that at least \emph{periodic} patterns should be categorised as ordered. Let us consider a pattern of Euclidean space $\mathbb{R}^d$ as some sort of decoration of it, perhaps as a tiling or as a point set (for many of the questions that interest us here, the particular decoration of our original pattern will not be significant). A pattern is \emph{periodic} if there exist $d$ linearly independent vectors $x_1 \ldots, x_d \in \mathbb{R}^d$ for which our original pattern is left invariant under a translation by any of these vectors. A most natural question arises: what kinds of symmetry can such a periodic pattern possess?

A \emph{space group} (sometimes known as a \emph{Bieberbach group}) $G$ is a discrete group of isometries of $\mathbb{R}^d$ with compact fundamental domain (so that $\mathbb{R}^d/G$ is compact; one also says that $G$ acts on $\mathbb{R}^d$ \emph{cocompactly}). Any such space group necessarily contains $d$ linearly independent translations. In answer to part of Hilbert's Eighteenth Problem, Bieberbach showed that there are only finitely many such space groups in each dimension up to conjugation by orientation-preserving affine transformations. This allows for a finite classification of the space groups in any given dimension. For example, for $d=2$ the space groups are sometimes known as the \emph{wallpaper groups} and there are $17$ of them. Of great relevance to crystallographers is that there is a classification of the space groups in dimension $3$, of which there are $230$. In some sense, this classification provides a complete categorisation of the possible geometries of these periodic patterns.

In the context of crystallography it was long believed that periodicity and order were one and the same. The hallmark of a material crystal, and of a high degree of structural order in the material, is the existence of sharp Bragg peaks in its X-ray diffraction pattern. It was believed that this phenomenon only existed for crystals whose atoms lie on a periodic lattice. However, in $1982$ Dan Shectman et al.\ discovered a material whose X-ray diffraction pattern possessed fivefold rotational symmetry. The classification of space groups of $\mathbb{R}^3$ does not allow for periodic crystals possessing such symmetries and so the material discovered, although evidently highly ordered due to the existence of sharp Bragg peaks, was necessarily non-periodic. Such substances have come to be known as \emph{quasicrystals}.

This discovery has forced crystallographers to reconsider the definition of a crystal. And, for the mathematician, new questions emerge: how does one characterise order in a pattern of Euclidean space (or of a more general space)? What mathematical tools should one use to study these patterns and their properties? Is some form of classification of these patterns possible? Are these patterns applicable to other areas of mathematics? These questions form the base of what is now known as the discipline of aperiodic order. See \cite{BGM} for an introduction to the field.

Although ordered patterns are our primary motivation here, the patterns of concern to us in this thesis are those which possess \emph{finite local complexity} (or are \emph{FLC}, for short). Loosely speaking, an FLC pattern is one for which, for any given radius, the number of distinct motifs of the pattern of this radius is finite, up to some agreed notion of equivalence (e.g., by translation or rigid motion). Finite local complexity is not a signifier of order but it does allow for a more combinatorial approach to the study of certain properties of these patterns. In particular, the approach of the classical periodic setting given by studying the space group of isometries preserving a pattern can be mimicked in the general setting. The idea is to consider, instead of groups of isometries, inverse semigroups of \emph{partial} isometries. This is an idea that we shall make precise in Chapter \ref{chap: Patterns of Finite Local Complexity}.

Of particular interest to us here is the approach of studying patterns through the topology of some associated space, for a tiling this space is often called the \emph{tiling space} or \emph{continuous hull} of the tiling (see \cite{Sadun2} for an accessible introduction to these ideas). The approach is a common one in mathematics: one associates to our objects of interest moduli spaces of associated objects. The goal is to study the geometry of these spaces and to infer from this analysis properties of the original objects of interest.

Many of the constructions in this direction are surprisingly intuitive for FLC tilings. For example, in the tiling space two tilings of it are considered as ``close'' if small motions of each are equal to some ``large'' radius. It is perhaps not surprising that this topology has applications to physical systems modelled by these tilings. Indeed, given some quantity which varies continuously on its surroundings in the tiling -- so that it evaluates similarly at two points which look the same to a large radius up to a small perturbation -- then the quantity necessarily varies continuously on the tiling space. There are already results in this direction, relating abstract topological invariants of tiling spaces to areas of physical interest. Most notable here is gap labelling \cite{B}, which relates gaps in the energy spectrum of a Hamiltonian of a particle moving through a solid modelled on the tiling to the $K$-theory of the tiling space. See \cite{KP} for an excellent introduction to the ideas relating the physics of quasicrystals to topological invariants of tiling spaces.

For FLC tilings the tiling space has a simple description as an inverse limit of ``approximants'' (see \cite{Sadun2,BDHS}). A point of each approximant determines how one may lay down a patch of tiles at the origin of a certain radius; a point of the inverse limit is a consistent sequence of such instructions for larger and larger patches. In Chapter \ref{chap: Patterns of Finite Local Complexity} we shall consider a generalised setting in which one may similarly express the hull of a pattern in terms of an inverse limit of approximants.

This description is of greater theoretical value than of computational value in the context of topological invariants of tiling spaces. It allows one to interpret certain topological invariants of the tiling space as invariants ``on the tiling'' which respect the tiling in a certain sense (see \cite{Clark} for an example of where it is useful to consider these invariants in this way). For example, the \emph{pattern-equivariant} (\emph{PE}) \emph{cohomology groups} express the \v{C}ech cohomology of the tiling space in terms of differential forms \cite{Kellendonk} or cellular cochains on the tiling \cite{Sadun1} which respect its structure. In Chapter \ref{chap: Pattern-Equivariant Homology} we shall introduce an invariant for the abstract patterns defined in Chapter \ref{chap: Patterns of Finite Local Complexity}. One considers Borel-Moore chains, or ``infinite'' singular chains, which are invariant with respect to the pattern to a certain radius. In Theorem \ref{thm: sing=cell} we show that these groups can be described by homology groups of certain cellular chains whenever the pattern comes equipped with a suitable CW-decomposition. In the case of patterns arising from tilings, these cellular chain groups correspond precisely to the cellular PE cohomology groups, but where one considers the boundary instead of the coboundary maps.

In Chapter \ref{chap: Poincare Duality for Pattern-Equivariant Homology} we shall link the PE homology and cohomology groups through Poincar\'{e} duality. This means that the PE cohomology groups, which are isomorphic to the \v{C}ech cohomology groups of the tiling space, may be understood using cellular chains. This leads to some rather pleasing geometric descriptions of these rather abstract groups, see for example Figure \ref{fig:Penrose} which illustrates a generator for the degree one rotationally-invariant PE homology of the Penrose kite and dart tilings. The Penrose tilings are hierarchical, the figure shows how an analogous generator of the ``supertiling'' is related to the generator on the tiling through a PE $2$-chain. Asides from being a cute observation, this is part of a step of the general computation of the PE homology groups for hierarchical tilings, which we outline in Chapter \ref{chap: Pattern-Equivariant Homology of Hierarchical Tilings}. The method is highly geometric, the computations are defined in terms of cellular chains on the tiling of interest. It also appears to be an efficient method of computation, indeed, in some sense, the ``minimal'' amount of collaring information is needed at each approximant computation, supposing that one has not determined that the tiling ``forces the border'' (see \cite{AP}). We exhibit further computations of this nature in this chapter. The method may be applied to more general tilings than of Euclidean space, for example, we make computations for the beautiful``regular'' pentagonal tilings of Bowers and Stephenson \cite{Pent}. The ambient spaces of these tilings are not homogeneous, indeed, their geometries reflect precisely the combinatorics of the tilings which live on them, but this presents no difficulties when one takes the view that patches should be compared with \emph{partial} isometries.

Given a tiling of Euclidean space with FLC with respect to rigid motions, there are two spaces that one may associate to it, denoted in \cite{BDHS} by $\Omega^0$ and $\Omega^{\text{rot}}$. The latter space, the \emph{rigid hull}, is the completion of a Euclidean orbit of the tiling whereas the former is a quotient of it by the group action of $SO(d)$ by rotation. This action is not free in general since some tilings in the hull may be fixed by non-trivial rotations. This means that the quotient map is not a fibration, so one may not apply a na\"{\i}ve Serre spectral sequence to the map, incorporating just the cohomology of $\Omega^0$ and of the fibre $SO(d)$. In \cite{BDHS} it was shown how one may define a spectral sequence using the \v{C}ech cohomology of $\Omega^0$ for certain hierarchical tilings of the Euclidean plane. One needs to incorporate extra torsion into the spectral sequence which accommodates for the existence of tilings in the hull fixed by non-trivial rotations. We present here a similar spectral sequence, applicable to any FLC (with respect to rigid motions) tiling of the Euclidean plane. The entries of the spectral sequence are the rotationally invariant PE homology groups of the tiling, some of them in a ``modified'' form. The modified groups are Poincar\'{e} dual to the \v{C}ech cohomology groups of the space $\Omega^0$, whereas the usual PE homology groups are Poincar\'{e} dual to these groups in all degrees except zero (for a two-dimensional tiling).

So the spectral sequence we present incorporates the \v{C}ech cohomology of $\Omega^0$ and a single degree zero PE homology group. In this sense, it is much like the spectral sequence of \cite{BDHS}, since this degree zero group is an extension of $\check{H}^2(\Omega^0)$ over a torsion group defined in terms of the number of rotationally invariant tilings in the hull. Indeed, this degree zero group is not a topological invariant of $\Omega^0$ and appears to retain information about the existence of rotationally invariant tilings. For example, for the triangle and square periodic tilings of the Euclidean plane, their rigid hulls are both homeomorphic to the $2$-sphere but their degree zero PE homology groups have different torsion parts.

We show how to compute the $E^\infty$ page of our spectral sequence, by giving a description of the $d^2$ map. This involves simply determining a single generator of the image. We show that it has a particularly geometric description, defined in terms of winding numbers about vertices of the tiling. This method coalesces conveniently with the methods of computation for hierarchical tilings given in Chapter \ref{chap: Pattern-Equivariant Homology of Hierarchical Tilings} and we show how one may perform a rather painless computation for the \v{C}ech cohomology of the Penrose kite and dart tilings using these techniques. This is an original computation and gives a surprising answer. We find that torsion does not appear in $\check{H}^2(\Omega^{\text{rot}})$. It has been suspected that this cannot be the case due to the existence of ``exceptional fibres'' in the approximants for the rigid hull. We give a direct geometric explanation of why the exceptional fibres are homotopic, showing that their difference in fact does not generate torsion in the \v{C}ech cohomology of the rigid hull.

\chapter{Background} \pagenumbering{arabic}
In this chapter we shall introduce tilings and Delone sets. We consider the study of these objects as a primary motivation for the work of this thesis. Much of the background material of this chapter will no doubt be familiar to a reader already acquainted with the study of the topology of tiling spaces. However, we should draw particular attention to our overview of uniform spaces (in Section \ref{sect: Uniform Spaces}) and of Borel-Moore homology, or homology of ``infinite'' singular chains (in Subsection \ref{subsect: Borel-Moore Homology and Poincare Duality}), whose use in this context is non-standard.

\section{Tilings and Delone Sets} \label{sect: Tilings and Delone Sets}
\subsection{Tilings}

Loosely speaking, a tiling is a decomposition of a space into tiles. Here, the ambient space of a tiling, the space which its tiles decomposes, will always be some metric space $(X,d_X)$. A \emph{prototile} is some subspace $p \subset (X,d_X)$. A \emph{prototile set} $\mathfrak{P}$ will be simply a set of prototiles, which will usually be finite but we will only make this restriction where necessary. It may be the case that we wish to distinguish congruent prototiles with a \emph{label}, that is, we may assume if necessary that there exists some \emph{label set} $L$ along with a bijection $l \colon \mathfrak{P} \rightarrow L$. A \emph{tile} $t$ is then some subspace congruent to one of our prototiles $p \in \mathfrak{P}$, that is, $t \subset (X,d_X)$ for which there exists some (surjective) isometry $\Phi \colon p \rightarrow t$; we assume that the tile $t$ is coupled with the same label as $p$ where we demand that our prototiles are labelled. We shall always assume, unless otherwise stated, that tiles are the closures of their interiors and are bounded in radius, that is, for each tile $t$ there exists some $x \in t$ and $r \in \mathbb{R}_{>0}$ such that $t \subset B_{d_X}(x,r)$.

A \emph{patch} is some union of tiles for which distinct tiles intersect on at most their boundaries. Given two patches $P$ and $Q$ for which each tile of $P$ is a tile of $Q$, we say that $P$ is a \emph{subpatch} of $Q$ and write $P \subset Q$. A \emph{tiling} is a patch for which the tiles cover all of $X$. Given a patch $T$ and some subset $U \subset X$, we define $T(U)$ to be the subpatch of tiles of $T$ which have non-trivial intersection with $U$. This follows the notation of $\cite{AP}$; one may consider patches and tilings as defining (order-preserving) functions $T \colon P(X) \rightarrow P(\{\text{tiles in } X\})$ (where $P(A)$ denotes the power set of a set $A$). In the case that $U=B_{d_X}(x,r)$ is an $r$-ball about $x$, we write $T(U)=T(x,r)$. For a (partial) isometry $\Phi$ of $X$ whose domain contains the patch $P$ we define the patch $\Phi(P)$ to be the set of images of tiles $t \in P$ under $\Phi$.

It is often the case that a tiling has a CW structure or is at least MLD to a tiling (see Example \ref{ex: MLD}) which has a CW structure. A \emph{cellular tiling} $T$ will consist of the following data. Firstly, each of the prototiles should have a CW-decomposition (see Subsection \ref{subsect: CW-Complexes and Cellular Homology}). The tiling $T$ (or, more accurately, $(X,d_X)$) should have a CW-decomposition for which the intersection of its (open) cells with a tile is a subcomplex which is a congruent image of the CW-decomposition of the prototile that the tile is an image of. Notice that for any two subcells of tiles, their intersection is a sub-complex; one often says that the tiles meet \emph{full-face to full-face}. Of course, it is possible that the tiles may be given multiple CW-decompositions if the original prototile possesses non-trivial isometries. We shall consider a CW-decomposition of a tile into cells (along with its possible label) as part of the \emph{decoration} of the tile. One should only consider patches $P$ and $Q$ as being equal if the set of tiles of each are the same when taken with their decoration of label and CW-decomposition.

\subsection{Delone Sets}
Although tilings have an instant geometric appeal, point sets are also important geometric objects. Given some crystal, an obvious approach to studying it is to investigate the properties of the lattice on which its atoms live, which one may assume to extend infinitely in all directions in Euclidean space. Periodic lattices in Euclidean space are well understood, but the discovery of quasicrystals has given impetus to study a wider range of point sets, ones which may be non-periodic but still highly ordered.

We should usually like, at the very least, for our point sets to be \emph{Delone sets}. An \emph{$(r,R)$-Delone set} (on the metric space $(X,d_X)$) is some subset $D \subset X$ which satisfies the following:
\begin{enumerate}
	\item $D$ is \emph{$r$-discrete}, that is, we have that $B_{d_X}(p,r) \cap D = \{p\}$ for all $p \in D$. Given the existence of such an $r$, one says that $D$ is \emph{uniformly discrete}.
	\item $D$ is \emph{$R$-dense}, that is, we have that $B_{d_X}(x,R) \cap D \neq \emptyset$ for all $x \in X$. Given the existence of such an $R$, one says that $D$ is \emph{relatively dense}.
\end{enumerate}
A \emph{Delone set} (on $(X,d_X)$) is of course simply an $(r,R)$-Delone set for some $r,R \in \mathbb{R}_{>0}$, that is some $D \subset X$ which is uniformly discrete and relatively dense in $(X,d_X)$. Just as for tilings we may, if so desired, label the points of a Delone set with some function $l \colon D \rightarrow L$. For a subset $U \subset X$ we define $D(U)$ to be the set of (labelled) points of $D$ which are contained in $U$. For a (partial) isometry $\Phi$ of $X$ whose domain contains $U$ we may define the point set $\Phi(D(U))$ as the set of images of the (labelled points) of $D(U)$ under $\Phi$.

\subsection{Examples of Tilings and Delone Sets} \label{subsect: Examples of Tilings and Delone Sets}

\subsubsection{Voronoi Diagrams and Punctured Tilings}

There is a sense in which the study of tilings and of Delone sets are equivalent ``modulo local redecorations''. Every Delone set $D$ on $(X,d_X)$ induces a \emph{Voronoi tiling} $V(D)$ on $(X,d_X)$. The tiles of $V(D)$ may be identified with elements $x \in D$ and consist of those points of $X$ which do not lie closer to any other $y \in D$ than they do to $x$, such a tile is sometimes called a \emph{Voronoi cell} (of $x$). It is not necessarily true that the Voronoi tiling is a tiling in the sense defined above. It is convenient here to restrict the metric space $(X,d_X)$ somewhat, so as to restrict the topology of the tiles. For example, it is not too hard to see that given a Delone set $D$ of $(\mathbb{R}^d,d_{euc})$, the Voronoi tiles are intersections of a finite number of half-spaces, making them convex polygons and so the Voronoi tiles do define a tiling of $(\mathbb{R}^d,d_{euc})$ with respect to some prototile set.

In the other direction, suppose that we have some tiling $T$ of $(X,d_X)$. A choice of \emph{puncture} (that is, a choice of point) of each prototile $p$ in the prototile set for $T$ induces a puncture of each tile $t \in T$ (we assume for simplicity that there is no ambiguity here, of course the puncture may not be well defined if a tile has non-trivial self-isometries. It is perhaps best to assume from the outset that the punctures are part of the decoration of each prototile). This defines a point pattern $P(T)$, the \emph{set of punctures} of $T$ on $(X,d_X)$, which will be a Delone set given reasonable conditions on the tiling.

Of course, both of these constructions may allow for the embellishment of tiles/points with labels. We see that examples of Delone sets produce examples of tilings and vice versa. Supposing that our punctures were chosen sensibly, these constructions are dual in the following sense: $D$ may be recovered from $V(D)$, and vice versa, ``via locally defined rules'', and similarly for $T$ and $P(T)$. These ideas are made more precise by the notion of MLD equivalence (see Example \ref{ex: MLD}). In the language of Chapter \ref{chap: Patterns of Finite Local Complexity}, the induced patterns of these structures are all equivalent.

\subsubsection{Matching Rules}

There are surprisingly few general constructions of aperiodic tilings or point patterns. However, the constructions that do exist have appeal in and of themselves, possessing a rich (and growing) set of interactions with other areas of mathematics (as well as outside of mathematics). These are the so called matching rule, projection method and substitution (or hierarchical) constructions. We shall give a brief overview of each, although extra attention will be paid to the substitution method in this thesis.

The matching rule tilings are perhaps the most readily accessible, the set-up is rather simple. One starts with some prototile set $\mathfrak{P}$ and a matching rule tiling (for $\mathfrak{P}$) is then simply a tiling of these prototiles. So a matching rule tiling is nothing more than an infinite jigsaw: one has a box of pieces, the prototiles, (although one has access to an infinite supply of each type of piece!) and a matching rule tiling of these pieces is a covering of our space with isometric copies of the original pieces in a way such that distinct pieces do not overlap (on anything more than their boundaries). One often specifies other conditions, for example, one may only wish for the tiles to be, say, translates of the original prototiles. One may also specify extra matching conditions on the boundaries of tiles (or, more generally, only allow certain patches of tiles to be found in the tiling, which may not be a rule which can be forced by conditions on the adjacency of tiles, see \cite{SocTaylor} for such an example).

A famous class of matching rule tilings are the \emph{Wang tilings}. One starts with an initial finite prototile set of \emph{Wang tiles}, whose supports are unit squares of $\mathbb{R}^2$, which possess an assignment of colour to each edge of the square. A Wang tiling of such a prototile set is then a tiling of these prototiles so that the squares meet edge to edge for which, where tiles meet at an edge, they agree on their assigned colours there. One usually only allows for translates of the original prototile set, which of course is more general than allowing all isometries since rotates/reflections of tiles may just be included in the original prototile set if so desired.

Unlike for projection method or hierarchical tilings, meaningful computations for general matching rule tilings, such as computations of topological invariants of the continuous hull of the tiling, are not at the moment possible. There is good reason to expect for this not to change, at least for a class as general as the set of all Wang tilings. The domino problem asks whether it is algorithmically decidable for a given prototile set of Wang tiles to admit a tiling of the plane. It was shown by Berger \cite{Berg} that, in fact, the domino problem is undecidable. If one cannot even in general determine that a given prototile set admits a tiling of the plane, one certainly should not expect for there to be any general method of computation for invariants dependent on the long-range order of such objects!

Nevertheless, matching rule tilings are still of interest in a range of disciplines. The links to logic have already been alluded to above. Given a general Turing machine and some input, one may generate a Wang tile set which admits a tiling of the plane if and only if the Turing machine does not halt \cite{Berg}. In a less abstract setting, one may be interested in the formation of quasicrystals; it is reasonable to expect that they arise, in large part, from very specific sets of local interactions. The question arises of how such local interactions can lead to global structure. Related to this is a connection between matching rule and substitution tilings. The structure of substitution tilings can be forced by some finite set of matching rules \cite{CGS,Moz}. Loosely speaking, this shows how large scale hierarchical structure may be forced by local interactions -- one can not help but feel that there is potential for such a statement to possess rather profound implications.

\subsubsection{Projection Method Patterns}

The projection method is a scheme capable of producing a large class of non-periodic but highly ordered patterns. The idea is to take some periodic structure from a higher dimensional space, take a slice of it and then project this to our ambient space. The hope is that this pattern will inherit the ordered or repetitive nature of the original periodic structure but that the choice of ``slice'' followed by the projection will break the periodicity of it.

It is common to take our periodic structure as a lattice of $\mathbb{R}^N$ e.g., one may consider the integer lattice $\mathbb{Z}^N \subset \mathbb{R}^N$. The ambient space of our resulting point pattern will be $\mathbb{R}^d$, which may be considered as a vector subspace $E \subset \mathbb{R}^N$. The ``slice'' or ``acceptance strip'' will then be some fattening of $E$ in $\mathbb{R}^N$, given as a product of $E$ with some ``window'' of the orthogonal complement of $E$. The points of the lattice $\mathbb{Z}^N$ which fall into this slice may then be projected orthogonally onto $E$ and define a projection method pattern on $\mathbb{R}^d$. Given a nice choice of window, the pattern so defined is repetitive (that is, loosely, finite motifs occur relatively densely throughout the pattern -- see Definition \ref{def: FLC tiling}) and, if $\mathbb{R}^d$ is placed irrationally in $\mathbb{R}^N$, it will also be non-periodic.

We see that the resulting pattern depends on the window and the placement of $\mathbb{R}^d$ within $\mathbb{R}^N$. In some sense, unlike the situation for matching rule tilings, this collection of data gives some handle on the long-range structure of the pattern and computations (e.g., for the cohomology of the hull of the pattern) can be made, see for example \cite{FHK,GHK}. Such computations, along with similar techniques for substitution tilings (see \cite{AP}) show, for example, that the classes of projection method and substitution patterns are distinct: there are projection method patterns that do not arise from substitution rules and vice versa (although some very interesting patterns, such as the Penrose tiling, fall in the intersection of these two classes).

Fixing some ``canonical'' window, the geometry of a projection point pattern is a variable of the placement of the subspace $\mathbb{R}^d$ within $\mathbb{R}^N$. It is then reasonable to expect that number-theoretic issues may to come into play and, indeed, there are already results in this direction, see for example \cite{ArnBerEiIto,HaynesKellyWeiss}.

\subsubsection{Substitution Tilings}

We now turn our attention to substitution tilings. Substitution tilings possess a hierarchical structure, we shall come back to this idea in Section \ref{subsect: Patterns Associated to Tilings} and invariants of these tilings will be considered in Chapter \ref{chap: Pattern-Equivariant Homology of Hierarchical Tilings}.

Let $\mathfrak{P}$ be some finite prototile set of $(\mathbb{R}^d,d_{euc})$ and $\lambda > 1$ (which will be called the \emph{inflation factor}). A \emph{substitution rule} (on $\mathfrak{P}$, with inflation factor $\lambda$) is a rule which associates to each $p \in \mathfrak{P}$ a patch with support equal to the support of $p$ in a way such that this patch inflated by a factor of $\lambda$ is a patch of the original prototiles of $\mathfrak{P}$. Then this rule also acts on patches of tiles by substituting each tile of the patch individually. By following a substitution with an inflation by $\lambda$, we see that repeated iteration of this \emph{inflation rule}, which we shall call $\omega$, grows larger and larger patches of tiles. We shall say that a tiling $T$ of prototiles $\mathfrak{P}$ is an \emph{$\omega$-substitution tiling} if, for each patch $P \subset T$ with bounded support, we have that in fact $P$ is a subpatch of some iteratively inflated tile, that is, $P \subset \omega^n(t)$ where $t$ is an isometric copy of one of our original prototiles $p \in \mathfrak{P}$ (note that it is common to restrict to translates here). We shall temporarily denote the set of such tilings by $\Omega_\omega$. Under reasonable conditions the set $\Omega_\omega$ is non-empty (see \cite{AP}).

Of course, the substitution rule acts on patches by substitution on each tile followed by inflation. Then the inflation rule $\omega$ acts on the tilings of $\Omega_\omega$ and it is not too hard to see that for $T \in \Omega_\omega$ we have that $\omega(T) \in \Omega_\omega$ also. Of great importance for the structure of substitution tilings is the reverse operation: one can show that under reasonable conditions \cite{AP} the inflation map is surjective on $\Omega_\omega$. In other words, for every $T_0 \in \Omega_\omega$ there exists some tiling $T_1$, which is a tiling of $\Omega_\omega$ inflated by a factor of $\lambda$, for which the substitution rule subdivides $T_1$ into $T_0$. The tiles of $T_1$ are based on inflated prototiles of $\mathfrak{P}$ which are called \emph{supertiles}. This process can be iterated, given some tiling $T_0 \in \Omega_\omega$ there is a string of tilings $(T_0,T_1,\ldots )$, where each $T_i$ is a tiling of \emph{super$^i$-tiles}, for which the substitution rule subdivides the tiles of $T_{i+1}$ to $T_i$. This string of tilings may be thought of as specifying a way of grouping the super$^i$-tiles of $T_i$ into super$^{i+1}$-tiles. Loosely speaking, if this grouping can be performed using only local information (so that $T_i$ is forced to be MLD to $T_{i+1}$), then the substitution rule is said to be \emph{recognisable}. The combintorics of how tiles fit together locally is the same at each level of the hierarchy; analogously to the projection method tilings, this hierarchy gives us a handle on the long range structure of the tiling which allows, for example, the computation of the \v{C}ech cohomology groups of the continuous hull of such a tiling, see \cite{AP,BDHS}.

There are several ways in which one may generalise the above setting. It is unnecessarily restrictive that the substitution rule replaces a prototile with a patch which covers the prototile exactly. One only needs that the substituted patch covers the prototile and that the substitution agrees on potential overlaps that may occur near adjacent tiles. See, for example, the famous Penrose kite and dart examples, for which we make computations in \ref{subsect: The Penrose Tiling}. One may also consider hierarchical tilings on spaces other than $(\mathbb{R}^d,d_{euc})$, see, for example, the Pentagonal tilings of Bowers and Stephenson \cite{Pent}, for which we shall calculate invariants for in Subsection \ref{subsect: Pent}. Another direction is to consider not just one, but multiple substitutions on the prototiles set $\mathfrak{P}$, in symbolic dynamics such a set-up is known as an ``$s$-adic system'', see \cite{Ler}, and multi/mixed substitutions of tilings have also been considered \cite{GM}. A general framework for describing a hierarchical structure on a tiling is described in \cite{PF2}. And, despite these interesting generalised settings for hierarchical tilings, there remain many interesting problems for substitution tilings of $\mathbb{R}^1$, the most intensely studied problems those surrounding the Pisot substitution conjecture, see \cite{BS}.

\section{Uniform Spaces} \label{sect: Uniform Spaces}

A well-established approach to studying tilings and Delone sets is to associate to one a certain space (often called the \emph{continuous hull}), which may be constructed as the completion of a metric space defined in terms of the original pattern. We also take this approach here but we find it advantageous to utilise uniform spaces instead of metric spaces for these constructions. We shall provide here the necessary background on uniform spaces required for the next chapter.

\subsection{Uniformities}
The idea behind uniform spaces is rather intuitive. Suppose that we have some set $X$ and we want to define a notion of ``closeness'' on it. A na\"{\i}ve attempt would be the following. A \emph{relation} on $X$ is simply a subset $U \subset X \times X$. When we consider $x$ as being close to $y$ we declare that $(x,y) \in U$, thus defining a relation on $X$. We should clearly have that $x$ is close to $x$ for any $x \in X$, that is, the diagonal $\Delta_X:=\{(x,x) \mid x \in X \} \subset U$ or $U$ is \emph{reflexive}. It also seems intuitive that if $x$ is close to $y$ then $y$ should be close to $x$. That is, $\{(y,x) \mid (x,y) \in U\} =: U^\text{op} = U$ or $U$ is \emph{symmetric}. Given two subsets $U,V \subset X \times X$ we define $U \circ V:=\{ (x,z) \mid (x,y) \in U, (y,z) \in V \text{ for some } y \in X \}$. A relation is said to be \emph{transitive} if $U \circ U \subset U$. It is not clear that our $U$ defining ``closeness'' should be transitive, intuitively it should satisfy some weak form of transitivity. Of course, attempting to define a notion of ``closeness'' on $X$ using just a reflexive and symmetric relation is wholly insufficient. However, the general philosophy is almost right, the idea is to use a \emph{family} of relations $U \subset X \times X$:

\begin{definition} A \emph{uniformity base} on a set $X$ consists of a non-empty collection $\mathcal{U}$ of \emph{entourages} $U \subset X \times X$ which satisfies the following:
\begin{enumerate}
	\item We have that $\Delta_X \subset U$ for all $U \in \mathcal{U}$.
	\item For all $U \in \mathcal{U}$ there exists some $V \in \mathcal{U}$ such that $V^\text{op} \subset U$.
	\item For all $U \in \mathcal{U}$ there exists some $V \in \mathcal{U}$ such that $V \circ V \subset U$.
	\item For all $U,V \in \mathcal{U}$ there exists some $W \in \mathcal{U}$ with $W \subset U \cap V$.
\end{enumerate}
A \emph{uniformity} is a uniformity base which additionally satisfies the following. For all $U \in \mathcal{U}$ and $U \subset V \subset X \times X$ we have that $V \in \mathcal{U}$. A pair $(X,\mathcal{U})$ of a set $X$ with a uniformity on it is called a \emph{uniform space}.
\end{definition} 

We note that there are are three equivalent approaches to defining uniform spaces, via entourages (as above), \emph{uniform covers} and collections of pseudometrics, that is, via \emph{gauge spaces}. We shall concentrate here though only on the definition given above and refer the reader to \cite{Uniform Spaces, HAF} for further details.

Notice that the final condition for a uniformity base to be a uniformity (along with axiom $4$) is simply the requirement that $\mathcal{U}$ is a filter on $(P(X \times X),\subset)$. Of course, it implies a strengthening of axioms $2$, $3$ and $4$, that the given inclusions may be taken as equalities. Naturally, a uniformity base is enough to specify a unique uniformity, by taking the upwards closure. We shall say that a uniformity base $\mathcal{U}$ \emph{generates} and \emph{is a base for} the uniformity $\mathcal{V}$ if $\mathcal{V}$ is the upwards closure of $\mathcal{U}$.

One could have defined the notion of two uniformity bases on a set $X$ as being \emph{equivalent}, that is, one could define $\mathcal{U}_1$ and $\mathcal{U}_2$ to be equivalent if for all $U_1 \in \mathcal{U}_1$ there exists some $U_2 \in \mathcal{U}_2$ with $U_2 \subset U_1$, and vice versa. The filter condition simply replaces the notion of equivalence with that of equality and in that regard the first four axioms for a uniformity are the most important. It is easy to see that any uniformity has a base of \emph{symmetric} entourages, that is, entourages $U$ satisfying $U=U^\text{op}$ (just consider all entourages of the form $U \cap U^\text{op}$). Hence, following the preceding remarks, we could think of a uniform space as being specified by entourages satisfying axioms $1$, $3$ and $4$ above and the stronger axiom in replacement for $2$, that for all $U \in \mathcal{U}$ we have that $U=U^\text{op}$. Then the first axiom says that our ``closeness'' relations are reflexive, the strengthened second one says that they are symmetric, the third says that we have some weak form of transitivity and the final one says that being $U$ and $V$-close should also itself be a ``closeness'' relation for any $U,V \in \mathcal{U}$.

\begin{exmp}\label{ex: met} Any metric space $(X,d_X)$ induces a uniformity on $X$ in the following way. For $r \in \mathbb{R}_{>0}$, define $U_r:=\{(x,y) \mid d_X(x,y)<r\}$. It is easily checked that the collection $\{U_r \mid r \in \mathbb{R}_{>0}\}$ is a uniformity base for $X$, and so we define the uniformity induced by $d_X$ to be the upwards closure of this uniformity base. \end{exmp}

\begin{exmp} \label{ex: top gp} Another important set of examples of uniform spaces are given by topological groups. Given a neighbourhood $N$ of the identity in a topological group $G$, define $U_N:=\{(g,h) \mid g \cdot h^{-1} \in N \}$. The collection of $U_N$ ranging over all neighbourhoods $N$ of the identity generates a uniformity known as the \emph{right uniformity} on $G$. The left uniformity is defined analogously and of course they are equal in the case that $G$ is abelian. Both induce the original topology on $G$ (see below). \end{exmp}

For a uniform space $(X,\mathcal{U}_X)$ and a subset $Y \subset X$, one can define the \emph{induced uniformity} on $Y$ in the obvious way, by taking as entourages for $Y$ restrictions of entourages of $X$ to $Y$, that is, sets $U |_{Y \times Y}$ with $U \in \mathcal{U}_X$.

Given a set $U \subset X \times X$, we define $U(x):=\{y \mid (y,x) \in U\}$. That is, $U$ defines a multi-valued function $U \colon X \rightarrow P(X)$ on $X$, which we may think of as the function which assigns to a point $x$ a ``ball of size $U$ about $x$''. Analogously to metric spaces, we say that $N$ is a \emph{neighbourhood} of $x$ in $(X,\mathcal{U})$ if there exists some entourage $U \in \mathcal{U}$ for which $U(x) \subset N$. One may define a topology in terms of neighbourhoods of points and it is easily checked that the neighbourhoods defined in this way satisfy the necessary properties to define a topology on $X$. It can be shown that every uniform space is completely regular and that, conversely, every completely regular topological space is uniformisable, that is, there exists some uniformity which induces the required topology.

Continuous maps between uniform spaces are defined as those functions which are continuous with respect to the induced topologies. In this way we have full and faithful functors $i \colon \mathbf{Met} \rightarrow \mathbf{Unif}$ and $j \colon \mathbf{Unif} \rightarrow \mathbf{Top}$ between the categories of metric, uniform and topological spaces with continuous maps as morphisms (of course $j \circ i$ is the ``usual'' functor which assigns to a metric space the topological space with neighbourhood topology defined by open balls). These should be thought of as forgetful functors, at each stage some structure is lost. Most importantly here, unlike for general topological spaces, uniform spaces possess the structure necessary to define \emph{uniformly continuous maps}, as well as \emph{Cauchy sequences}, as we shall see shortly.

\begin{notation} Let $f \colon X \rightarrow Y$ be a function between the sets $X$ and $Y$. Given $U \subset X \times X$ we define $f(U):=\{(f(x),f(y)) \mid (x,y) \in U \}$. For $V \subset Y \times Y$ we define $f^{-1}(V):=\{(x,y) \mid (f(x),f(y)) \in V\}$. \end{notation}

We shall, in the next chapter, make frequent use of the simple inclusions and equalities of the lemma below.

\begin{lemma} With the above notation, where $U \subset X \times X$ etc., as appropriate, we have that:
\begin{enumerate}
	\item $f^{-1}(U) \circ f^{-1}(V) \subset f^{-1}(U \circ V)$ (with equality if $f$ is surjective).
	\item $f^{-1}(U \cap V) = f^{-1}(U) \cap f^{-1}(V)$.
	\item $f(U \cap V) \subset f(U) \cap f(V)$.
	\item $(g \circ f)^{-1}(U) = f^{-1}(g^{-1}(U))$.
	\item $(g \circ f)(U) = g(f(U))$.
	\item $U \subset f^{-1}(f(U))$.
	\item For $h=g \circ f$ we have that $f(U) \subset g^{-1}(h(U))$.
\end{enumerate}
\end{lemma}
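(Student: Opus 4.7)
The plan is to verify each of the seven statements by directly unpacking the definitions of $f(\cdot)$, $f^{-1}(\cdot)$ and $\circ$ given just before and within the lemma. None of the seven claims require any hypothesis on $X$, $Y$ or $f$ beyond what is stated, and each reduces to chasing an element $(x,y) \in X \times X$ (or $(x',y') \in Y \times Y$) through the relevant definitions. The work therefore splits into seven short diagram chases which I would simply list in order.

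For (1), I would take $(x,z) \in f^{-1}(U) \circ f^{-1}(V)$, pick a witness $y \in X$ with $(x,y) \in f^{-1}(U)$ and $(y,z) \in f^{-1}(V)$, and observe that $f(y)$ then witnesses $(f(x),f(z)) \in U \circ V$. For the converse under the surjectivity hypothesis, given $(x,z) \in f^{-1}(U \circ V)$ there is a witness $y' \in Y$ with $(f(x),y') \in U$ and $(y',f(z)) \in V$, and surjectivity lets me lift $y'$ to some $y \in X$ with $f(y)=y'$; without surjectivity one cannot guarantee such a lift, which is exactly why equality fails in general. The point where surjectivity enters is the only non-routine step in the whole lemma, and it is the content I would emphasise. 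Items (2), (4) and (5) are immediate functorial identities; (3) and (6) are the standard one-sided inclusions expressing that $f$ need be neither injective nor surjective; and (7) follows by composing (6) with the tautology $f(U) \subset f(U)$, applied to the map $g$, since $(g \circ f)(U) = g(f(U))$ by (5).

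The main obstacle, insofar as there is one, is purely notational: I want to keep the distinction between the two uses of $f$ (as a map $X \to Y$ and as induced maps $P(X \times X) \to P(Y \times Y)$ and $P(Y \times Y) \to P(X \times X)$) completely transparent, so that the reader can check each line against the notational convention set up just above the lemma. I would therefore present the proof as a list of seven one- or two-line derivations rather than a single block of prose, reusing the same template ``let $(x,y)$ be an arbitrary element of the left-hand side; apply the definitions; conclude membership in the right-hand side'' each time, and remarking after (1) that it is precisely the failure of (6) to be an equality that forces (1) to be only an inclusion without further hypothesis.
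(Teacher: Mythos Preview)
Your proposal is correct: each of the seven items is a routine element chase through the definitions, and your sketch handles them all accurately, including the one nontrivial point (the role of surjectivity in the reverse inclusion of (1)) and the derivation of (7) from (5) and (6). The paper itself gives no proof of this lemma, treating it as an elementary bookkeeping result; your write-up is exactly the kind of verification one would supply if asked to fill in the details.
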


\begin{definition} Let $(X,\mathcal{U}_X)$ and $(Y,\mathcal{U}_Y)$ be uniform spaces. A function $f \colon X \rightarrow Y$ is said to be \emph{uniformly continuous} if for all $V \in \mathcal{U}_Y$ there exists some $U \in \mathcal{U}_X$ such that $U \subset f^{-1}(V)$. \end{definition}

Of course, just as for continuous maps between topological spaces, it is easy to see that one only needs to check the above condition for elements of some given uniformity bases of $(X,\mathcal{U}_X)$ and $(Y,\mathcal{U}_Y)$ to show that a map is uniformly continuous. It is not hard to show that any uniformly continuous map is also continuous.

\begin{exmp} Let $(X,d_X)$ be a metric space. As in Example \ref{ex: met}, we may generate a uniformity $\mathcal{U}_X$ on $X$ using the sets $U_r$. Notice that $U_r(x)$, the ``ball of size $U_r$ about $x$'', is precisely the open ball $B_{d_X}(x,r)$. Given a metric space $(Y,d_Y)$ (with induced uniformity $\mathcal{U}_Y$), a function $f \colon X \rightarrow Y$ is uniformly continuous with respect to the induced uniformities if and only if for all $V_\epsilon \in \mathcal{U}_Y$ there exists some $U_\delta$ such that $U_\delta \subset f^{-1}(V_\epsilon)$. That is, for any $\epsilon>0$ there exists some $\delta>0$ such that $d_X(x,y)< \delta$ implies that $d_Y(f(x),f(y)) < \epsilon$, which of course is the usual definition of uniform continuity for maps between metric spaces. \end{exmp}

\begin{exmp} Given a topological group $G$ with induced right uniformity $\mathcal{U}_G$ (see Example \ref{ex: top gp}), we have that right multiplication $r_x \colon G \rightarrow G$, defined by $r_x(g):=g \cdot x$ for some chosen $x \in G$, is uniformly continuous. Indeed, given some neighbourhood $N$ of the identity, we have that \[r_x^{-1}(U_N)= \{(g,h) \mid (g \cdot x) \cdot (h \cdot x)^{-1} = g \cdot h^{-1} \in N \} = U_N.\]
\end{exmp}

\subsection{Nets and Completions}
There exists a class of topological spaces, the \emph{sequential spaces}, which, loosely, are such that their topology is completely determined by the specification of which points any given sequence converges to. All first-countable spaces are sequential, in particular all metric spaces are sequential. Familiar concepts such as continuity can be defined in terms of convergence of sequences in this setting. Unfortunately, not all topological spaces are sequential, and indeed there exist non-sequential uniform spaces. However, most of the theory follows through by simply replacing sequences with the more general notion of a \emph{net} \cite{HAF}.

\begin{definition} A \emph{directed set} is a poset $(\Lambda,\leq)$ with the following property: for all $\lambda_1,\lambda_2 \in \Lambda$ there exists some $\mu \in \Lambda$ with $\mu \geq \lambda_1,\lambda_2$. A \emph{net} on a set $X$ is a directed set and a function $x \colon \Lambda \rightarrow X$. We shall often write terms of a net as $x_\lambda:=x(\lambda)$ and the net itself as $(x_\alpha)$.

A \emph{subnet} of a net $(x_\alpha)$ over $(\Lambda,\leq_\Lambda)$ is another net $(y_\beta)$ over some directed set $(M,\leq_M)$ along with a monotone and final function $f \colon M \rightarrow \Lambda$ such that $y_\beta=x_{f(\beta)}$. By $f$ being monotone we mean that $\beta_1 \leq_M \beta_2$ implies that $f(\beta_1) \leq_\Lambda f(\beta_2)$ and by final we mean that for every $\lambda \in \Lambda$ there exists some $\mu \in M$ such that $f(\mu) \geq_\Lambda \lambda$.

A net $(x_\alpha)$ defined on some topological space $X$ is said to \emph{converge} (to $l \in X$) if for all neighbourhoods $N$ of $l$ there exists some $\alpha$ such that $x_\beta \in N$ for all $\beta \geq \alpha$.\end{definition}

\begin{exmp} A sequence is simply a net over the directed set $(\mathbb{N},\leq)$. Subsequences of sequences are subnets, although the converse is far from being true.\end{exmp}

As mentioned, many concepts from topology can be rephrased using nets \cite{HAF}:

\begin{proposition} \begin{enumerate}
	\item A subset $U \subset X$ is open in $X$ if and only if for any convergent net $(x_\alpha) \rightarrow x$ in $X$ with $x \in U$ we have that $(x_\alpha)$ is eventually in $U$, that is, there exists some $\beta$ such that $x_\alpha \in U$ for all $\alpha \geq \beta$.
	\item A subset $C \subset X$ is closed in $X$ if and only if for any convergent net $(x_\alpha) \rightarrow x$, where each $x_\alpha \in C$, we have that $x \in C$.
	\item A function $f \colon X \rightarrow Y$ is continuous at $x$ if and only if, given a convergent net $(x_\alpha) \rightarrow x$ in $X$, we have that $f(x_\alpha) \rightarrow f(x)$ in $Y$.
	\item A topological space $X$ is compact if and only if every net has a convergent subnet.
\end{enumerate}
\end{proposition}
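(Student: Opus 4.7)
The plan is to dispatch (1)--(3) via the standard ``neighbourhood net'' construction and to reduce (4) to the finite intersection property characterisation of compactness, together with the equivalence between cluster points of a net and limits of its subnets.

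For (1), one direction is immediate: if $U$ is open, $x \in U$, and $(x_\alpha) \to x$, then $U$ itself is a neighbourhood of $x$, so $(x_\alpha)$ is eventually in $U$. For the converse I would argue contrapositively. If $U$ fails to be open, pick $x \in U$ every neighbourhood of which meets $X \setminus U$. Direct the neighbourhood filter $\mathcal{N}(x)$ by reverse inclusion, and for each $N$ pick some $x_N \in N \cap (X \setminus U)$. The net $(x_N)$ converges to $x$ (for any $N_0 \in \mathcal{N}(x)$, every $M \geq N_0$ satisfies $M \subseteq N_0$, so $x_M \in M \subseteq N_0$) yet never enters $U$. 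Part (2) follows at once from (1) by taking complements.

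For (3), the forward direction unpacks continuity: given a neighbourhood $V$ of $f(x)$, choose a neighbourhood $U$ of $x$ with $f(U) \subseteq V$; if $(x_\alpha) \to x$ then $(x_\alpha)$ is eventually in $U$ and so $(f(x_\alpha))$ is eventually in $V$. The converse is another neighbourhood-net argument: if $f$ is discontinuous at $x$, fix $V \in \mathcal{N}(f(x))$ with $f^{-1}(V) \notin \mathcal{N}(x)$, and for each $N \in \mathcal{N}(x)$ choose $x_N \in N$ with $f(x_N) \notin V$; the resulting net converges to $x$ while its image does not converge to $f(x)$.

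Part (4) will be the main obstacle. I would first establish the auxiliary fact that $(x_\alpha)$ admits a subnet converging to $x$ if and only if $x$ is a \emph{cluster point}, meaning $x \in \overline{T_\alpha}$ for every tail $T_\alpha := \{x_\beta \mid \beta \geq \alpha\}$. One direction is immediate from (2). For the other, one uses the standard construction in which the subnet is indexed by the directed set of pairs $(\alpha, N)$ with $N \in \mathcal{N}(x)$ and $x_\alpha \in N$ (ordered componentwise), with the monotone final map $(\alpha,N) \mapsto \alpha$; this is the one point at which nontrivial verification that we obtain a subnet in the sense of the paper's definition is required. Granted this, (4) reduces to the claim that $X$ is compact if and only if every net has a cluster point. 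For the forward direction, the family $\{\overline{T_\alpha}\}$ has the finite intersection property (the directedness of the index set ensures any finite collection of tails has a common subtail), so compactness forces $\bigcap_\alpha \overline{T_\alpha} \neq \emptyset$, and any point in the intersection is a cluster point. For the converse, if $\mathcal{C}$ is an open cover with no finite subcover, index a net over the finite subfamilies $F \subseteq \mathcal{C}$ ordered by inclusion by choosing $x_F \in X \setminus \bigcup F$; any prospective cluster point $x$ lies in some $C \in \mathcal{C}$, yet every $x_F$ with $F \supseteq \{C\}$ avoids $C$, preventing $C$ from meeting the tail from $\{C\}$.
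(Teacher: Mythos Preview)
Your argument is correct and is the standard one. The paper, however, does not prove this proposition at all: it is stated as background material with a citation to Schechter's \emph{Handbook of Analysis and its Foundations} and no proof is given. So there is nothing to compare against; you have simply supplied the proof the paper omits.
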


\begin{definition} Let $(X,\mathcal{U})$ be a uniform space. A net $(x_\alpha)$ on $X$ is \emph{Cauchy} if, for every entourage $U \in \mathcal{U}$, there exists some $\gamma$ such that $(x_\alpha,x_\beta) \in U$ for all $\alpha,\beta \geq \gamma$. We shall say that $(X,\mathcal{U})$ is \emph{complete} if every Cauchy net converges. \end{definition}

\begin{exmp} Let $(X,d_X)$ be a metric space, with induced uniformity $\mathcal{U}$, and $(x_\alpha)$ be a net over the directed set $(\mathbb{N},\leq)$ in $X$ (that is, $(x_\alpha)$ is a sequence in $X$). Then $(x_\alpha)$ is Cauchy if for every entourage $U_\epsilon \in \mathcal{U}$ there exists some $\gamma$ such that $(x_\alpha,x_\beta) \in U_\epsilon$ for all $\alpha,\beta \geq \gamma$. In other words, for all $\epsilon > 0$ there exists some $\gamma$ such that $d_X(x_\alpha,x_\beta)<\epsilon$ for all $\alpha,\beta \geq \gamma$, which is the usual definition of a Cauchy sequence in a metric space.\end{exmp}

Analogously to the situation for (pseudo)metric spaces, every uniform space $(X,\mathcal{U}_X)$ has a \emph{Hausdorff completion}, that is, a complete Hausdorff uniform space $\overline{(X,\mathcal{U}_X)}$ along with a uniformly continuous map $i \colon (X,\mathcal{U}_X) \rightarrow \overline{(X,\mathcal{U}_X)}$ which satisfies the following universal property: any uniformly continuous map $f \colon \ab (X,\mathcal{U}_X) \rightarrow (Y,\mathcal{U}_Y)$, where $(Y,\mathcal{U}_Y)$ is a complete Hausdorff uniform space, extends to a unique uniformly continuous map $\overline{f} \colon \overline{(X,\mathcal{U}_X)} \rightarrow (Y,\mathcal{U}_Y)$, that is, a uniformly continuous map with $f=\overline{f} \circ i$. As is usual for objects defined by universal properties, the Hausdorff completion of a uniform space is uniquely defined up to isomorphism in the category of uniform spaces (with uniformly continuous maps as morphisms). The Hausdorff completion defines a functor, by taking completions and extending maps, and is left adjoint to the inclusion of the full subcategory of complete Hausdorff uniform spaces into the category of uniform spaces, that is, the complete Hausdorff uniform spaces are a reflective subcategory.

\subsection{Kolmogorov Quotients}

Given a topological space $X$, say that two points $x,y \in X$ are \emph{topologically indistinguishable} if they have the same set of neighbourhoods. Two points are \emph{topologically distinguishable} if they are not topologically indistinguishable, that is, there exists some $U$ which is a neighbourhood of one point but not the other. A space for which distinct points are topologically indistinguishable is called \emph{$T_0$} or \emph{Kolmogorov} (see, for example, \cite{HAF}).

It is instructive to think of non-$T_0$ spaces simply as $T_0$ spaces for which there are certain places where multiple points ``inhabit the same location''. Given a space $X$, define the equivalence relation $\sim$ on $X$ by letting $x \sim y$ if $x$ and $y$ are topologically indistinguishable. Then the \emph{Kolmogorov quotient} $X^{KQ}:=X / \sim$ is a $T_0$ space. Furthermore, it is easy to show that the space $X^{KQ}$ along with the quotient map $\pi^{KQ} \colon X \rightarrow X^{KQ}$ satisfies the following universal property: for every continuous map $f \colon X \rightarrow Y$ where $Y$ is a $T_0$ space, there exists a unique map $f^{KQ} \colon X^{KQ} \rightarrow Y$ lifting $f$, that is, $f= f^{KQ} \circ \pi^{KQ}$. This makes the $T_0$ spaces with continuous maps a reflective subcategory of $\mathbf{Top}$.

For a uniform space $(X,\mathcal{U})$ two points $x$ and $y$ are topologically indistinguishable if and only if for every entourage $U \in \mathcal{U}$ we have that $y \in U(x)$. We say that $(X,\mathcal{U})$ is \emph{separated} if the intersection of all entourages is the diagonal $\Delta_X$ (which also implies that the uniform space is Hausdorff). The Kolmogorov quotient $(X,\mathcal{U})^{KQ}$ of a uniform space is itself a (separated) uniform space, one takes the uniformity $\mathcal{U}^{KQ}$ defined by the entourages $U^{KQ}:=\{([x]^{KQ},[y]^{KQ}) \mid (x,y) \in U\}$ for $U \in \mathcal{U}$. It is easily checked that this defines a uniformity on $X^{KQ}$ which induces the correct topology. Of course, the map $\pi^{KQ} \colon (X,\mathcal{U}) \rightarrow (X,\mathcal{U})^{KQ}$ is uniformly continuous and given a uniformly continuous map $(X,\mathcal{U}_X) \rightarrow (Y,\mathcal{U}_Y)$ to a separated uniform space, we have that $f^{KQ}$ is uniformly continuous. Taking the Kolmogorov quotient factors through the operation of taking the Hausdorff completion:

\begin{proposition} Let $(X,\mathcal{U}_X)$ be a uniform space and $i \colon (X,\mathcal{U}_X) \rightarrow \overline{(X,\mathcal{U}_X)}$ be a Hausdorff completion of it. Then $i^{KQ} \colon (X,\mathcal{U}_X)^{KQ} \rightarrow \overline{(X,\mathcal{U}_X)}$ is a Hausdorff completion of $(X,\mathcal{U}_X)^{KQ}$ for which $i=i^{KQ} \circ \pi^{KQ}$. \end{proposition}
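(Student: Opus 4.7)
The plan is to verify that $i^{KQ}$ satisfies the defining universal property of the Hausdorff completion of $(X,\mathcal{U}_X)^{KQ}$, relying on a standard diagram chase between the two universal properties already established (for $\pi^{KQ}$ and for $i$).

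First I would construct $i^{KQ}$ and establish the factorisation $i = i^{KQ} \circ \pi^{KQ}$. Since $\overline{(X,\mathcal{U}_X)}$ is Hausdorff, hence separated as a uniform space, the universal property of the Kolmogorov quotient applied to the uniformly continuous map $i \colon (X,\mathcal{U}_X) \rightarrow \overline{(X,\mathcal{U}_X)}$ yields a unique uniformly continuous map $i^{KQ} \colon (X,\mathcal{U}_X)^{KQ} \rightarrow \overline{(X,\mathcal{U}_X)}$ with $i = i^{KQ} \circ \pi^{KQ}$, which is precisely the relation claimed.

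Next I would check the universal property. Note that $\overline{(X,\mathcal{U}_X)}$ is already complete and Hausdorff, so it only remains to show: for any uniformly continuous map $f \colon (X,\mathcal{U}_X)^{KQ} \rightarrow (Y,\mathcal{U}_Y)$ with $(Y,\mathcal{U}_Y)$ complete Hausdorff, there is a unique uniformly continuous $\overline{f} \colon \overline{(X,\mathcal{U}_X)} \rightarrow (Y,\mathcal{U}_Y)$ with $f = \overline{f} \circ i^{KQ}$. Given such $f$, I would compose to form $f \circ \pi^{KQ} \colon (X,\mathcal{U}_X) \rightarrow (Y,\mathcal{U}_Y)$, which is uniformly continuous as a composite of uniformly continuous maps. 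Applying the universal property of the Hausdorff completion $i$ to this map produces a unique uniformly continuous extension $\overline{f} \colon \overline{(X,\mathcal{U}_X)} \rightarrow (Y,\mathcal{U}_Y)$ with $\overline{f} \circ i = f \circ \pi^{KQ}$.

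To verify $\overline{f} \circ i^{KQ} = f$, I would substitute $i = i^{KQ} \circ \pi^{KQ}$ to obtain $\overline{f} \circ i^{KQ} \circ \pi^{KQ} = f \circ \pi^{KQ}$, and then cancel $\pi^{KQ}$ on the right using its surjectivity. Uniqueness of $\overline{f}$ is straightforward: if $g \colon \overline{(X,\mathcal{U}_X)} \rightarrow (Y,\mathcal{U}_Y)$ is any uniformly continuous map with $g \circ i^{KQ} = f$, then $g \circ i = g \circ i^{KQ} \circ \pi^{KQ} = f \circ \pi^{KQ}$, so the uniqueness clause of the universal property of $i$ forces $g = \overline{f}$. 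There is no serious obstacle here; the only point that really requires attention is being careful that the two universal properties are being applied to separated (respectively complete Hausdorff) targets, which is automatic from the hypotheses, and the surjectivity of $\pi^{KQ}$ needed for the final cancellation.
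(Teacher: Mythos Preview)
Your proof is correct and follows essentially the same approach as the paper: construct $i^{KQ}$ via the universal property of the Kolmogorov quotient, then verify the universal property of the Hausdorff completion by precomposing with $\pi^{KQ}$ and invoking the universal property of $i$. The only cosmetic difference is that where you cancel $\pi^{KQ}$ by surjectivity, the paper phrases the same step as an appeal to the universal property of the Kolmogorov quotient.
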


\begin{proof} This follows easily from the universal properties of the completion and Kolmogorov quotient. Since the Hausdorff completion of a space is $T_0$, by the universal property of the Kolmogorov quotient we have a map $i^{KQ} \colon (X,\mathcal{U}_X)^{KQ} \rightarrow \overline{(X,\mathcal{U}_X)}$ with $i=i^{KQ} \circ \pi^{KQ}$. This defines a uniformly continuous map from the Kolmogorov quotient to a complete Hausdorff uniform space, so one just needs to check the universal property for the completion. Let $f \colon (X,\mathcal{U})^{KQ} \rightarrow (Y,\mathcal{U}_Y)$ be a uniformly continuous map to another complete Hausdorff uniform space. This defines a uniformly continuous map $f \circ \pi^{KQ} \colon (X,\mathcal{U}_X) \rightarrow (Y,\mathcal{U}_Y)$. It follows from the universal property of the completion that there exists a uniformly continuous map $g \colon \overline{(X,\mathcal{U}_X)} \rightarrow (Y,\mathcal{U}_Y)$ with $f \circ \pi^{KQ} = g \circ i = g \circ i^{KQ} \circ \pi^{KQ}$. It follows from the universal property of the Kolmogorov quotient that $f = g \circ i^{KQ}$, as desired. Furthermore, the map $g$ is unique for, given any other such map, there would be two distinct extensions of the map $f \circ \pi^{KQ}$ through the completion map $i$. \end{proof}

It will be helpful to have a slightly more general notion of a completion of a uniform space, which may not necessarily be separated:

\begin{definition} The uniform space $(Y,\mathcal{U}_Y)$ is a \emph{completion} of $(X,\mathcal{U}_X)$ if there exists a uniformly continuous map $i \colon (X,\mathcal{U}_X) \rightarrow (Y,\mathcal{U}_Y)$ such that $\pi^{KQ} \circ i \colon \ab (X,\mathcal{U}_X)  \rightarrow (Y,\mathcal{U}_Y)^{KQ}$ is a Hausdorff completion. \end{definition}

Of course, a Hausdorff completion is always a completion in this sense. Since Hausdorff completions are always uniformly isomorphic, general completions are also uniformly isomorphic ``modulo topologically indistinguishable points''.

\begin{proposition}\label{prop:kolmog compl} We have that $(Y,\mathcal{U}_Y)$ is a completion of $(X,\mathcal{U}_X)$ if and only if there exists a map $i \colon (X,\mathcal{U}_X)\rightarrow (Y,\mathcal{U}_Y)$ such that:
\begin{enumerate}
	\item $(Y,\mathcal{U}_Y)$ is complete.
	\item $i$ has dense image.
	\item $(\pi^{KQ} \circ i)^{KQ}$ is injective.
	\item $(\pi^{KQ} \circ i)^{KQ}$ is a uniform isomorphism onto its image.
\end{enumerate}
\end{proposition}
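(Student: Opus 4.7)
The plan is to prove both implications by transferring uniform-space properties between $(Y, \mathcal{U}_Y)$ and its Kolmogorov quotient $(Y, \mathcal{U}_Y)^{KQ}$, invoking the previous proposition at the key step. All such transfers rest on the basic observation that topologically indistinguishable points $y \sim y'$ in $Y$ satisfy $(y, y') \in U$ for every entourage $U \in \mathcal{U}_Y$, together with a standard three-step composition estimate of the form $V \circ V \circ V \subset U$.

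For the forward direction, I would assume $\pi^{KQ} \circ i$ is a Hausdorff completion. Condition (1), that $Y$ is complete, would follow by lifting: any Cauchy net in $Y$ descends to a Cauchy net in $Y^{KQ}$, which is complete as the codomain of the Hausdorff completion, and hence converges there; the original net then converges in $Y$ to any preimage of this limit by chaining together three $V$-close pairs using the indistinguishability observation. Condition (2) is obtained by noting that open subsets of $Y$ are automatically saturated under $\sim$, so $\pi^{KQ}$ sends open sets to open sets, and density of $\pi^{KQ}(i(X))$ in $Y^{KQ}$ pulls back to density of $i(X)$ in $Y$. Conditions (3) and (4) come from the previous proposition: it identifies $(\pi^{KQ} \circ i)^{KQ}$ as itself a Hausdorff completion of the separated space $X^{KQ}$, and so as a uniform isomorphism onto a dense image in $Y^{KQ}$, which in particular forces injectivity.

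For the reverse direction, assuming (1)--(4), the goal is to verify that $\pi^{KQ} \circ i$ is a Hausdorff completion. Hausdorffness of $Y^{KQ}$ is automatic, and I would establish its completeness by lifting a Cauchy net $([y_\alpha])$ pointwise to a net $(y_\alpha)$ in $Y$ using surjectivity of $\pi^{KQ}$; Cauchyness in $Y$ follows again from the three-step estimate, and pushing the $Y$-limit back down provides a limit in $Y^{KQ}$. Combining (4) with (2) then shows that $(\pi^{KQ} \circ i)^{KQ}$ is a uniform embedding of $X^{KQ}$ onto a dense subspace of the complete Hausdorff space $Y^{KQ}$, hence a Hausdorff completion of $X^{KQ}$. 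Precomposing with the Kolmogorov quotient map $\pi^{KQ}_X$ of $X$ and invoking the universal properties of completion and Kolmogorov quotient, exactly as in the proof of the previous proposition but run in reverse, recovers $\pi^{KQ} \circ i$ itself as a Hausdorff completion.

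The principal obstacle, present in both directions, is the bidirectional transfer of uniform-space notions (completeness, density, the Cauchy property) across the quotient $\pi^{KQ}_Y$. Each such transfer reduces to the same two ingredients above, but keeping careful track of which Kolmogorov quotient is being taken at each stage, and correctly slotting in the previous proposition, is where the bookkeeping demands some care; no serious new idea should be required beyond those already developed in the preceding subsections.
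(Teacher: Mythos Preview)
Your proposal is correct and follows essentially the same approach as the paper's proof. The paper's argument is considerably more terse: it invokes the previous proposition to identify $(\pi^{KQ}\circ i)^{KQ}$ as a Hausdorff completion of $X^{KQ}$, cites as ``well known'' the characterisation of Hausdorff completions between separated spaces via conditions (3)--(4) together with completeness and density at the $Y^{KQ}$ level, and then briefly remarks that completeness and density transfer between $Y$ and $Y^{KQ}$ (the former invoking the axiom of choice). Your proposal simply unpacks these transfers explicitly via the Cauchy-net lifting and the three-step entourage estimate, which is exactly what the paper's ``easy to show'' is hiding.
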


\begin{proof} If $i$ is a completion, by the above proposition we have that $(\pi^{KQ} \circ i)^{KQ} \colon \ab (X,\mathcal{U}_X)^{KQ} \rightarrow (Y,\mathcal{U}_Y)^{KQ}$ is a Hausdorff completion. This is a uniformly continuous map between separated uniform spaces and in this case it is well known that such a map is a Hausdorff completion map if and only if the above conditions are satisfied. 

Assuming the axiom of choice, it is easy to show that $(Y,\mathcal{U}_Y)$ is complete if and only if its Kolmogorov quotient is. The map $i$ has dense image if and only if $(\pi^{KQ} \circ i)^{KQ}$ does, so the result follows. \end{proof}

\begin{lemma}\label{lem:unif iso} Let $f \colon (X,\mathcal{U}_X) \rightarrow (Y,\mathcal{U}_Y)$ be a function which sends topologically indistinguishable points to topologically indistinguishable points so that $(\pi^{KQ} \circ f)^{KQ}$ is well defined. We have that $(\pi^{KQ} \circ f)^{KQ}$ is injective and if and only if $f$ maps topologically distinguishable points to topologically distinguishable points. In this case $(\pi^{KQ} \circ f)^{KQ}$ is a uniform isomorphism onto its image if and only if:
\begin{enumerate}
	\item for all $V \in \mathcal{U}_Y$ there exists some $U \in \mathcal{U}_X$ such that $f(U) \subset V$.
	\item For all $U \in \mathcal{U}_X$ there exists some $V \in \mathcal{U}_Y$ such that $f^{-1}(V) \subset U$.
\end{enumerate}
\end{lemma}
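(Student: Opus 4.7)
The statement splits naturally into two independent claims: a characterisation of injectivity of $g := (\pi^{KQ} \circ f)^{KQ}$, and, assuming injectivity, a characterisation of when $g$ is a uniform isomorphism onto its image. My plan is to dispatch the first by unwinding the definition of $g$, and to treat the second by showing that conditions 1 and 2 correspond exactly to uniform continuity of $g$ and of $g^{-1}$ on its image, respectively.

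For injectivity, since $g([x]^{KQ}) = [f(x)]^{KQ}$, the map $g$ fails to be injective precisely when there exist topologically distinguishable $x, y \in X$ whose images $f(x), f(y)$ are topologically indistinguishable in $Y$. The contrapositive is the stated condition and the argument amounts to a two-line unwinding.

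For the uniform isomorphism claim I would introduce the shorthand $\tilde V := (\pi^{KQ})^{-1}(V^{KQ}) \in \mathcal{U}_Y$ for each $V \in \mathcal{U}_Y$ (and analogously $\tilde U$ for $U \in \mathcal{U}_X$); uniform continuity of $\pi^{KQ}$ ensures these are entourages, and unwinding gives $\tilde V = I \circ V \circ I$, where $I := \bigcap_{W \in \mathcal{U}_Y} W$ is the indistinguishability relation. The key technical ingredient is the observation that for every $V \in \mathcal{U}_Y$ there is a smaller $V_1 \in \mathcal{U}_Y$ with $\tilde{V_1} \subset V$: iterate axiom 3 of a uniformity to obtain $V_1$ with $V_1 \circ V_1 \circ V_1 \subset V$, then use $I \subset V_1$ to conclude $\tilde{V_1} = I \circ V_1 \circ I \subset V_1 \circ V_1 \circ V_1 \subset V$, and analogously in $\mathcal{U}_X$. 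Granted this, the implication condition 1 $\Rightarrow$ $g$ uniformly continuous is immediate from $g(U^{KQ}) = \pi^{KQ}(f(U)) \subset \pi^{KQ}(V) = V^{KQ}$, while conversely uniform continuity of $g$ only yields, for each $V$, some $U$ with $f(U) \subset \tilde V$; applying this instead to a smaller $V_1$ with $\tilde{V_1} \subset V$ recovers $f(U) \subset V$, i.e.\ condition 1. A parallel argument, after reformulating uniform continuity of $g^{-1}$ on its image as the statement ``for every $U \in \mathcal{U}_X$ there exists $V \in \mathcal{U}_Y$ with $f^{-1}(\tilde V) \subset \tilde U$,'' establishes the equivalence with condition 2 by the same passage to smaller entourages (applied once in $\mathcal{U}_X$ and once in $\mathcal{U}_Y$).

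The main subtlety, and the step requiring the most care, is this discrepancy between an entourage $V$ and its saturation $\tilde V$ under the indistinguishability relation: conditions 1 and 2 are phrased sharply in terms of $V$ and $U$, whereas the induced map between Kolmogorov quotients naturally sees only the saturated entourages. Isolating the observation $\tilde{V_1} \subset V$ for suitable $V_1$ at the outset is precisely what reconciles the two viewpoints and makes the rest of the proof essentially mechanical.
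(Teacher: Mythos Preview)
Your proof is correct and follows essentially the same route as the paper's: unwind the definition of $g$ for injectivity, then translate conditions 1 and 2 into uniform continuity of $g$ and $g^{-1}$ via the correspondence $U \leftrightarrow U^{KQ}$. The paper's proof is a three-line sketch that invokes the ``fact'' $(x,y)\in U \iff ([x]^{KQ},[y]^{KQ})\in U^{KQ}$; the backward direction of this biconditional is literally false for non-saturated entourages, and your explicit treatment of the saturation $\tilde V = I\circ V\circ I$ together with the observation that one can always pass to a smaller $V_1$ with $\tilde{V_1}\subset V$ is precisely what is needed to make the paper's sketch rigorous.
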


\begin{proof} It is easy to see that the injectivity of $(\pi^{KQ} \circ f)^{KQ}$ is equivalent to the condition given for $f$. Almost by definition an injective function $f \colon (X,\mathcal{U}_X) \rightarrow (Y,\mathcal{U}_Y)$ is a uniform isomorphism onto its image if and only if the two given conditions hold. The result follows easily from the fact that $(\pi^{KQ} \circ f)^{KQ}([x]^{KQ}) = [f(x)]^{KQ}$ and $(x,y) \in U$ if and only if $([x]^{KQ},[y]^{KQ}) \in U^{KQ}$ for entourages $U$. \end{proof}

\section{Topological Background}

We shall assume (and already have assumed) that the reader is comfortable with basic point-set topology. The tiling spaces to appear will be constructed as \emph{inverse limits}, which we introduce here. A useful cohomology theory to apply to this class of spaces is the \v{C}ech cohomology, for which we shall provide here the minimal necessary details. In Chapter \ref{chap: Poincare Duality for Pattern-Equivariant Homology} we shall link these groups, through a Poincar\'{e} duality result, to certain geometrically motivated homology groups defined on the tiling. These \emph{pattern-equivariant homology groups} are defined in terms of non-compactly supported singular (or cellular) chains, which we review here.

\subsection{Inverse and Direct Limits}

Although we shall only be interested in inverse limits of topological spaces, the notion of an inverse limit of an inverse system makes sense in any category (see e.g., \cite{MacLane}).

\begin{definition} Let $\mathcal{C}$ be some category. An \emph{inverse system} (in $\mathcal{C}$) consists of a directed set $(\Lambda,\leq)$ along with objects $X_\lambda \in Ob(\mathcal{C})$, one for each $\lambda \in \Lambda$, and morphisms $\pi_{\lambda,\mu} \colon X_\mu \rightarrow X_\lambda$ for each relation $\lambda \leq \mu$. The morphisms are required to satisfy $\pi_{\lambda,\lambda} = Id_{X_\lambda}$ and $\pi_{\lambda,\nu}=\pi_{\lambda,\mu} \circ \pi_{\mu,\nu}$ for all $\lambda \leq \mu \leq \nu$ (in short, one could say that an inverse system is a contravariant functor from the obvious category associated to the directed set to $\mathcal{C}$).

An \emph{inverse limit} of an inverse system is an object $X_\infty \in Ob(\mathcal{C})$ along with morphisms $\pi_{\lambda,\infty} \colon X_\infty \rightarrow X_\lambda$ with $\pi_{\lambda,\infty}=\pi_{\lambda,\mu} \circ \pi_{\mu,\infty}$ for all $\lambda \leq \mu$ and satisfying the following universal property: for any other such choice of object $Y_\infty \in Ob(\mathcal{C})$ with corresponding morphisms $\rho_{\lambda,\infty}$, there exists a unique morphism $f \colon Y_\infty \rightarrow X_\infty$ such that $\pi_{\lambda,\infty} \circ f = \rho_{\lambda,\infty}$. \end{definition}

As is usual for universal properties, it is easy to show that an inverse limit of an inverse system, if it exists, is unique up to isomorphism. It turns out that a sufficient (and necessary) condition for all inverse limits to exist in $\mathcal{C}$ is that $\mathcal{C}$ has equalisers and (small) products \cite{MacLane}. The proof is constructive and for the category $\mathbf{Top}$ of topological spaces with continuous maps it produces the following:

\begin{definition} Let $(X_\lambda,\pi_{\lambda,\mu})$ be an inverse system in $\mathbf{Top}$. Then the space \[\varprojlim (X_\lambda,\pi_{\lambda,\mu}):= \{(x_\lambda) \in \prod_{\lambda \in \Lambda} X_\lambda \mid \pi_{\lambda,\mu}(x_\mu)=x_\lambda \},\] equipped with the subspace topology relative to $\prod X_\lambda$, is an inverse limit of $(X_\lambda,\pi_{\lambda,\mu})$, which we shall refer to as \emph{the} inverse limit. \end{definition}

One can similarly define \emph{directed systems} and \emph{colimits} of directed systems in a category. We recall here only the definition of a colimit, often called \emph{the direct limit}, of a directed system of groups:

\begin{definition} Let $((G_\lambda,\cdot_\lambda),f_{\lambda,\mu})$ be a directed system (over $(\Lambda, \leq)$) in the category $\mathbf{Grp}$ of groups (the homomorphisms are indexed here as $f_{\lambda,\mu} \colon (G_\lambda,\cdot_\lambda) \rightarrow (G_\mu,\cdot_\mu)$). Define an equivalence relation $\sim$ on $\coprod_{\lambda \in \Lambda} G_\lambda$ by tail equivalence, that is by setting $g_\lambda \sim g_\mu$ if and only if there exists some $\nu \in \Lambda$ such that $f_{\lambda,\nu}(g_\lambda)=f_{\mu,\nu}(g_\mu)$. One can define a multiplication $\cdot_\infty$ on $\coprod_{\lambda \in \Lambda} G_\lambda / \sim$ in the obvious way, multiplying two equivalence classes by multiplying any two representatives which both belong to a common $G_\lambda$. In this way we define the group \[\varinjlim((G_\lambda,\cdot_\lambda),f_{\lambda,\mu}):= (\coprod_{\lambda \in \Lambda} G_\lambda / \sim,\cdot_\infty),\] which is a direct limit of $((G_\lambda,\cdot_\lambda),f_{\lambda,\mu})$. We call it \emph{the direct limit} of $((G_\lambda,\cdot_\lambda),f_{\lambda,\mu})$ and, abusing notation, write it for short as $\varinjlim (G_\lambda,f_{\lambda,\mu})$. \end{definition}

\begin{exmp} Take the direct system of groups $S:=(\mathbb{Z} \rightarrow_{\times 2} \mathbb{Z} \rightarrow_{\times 2} \ldots)$ over $(\mathbb{N}_0,\leq)$ with connecting maps $f_{n,n+1} \colon \mathbb{Z} \rightarrow \mathbb{Z}$ the times two map $n \mapsto 2n$. Then the direct limit of $S$ is isomorphic to the group $(\mathbb{Z}[1/2],+)$ of dyadic rationals under addition, that is, the subgroup of $\mathbb{Q}$ of numbers of the form $a/2^b$ where $a \in \mathbb{Z}$ and $b \in \mathbb{N}_0$. The isomorphism maps the set $\coprod_{n \in \mathbb{N}_0} \mathbb{Z} / \sim$ into $\mathbb{Z}[1/2]$ by sending the $n$-th copy of $\mathbb{Z}$ into the subgroup $1/2^n \mathbb{Z} \subset \mathbb{Q}$ via the map $k \mapsto k/2^n$. \end{exmp}

\subsection{\v{C}ech Cohomology}

A cohomology theory well suited to studying tiling spaces is the \emph{\v{C}ech cohomology} (see \cite{ES,Sadun2}). One reason for this is that tiling spaces can often be expressed as an inverse limit of CW-complexes, and \v{C}ech cohomology is a \emph{continuous} contravariant functor which agrees with the singular cohomology of spaces which are homotopy-equivalent to CW-complexes:

\begin{proposition} Let $X_\infty$ be the inverse limit of an inverse system of spaces $(X_\lambda,\pi_{\lambda,\mu})$. Then there is an isomorphism $\check{H}^\bullet(X_\infty) \cong \varinjlim (\check{H}^\bullet(X_\lambda),\pi_{\lambda,\mu}^*)$ between the \v{C}ech cohomology of the inverse limit and the direct limit of the induced direct system of the \v{C}ech cohomologies of the spaces $X_\lambda$.

Given a space $X$ which is homotopy-equivalent to a CW-complex, we have a natural isomorphism $\check{H}^\bullet(X) \cong H^\bullet(X)$ between the \v{C}ech cohomology and singular cohomology of $X$. \end{proposition}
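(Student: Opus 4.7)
The plan is to handle the two statements separately, since they address independent classical properties of \v Cech cohomology: continuity with respect to inverse limits (on a suitable class of spaces) and agreement with singular cohomology on CW-complexes. Throughout I would work from the nerve definition $\check H^\bullet(X) = \varinjlim_{\mathcal U} H^\bullet(|N(\mathcal U)|)$, where $\mathcal U$ ranges over open covers of $X$ ordered by refinement and $N(\mathcal U)$ denotes the nerve of $\mathcal U$.

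For the continuity statement, first I would observe that each projection $\pi_{\lambda,\infty}\colon X_\infty\to X_\lambda$ is continuous, inducing a compatible family $\pi_{\lambda,\infty}^*\colon \check H^\bullet(X_\lambda)\to \check H^\bullet(X_\infty)$ which factors through the direct limit to produce a canonical map $\Phi\colon \varinjlim \check H^\bullet(X_\lambda) \to \check H^\bullet(X_\infty)$. The heart of the argument is then a cofinality statement: the open covers of $X_\infty$ of the form $\pi_{\lambda,\infty}^{-1}(\mathcal V)$ for some $\lambda$ and some open cover $\mathcal V$ of $X_\lambda$ are cofinal in the directed system of all open covers of $X_\infty$. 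This follows because sets of the form $\pi_{\lambda,\infty}^{-1}(U)$ form a subbasis of the subspace topology on $X_\infty$ inherited from $\prod X_\lambda$, so any open cover has a refinement by finite intersections of such sets; by directedness of $(\Lambda,\leq)$ together with the relation $\pi_{\lambda,\mu}\circ\pi_{\mu,\infty}=\pi_{\lambda,\infty}$, one may absorb finitely many indices into a single large one. When the $X_\lambda$ are compact Hausdorff with surjective bonding maps (the setting of our tiling-space approximants), the pullback $\pi_{\lambda,\infty}^{-1}(\mathcal V)$ has nerve combinatorially equal to $N(\mathcal V)$, because a finite collection of elements of $\mathcal V$ has non-empty intersection in $X_\lambda$ if and only if its pullback does in $X_\infty$. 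Consequently the cofinal sub-diagram computing $\check H^\bullet(X_\infty)$ is identified with the direct limit of the nerve cohomologies of the $X_\lambda$, exhibiting $\Phi$ as an isomorphism.

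For the second statement, I would invoke a nerve-type theorem: every CW-complex admits \emph{good} covers, that is, open covers $\mathcal U$ whose non-empty finite intersections are all contractible. For such a cover the geometric realisation $|N(\mathcal U)|$ is homotopy equivalent to the ambient space, so $H^\bullet(|N(\mathcal U)|) \cong H^\bullet(X)$. Since good covers are cofinal among all open covers of a CW-complex, passing to the colimit one deduces $\check H^\bullet(X) \cong H^\bullet(X)$, with naturality inherited from the nerve construction. The extension to spaces merely homotopy-equivalent to a CW-complex is then immediate from homotopy invariance of both cohomology theories, since the maps realising a homotopy equivalence induce isomorphisms on each.

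The main obstacle is the first part: the cofinality argument needs a point-set condition on the inverse system ensuring that pulled-back covers faithfully reflect the combinatorics of covers on the approximants. The standard hypothesis is that the inverse system consists of compact Hausdorff spaces with surjective bonding maps, which is the typical situation for the tiling-space approximants considered later in the thesis; without such an assumption the nerves of pullback covers can acquire additional simplices and the identification breaks. Any proof at this level of generality should therefore either make this assumption explicit or appeal to the \v Cech cohomology of pro-objects, where the continuity statement is automatic.
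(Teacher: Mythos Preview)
The paper does not actually supply a proof of this proposition: it is stated as a standard background fact in the preliminary chapter, with implicit reference to Eilenberg--Steenrod and Sadun's book, and is followed immediately by an example (the dyadic solenoid). So there is no ``paper's own proof'' to compare against.

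That said, your sketch is a sound outline of the classical argument. For the second part, the good-cover/nerve-theorem route is exactly the standard one. For the first part, you correctly identify the crucial issue: continuity of \v{C}ech cohomology in this form is \emph{not} valid for arbitrary inverse systems of spaces, and the nerve identification $N(\pi_{\lambda,\infty}^{-1}\mathcal V)\cong N(\mathcal V)$ genuinely requires something like compact Hausdorff approximants with surjective bonding maps. Since the thesis only ever applies the proposition to inverse limits of compact Hausdorff approximants (the BDHS or G\"ahler complexes), your caveat is apt rather than pedantic; the proposition as literally stated in the paper is slightly over-general, and you have put your finger on exactly where the hypothesis is needed.
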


\begin{exmp} Let $\mathbb{S}_2^1$ be the one-dimensional dyadic solenoid, that is, the inverse limit of the inverse system $S=(S^1 \leftarrow_{\times 2} S^1 \leftarrow_{\times 2} \ldots )$ of the circle under the doubling map. By the above we have that the degree one \v{C}ech cohomology is isomorphic to \[\check{H}^1(\mathbb{S}_2^1) \cong \check{H}^1(\varprojlim(S)) \cong \varinjlim(\check{H}^1(S^1),f^*) \cong \varinjlim(H^1(S^1),f^*) \cong \varinjlim(\mathbb{Z},\times 2) \cong \mathbb{Z}[1/2].\] \end{exmp}

\subsection{Borel-Moore Homology and Poincar\'{e} Duality} \label{subsect: Borel-Moore Homology and Poincare Duality}

Given a compact oriented $d$-dimensional manifold $M$, there is a famous \emph{Poincar\'{e} duality} between the singular homology and cohomology of $M$. In more detail, there exists a fundamental class $[M] \in H_d(M)$ for which the cap product with $[M]$ induces an isomorphism $H^k(M) \cong H_{d-k}(M)$. If $M$ is non-compact then of course nothing analogous to a fundamental class exists in the singular homology of $M$. By changing the cohomology to cohomology with compact support we obtain duality $H_c^k(M) \cong H_{d-k}(M)$. An alternative is to consider the \emph{Borel-Moore homology} of $M$. In the context to be discussed here, describing this homology using chains, this homology is sometimes referred to as \emph{homology of infinite chains}, \emph{homology with closed support} or \emph{locally finite homology} \cite{Ends}. There then exists a Borel-Moore fundamental class $[M] \in H_d^{\text{BM}}(M)$ for which, analogously to the compact case, one obtains duality $H^k(M) \cong H_{d-k}^{\text{BM}}(M)$ by capping with this fundamental class; see \cite{Bredon} for the sheaf-theoretic point of view on this.

To limit discussion we shall consider here only the case where our (co)chain complexes are taken over $\mathbb{Z}$-coefficients. Let $\Delta^k$ be the standard $k$-simplex, that is the subspace $\{(x_0,\ldots,x_k) \in \mathbb{R}^{k+1} \mid \sum_{i=0}^k x_i = 1 \text{ and } x_i \geq 0 \} \subset\mathbb{R}^{k+1}$. A singular $k$-simplex of a space $X$ is a continuous map $\sigma \colon \Delta^k \rightarrow X$. The sets $\Delta^k(X)$ of singular $k$-simplexes define \emph{singular chain groups}, denoted by $S_k(X)$, as the free abelian groups with basis $\Delta^k(X)$. That is, a singular chain $\sigma \in S_k(X)$ is a formal sum $\sum_{i=1}^n c_i \sigma_i$ of singular $k$-simplexes, where each $c_i \in \mathbb{Z}$. We have boundary operators $\partial_k \colon S_k(X) \rightarrow S_{k-1}(X)$ for each $k \in \mathbb{N}_0$ defined by the well-known formula on each singular simplex by $\partial_k(\sigma) = \sum_{n=0}^k (-1)^n \sigma |_{[e_0,\ldots,\hat{e_n},\ldots,e_k]}$ and extended to $S_k(X)$ linearly. Here, $\sigma |_{[e_0,\ldots,\hat{e_n},\ldots,e_k]}$ is the singular $(k-1)$-simplex defined by restricting to the face of $\Delta^k$ with vertex set $\{e_0,\ldots,e_k \} - \{e_n\}$ which itself may be identified with $\Delta^{k-1}$ in a canonical way. We have that $\partial_{k-1} \circ \partial_k = 0$, that is, $S_\bullet(X):=(0 \leftarrow_{\partial_0} S_0(X) \leftarrow_{\partial_1} S_1(X) \leftarrow_{\partial_2} \cdots)$ is a chain complex. We define the singular homology $H_\bullet(X):=H(S_\bullet(X))$ to be the homology of this chain complex. Given a continuous map $f \colon X \rightarrow Y$ and a singular simplex $\sigma \colon \Delta^n \rightarrow X$ we have that $f_*(\sigma):=f \circ \sigma$ is a singular simplex of $Y$. This defines an induced map $f_\bullet \colon H_\bullet(X) \rightarrow H_\bullet(Y)$ in the obvious way and makes singular homology a functor $H_\bullet(-) \colon \mathbf{Top} \rightarrow \mathbf{Ab}$ to the category of abelian groups. In fact, $H_\bullet(-)$ is a homotopy invariant, that is, we have that $f_* = g_*$ if $f$ and $g$ are homotopy equivalent.

The singular Borel-Moore homology will be defined similarly, but where we allow an infinite sum of chains. Of course, a restriction needs to be made to ensure that the boundary map is well defined. Let $\sigma = \sum_{\sigma_i \in \Delta^k(X)} c_i \sigma_i$ be a (possibly infinite) sum of singular simplexes on $X$, where each $c_i \in \mathbb{Z}$. One may consider $\sigma$ as an element of $\prod_{\Delta^k(X)} \mathbb{Z}$. For a set $S \subset X$, we define $\sigma^S:= \sum c_i^S \sigma_i$ where $c_i^S = c_i$ if $S \cap \sigma_i(\Delta^k) \neq \emptyset$ and $c_i^S=0$ otherwise. We define $\sigma^x:=\sigma^{\{x\}}$ for $x \in X$. A sum of singular simplexes $\sigma = \sum_{\sigma_i \in S_k(X)} c_i \sigma_i$ will be called a \emph{singular Borel-Moore $k$-chain} if the number of non-zero coefficients $c_i^K$ is finite for each compact $K \subset X$.\footnote{A similar approach, see e.g., \cite{Ends}, is to consider chains for which, for all $x \in X$, there exists some open neighbourhood $U \subset X$ of $x$ such that the number of non-zero coefficients of $\sigma^U$ is finite. In general, this is a stronger condition than the one given here. We shall only be interested in Borel-Moore chains of locally compact spaces and of course the two definitions coincide in this case.} For example, a singular Borel-Moore $k$-chain on a proper metric space $(X,d_X)$ corresponds precisely to a (possibly) infinite sum of singular $k$-simplexes for which each bounded subset $B \subset X$ intersects the images of only finitely many singular simplexes of the chain.

The singular Borel-Moore $k$-chains form groups $S_k^{\text{BM}}(X)$ which fit into a chain complex $S_\bullet^{\text{BM}}(X) :=(0 \leftarrow_{\partial_0} S_0^{\text{BM}}(X) \leftarrow_{\partial_1} S_1^{\text{BM}}(X) \leftarrow_{\partial_2} \cdots )$ by extending the usual boundary map to each $S_k^{\text{BM}}(X)$ in the obvious way. Notice that the boundary map is well defined since, given a singular $(k-1)$-simplex $\sigma$, we have that $\sigma(\Delta^{k-1})$ is compact and a singular simplex is the face of only finitely many singular $k$-simplexes of a given singular Borel-Moore $k$-chain.

We thus define the \emph{singular Borel-Moore homology} $H_\bullet^{\text{BM}}(X)$ of a topological space $X$ as the homology of the chain complex $S_\bullet^{\text{BM}}(X)$. It is \emph{not} a functor from $\mathbf{Top}$ to $\mathbf{Ab}$, the complication lies in the fact that, given a map $f \colon X \rightarrow Y$, the ``obvious'' chain $f_*(\sigma)$ may not be a Borel-Moore chain (or even well defined) if $f$ maps infinitely many simplexes with non-zero coefficient to some compact $K$. For example, let $f$ be the unique map $f \colon \mathbb{R} \rightarrow \{\bullet\}$ to the one point space and $\sigma$ be the singular Borel-Moore $0$-chain assigning coefficient $1$ to each point of $\mathbb{Z} \subset \mathbb{R}$. To avoid this situation, one restricts to proper maps. We say that $f \colon X \rightarrow Y$ is \emph{proper} if $f^{-1}(K)$ is compact for every compact $K \subset Y$. This supposed deficiency, of $H_\bullet^{\text{BM}}(-)$ only being functorial over proper maps, is in fact related to its utility: it is a \emph{proper homotopy invariant} which makes it a more useful invariant in the study of certain properties of non-compact spaces. Two continuous maps $f$ and $g$ are \emph{properly homotopic} if there exists some proper $F \colon [0,1] \times X \rightarrow Y$ with $F(0,-)=f$ and $F(1,-)=g$ (which implies that $f$ and $g$ are themselves proper). It can then be shown that singular Borel-Moore homology is a functor $H_\bullet^{\text{BM}}(-) \colon \mathbf{PropTop} \rightarrow \mathbf{Ab}$ from the category of topological spaces with proper maps as morphisms to the category of abelian groups and one has that $f_*=g_*$ if $f$ and $g$ are properly homotopic.

\begin{exmp} Homeomorphisms are proper and so $H_\bullet^{\text{BM}}(-)$ is a homeomorphism invariant. It is easy to see that $H_\bullet^{\text{BM}}(X) \cong H_\bullet(X)$ when $X$ is compact.

Borel-Moore homology is not a homotopy invariant. For example, we have that $H_0^{\text{BM}}(\mathbb{R}) \cong 0$ (which one may interpret as saying that $0$-chains may be ``pushed to infinity'') and $H_1^{\text{BM}}(\mathbb{R}) \cong \mathbb{Z}$ (generated by a fundamental class for $\mathbb{R}$) whereas, for the homotopy equivalent (but not \emph{properly} homotopy equivalent) one point space, we have that $H_0^{\text{BM}}(\{\bullet\}) \cong \mathbb{Z}$ and $H_1^{\text{BM}}(\{\bullet\}) \cong 0$. \end{exmp}

The \emph{singular cochain complex} $S^\bullet(X)$ of a topological space $X$ is defined to be the dual of the singular chain complex. That is, $S^\bullet(X)$ is the cochain complex with cochain groups $S^k(X):= \text{Hom}_\mathbb{Z}(S_k(X),\mathbb{Z})$, so a singular $k$-cochain $\psi$ assigns to each singular simplex $\sigma \colon \Delta^k \rightarrow X$ some coefficient $\psi(\sigma) \in \mathbb{Z}$. The coboundary maps $\delta_k$ are defined on $k$-cochains $\psi$ by $\delta(\psi)(\sigma):=\psi(\partial(\sigma))$. The \emph{singular cohomology} of $X$ is defined to be the cohomology of this cochain complex.

The (co)homology groups introduced here also have relative versions. Let $(X,A)$ be a pair of a topological space $X$ along with a subspace $A \subset X$. Then one may define the \emph{relative singular homology groups} $H_\bullet(X,A)$ by considering the chain complex $S_\bullet(X,A)$ with chain groups $S_k(X)/S_k(A)$ and boundary maps induced by the usual singular boundary maps. One may similarly define the \emph{relative singular cohomology groups} $H^\bullet(X,A)$. It is not necessarily true that a singular Borel-Moore chain $\sigma \in S_k(A)$ is a singular Borel-Moore chain of $X$ unless $A$ is closed in $X$, but in such a case one may define the \emph{relative singular Borel-Moore homology groups} $H_\bullet^{\text{BM}}(X,A)$.

There is an operation, called the \emph{cap product}, which produces a singular chain given a combination of a singular chain and cochain. Loosely, one may think of the cap product as integrating a cochain over a given chain. More formally, it is defined as follows. Let $\sigma \colon \Delta^p \rightarrow X$ be a singular $p$-simplex and $\psi$ be a singular $q$-cochain where $p \geq q$. The sets of vertices $\{e_0,\ldots,e_q\}$ and $\{e_q,\ldots,e_p\}$  span simplexes which may be canonically identified with $\Delta^q$ and $\Delta^{p-q}$, respectively. We have the singular $q$-simplex $\sigma|_{[e_0,\ldots,\ldots,e_q]}$, the \emph{$q$-th front face} of $\sigma$, by restricting $\sigma$ to the former face of $\Delta^n$, and similarly the $(p-q)$-simplex $\sigma|_{[e_q,\ldots,\ldots,e_p]}$, the \emph{$(p-q)$-th back face} of $\sigma$, by restricting to the latter face. We define the singular $(p-q)$-chain $\sigma \frown \psi:=\psi(\sigma|_{[e_0,\ldots,\ldots,e_q]})\sigma|_{[e_q,\ldots,\ldots,e_p]}$. By extending linearly to the whole group $S_p(X)$, we thus define the homomorphism $\frown \colon S_p(X) \times S^q(X) \rightarrow S_{p-q}(X)$. It is routine to check that $\partial(\sigma \frown \psi)=(-1)^q(\partial(\sigma) \frown \psi - \sigma \frown \delta(\psi))$ so that $\frown$ defines a homomorphism $\frown \colon H_p(X) \times H^q(X) \rightarrow H_{p-q}(X)$, called the \emph{cap product}. One may analogously define the cap product for the Borel-Moore homology groups $\frown \colon H_p^{\text{BM}}(X) \times H^q(X) \rightarrow H^{\text{BM}}_{p-q}(X)$. There are also relative versions $\frown \colon H_p(X,A) \times H^q(X) \rightarrow H_{p-q}(X,A)$ and $\frown \colon H_p(X,A) \times H^q(X,A) \rightarrow H_{p-q}(X)$ (and similarly for the Borel-Moore homology) given by extending the above formulas in the obvious way.

Say that a $k$-cochain $\psi \in S^k(X)$ is \emph{compactly supported} if there exists some compact $K \subset X$ such that $\psi(\sigma)=0$ for all singular chains $\sigma$ with support $\sigma(\Delta^k)$ contained in $X-K$. It is easy to see that the coboundary of a compactly supported singular $k$-cochain is also compactly supported and so one may define the cochain complex $S_c^\bullet:=(0 \rightarrow S_c^0(X) \rightarrow_{\delta_1} S_c^1(X) \rightarrow_{\delta_2} \ldots)$ of compactly supported singular cochains and the \emph{compactly supported singular cohomology} $H_c^\bullet(X)$ of $X$ as the cohomology of this cochain complex. A compactly supported cochain is precisely an element of $\varinjlim_K S^\bullet(X,X-K)$ taken over the directed set of compact subspaces $K \subset X$, directed by inclusion (notice that we have canonical inclusions $S^\bullet(X,X-K) \hookrightarrow S^\bullet(X,X-L)$ for $K \subset L$). We have that $\varinjlim_K H^\bullet(X,X-K) \cong H_c^\bullet(X)$. Just as for singular Borel-Moore homology, compactly supported cohomology is not functorial over all continuous maps, only over proper ones. Of course, it is isomorphic to singular cohomology on compact spaces.

\begin{theorem}[Poincar\'{e} Duality] Let $M$ be a $d$-dimensional manifold (without boundary). Then
\begin{enumerate}
	\item $H_c^k(M) \cong H_{d-k}(M)$.
	\item $H^k(M) \cong H_{d-k}^{\text{BM}}(M)$.
\end{enumerate} \end{theorem}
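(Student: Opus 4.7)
The plan is to prove both statements as cap product isomorphisms with a canonical fundamental class, reduced to a local computation via a Mayer--Vietoris induction. Assuming $M$ is oriented (otherwise one should work with $\mathbb{Z}/2$-coefficients or twisted coefficients, a point the statement as given does not distinguish), the first step is to construct the Borel--Moore fundamental class $[M] \in H_d^{\text{BM}}(M)$, uniquely characterised by restricting to the chosen local orientation generator of $H_d(M, M \setminus \{x\}) \cong \mathbb{Z}$ at every $x \in M$. Existence follows by patching local fundamental classes on Euclidean charts; this is possible precisely because the Borel--Moore complex permits infinite sums. Cap product with $[M]$ then yields the two natural homomorphisms
\[ D^c_M \colon H_c^k(M) \to H_{d-k}(M), \qquad D_M \colon H^k(M) \to H_{d-k}^{\text{BM}}(M), \]
well-defined since a Borel--Moore chain capped with a \emph{compactly supported} cochain produces a chain with compact support, while the second uses the Borel--Moore variant of $\frown$.

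The proof then proceeds by induction on the complexity of an open cover. The base case $M = \mathbb{R}^d$ (and any open convex subset) is a direct computation: $H_c^k(\mathbb{R}^d)$ and $H_{d-k}(\mathbb{R}^d)$ are both $\mathbb{Z}$ exactly when $k=d$, and the generator of $H_c^d$ caps with $[\mathbb{R}^d]$ to a generator of $H_0$; similarly $H^0(\mathbb{R}^d) \cong \mathbb{Z}$ generated by the cocycle $1$ caps with $[\mathbb{R}^d]$ to itself, a generator of $H_d^{\text{BM}} \cong \mathbb{Z}$, with all other degrees vanishing. For the inductive step, given $M = U \cup V$ with the result known for $U$, $V$, $U \cap V$, the essential technical point is to fit the duality maps into a commutative ladder between Mayer--Vietoris sequences:
\[
\begin{array}{ccccccc}
\cdots \to & H_c^k(U \cap V) & \to & H_c^k(U) \oplus H_c^k(V) & \to & H_c^k(M) & \to \cdots \\
& \downarrow D^c & & \downarrow & & \downarrow & \\
\cdots \to & H_{d-k}(U \cap V) & \to & H_{d-k}(U) \oplus H_{d-k}(V) & \to & H_{d-k}(M) & \to \cdots
\end{array}
\]
and the analogous ladder pairing $H^\bullet$ with $H^{\text{BM}}_\bullet$. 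Note that the compactly supported cohomology Mayer--Vietoris runs in the same direction as homology; its existence is established from the presentation $H_c^\bullet = \varinjlim_K H^\bullet(X, X \setminus K)$. Commutativity up to controlled signs follows from a Leibniz-type identity relating the cap product to the Mayer--Vietoris boundary constructed via barycentric subdivision. The five-lemma then promotes the inductive hypothesis to $M$.

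Iterating, induction on a finite good cover establishes duality for any $M$ admitting one, including all compact manifolds. For a general (second countable) $M$, exhaust $M$ by an increasing union $M_1 \subset M_2 \subset \cdots$ of open submanifolds each possessing a finite good cover; the first duality then passes directly to the colimit, since both $H_c^\bullet$ and $H_\bullet$ commute with directed unions of open subsets. The second duality is more delicate, because $H^\bullet$ does not in general commute with direct limits of open inclusions: here one reconciles $H^\bullet(M)$ and $H_\bullet^{\text{BM}}(M)$ via a Milnor-type short exact sequence involving an inverse limit and its first derived functor over the $M_n$, and verifies both sides exhibit identical behaviour. The main obstacle is precisely these two technical checks: the sign-compatible interaction between cap products and Mayer--Vietoris connecting maps, and the passage from the finite good cover case to an arbitrary manifold for the second statement.
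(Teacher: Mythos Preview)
Your proposal is correct and follows essentially the same approach as the paper, which simply cites Hatcher for item 1 (cap with local fundamental classes $[K] \in H_d(M,M-K)$, check convex sets, stitch via Mayer--Vietoris) and for item 2 invokes cap product with a Borel--Moore fundamental class, deferring the details to the later pattern-equivariant duality chapter. The only cosmetic difference is that you cap a single global Borel--Moore class against compactly supported cochains for item 1, whereas the paper (following Hatcher) phrases it via the directed system of relative classes $[K]$; the resulting duality maps coincide.
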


\begin{proof} For item $1$ see \cite{Hat}. The idea is to define a duality map by capping with the fundamental classes $[K] \in H_d(M,M-K)$ for compact $K$ and applying a limiting process over all such compact subspaces $K \subset M$. The duality is easy to show for convex subsets of $\mathbb{R}^n$ and the general result follows from ``stitching'' together the duality isomorphisms using diagrams of Mayer-Vietoris sequences.

The second isomorphism is induced by taking the cap product with a Borel-Moore fundamental class for the manifold. We will not present the details here since most of the main ideas will appear in the proof of a Poincar\'{e} duality result for pattern-equivariant (co)chains in Chapter \ref{chap: Poincare Duality for Pattern-Equivariant Homology}.\end{proof}

\subsection{CW-Complexes and Cellular Homology} \label{subsect: CW-Complexes and Cellular Homology}

In practice, one never computes homology directly from the singular complexes, the chain groups are unwieldy and of huge cardinality. However, many spaces encountered are \emph{CW-complexes}, spaces which possess a nice decomposition into cells, which allows for a more combinatorial calculation of these invariants, that is, through the \emph{cellular homology groups}, which are isomorphic to the singular homology groups. We refer the reader to \cite{Hat} for extra details on CW-complexes and cellular homology.

For $k \in \mathbb{N}$ let $D^k$ be the closed unit disc of $\mathbb{R}^k$ and $S^{k-1}$ be the unit sphere, the boundary of $D^k \subset \mathbb{R}^k$; for dimension $0$ we define $D^0$ to be the one point space $\{\bullet\} \subset \{\bullet\} =: \mathbb{R}^0$ so that $S^{-1}=\emptyset \subset \mathbb{R}^0$. Given a topological space $X$ a \emph{closed $k$-cell} is a subset $e_c \subset X$ together with a continuous (and surjective) map $c \colon D^k \rightarrow e_c$, called the \emph{characteristic map of $e_c$}, such that the restriction of $c$ to $\text{int}(D^k) \subset \mathbb{R}^k$ (that is, to the open unit disc) is a homeomorphism. A subset $\alpha \subset X$ which is homeomorphic to $\mathbb{R}^k$ is called an \emph{open $k$-cell}. A \emph{CW-decomposition} of a Hausdorff space $X$ is a partition of $X$ into open cells for which:
\begin{enumerate}
	\item for each open $k$-cell $\alpha \subset X$ of the partition there exists a closed cell with characteristic map $c$ with $\alpha = c(\text{Int}(D^k))$. We require that $c(S^{k-1})$ is contained in a finite union of open cells of dimension $\leq k-1$.
	\item A subset $C \subset X$ is closed if and only if for all open cells $\alpha$ we have that $C \cap \text{cl}(\alpha)$ is closed.\footnote{Note that $\text{cl}(\alpha)=c(D^k)$ in a CW-decomposition. This follows since, on the one hand, $\text{cl}(\alpha) \supset c(\text{cl}(\text{int}((D^k))) = c(D^k)$ by continuity and on the other hand we have that $c(D^k)$ is a compact and hence closed subspace (since $X$ is Hausdorff).}
\end{enumerate}

A space along with a CW-decomposition of it is called a \emph{CW-complex}. Given a CW-decomposition $X^\bullet$ of $X$ (which, abusing notation, we shall often also refer to by $X$), a \emph{subcomplex} is a subspace $A \subset X$ which is a union of open cells of $X^\bullet$ for which, for each open cell $\alpha \subset A$, we have that $\text{cl}(\alpha) \subset A$ (as subspaces of $X$) or, equivalently, $A$ is closed in $X$. Since we have that $\text{cl}(\alpha)=c(D^n)$ for some given characteristic map $c$, the open cells of $A$ define a CW-decomposition of $A$ with the same characteristic maps for each open cell.

For $n \in \mathbb{N}_0$, the union of all open cells of dimension $\leq n$ is called the \emph{$n$-skeleton} and is denoted, where there can be no confusion of which CW-decomposition of $X$ is in question, by $X^n$. We define $X^{-1}$ to be the ``empty partition'' $\emptyset$ of the subcomplex $\emptyset \subset X$. Of course, we have that $X^n$ is a subcomplex of $X$. The CW-decomposition is said to be \emph{finite-dimensional} if $X^n=X$ for some $n \in \mathbb{N}_0$, that is, if the dimensions of the cells are bounded, and \emph{infinite-dimensional} otherwise. It is \emph{finite} if there are only finitely many cells and \emph{locally finite} if each closed cell intersects only finitely many other closed cells. Of course, these are properties of the space $X$, not just the CW-decomposition i.e., the CW-decomposition is finite if and only if $X$ is compact and is locally finite if and only if $X$ is locally compact.

Given a CW-complex $X$, define $C_k(X) := H_k(X^k,X^{k-1})$. One may show that $C_k(X) \cong \bigoplus_\alpha \mathbb{Z}$, where the direct sum is taken over all $k$-cells $\alpha$. Given $[\sigma] \in H_k(X^k,X^{k-1})$ represented by the singular $k$-chain $\sigma$, we have that $\partial_k(\sigma)$ is an element of $S_{k-1}(X^{k-1},X^{k-2})$. These maps in fact induce boundary maps $\partial_k \colon C_k(X) \rightarrow C_{k-1}(X)$ and hence a chain complex $C_\bullet(X)$. The homology groups $H(C_\bullet(X))$ are called the \emph{cellular homology groups} of $X$. We have that the singular and cellular homology groups of a CW-complex are isomorphic (see e.g., \cite{Hat}).

One may proceed as above with singular Borel-Moore chains instead of singular chains. Let $X$ be a locally finite CW-complex. In this case, we have that $C_k^{\text{BM}}(X) := H_k^{\text{BM}}(X^k,X^{k-1}) \cong \prod_\alpha \mathbb{Z}$, that is, a cellular Borel-Moore $k$-chain corresponds to a choice of (oriented) coefficient for each $k$-cell. Analogously to above, one may define boundary maps $\partial_k \colon C_k^{\text{BM}}(X) \rightarrow C_{k-1}^{\text{BM}}(X)$; these correspond ``locally'' to the usual cellular boundary maps. However, it is not necessarily the case that $H(S_\bullet^{\text{BM}}(X)) \cong H(C_\bullet^{\text{BM}}(X))$ (see below for a counterexample to this isomorphism). It is shown in \cite{Ends}, however, that the singular and cellular groups are isomorphic when the CW-complex is \emph{strongly locally finite}, a notion introduced in \cite{FTW}. In particular, the singular and cellular Borel-Moore homology groups are isomorphic for locally finite, finite dimensional CW-complexes with a countable number of cells.

\begin{exmp} The following simple example appears in \cite{Ends} and \cite{FTW} as a CW-complex which is not strongly locally finite. Define a CW-complex $X$ with one cell of each dimension where the boundary of the unique $k$-cell $\alpha$ is contained in a point of the interior of the $(k-1)$-cell $\beta$. That is $X$, as a topological space, is an infinite string of spheres of increasing dimension. The cellular Borel-Moore chain complex for $X$ is $C^{\text{BM}}_\bullet = (0 \leftarrow \mathbb{Z} \leftarrow \mathbb{Z} \leftarrow \ldots )$ where each boundary map is the zero map. Hence, in particular, we have that $H_0(C_\bullet^{\text{BM}}(X)) \cong \mathbb{Z}$. On the other hand, it is not too hard to see that $H_0(S_\bullet^{\text{BM}}(X)) \cong 0$, one may ``push singular $0$-chains to infinity''. \end{exmp}

\begin{exmp} Let $\mathbb{R}^d$ be given by the CW-decomposition induced from the tiling of unit squares with corners at the lattice points $\mathbb{Z}^d$. This is a locally finite, finite dimensional CW-complex with a countable number of cells. Hence, its singular and cellular Borel-Moore homology groups coincide. By Poincar\'{e} duality we have that $H_i^{\text{BM}}(\mathbb{R}^d) \cong H^{d-i}(\mathbb{R}^d) \cong 0$ for $i \neq d$ and $H_d^{\text{BM}}(\mathbb{R}^d) \cong H^{0}(\mathbb{R}^d) \cong \mathbb{Z}$. The cellular Borel-Moore homology group in degree $d$ is generated by the fundamental class for $\mathbb{R}^d$ by assigning (oriented) coefficient one to each $d$-cube.\end{exmp}

\chapter{Patterns of Finite Local Complexity}
\label{chap: Patterns of Finite Local Complexity}

The tilings (and point patterns) of interest to us in this thesis are those which possess \emph{finite local complexity} (or are \emph{FLC}, for short). Loosely speaking, a tiling is FLC if, for any given radius, the number of motifs of the tiling of that radius is finite, up to some agreed notion of equivalence (by translations or rigid motions, say), see Definition \ref{def: FLC tiling}. Then to determine the local decoration of our geometric object to some agreed radius, one needs only to choose from a finite selection of motifs (although along with some continuous choice of location within one). This rigidity means that the underlying geometric object of interest is well described by a system of partial isometries, which describe the motions preserving portions of it. So we shall firstly in this chapter introduce the abstract notion of a \emph{pattern}. The idea is that we may associate with some FLC pattern the system of underlying partial symmetries that it possesses. These abstract patterns may be thought of as an analogue to the space group of isometries preserving a tiling, but where one considers instead the inverse semigroup of all \emph{partial} isometries preserving the tiling (see Subsection \ref{subsect: Patterns as Inverse Semigroups}). The MLD class of an FLC tiling may be recovered from its underlying pattern of partial isometries, so if one is interested only in the tiling ``modulo local redecorations'', one really can identify with a tiling its underlying abstract pattern. Many other constructions will also only depend on these partial isometries; the pattern-equivariant homology that we introduce in the next chapter will be applied to these abstract patterns.

We shall then introduce the notion of a \emph{collage} which will be a system of equivalence relations on some uniform space. The observation motivating them is that many constructions associated to FLC tilings are actually defined in terms of an underlying system of equivalence relations determined by the tiling (and analogously for Delone sets). Any pattern, in the abstract sense as above, induces a collage. The equivalence relations of patch equivalence to a certain radius induce a tiling metric and an inverse system of ``approximants''. For FLC tilings, the completion of this induced space of tilings is homeomorphic to the inverse limit of approximants (see \cite{BDHS}). Analogously, the collages defined here will determine a collage uniformity and an inverse system of approximants. In cases of interest the inverse limit of the inverse system of approximants can be shown to be a completion of the uniform space induced by the collage.

\section{Patterns} \label{sect: Patterns}

\subsection{Patterns of Partial Isometries} \label{subsect: Patterns of Partial Isometries}

Let $T$ be a periodic tiling of $\mathbb{R}^d$, that is, a tiling for which there exist $d$ linearly-independent vectors $x_1,\ldots,x_d$ such that $T+x_i=T$. Everything one could wish to know about $T$ up to ``local redecorations'', that is, up to MLD equivalence (see Example \ref{ex: MLD}) is stored in the symmetry group $G_T$ of isometries $\Phi\colon \mathbb{R}^d \rightarrow \mathbb{R}^d$ which preserve the tiling. Partial isometries preserving sufficiently large patches of the tiling necessarily extend uniquely to full symmetries of the whole tiling. This is no longer true for non-periodic tilings, indeed, there may not even be any non-trivial isometries preserving the tiling, let alone enough to provide an adequate description of the tiling! However, the tiling may possess a rich supply of \emph{partial} isometries preserving patches of the tiling. For FLC tilings, this system of partial isometries determines the MLD class of the tiling.

We shall always assume that (partial) isometries are surjective onto their codomain. For a partial isometry $\Phi \colon U \rightarrow V$ we define $dom(\Phi):=U$ and $ran(\Phi):=V$. Two partial isometries will always be composed on the largest domain for which the composition makes sense, that is $\Phi_2 \circ \Phi_1 \colon \Phi_1^{-1}(ran(\Phi_1) \cap dom(\Phi_2)) \rightarrow \Phi_2(ran(\Phi_1) \cap dom(\Phi_2))$.

\begin{definition} \label{def: pattern} Let $(X,d_X)$ be a metric space. A \emph{pattern} $\mathcal{P}$ (\emph{on $(X,d_X)$}) is a collection of partial isometries $\Phi \colon U \rightarrow V$ between the open sets of $(X,d_X)$ such that:
\begin{enumerate}
\item for every open set $U$ the identity map $Id_U \in \mathcal{P}$.
\item If $\Phi \in \mathcal{P}$ its inverse $\Phi^{-1} \in \mathcal{P}$.
\item If $\Phi_1,\Phi_2 \in \mathcal{P}$ then $\Phi_2 \circ \Phi_1 \in \mathcal{P}$.
\end{enumerate}
\end{definition}

Note that, by $1$, we should always have the ``empty morphism'' $Id_{\emptyset} \colon \emptyset \rightarrow \emptyset$ as an element of $\mathcal{P}$, which is the unique such morphism with domain and codomain equal to the empty set in the category of sets. The axioms imply that restrictions of elements $\Phi \in \mathcal{P}$ to open sets $U \subset X$ are also elements of $\mathcal{P}$ since ${\Phi |}_U = \Phi \circ Id_U$.

\begin{notation} Let $\mathcal{P}$ be a pattern on $(X,d_X)$. Denote by $\mathcal{P}_{x,y}^r$ the set of partial isometries $\Phi \in \mathcal{P}$ with $\Phi(x)=y$, $dom(\Phi)=B_{d_X}(x,r)$ and $ran(\Phi)=B_{d_X}(y,r)$.\end{notation}

\begin{definition} Let $\mathcal{P}$ be a pattern on $(X,d_X)$. We shall say that\begin{enumerate}
	\item $\mathcal{P}$ has \emph{finite local complexity} (or is \emph{FLC}, for short) if for all $r \in \mathbb{R}_{> 0}$ there exists some compact $K \subset X$ for which, for any $x \in X$, there exists some $y \in K$ with $\mathcal{P}_{x,y}^r \neq \emptyset$.
	\item $\mathcal{P}$ is \emph{repetitive} if for all $r \in \mathbb{R}_{> 0}$ there exists some $R \in \mathbb{R}_{> 0}$ for which, for any $x,y \in X$, there exists some $y' \in B_{d_X}(y,R)$ with $\mathcal{P}_{x,y'}^r \neq \emptyset$.
\end{enumerate}
\end{definition}

Notice that a repetitive pattern on a proper metric space is also FLC. These properties are named from their usage in the context of tilings.

\subsection{Patterns Associated to Tilings} \label{subsect: Patterns Associated to Tilings}

\subsubsection{Tilings on General Metric Spaces}

Let $(X,d_X)$ be some metric space on which a tiling $T$ is defined. We shall define a pattern on $(X,d_X)$ given some collection of partial isometries on it, which allow us to compare local patches of tiles. This collection of partial isometries with which we shall compare patches should at least satisfy the axioms of being a pattern, so we set $\mathcal{S}$ to be some pattern of \emph{allowed partial isometries} of $(X,d_X)$. In most cases of interest to us here, the allowed partial isometries will be restrictions (to the open sets of $(X,d_X)$) of a group of global isometries of $(X,d_X)$.

\begin{exmp} \label{ex: Allowed Partial Isometries} Given a group $S$ of isometries on $(X,d_X)$ we can define a pattern $\mathcal{S}$ of partial isometries on $(X,d_X)$ simply by restricting the isometries of $S$ to each of the open subsets of $X$, it is easy to verify that this defines a pattern on $(X,d_X)$. A common example in the context of tilings is the group $\mathbb{R}^d$ of translations $t_y$ defined by $x \mapsto x+y$ on $\mathbb{R}^d$. More generally, one could consider a Lie group $L$ with left invariant metric $d_L$. Then $L$ acts on itself as a group of isometries through left multiplication $L_g(x)=g \cdot x$.

Another common example for tilings of $\mathbb{R}^d$ is to take the group $E^+(d)$ of orientation-preserving isometries on $\mathbb{R}^d$, which we shall call \emph{rigid motions}. \end{exmp}

See the example of \ref{subsect: Pent} for a situation where it is necessary to consider partial isometries which are not restrictions of global isometries of the space.

Given a tiling along with some collection of allowed partial isometries, we associate a pattern to it:

\begin{definition} Let $T$ be a tiling of $(X,d_X)$ with allowed partial isometries $\mathcal{S}$. Let $\Phi \in \mathcal{S}$ be an element of $\mathcal{T}_\mathcal{S}$ if and only if $\Phi$ is the restriction of some partial isometry $\Psi$ with $dom(\Psi)=T(dom(\Phi))$, $ran(\Psi)=T(ran(\Phi))$ and $\Psi(T(dom(\Phi))) = T(ran(\Phi))$. \end{definition}

That is, the pattern defined by a tiling and collection of allowed partial isometries is given by those partial isometries which preserve patches of the tiling. We will frequently drop the $\mathcal{S}$ from $\mathcal{T}_\mathcal{S}$, where the allowed partial isometries are understood.

\begin{proposition} The collection of partial isometries defining $\mathcal{T}_\mathcal{S}$ forms a pattern. \end{proposition}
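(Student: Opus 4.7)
The plan is to verify the three pattern axioms in turn, exploiting the fact that $\mathcal{S}$ already satisfies them so the only real work is checking that the extension condition defining $\mathcal{T}_\mathcal{S}$ is preserved under the three operations.

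For axiom $1$, given an open $U \subset X$, the identity $Id_U$ lies in $\mathcal{S}$ by hypothesis, and I would take $\Psi = Id_{|T(U)|}$ as its extension, where $|T(U)|$ denotes the support of the patch $T(U)$. Trivially $\Psi$ maps the patch $T(U)$ bijectively to itself tile-by-tile, and its domain and range are both $T(U)=T(dom(Id_U))=T(ran(Id_U))$, so $Id_U \in \mathcal{T}_\mathcal{S}$. For axiom $2$, if $\Phi \in \mathcal{T}_\mathcal{S}$ with witnessing extension $\Psi$, then $\Phi^{-1} \in \mathcal{S}$ (as $\mathcal{S}$ is a pattern) and $\Psi^{-1}$ is immediately a partial isometry extending $\Phi^{-1}$ whose domain $ran(\Psi)=T(ran(\Phi))=T(dom(\Phi^{-1}))$ and range $dom(\Psi)=T(dom(\Phi))=T(ran(\Phi^{-1}))$ are as required; since $\Psi$ maps $T(dom(\Phi))$ to $T(ran(\Phi))$ bijectively on tiles, so does $\Psi^{-1}$ in reverse.

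The heart of the proof is axiom $3$, closure under composition. Write $\Phi_1 \colon U_1 \to V_1$ and $\Phi_2 \colon U_2 \to V_2$ with extensions $\Psi_1, \Psi_2$, and let $W := \Phi_1^{-1}(V_1 \cap U_2) = dom(\Phi_2 \circ \Phi_1)$. The membership $\Phi_2 \circ \Phi_1 \in \mathcal{S}$ is immediate. I would then define the candidate extension as the restriction of $\Psi_2 \circ \Psi_1$ to the support $|T(W)|$. The key verification is that $\Psi_1$ sends the subpatch $T(W) \subset T(U_1)$ onto the subpatch $T(V_1 \cap U_2) \subset T(V_1)$, because $\Psi_1$ carries tiles meeting $W$ to tiles meeting $\Phi_1(W) = V_1 \cap U_2$; since $T(V_1 \cap U_2) \subset T(U_2) = dom(\Psi_2)$, the composition is well-defined, and applying the analogous tile-tracking argument to $\Psi_2$ shows that $\Psi_2 \circ \Psi_1$ sends $T(W)$ bijectively onto $T(\Phi_2(V_1 \cap U_2)) = T(ran(\Phi_2 \circ \Phi_1))$.

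The only potential obstacle is keeping the bookkeeping straight between a \emph{patch} (the set of tiles) and its \emph{support} (the underlying subset of $X$), and in particular justifying the identity $\Psi_1(T(W)) = T(\Phi_1(W))$ as subpatches. I expect this to follow directly from the hypothesis that $\Psi_1$ is an isometry extending $\Phi_1$ which carries $T(U_1)$ bijectively to $T(V_1)$ as a set of tiles: a tile $t \in T(U_1)$ meets $W \subset U_1$ if and only if its image $\Psi_1(t)$ meets $\Phi_1(W) \subset V_1$, because $\Psi_1$ restricted to $W$ agrees with $\Phi_1$. With this observation in hand all three axioms follow.
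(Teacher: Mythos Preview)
Your proposal is correct and follows essentially the same route as the paper: verify the three axioms directly, using $Id_{T(U)}$, $\Psi^{-1}$, and the restriction of $\Psi_2 \circ \Psi_1$ to $T(W)$ as the witnessing extensions respectively. The paper's verification of axiom~3 amounts to exactly your key identity $\Psi_1(T(W)) = T(\Phi_1(W))$, though written out as a chain of equalities rather than isolated as a lemma.
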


\begin{proof} Since $Id_U \in \mathcal{S}$ and $Id_{T(U)}(T(U))=T(U)$, we have that $Id_U \in \mathcal{T}_\mathcal{S}$ for all open $U \subset X$.

Suppose that $\Phi \in \mathcal{T}_\mathcal{S}$. Then $\Phi^{-1} \in \mathcal{S}$ since $\mathcal{S}$ is closed under inverses. Since $\Phi \in \mathcal{T}_\mathcal{S}$ we have that $\Phi$ is the restriction of some isometry $\Psi \colon T(dom(\Phi)) \rightarrow T(ran(\Phi))$ with $\Psi(T(dom(\Phi))) = T(ran(\Phi))$. Then $\Phi^{-1} \in \mathcal{T}_\mathcal{S}$ since it is the restriction of $\Psi^{-1} \colon T(ran(\Phi)) \rightarrow T(dom(\Phi))$, for which one has that $\Psi^{-1}(T(ran(\Phi)))$ \linebreak $= \Psi^{-1}( \Psi (T(dom(\Phi)))) = T(dom(\Phi))$.

Finally, suppose that $\Phi_1, \Phi_2 \in \mathcal{T}_\mathcal{S}$. Then $\Phi_2 \circ \Phi_1 \in \mathcal{S}$ since $\mathcal{S}$ is closed under composition. For $i=1,2$ we have that $\Phi_i$ is a restriction of some isometry $\Psi_i \colon T(dom(\Phi_i)) \rightarrow T(ran(\Phi_i))$ with $\Psi_i (T(dom(\Phi_i)))=T(ran(\Phi_i))$. Then $\Phi_2 \circ \Phi_1 \in \mathcal{T}_\mathcal{S}$ since it is the restriction of ${\Psi_2 \circ \Psi_1 |}_{T(\Phi_1^{-1}(ran(\Phi_1) \cap dom(\Phi_2)))}$ which has domain $T(\Phi_1^{-1}(ran(\Phi_1) \cap dom (\Phi_2)))$ and range $T(\Phi_2(ran(\Phi_1) \cap dom (\Phi_2)))$. We have that $\Psi_2 \circ \Psi_1(T(ran(\Phi_2 \circ \Phi_1))) = \Psi_2 \circ \Psi_1 (T(\Phi_1^{-1}(ran(\Phi_1) \cap dom (\Phi_2)))) = \Psi_2 (T(ran(\Phi_1)) \cap T(dom(\Phi_2))) = T(\Phi_2(ran(\Phi_1) \cap dom(\Phi_2))) = T(ran(\Phi_2 \circ \Phi_1))$. \end{proof}

\begin{remark} \label{rem} Note that it was not necessary for the tiles to cover $X$, for tiles to be equal to the closures of their interiors or for tiles to intersect on at most their boundaries in the above. There will be occasions where dropping these assumptions will be useful. For example, we could define a pattern associated to a Delone set (or in fact any point pattern) analogously, simply by considering the points of the Delone set as ``tiles'', and much of what is said here in the context of tilings will also apply to Delone sets. \end{remark}

Given a tiling $T$ with collection of allowed partial isometries $\mathcal{S}$, write $P \sim_\mathcal{S} Q$ for subpatches $P,Q \subset T$ if there exists some $\Phi \in \mathcal{S}$ with $P \subset dom(\Phi)$ and $\Phi(P)=Q$. This defines an equivalence relation on the subpatches of $T$ (note that $\mathcal{S}$ is closed under partial composition so that $\sim_\mathcal{S}$ is transitive), we denote the equivalence classes by $[P]_T^\mathcal{S}$.

\begin{definition} \label{def: FLC tiling} Let $T$ be a tiling on $(X,d_X)$. We shall say that $T$ is \emph{locally finite} if there exists some $\epsilon > 0$ such that each $\epsilon$-ball $B_{d_X}(x,\epsilon)$ intersects only finitely many tiles and \emph{proper} if each $R$-ball intersects only finitely many tiles for all $R \in \mathbb{R}_{>0}$. For a proper tiling we say that
\begin{enumerate}
	\item $T$ has \emph{finite local complexity} (or is \emph{FLC}, for short) with respect to $\mathcal{S}$ if for each $r \in \mathbb{R}_{>0}$ the number of equivalence classes $[T(x,r)]_T^\mathcal{S}$ for $x \in X$ is finite.
	\item $T$ is \emph{repetitive} with respect to $\mathcal{S}$ if, for each $r \in \mathbb{R}_{>0}$, there exists some $R \in \mathbb{R}_{> 0}$ such that each $R$-ball $B_{d_X}(y,R)$ contains a representative of each equivalence class $[T(x,r)]_T^\mathcal{S}$ for $x \in X$.
\end{enumerate}
\end{definition}

The above definitions of FLC and repetitivity are well known in the literature where they are usually just stated for tilings of $\mathbb{R}^d$ with respect to translations or rigid motions. The FLC condition is also known as the ``finite type condition'' (e.g., in \cite{AP}). Note that FLC or repetitivity is not a property of the tiling $T$, but of the pair of the tiling $T$ and the collection of allowed partial isometries $\mathcal{S}$. For example, the pinwheel tiling \cite{Rad} is a well-known example of a tiling with has FLC with respect to rigid motions but not with respect to translations alone, the tiles appear in infinitely many orientations in the plane.

\begin{proposition} \label{prop: FLC iff FLC} Let $(X,d_X)$ be a proper metric space. Then a tiling $T$ on $(X,d_X)$ is locally finite if and only if it is proper. If $T$ is locally finite then
\begin{enumerate}
	\item if $T$ is repetitive with respect to $\mathcal{S}$ then it is FLC with respect to $\mathcal{S}$.
	\item $T$ is FLC with respect to $\mathcal{S}$ if and only if $\mathcal{T}_\mathcal{S}$ is FLC.
	\item $T$ is repetitive with respect to $\mathcal{S}$ if and only if $\mathcal{T}_\mathcal{S}$ is repetitive.
\end{enumerate}
\end{proposition}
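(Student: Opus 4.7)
The plan is to handle the four pieces of the statement in turn, starting with the purely point-set part. The equivalence of local finiteness and properness is standard: on a proper metric space any closed $R$-ball is compact, hence covered by finitely many $\epsilon$-balls of the witnessing radius, each meeting only finitely many tiles, so the $R$-ball does too. For item (1), assuming $T$ is repetitive with parameter $R$ at the given $r$, fix any $y_0 \in X$; by repetitivity the ball $B(y_0, R)$ contains a representative of every class $[T(x, r)]_T^\mathcal{S}$. Each representative $x' \in B(y_0, R)$ yields a patch $T(x', r)$ which is a subpatch of $T(y_0, R + r)$. The latter has finitely many tiles by local finiteness and properness, so admits only finitely many distinct subpatches, bounding the number of classes.

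For (2) and (3), the technical crux is the following patch-tracking observation. Given a witness $\Psi \in \mathcal{S}$ for $T(x, r) \sim_\mathcal{S} T(x', r)$, so $\Psi(T(x, r)) = T(x', r)$, the point $y := \Psi(x)$ satisfies $T(y, r) = T(x', r)$ (and in particular $\Psi|_{B(x, r)}$ is then an element of $(\mathcal{T}_\mathcal{S})_{x, y}^r$). The inclusion $T(x', r) \subset T(y, r)$ follows since $\Psi(B(x, r)) = B(y, r)$, so each tile of $\Psi(T(x, r))$ meets $B(y, r)$. Conversely, any tile $t$ meeting $B(y, r)$ has $\mathrm{int}(t) \cap B(y, r) \neq \emptyset$, as $\mathrm{int}(t)$ is dense in $t$ and $B(y, r)$ is open; a point $p$ in this intersection lies in some tile $s$ of $\Psi(T(x, r))$ (these tiles cover $B(y, r)$), and if $s \neq t$ then $p \in s \cap t \subset \partial t$, contradicting $p \in \mathrm{int}(t)$. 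Hence $s = t \in T(x', r)$, and crucially $y$ lies in the compact set $\mathrm{supp}(T(x', r))$.

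From this, (2) and (3) follow without further difficulty. For the forward direction of (2), pick representatives $x_1, \ldots, x_n$ of the finitely many tiling patch-classes and take $K := \bigcup_i \mathrm{supp}(T(x_i, r))$, a compact set by properness; the observation gives, for any $x$, some $y \in K$ with $(\mathcal{T}_\mathcal{S})_{x, y}^r \neq \emptyset$. Conversely, given compact $K \subset B(y_0, M)$ witnessing pattern FLC, each resulting $T(y, r)$ is a subpatch of $T(y_0, M + r)$, bounding the number of tiling classes. For (3), tiling repetitivity with parameter $R$ transforms into pattern repetitivity with parameter $R + r + D$, where $D$ bounds the tile diameters (this accounts for $y$ lying anywhere in $\mathrm{supp}(T(x', r))$ rather than at $x'$ itself); the converse is immediate, since a pattern-repetitivity witness automatically provides a tiling representative via the induced $\Psi$. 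The main obstacle, then, is the tile-matching step of the key observation, which uses essentially that tiles are closures of their interiors and meet only on their boundaries; once $T(y, r) = T(x', r)$ is secured the rest is bookkeeping.
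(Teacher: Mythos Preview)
Your approach is essentially the same as the paper's, and on one point you are more careful: the ``patch-tracking observation'' that $T(\Psi(x),r) = T(x',r)$ is something the paper simply asserts (writing ``$\Phi|_{B_{d_X}(x,r)} \in (\mathcal{T}_\mathcal{S})_{x,\Phi(x)}^r$'' without comment), and your interior-point argument for it is correct and worth making explicit.

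There is, however, a small genuine gap in your converse directions for (2) and (3). You claim that a pattern witness $\Phi \in (\mathcal{T}_\mathcal{S})_{x,y}^r$ ``automatically provides a tiling representative via the induced $\Psi$'', i.e.\ you use the extension $\Psi$ with $\Psi(T(x,r)) = T(y,r)$ as a witness for $T(x,r) \sim_\mathcal{S} T(y,r)$. But the definition of $\sim_\mathcal{S}$ requires the witness to lie in $\mathcal{S}$, while the definition of $\mathcal{T}_\mathcal{S}$ only guarantees that $\Psi$ is \emph{some} partial isometry extending $\Phi$; it need not belong to $\mathcal{S}$. The map $\Phi$ itself cannot serve either, since its domain $B(x,r)$ does not in general contain $\mathrm{supp}(T(x,r))$. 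The paper handles this by invoking pattern FLC or repetitivity at radius $r+2t$ rather than $r$ (with $t$ a bound on tile radii): then $\Phi \in \mathcal{S}$ has domain $B(x,r+2t) \supset \mathrm{supp}(T(x,r))$, and since the extension $\Psi$ maps $T(x,r+2t)$ to $T(y,r+2t)$ and agrees with $\Phi$ on $B(x,r+2t)$, a tile-by-tile check gives $\Phi(T(x,r)) = T(y,r)$, so $\Phi$ itself is the required $\sim_\mathcal{S}$ witness. With this adjustment your argument goes through.
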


\begin{proof} It is obvious that being proper is a stronger condition than of being locally finite. Suppose that $T$ is locally finite, so there exists some $\epsilon > 0$ for which each $\epsilon$-ball intersects only finitely many tiles. Set $R > 0$ and $x \in X$. Then since $(X,d_X)$ is proper we have that $B_{d_X}(x,R)$ is contained in a compact subset, which thus possesses a finite covering by $\epsilon$-balls. Since each such $\epsilon$-ball only intersects finitely many tiles of $T$, the same is true of $B_{d_X}(x,R)$ and so $T$ is proper.

Suppose that $T$ is repetitive with respect to $\mathcal{S}$ and set $r>0$ and $p \in X$. Then there exists some $R > 0$ such that a representative of each equivalence class $[T(B_{d_X}(x,r))]_T^\mathcal{S}$ is contained in $B_{d_X}(p,R)$ and hence is a subpatch of $P:=T(B_{d_X}(p,R))$. Since $T$ is proper, we have that $P$ is finite. Since a subpatch of $P$ is simply a choice of tiles from it, we have that the cardinality of $\{[T(B_{d_X}(x,r))]_T^\mathcal{S} \mid x \in X\}$ is less than or equal to the cardinality of the power set of $P$, which is finite since the cardinality of $P$ is.

To prove $2$, assume firstly that $T$ is FLC with respect to $\mathcal{S}$ and set $r > 0$ arbitrary. We always assume that the radii of the tiles are bounded by, say, $t > 0$. By FLC the number of equivalence classes $[T(B_{d_X}(x,r))]_T^\mathcal{S}$ is finite. Choose representative patches $P_1 = T(B_{d_X}(x_1,r)),\ldots,P_k = T(B_{d_X}(x_k,r))$ for each such equivalence class. Let $x \in X$. Then there exists some partial isometry $\Phi \in \mathcal{S}$ taking the patch $T(x,r)$ isometrically to $P_i$ for some $i \in \{1,\ldots,k\}$. Since $P_i$ has radius less than $r+2t$ we have that $d_X(x,x_i) < r+2t$. On the other hand we have that $\Phi|_{B_{d_X}(x,r)} \in (\mathcal{T}_\mathcal{S})_{x,\Phi(x)}^r$ so that $B_{d_X}(x_i,r+2t)$ contains a point $y$ with $(\mathcal{T}_\mathcal{S})_{x,y}^r \neq \emptyset$. It follows that for all $x \in X$ there exists some $y \in \bigcup_{i=1}^k B_{d_X}(x_i,r+2t)$ such that $(\mathcal{T}_\mathcal{S})_{x,y}^r \neq \emptyset$. Since $(X,d_X)$ is proper, this finite union is contained in a compact $K \subset X$ and so $\mathcal{T}_\mathcal{S}$ is FLC.

Conversely, suppose that $\mathcal{T}_\mathcal{S}$ is FLC and set $r>0$. Then by FLC there exists some compact $K \subset X$ such that for all $x \in X$ there exists some $y \in K$ with $(\mathcal{T}_\mathcal{S})_{x,y}^{r+2t} \neq \emptyset$. Since $K$ is compact it is bounded, so we have that $K \subset B_{d_X}(p,R)$ for some $R > 0$ and $p \in X$. Let $x \in X$ be arbitrary. Then $T(x,r) \subset B_{d_X}(x,r+2t)$. By the above we know that there exists some $\Phi \in \mathcal{S}$ taking $T(x,r)$ to a patch $T(y,r)$ with $d_X(p,y)<R$. That is, every patch $T(x,r)$ is equivalent, under $\mathcal{S}$, to a patch contained in $B_{d_X}(p,R+r+2t)$. Since $T$ is proper, this patch contains only finitely many tiles and hence subpatches, and so $T$ is FLC with respect to $\mathcal{S}$.

To prove $3$, firstly suppose that $T$ is repetitive with respect to $\mathcal{S}$ and set $r>0$ and $x,y \in X$ arbitrary. It follows that there exists some $R > 0$ for which each $R$-ball $B_{d_X}(y,R)$ contains a representative of each equivalence class $[T(x,r)]_T^\mathcal{S}$. So, in particular, there exists some $\Phi \in \mathcal{S}$ mapping $T(x,r)$ isometrically to a subpatch of $T(y,R)$. Since this subpatch is contained in $B_{d_X}(y,R+2t)$ we have that $d_X(y,\Phi(x)) < R+2t$. On the other hand, we know that $\Phi|_{B_{d_X}(x,r)} \in (\mathcal{T}_\mathcal{S})_{x,\Phi(x)}^r$. Hence, for all $x,y \in X$ we have that $(\mathcal{T}_\mathcal{S})_{x,y'}^r \neq \emptyset$ for some $y' \in B_{d_X}(x,R+2t)$ and so $\mathcal{T}_\mathcal{S}$ is repetitive.

Finally, assume that $\mathcal{T}_\mathcal{S}$ is repetitive and set $r > 0$ arbitrary. Then there exists some $R > 0$ such that for all $x,y \in X$ there exists some $y' \in B_{d_X}(y,R)$ with $(\mathcal{T}_\mathcal{S})_{x,y'}^{r+2t} \neq \emptyset$. Take an arbitrary $r$-patch $T(x,r)$ of $T$. Then $T(x,r) \subset B_{d_X}(x,r+2t)$ and so by the above there exists some $y' \in B_{d_X}(x,R)$ and $\Phi \in \mathcal{S}$ taking the patch $T(x,r)$ to $T(y',r)$. We have that $T(y',r) \subset B_{d_X}(y,R+2t)$. That is, every equivalence class of patch $T(x,r)$ is represented by a subpatch of $B_{d_X}(y,R+2t)$ and so $T$ is repetitive with respect to $\mathcal{S}$. \end{proof}

\subsubsection{Tilings of $(\mathbb{R}^d,d_{euc})$}

Let $T$ be a tiling of $(\mathbb{R}^d,d_{euc})$. We present here three different patterns $\mathcal{T}_1$, $\mathcal{T}_0$ and $\mathcal{T}_{\text{rot}}$ associated to it. The first pattern is usually only suitable for tilings which have FLC with respect to translations, while the latter two are suitable for tilings which have FLC with respect to rigid motions. The spaces $\Omega^{\mathcal{T}_1}$, $\Omega^{\mathcal{T}_0}$ and $\Omega^{\mathcal{T}_{\text{rot}}}$ (see Definition \ref{def: Pattern Space}) correspond to the tiling spaces commonly seen in the literature; they are named $\Omega^1$, $\Omega^0$ and $\Omega^{\text{rot}}$, resp., in \cite{BDHS}, for example.

\begin{definition} \label{def: 0,1 tiling patterns} Given a tiling $T$ of $(\mathbb{R}^d,d_{euc})$, define $\mathcal{T}_1$ ($\mathcal{T}_0$, resp.) to be the pattern $\mathcal{T}_\mathcal{S}$ for $T$, where the allowed partial isometries $\mathcal{S}$ are taken to be restrictions of translations (rigid motions, resp.) to the open sets of $(\mathbb{R}^d,d_{euc})$.\end{definition}

So $(\mathcal{T}_i)_{x,y}^r \neq \emptyset$ for $i=1$ ($i=0$, resp.) if and only if the tilings $T-x$ and $T-y$ have the same patch of tiles within radius $r$ of the origin (after a rotation at the origin, resp.) 

A different way of taking the full orientation-preserving isometry group into account is to define a pattern on the group $E^+(d)$ of rigid motions of $\mathbb{R}^d$. Elements of $E^+(d) \cong \mathbb{R}^d \rtimes SO(d)$ can be uniquely described as a translation $b \in \mathbb{R}^d$ followed by a rotation $A \in SO(d)$. One can define a metric on $E^+(d)$ by setting $d_E(f,g) := \max \{d_{euc}(f(x),g(x)) \mid \|x\| \le 1 \}$. Then $E^+(d)$ acts on itself as a group of isometries by post-composition, so we may take the restrictions of such isometries to the open sets of $E^+(d)$ to be a set of allowed partial isometries. We temporarily allow ourselves to use tiles with empty interior for the following definition:

\begin{definition} \label{def: rot tiling pattern} Let $T$ be a tiling of $(\mathbb{R}^d,d_{euc})$. We define a tiling $T_{\text{rot}}$ on $(E^+(d), \ab d_E)$ in the following way. The prototiles of $T_{\text{rot}}$ are the prototiles of $T$ as subsets of $\mathbb{R}^d$, where we consider $\mathbb{R}^d \subset E^+(d)$ as the subset of translations. A tile $t_{\text{rot}} \in T_{\text{rot}}$ if and only if $t_{\text{rot}}=g(t)$ for some tile $t \in T$ and $g \in SO(d)$. Define $\mathcal{T}_{\text{rot}}$ to be the pattern associated to this tiling with collection of allowed partial isometries given as restrictions of isometries given by the action of $E^+(d)$ on itself, as above.\end{definition}

Note that the tiles of $T_{\text{rot}}$  here are contained in the cosets of $\mathbb{R}^d \trianglelefteq E^+(d)$; one may like to think of the tiling as a union of copies of the tiling $T$, one copy on each fibre $\mathbb{R}^d \hookrightarrow E^+(d) \rightarrow SO(d)$ for each rotation $g \in SO(d)$. We have that $(\mathcal{T}_{\text{rot}})_{f,g}^r \neq \emptyset$ if and only if the patches of tiles within distance $r$ of the origin of $f^{-1}(T_{\text{rot}})$ and $g^{-1}(T_{\text{rot}})$ are the same. This implies that the patch of tiles within distance $r$ of the origin of $f^{-1}(T)$ and $g^{-1}(T)$ are the same, which is also sufficient (for then this is also true of their rotates). That is, $(\mathcal{T}_{\text{rot}})_{f,g}^r \neq \emptyset$ if and only if $f^{-1}(T)$ and $g^{-1}(T)$ have the same patch of tiles within distance $r$ of the origin. Of course, for $(\mathcal{T}_{\text{rot}})_{f,g}^r \neq \emptyset$ we have that in fact $(\mathcal{T}_{\text{rot}})_{f,g}^r = \{g \circ f^{-1}|_{B_{d_E}(f,r)} \}$ so, in general, we have that $|(\mathcal{T}_{\text{rot}})_{f,g}^r| = 0$ or $1$.

\begin{exmp} Let $T$ be the periodic tiling of unit squares of $(\mathbb{R}^2,d_{euc})$ with the vertices of the squares lying on the integer lattice. Then for $i=1$ ($i=0$, resp.) $(\mathcal{T}_i)^r_{x,y}$ consists of (restrictions to $B_{d_X}(x,r)$ of) translations (rigid motions, resp.) taking $x$ to $y$ preserving the integer lattice. So $|(\mathcal{T}_1)^r_{x,y}| \neq 0$ if and only if $y-x \in \mathbb{Z}^2$; for $y-x \in \mathbb{Z}^2$ we have that $(\mathcal{T}_1)^r_{x,y}=\{t_{y-x}|_{B_{d_X}(x,R)}\}$. The set of morphisms $(\mathcal{T}_0)^r_{x,y}$ may have more than one element; when $x,y$ both lie in the centre of an edge then $|(\mathcal{T}_0)_{x,y}^r|=2$ and if $x,y$ both lie in the centre of a square or both lie on a vertex then $|(\mathcal{T}_0)_{x,y}^r|=4$.

For $\mathcal{T}_{\text{rot}}$, we have that $|(\mathcal{T}_{\text{rot}})_{f,g}^r| \neq 0$ if and only if $f^{-1}(T)=g^{-1}(T)$. \end{exmp}

\subsubsection{Hierarchical Tilings}

Given a tiling substitution $\omega$, one may define the notion of an $\omega$-tiling, that is, a tiling admitted by the substitution rule $\omega$; see Subsection \ref{subsect: Examples of Tilings and Delone Sets}. Given such a tiling $T_0$, there exists another tiling $T_1$ (of inflated tiles) admitted by $\omega$ which $\omega$ decomposes into $T_0$. The tiling $T_1$ and the substitution $\omega$ determines the tiling $T_0$. One may continue this process inductively to define a string of tilings $(T_0,T_1,\ldots)$. The patches of each $T_n$ are locally determined by the patches of the tiling $T_{n+1}$ that lies above it in the hierarchy. This notion could be expressed using partial isometries by saying that if an allowed partial isometry preserving patches $\Phi \in \mathcal{T}_{n+1}$ then $\Phi \in \mathcal{T}_n$ also.

\begin{definition} Let $(X,d_X)$ be any metric space and a $\mathcal{S}$ a collection of allowed partial isometries on it. We shall say that a string of tilings $T_\omega=(T_0,T_1,\ldots)$ is a \emph{hierarchical tiling} if for each $\Phi \in \mathcal{T}_{n+1}$ we have that $\Phi \in \mathcal{T}_n$. \end{definition}

Given such a hierarchical tiling, we wish to assign a pattern to it:

\begin{definition}For a partial isometry $\Phi$, temporarily write $rad(\Phi) \ge R$ to mean that there exists some $x$ for which $B_{d_X}(x,R) \subset dom(\Phi)$ and $B_{d_X}(\Phi(x),R) \subset ran(\Phi)$. Let $k \colon \mathbb{R}_{> 0} \rightarrow \mathbb{N}_0$ be a non-decreasing unbounded function. Then define the pattern $\mathcal{T}_\omega$ for $T_\omega$ by setting $\Phi \in \mathcal{T}_\omega$ if and only if $\Phi \in \mathcal{T}_{k(R)}$ for any $R$ with $rad(\Phi) \ge R$. \end{definition}

It follows rather immediately from the definitions that $\mathcal{T}_\omega$ is a pattern (note that $rad(\Phi_2 \circ \Phi_1) \ge R$ implies that $rad(\Phi_1),rad(\Phi_2) \ge R$). For a hierarchical tiling of $(\mathbb{R}^d,d_{euc})$ we have that $(\mathcal{T}_\omega)_{x,y}^R = (\mathcal{T}_{k(R)})_{x,y}^R$. This does not hold for general metric spaces since one may have $B_{d_X}(x,r)=B_{d_X}(y,R)$ for $r<R$ e.g., for the positive real line $\mathbb{R}_{> 0}$. For this reason, for simplicity, we shall only consider hierarchical tilings of metric spaces for which open balls determine their centre and radii. A perhaps more elegant approach would be to give a definition of a pattern which allows one to specify a hierarchy of partial isometries, but we will be content here with hierarchical tilings in this restricted setting.

The idea is that one removes ``large'' (as measured by the radii of the domain and range) isometries which do not preserve patches of the tilings high up the hierarchy. Two points of the hierarchical tiling will be considered as ``close'' in the induced uniformity (see the next section) if they have nearby points which agree on a large patch of a tiling $T_n$ for some large $n$. An interesting feature of these patterns is that they do not necessarily satisfy a ``glueing'' axiom with respect to the allowed partial isometries in a way similar to that of classical pseudogroups. That is ``large''  morphisms of $\mathcal{T}_\omega$ can not necessarily be determined by the collection of allowed isometries and the ``small'' morphisms of $\mathcal{T}_\omega$ alone (see Example \ref{ex: DS}). As described above, tiling substitutions provide examples of hierarchical tilings in this sense, although one could also easily consider multi/mixed-substitutions in this context (see \cite{GM}).

\section{Collages} \label{sect: Collages}

Many constructions associated to finite local complexity tilings may in fact be defined solely in terms of a collection of induced equivalence relations on the underlying space of the tiling. These equivalence relations, for each $R \in \mathbb{R}_{>0}$, codify the relation of two points $x,y$ being such that ``to radius $R$, the tiling centred at $x$ is equal, modulo some type of isometry, to the tiling centred at $y$''. For example, the tiling metric, the notion of MLD equivalence, the BDHS-complexes \cite{BDHS} and pattern-equivariant cohomology may all be defined in terms of these induced equivalence relations.

In this section we shall define the notion of a \emph{collage} on a uniform space, a system of equivalence relations which patch together in some consistent way with respect to the underlying uniform space. They are intended as a simple abstraction of the above equivalence relations associated to an FLC tiling. Analogously to how these equivalence relations of an FLC tiling induce a tiling metric, collages will induce a \emph{collage uniformity}. The tiling space is constructed as a completion with respect to the tiling metric and is homeomorphic to the inverse limit of BDHS-complexes of increasing patch radii \cite{BDHS}. We shall similarly consider completions of the uniform spaces induced by collages and show that, at least for the cases of interest, the inverse limit of approximants defined by the system of equivalence relations is a completion of this uniform space.

\begin{notation} Recall from Section \ref{sect: Uniform Spaces} that we consider equivalence relations (on $X$, say) as subsets $U \subset X \times X$ and so for two equivalence relations $U,V$ on $X$ we have that $U \subset V$ if and only if being $U$-equivalent implies being $V$-equivalent. Given a general relation $U$, we will sometimes write $x U y$ to mean that $(y,x) \in U$. \end{notation}

\begin{definition} \label{def: collage} A \emph{collage} $\tilde{\mathcal{P}}$ (on a uniform space $(X,\mathcal{U}_X)$ over the directed set $(\Lambda,\leq)$) consists of equivalence relations $\lambda_{\tilde{\mathcal{P}}} \subset X \times X$, one for each $\lambda \in \Lambda$, such that:
	\begin{enumerate}
		\item if $\lambda \leq \mu$ then $\mu_{\tilde{\mathcal{P}}} \subset \lambda_{\tilde{\mathcal{P}}}$.
		\item For each $\lambda \in \Lambda$ there exists some $\mu \geq \lambda$ such that, for all $U \in \mathcal{U}_X$, there exists some $V \in \mathcal{U}_X$ with $V \subset U$ and $V \circ \mu_{\tilde{\mathcal{P}}} \subset \lambda_{\tilde{\mathcal{P}}} \circ U$.
	\end{enumerate}
	
We shall say that a collage $\tilde{\mathcal{P}}$ has \emph{finite local complexity} (or is \emph{FLC}, for short) if each quotient space $K_\lambda^{\tilde{\mathcal{P}}}: = X/\lambda_{\tilde{\mathcal{P}}}$ (where $X$ is given the topology induced by $\mathcal{U}_X$) is compact.
\end{definition}

The second axiom for a collage looks rather arbitrarily asymmetric. We could have also demanded that $\mu_{\tilde{\mathcal{P}}} \circ V \subset U \circ \lambda_{\tilde{\mathcal{P}}}$, but doing so would be superfluous. Indeed, given $\lambda \in \Lambda$ and $U \in \mathcal{U}_X$, set $\mu \in \Lambda$ which satisfies $2$. Then there exists some $V \in \mathcal{U}_X$ such that $V \circ \mu_{\tilde{\mathcal{P}}} \subset \lambda_{\tilde{\mathcal{P}}} \circ U^\text{op}$ (since $U^\text{op} \in \mathcal{U}_X$, being a uniformity on $X$). But then $(V \circ \mu_{\tilde{\mathcal{P}}})^\text{op} \subset (\lambda_{\tilde{\mathcal{P}}} \circ U^\text{op})^\text{op}$ which implies that $\mu_{\tilde{\mathcal{P}}} \circ V^\text{op} \subset U \circ \lambda_{\tilde{\mathcal{P}}}$. For $W$ with $W \circ \mu_{\tilde{\mathcal{P}}} \subset \lambda_{\tilde{\mathcal{P}}} \circ U$ we have that $W \cap V^\text{op} \in \mathcal{U}_X$ satisfies both of the required inclusions.

The simple axioms of the definition of a collage may be interpreted geometrically. The equivalence relations $\lambda_{\tilde{\mathcal{P}}}$ should be thought of as codifying the relations ``are equivalent to magnitude $\lambda$'' (for example, our directed set $(\Lambda,\leq)$ will often correspond to $(\mathbb{R}_{>0},\leq)$ whose elements may be thought of directly as radii). The first axiom states that a prerequisite for two points to being equivalent to magnitude $\mu$ is that they are equivalent to any smaller $\lambda$. The second axiom may be interpreted as follows: suppose that when $(x,y) \in U \in \mathcal{U}_X$ for $U$ ``small'' we should consider the geometric object of interest centred at $x$ as a small perturbation of it at $y$. Then the second axiom states that, given some allowance of perturbation $U \in \mathcal{U}_X$ and magnitude $\lambda \in \Lambda$, there is a smaller allowance of perturbation $V \in \mathcal{U}_X$ and larger magnitude $\mu \in \Lambda$ such that, given a point $x$ and a point $y$ which is a $V$-perturbation of some point which is equivalent to magnitude $\mu$ to $x$, we have that in fact $y$ is equivalent to magnitude $\lambda$ to a point which is a $U$-perturbation from $x$. In some sense this establishes a coherence between the equivalence relations and the uniformity on $X$.

Given a tiling $T$ on $(X,d_X)$, one may consider points $x \in (X,d_X)$ as being pointed tilings. The tiling then induces a tiling metric on $X$, two points being considered ``close'' if their pointed tilings are ``close'' (see, e.g., \cite{AP}). Analogously, given a collage on $(X,\mathcal{U}_X)$, we can define an induced uniformity on $X$:

\begin{definition} Let $\tilde{\mathcal{P}}$ be a collage on $(X,\mathcal{U}_X)$. We define the \emph{collage uniformity} $\mathcal{U}_{\tilde{\mathcal{P}}}$ on $X$ by setting as a base of entourages sets of the form $U \circ \lambda_{\tilde{\mathcal{P}}} \circ U$ for some $\lambda \in \Lambda$ and $U \in \mathcal{U}_X$. \end{definition}

\begin{proposition} The collage uniformity is a uniformity on $X$. \end{proposition}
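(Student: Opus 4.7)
The plan is to verify that $\mathcal{B} := \{U \circ \lambda_{\tilde{\mathcal{P}}} \circ U : \lambda \in \Lambda, U \in \mathcal{U}_X\}$ is a uniformity base on $X$; the paper has already remarked that any uniformity base generates a unique uniformity by upward closure, so this will automatically establish that $\mathcal{U}_{\tilde{\mathcal{P}}}$ is a uniformity. Three of the four base axioms are essentially formal, and only the weak transitivity axiom is substantial.

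Reflexivity follows from $\Delta_X \subset U$ and $\Delta_X \subset \lambda_{\tilde{\mathcal{P}}}$. For symmetry, the equivalence relation $\lambda_{\tilde{\mathcal{P}}}$ is self-opposite, so $(U \circ \lambda_{\tilde{\mathcal{P}}} \circ U)^{\text{op}} = U^{\text{op}} \circ \lambda_{\tilde{\mathcal{P}}} \circ U^{\text{op}}$ is itself a member of $\mathcal{B}$. For the filter condition, given two base elements $U_i \circ (\lambda_i)_{\tilde{\mathcal{P}}} \circ U_i$, use directedness of $\Lambda$ to pick $\mu \geq \lambda_1, \lambda_2$ and choose $U \in \mathcal{U}_X$ with $U \subset U_1 \cap U_2$; collage axiom (1) then forces $\mu_{\tilde{\mathcal{P}}} \subset (\lambda_i)_{\tilde{\mathcal{P}}}$, so $U \circ \mu_{\tilde{\mathcal{P}}} \circ U \in \mathcal{B}$ is contained in both.

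The real work is weak transitivity: given $E = U \circ \lambda_{\tilde{\mathcal{P}}} \circ U \in \mathcal{B}$, I need to produce $F \in \mathcal{B}$ with $F \circ F \subset E$. My plan is to first shrink $U$ to a symmetric $U' \in \mathcal{U}_X$ with $U' \circ U' \subset U$, then apply collage axiom (2) to $\lambda$ and $U'$ to obtain some $\mu \geq \lambda$ and a symmetric $V_1 \in \mathcal{U}_X$ with $V_1 \subset U'$ for which both $V_1 \circ \mu_{\tilde{\mathcal{P}}} \subset \lambda_{\tilde{\mathcal{P}}} \circ U'$ and its opposite $\mu_{\tilde{\mathcal{P}}} \circ V_1 \subset U' \circ \lambda_{\tilde{\mathcal{P}}}$ hold (the paper has pointed out that this symmetric companion is automatic once $V_1$ is symmetric). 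Finally, pick $V \in \mathcal{U}_X$ with $V \circ V \subset V_1$ and $V \subset U'$, and set $F := V \circ \mu_{\tilde{\mathcal{P}}} \circ V$. The expression $F \circ F = V \circ \mu_{\tilde{\mathcal{P}}} \circ (V \circ V) \circ \mu_{\tilde{\mathcal{P}}} \circ V$ can then be squeezed: the middle $V \circ V$ slips into $V_1$, one $\mu_{\tilde{\mathcal{P}}}$ absorbs it by the collage axiom leaving a residual $U'$ and a $\lambda_{\tilde{\mathcal{P}}}$, the composition $\lambda_{\tilde{\mathcal{P}}} \circ \mu_{\tilde{\mathcal{P}}}$ collapses to $\lambda_{\tilde{\mathcal{P}}}$ by transitivity of $\lambda_{\tilde{\mathcal{P}}}$ together with $\mu_{\tilde{\mathcal{P}}} \subset \lambda_{\tilde{\mathcal{P}}}$, and the outer $V$'s combine with the residual $U'$ into $U' \circ U' \subset U$ on one side and $V \subset U$ on the other.

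The main obstacle is precisely this transitivity computation: collage axiom (2) is asymmetric, so I must use it and its symmetric companion to pass the perturbation through the $\mu_{\tilde{\mathcal{P}}}$ block in the middle of $F \circ F$, and the fact that the axiom is allowed to upgrade $\lambda$ to a larger $\mu$ is exactly what permits the residual $\lambda_{\tilde{\mathcal{P}}} \circ \mu_{\tilde{\mathcal{P}}}$ to collapse back to $\lambda_{\tilde{\mathcal{P}}}$. Once the size hierarchy $V \subset V_1 \subset U' \subset U$ (with $V \circ V \subset V_1$ and $U' \circ U' \subset U$) is in place, the inclusion $F \circ F \subset E$ is a short diagram-chase.
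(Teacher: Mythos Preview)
Your proposal is correct and follows essentially the same approach as the paper. The only difference is cosmetic: the paper applies \emph{both} versions of the collage axiom (to the left-hand $\mu_{\tilde{\mathcal{P}}} \circ W$ and the right-hand $W \circ \mu_{\tilde{\mathcal{P}}}$ in $F \circ F$) so that the middle collapses directly via $\lambda_{\tilde{\mathcal{P}}} \circ \lambda_{\tilde{\mathcal{P}}} = \lambda_{\tilde{\mathcal{P}}}$, which lets it avoid your extra layer $V$ with $V \circ V \subset V_1$; your route uses only one side of the axiom and compensates with that additional squeeze, but the computation is otherwise identical.
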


\begin{proof} Each $\lambda_{\tilde{\mathcal{P}}}$ is reflexive and each $U$ is an entourage so we have that the diagonal $\Delta_X \subset U \circ \lambda_{\tilde{\mathcal{P}}} \circ U$.

Since $\mathcal{U}_X$ is a uniformity and each $\lambda_{\tilde{\mathcal{P}}}$ is symmetric we have that $(U \circ \lambda_{\tilde{\mathcal{P}}} \circ U)^{\text{op}} = U^{\text{op}} \circ \lambda_{\tilde{\mathcal{P}}}^{\text{op}} \circ U^{\text{op}} = U^{\text{op}} \circ \lambda_{\tilde{\mathcal{P}}} \circ U^{\text{op}}$ is an element of $\mathcal{U}_{\tilde{\mathcal{P}}}$ so $\mathcal{U}_{\tilde{\mathcal{P}}}$ is closed under inverses.

For $U \circ \lambda_{\tilde{\mathcal{P}}} \circ U$ and $V \circ \mu_{\tilde{\mathcal{P}}} \circ V$, since $\Lambda$ is directed we have that there exists some $\nu \geq \lambda,\mu$. Then $(U \cap V) \circ \nu_{\tilde{\mathcal{P}}} \circ (U \cap V) \subset (U \circ \lambda_{\tilde{\mathcal{P}}} \circ U) \cap (V \circ \mu_{\tilde{\mathcal{P}}} \circ V)$ so $\mathcal{U}_{\tilde{\mathcal{P}}}$ is closed under finite intersections.

Finally, given $U \circ \lambda_{\tilde{\mathcal{P}}} \circ U$, since $\mathcal{U}_X$ is a uniformity we have that there exists some $V$ with $V \circ V \subset U$. By the definition of a collage (and the subsequent discussion we gave following it), there exists $\mu \geq \lambda$ and $W \subset V$ such that $W \circ \mu_{\tilde{\mathcal{P}}} \subset \lambda_{\tilde{\mathcal{P}}} \circ V$ and $\mu_{\tilde{\mathcal{P}}} \circ W \subset V \circ \lambda_{\tilde{\mathcal{P}}}$. It follows that \[(W \circ \mu_{\tilde{\mathcal{P}}} \circ W)^2 \subset (W \circ V \circ \lambda_{\tilde{\mathcal{P}}} ) \circ (\lambda_{\tilde{\mathcal{P}}} \circ V \circ W) = \] \[(W \circ V) \circ \lambda_{\tilde{\mathcal{P}}} \circ (V \circ W) \subset U \circ \lambda_{\tilde{\mathcal{P}}} \circ U.\] \end{proof}

The basis elements $V=U \circ \lambda_{\tilde{\mathcal{P}}} \circ U$ for the collage uniformity $\mathcal{U}_{\tilde{\mathcal{P}}}$ may be interpreted as follows. One may consider such an entourage as a function, which assigns to a point $x$ the points $V(x)$ considered as $V$-close to $x$. Reading from right to left, using function composition convention, $U \circ \lambda_{\tilde{\mathcal{P}}} \circ U(x)$ takes a point $x$ to all points which are a $U$-perturbation from it in $X$, takes all points which are equivalent to magnitude $\lambda$ to these points, and then allows for a further $U$-perturbation of these points. So the ``small'' entourages $V$ correspond to the entourages where $U$ is ``small'' and $\lambda$ is ``large''.

Consider the case of a collage induced from an FLC tiling pattern $\mathcal{T}$. We have that $(y,x) \in U \circ R_\mathcal{T} \circ U$ if there are points $x'$ and $y'$, with $x'$ $U$-close to $x$ and $y$ $U$-close to $y'$, such that $x' R_\mathcal{T} y'$. So points are ``close'' in the collage uniformity if there are nearby points where the patches centred there are equal to a large radius up to an allowed partial isometry. In some regards the uniformity approach is more natural than that given by the tiling metric (see Subsection \ref{subsect: Metric Collages}). One may not describe a tiling metric as ``the'' metric which considers two tilings as sufficiently close if they agree to a large radius up to a small perturbation, there are many choices for such a metric which depend on associating a specific number to distances between tilings. The tiling uniformity, however, is not so dependent on such arbitrary choices.

Note that we could have generated the uniformity instead by entourages of the form $\lambda_{\tilde{\mathcal{P}}} \circ U$ (or $U \circ \lambda_{\tilde{\mathcal{P}}}$). Indeed, given such a set, pick $V \in \mathcal{U}_X$ with $V \circ V \subset U$, $\mu \in \Lambda$ and $W \in \mathcal{U}_X$ with $W \subset V$ such that $W \circ \mu_{\tilde{\mathcal{P}}} \subset \lambda_{\tilde{\mathcal{P}}} \circ V$. Then we have that $W \circ \mu_{\tilde{\mathcal{P}}} \circ W \subset \lambda_{\tilde{\mathcal{P}}} \circ (V \circ W) \subset \lambda_{\tilde{\mathcal{P}}} \circ U$. So points $x,y$ are ``close'' if $y$ is equivalent to a ``large'' magnitude to some ``small'' perturbation of $x$ (or vice versa).

The notation $\tilde{\mathcal{P}}$ is intended to be indicative of the fact that a collage is a system of equivalence relations, but also that they will often be induced by patterns (and hence, perhaps, from tilings or Delone sets):

\begin{definition} Let $\mathcal{P}$ be a pattern on the metric space $(X,d_X)$ with corresponding uniform space $(X,\mathcal{U}_X)$. We define the \emph{induced collage} $\tilde{\mathcal{P}}$ on $(X,\mathcal{U}_X)$ over the directed set $(\mathbb{R}_{>0},\leq)$ by letting $(y,x) \in r_{\tilde{\mathcal{P}}}$ if and only if $\mathcal{P}_{x,y}^r \neq \emptyset$ (recall this notation from Subsection \ref{subsect: Patterns of Partial Isometries}).\end{definition}

\begin{proposition} The induced collage $\tilde{\mathcal{P}}$ of a pattern $\mathcal{P}$ is a collage.\end{proposition}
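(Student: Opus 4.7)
The plan is to verify each $r_{\tilde{\mathcal{P}}}$ is an equivalence relation, then axioms (1) and (2) of a collage, in that order.

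For the equivalence relation claims, reflexivity comes from the fact that $Id_{B_{d_X}(x,r)} \in \mathcal{P}$ and manifestly belongs to $\mathcal{P}_{x,x}^r$. Symmetry is immediate from closure under inverses: if $\Phi \in \mathcal{P}_{x,y}^r$ then $\Phi^{-1} \in \mathcal{P}_{y,x}^r$. For transitivity, given $\Phi \in \mathcal{P}_{x,y}^r$ and $\Psi \in \mathcal{P}_{y,z}^r$, the composition $\Psi \circ \Phi$ has domain $\Phi^{-1}(B_{d_X}(y,r) \cap B_{d_X}(y,r)) = B_{d_X}(x,r)$ and range $B_{d_X}(z,r)$, so $\Psi \circ \Phi \in \mathcal{P}_{x,z}^r$. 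Axiom (1) is equally routine: for $r \leq s$ and $\Phi \in \mathcal{P}_{x,y}^s$, the restriction $\Phi|_{B_{d_X}(x,r)} \in \mathcal{P}$ (using the already-noted fact that patterns are closed under restriction to open subsets) lies in $\mathcal{P}_{x,y}^r$, since $\Phi$ is an isometry and so sends $B_{d_X}(x,r)$ onto $B_{d_X}(y,r)$.

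The real content is axiom (2). Given $r$, my choice will be $s := r+1$ (any $s>r$ works; the value $1$ just fixes a gap). Given $U \in \mathcal{U}_X$, pick $\epsilon \in (0,1)$ small enough that $U_{2\epsilon} \subset U$, and set $V := U_\epsilon$; this clearly satisfies $V \subset U$. I then suppose $(z,x) \in V \circ s_{\tilde{\mathcal{P}}}$, so there exists $y$ with $(y,x) \in s_{\tilde{\mathcal{P}}}$ and $d_X(z,y) < \epsilon$, witnessed by some $\Phi \in \mathcal{P}_{x,y}^s$. Since $\epsilon < s$, the point $z$ lies in $\mathrm{ran}(\Phi)$, so I can define $x' := \Phi^{-1}(z)$; the isometry property gives $d_X(x,x') = d_X(y,z) < \epsilon$, hence $(x',x) \in U_\epsilon \subset U$. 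Because $d_X(x,x')<\epsilon$ and $r+\epsilon < s$, the ball $B_{d_X}(x',r)$ is contained in $B_{d_X}(x,s) = \mathrm{dom}(\Phi)$, and symmetrically $B_{d_X}(z,r) \subset B_{d_X}(y,s) = \mathrm{ran}(\Phi)$; therefore the restriction $\Phi|_{B_{d_X}(x',r)} \in \mathcal{P}$ is an element of $\mathcal{P}_{x',z}^r$, giving $(z,x') \in r_{\tilde{\mathcal{P}}}$. Chaining yields $(z,x) \in r_{\tilde{\mathcal{P}}} \circ U$, as required.

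The only step that requires care, and which I consider the main obstacle, is the second axiom: one must choose the magnitude $s$ strictly larger than $r$ so that a partial isometry of radius $s$ has enough ``overhang'' to permit re-basing at a perturbed point $x'$ near $x$ while still producing a full $r$-ball range centred at $z$. Once the gap $s - r$ is set up and $\epsilon$ is taken smaller than both this gap and the radius of the ball inside $U$, everything follows by running $\Phi$ backwards from $z$ and appealing to the isometry property together with closure of $\mathcal{P}$ under restriction. No finite local complexity assumption on $\mathcal{P}$ is needed at any step.
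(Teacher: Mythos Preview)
Your proof is correct and follows essentially the same approach as the paper's: both verify the equivalence relation axioms from the pattern axioms, check monotonicity via restriction, and for axiom~(2) take $\mu$ strictly larger than $\lambda$ (you use $s=r+1$, the paper uses $R+\epsilon$ for arbitrary $\epsilon>0$) and then pull $z$ back through the partial isometry to find the required $x'$. The only cosmetic difference is that you ask for $U_{2\epsilon}\subset U$ when $U_\epsilon\subset U$ would suffice, but this is harmless.
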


\begin{proof} Firstly, each relation $R_{\tilde{\mathcal{P}}}$ is an equivalence relation since each is reflexive (as $Id_{B_{d_X}(x,R)} \in \mathcal{P}$), symmetric (since $\mathcal{P}$ is closed under inverses) and transitive (since $\mathcal{P}$ is closed under (partial) composition).

Given $r \leq R$, for an isometry $\Phi \in \mathcal{P}_{x,y}^R$ we have that $Id_{B_{d_X}}(y,r) \circ \Phi \circ Id_{B_{d_X}}(x,r)$ is an element of $\mathcal{P}_{x,y}^r$ and so $R_{\tilde{\mathcal{P}}} \subset r_{\tilde{\mathcal{P}}}$.

Finally, let $R$ be given and $\epsilon>0$ be arbitrary. Given $U \in \mathcal{U}_X$ we set $V=U_a \subset U$ for some $a \leq \epsilon$. We wish to show that $V \circ (R+\epsilon)_{\tilde{\mathcal{P}}} \subset R_{\tilde{\mathcal{P}}} \circ U$. So, take some $(z,x) \in V \circ (R+\epsilon)_{\tilde{\mathcal{P}}}$. That is, there exists some $y \in X$ with $(z,y) \in V$ and $(y,x) \in (R+\epsilon)_{\tilde{\mathcal{P}}}$. We have that $z \in B_{d_X}(y,a)$ and $z \in ran(\Phi)$ for some $\Phi \in \mathcal{P}_{x,y}^{R+\epsilon}$; define $z':= \Phi^{-1}(z)$. Since $\Phi$ (and its inverse) is an isometry, we have that $d_X(x,z') < a$, $B_{d_X}(z',R) \subset dom(\Phi)$ and $B_{d_X}(z,R) \subset ran(\Phi)$. It follows that $Id_{B_{d_X}(z,R)} \circ \Phi \circ Id_{B_{d_X}(z',R)} \in \mathcal{P}_{z',z}^R$ so $(z,z') \in R_{\tilde{\mathcal{P}}}$ and $(z',x) \in U_a$, that is, $(z,x) \in R_{\tilde{\mathcal{P}}} \circ U_a \subset R_{\tilde{\mathcal{P}}} \circ U$.\end{proof}

It follows that constructions associated to collages may also be associated to patterns, so we shall often drop the tilde from the notation of an induced collage e.g., the space $\Omega^{\tilde{\mathcal{P}}}$ (see the following subsection) associated to the induced collage $\tilde{\mathcal{P}}$ of $\mathcal{P}$ will be written as $\Omega^\mathcal{P}$.

\begin{proposition} Let $(X,d_X)$ be a proper metric space and $\mathcal{P}$ be a pattern on it. Then $\mathcal{P}$ is FLC if and only if its induced collage $\tilde{\mathcal{P}}$ is FLC. \end{proposition}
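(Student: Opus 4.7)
The forward direction should follow immediately from the definitions. If $\mathcal{P}$ is FLC at radius $r$ with compact witness $K$, then the condition ``for every $x \in X$ there exists $y \in K$ with $\mathcal{P}_{x,y}^r \neq \emptyset$'' translates precisely to $\pi_r(K) = X/r_{\tilde{\mathcal{P}}}$, where $\pi_r \colon X \to X/r_{\tilde{\mathcal{P}}}$ is the quotient map. Since $\pi_r$ is continuous and $K$ is compact, $X/r_{\tilde{\mathcal{P}}}$ is the continuous image of a compact set, hence compact.

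For the reverse direction, I would fix $r > 0$ and a basepoint $x_0 \in X$, and aim to show that the compact exhaustion $K_n := \overline{B_{d_X}(x_0, n)}$ (afforded by properness of $X$) satisfies $\pi_r(K_N) = X/r_{\tilde{\mathcal{P}}}$ for some $N$; this $K_N$ would then witness FLC of $\mathcal{P}$ at radius $r$. The main tool is the second collage axiom as verified for the induced collage: applied at $\lambda = r$ with $\mu = R := r + \epsilon$ for small $\epsilon > 0$, it yields the inclusion $U_\epsilon \circ R_{\tilde{\mathcal{P}}} \subset r_{\tilde{\mathcal{P}}} \circ U_\epsilon$. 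Concretely, any $\epsilon$-perturbation $y' \in B_{d_X}(y,\epsilon)$ of a point $y$ that is $R$-equivalent to $x$ admits some $r$-equivalent representative $x' \in B_{d_X}(x,\epsilon)$.

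The argument then proceeds by contradiction. Suppose $\pi_r(K_n) \subsetneq X/r_{\tilde{\mathcal{P}}}$ for all $n$, and pick witnesses $z_n \in X$ with $\pi_r(z_n) \notin \pi_r(K_n)$. By compactness of $X/r_{\tilde{\mathcal{P}}}$, the net $(\pi_r(z_n))$ has a convergent subnet $\pi_r(z_{n_\alpha}) \to \pi_r(z_\infty)$ for some $z_\infty \in X$, and I would fix $N$ with $z_\infty \in B_{d_X}(x_0,N)$. Applying the spreading inclusion to the trivial relation $(z_\infty, z_\infty) \in R_{\tilde{\mathcal{P}}}$ shows that every point of $B_{d_X}(z_\infty,\epsilon)$ has an $r$-equivalent representative in $B_{d_X}(x_0, N + \epsilon) \subset K_{N+\epsilon}$. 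From this I would extract a saturated open neighborhood of $\pi_r(z_\infty)$ in the quotient, contained in $\pi_r(K_{N+\epsilon})$. Convergence of the subnet then forces $\pi_r(z_{n_\alpha}) \in \pi_r(K_{N+\epsilon}) \subset \pi_r(K_{n_\alpha})$ once $n_\alpha \geq N + \epsilon$, contradicting the choice of $z_{n_\alpha}$.

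The hard part will be verifying rigorously that the $\epsilon$-fattened $R$-saturation of $\{z_\infty\}$ genuinely descends to an open neighborhood of $\pi_r(z_\infty)$ in the quotient topology. Since $\pi_r$ is generally not an open map, I expect to need a careful bookkeeping argument that exploits the buffer $\epsilon = R - r > 0$ to upgrade a trivial $R$-equivalence into $r$-equivalences distributed throughout a genuine open metric ball around $z_\infty$, and only then to pass to the quotient. This is where the full strength of the collage axiom — balancing a larger magnitude against a smaller uniform perturbation — enters the proof.
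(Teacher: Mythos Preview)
Your forward direction is correct and matches the paper's argument exactly.

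The reverse direction has a genuine gap at precisely the step you flag as hard. You want $\pi_r\bigl(B_{d_X}(z_\infty,\epsilon)\bigr)$ to be (or to contain) a neighbourhood of $[z_\infty]_r$ in $K_r = X/r_{\tilde{\mathcal P}}$. For this the $r$-saturation of $B_{d_X}(z_\infty,\epsilon)$ must be a neighbourhood of \emph{every} representative $z' \sim_r z_\infty$. But given $\Phi \in \mathcal{P}_{z_\infty,z'}^{\,r}$ and $z'' \in B_{d_X}(z',\delta)$, the restricted isometry only witnesses $z'' \sim_{r-\delta} \Phi^{-1}(z'')$, not $z'' \sim_r \Phi^{-1}(z'')$: you lose exactly $\delta$ of radius. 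The buffer $\epsilon = R - r$ you invoke is designed to trade an $R$-equivalence for an $r$-equivalence after an $\epsilon$-perturbation; it does not let you \emph{stay} at level $r$ after perturbing, which is what working entirely inside $K_r$ demands. So the ``saturated open neighbourhood'' you need does not exist in general.

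The paper sidesteps this by working one level up, in $K_R$ with $R > r$. There the relevant sets are $\pi_R\bigl(U_\epsilon([x]_R)\bigr)$, the image of the $\epsilon$-neighbourhood of the \emph{entire} $R$-class; these \emph{are} neighbourhoods of $[x]_R$, since for any representative $x'$ the ball $U_\epsilon(x')$ already lies in $U_\epsilon([x]_R)$ with no equivalence needed. Compactness of $K_R$ (not $K_r$) then gives finitely many basepoints $x_1,\ldots,x_k$ whose $R \circ U_\epsilon \circ R$-neighbourhoods cover $X$, and the collage inclusion $U_\epsilon \circ R \subset r \circ U_\delta$ together with $R \subset r$ collapses this to $X = r \circ U_\delta(\{x_1,\ldots,x_k\})$, yielding the compact witness $\overline{U_\delta(\{x_i\})}$. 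Your contradiction scheme can be salvaged along these lines---take the convergent subnet in $K_R$ and use the $R$-class neighbourhood---but as written it does not go through.
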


\begin{proof} Suppose that $\mathcal{P}$ is FLC. Then, for each $r > 0$, there exists some compact $K \subset X$ which contains a representative of each equivalence class of $r_\mathcal{P}$. That means that the approximant $K_r^\mathcal{P}:=X/r_\mathcal{P}$ is the continuous image of a compact space and so is compact and hence $\tilde{\mathcal{P}}$ is FLC.

Conversely, suppose that $\tilde{\mathcal{P}}$ is FLC and $r > 0$. Then there exists some $R > 0$ such that, for each $U \in \mathcal{U}$, there exists some $V \in \mathcal{U}$ with $V \circ R \subset r \circ U$. Let $\delta$ be such that each $\delta$-ball is relatively compact and $\epsilon < \delta$ be such that $U_\epsilon \circ R \subset r \circ U_\delta$ (of course, by properness any $\delta > 0$ will do so in particular such a $\delta$ exists). For every $[x]_R \in K_R^\mathcal{P}$, we have that $\pi_{R,X}(U_\epsilon([x]_R))$ is an open neighbourhood of $[x]_R$ since, given $x \in X$ with $\pi_{R,X}(x)=[x]_R$, we have that $U_\epsilon(x) \subset R \circ U_\epsilon \circ R(x) = \pi_{R,X}^{-1}(\pi_{R,X}(U_\epsilon)([x]_R))$. Then $\{\pi_{R,X}(U)[x]_R \mid x \in X \}$ is such that the interiors cover $K_R^\mathcal{P}$ and, by FLC, there must exist $x_1, \ldots, x_k$ with $\bigcup_{i=1}^k \pi_{R,X}(U)([x_i]) = K_R^\mathcal{P}$. That is, we have that $X=R \circ U_\epsilon \circ R(\{x_1,\ldots,x_k\}) \subset R \circ r \circ U_\delta(\{x_1,\ldots,x_k\}) = r \circ U_\delta(\{x_1,\ldots,x_k\})$. This means that a representative of each $r_{\tilde{\mathcal{P}}}$-equivalence class is contained in $U_\delta(\{x_1,\ldots,x_k\})$. Since each $U_\delta(x_i)$ is contained in a compact subset $K_i \subset X$, we have that a representative of each $r_{\tilde{\mathcal{P}}}$-equivalence class is contained in the compact set $\bigcup_{i=1}^k K_i$ so $\mathcal{P}$ is FLC. \end{proof}

Note that it follows, by the above and Proposition \ref{prop: FLC iff FLC}, that a locally finite tiling $T$ of a proper metric space $(X,d_X)$ is FLC with respect to allowed partial isometries $\mathcal{S}$ if and only if its induced collage is FLC.

\subsection{Completions of Uniform Spaces Induced from Collages}

We have seen that, given a collage, one may associate a uniform space to it. The definition of a collage also suggests the construction of a certain topological space, an ``inverse limit of approximants'':

\begin{definition} \label{def: Pattern Space} Given a collage $\tilde{\mathcal{P}}$ define the \emph{approximants} $K_\lambda^{\tilde{\mathcal{P}}}$ as the quotient spaces of $X$ (equipped with the topology compatible with $\mathcal{U}_X$) by the equivalence relations $\lambda_{\tilde{\mathcal{P}}}$, denoting the quotient maps by $\pi^{\tilde{\mathcal{P}}}_{\lambda,X} \colon X \rightarrow K_\lambda^{\tilde{\mathcal{P}}}$. Since for each $\lambda \leq \mu$ we have that $\mu_{\tilde{\mathcal{P}}} \subset \lambda_{\tilde{\mathcal{P}}}$ we have \emph{connecting maps} given by the induced quotients $\pi^{\tilde{\mathcal{P}}}_{\lambda,\mu} \colon K_\mu^{\tilde{\mathcal{P}}} \rightarrow K_\lambda^{\tilde{\mathcal{P}}}$. As $\Lambda$ is directed the connecting maps between the approximants form an inverse system, for which we denote the inverse limit as $\Omega^{\tilde{\mathcal{P}}}:= \varprojlim (K_\mu^{\tilde{\mathcal{P}}},\pi^{\tilde{\mathcal{P}}}_{\lambda,\mu})$. Of course, there are projection maps from the inverse limit to the approximants, which we denote by $\pi^{\tilde{\mathcal{P}}}_{\lambda,\infty} \colon \Omega^{\tilde{\mathcal{P}}} \rightarrow K_\lambda^{\tilde{\mathcal{P}}}$ and there is a map from $X$ to the inverse limit, which we denote by $\pi^{\tilde{\mathcal{P}}}_{\infty,X} \colon X \rightarrow \Omega^{\tilde{\mathcal{P}}}$. \end{definition}

It shall frequently be the case that only one collage will be in question, so we shall often drop its notation from the indexing of the connecting maps etc.

\subsubsection{Collages of Euclidean Tilings}

We shall make a brief detour to see how these definitions apply to Euclidean tilings. By design, the resulting constructions coincide with those of the literature, see, e.g., \cite{Sadun2,BDHS}.

Let $T$ be a tiling of $(\mathbb{R}^d,d_{euc})$. Recall the definitions of the patterns $\mathcal{T}_1$, $\mathcal{T}_0$ and $\mathcal{T}_{\text{rot}}$ as defined in the previous subsection. We have that $x R_{\mathcal{T}_1} y$ if and only if the tilings $T-x$ and $T-y$ agree at the origin to radius $R$. The induced collage uniformity for $\mathcal{T}_1$ sets points $x$ and $y$ of $\mathbb{R}^d$ to be ``close'' if and only if they both have nearby points where the tilings centred there agree to some large radius. This uniformity will usually only be suitable for tilings which have FLC with respect to translations. We have that the approximants $K_R$ are precisely the BDHS approximants \cite{BDHS}. For an FLC (with respect to translations) tiling, these approximants are branched $d$-manifolds, the branching occurs at points for which there is a point of the boundary of a tile at distance $R$ to the represented patch where there is a choice of extension of the patch. The connecting maps $\pi_{r,R} \colon K_R \rightarrow K_r$ are the quotient maps between approximants which ``forget'' patch information from distance $r$ onwards.

The uniformity $\mathcal{U}_{\mathcal{T}_1}$ on $\mathbb{R}^d$ agrees with the uniformity induced by the usual tiling metric \cite{AP} on $\mathbb{R}^d$, where we consider a point $x \in \mathbb{R}^d$ as representing the tiling $T-x$. The inverse limit of approximants $\Omega^{\mathcal{T}_1}$ is homeomorphic to the completion of this uniform space. This space is often called the \emph{translational hull} or, when we are being less specific, the \emph{tiling space} of $T$. For non-periodic tilings this space is connected but not locally connected and is globally a fibre bundle with base space the $d$-torus and fibre a totally disconnected set \cite{SW} (which, for a repetitive tiling, is the Cantor set). Points of $\Omega^{\mathcal{T}_1}$ may themselves be considered as tilings, they are those tilings whose bounded patches are translates of patches from the tiling $T$. We have that $(\Omega^{\mathcal{T}_1},\mathbb{R}^d)$ is a dynamical system, where $\mathbb{R}^d$ acts on the tiling space $\Omega^{\mathcal{T}_1}$ by translations.

For $\mathcal{T}_{\text{rot}}$ defined on $E^+(d)$ we have that $f R_{\mathcal{T}_{\text{rot}}} g$ if and only if $f^{-1}(T)$ and $g^{-1}(T)$ agree at the origin to radius $R$. So we have that the induced collage uniformity for $\mathcal{T}_{\text{rot}}$ sets $f$ and $g$ as ``close'' if and only if there are two tilings which are small rigid motions from $f^{-1}(T)$ and $g^{-1}(T)$ which agree at the origin to a large radius. This uniformity will usually only be suitable for tilings with FLC with respect to rigid motions. Again, in the FLC case, the \emph{rigid hull} (or \emph{tiling space}) is the completion of the uniform space $(E^+(d),\mathcal{U}_{\mathcal{T}_{\text{rot}}})$ and is homeomorphic to the inverse limit of approximants $K_R$, which are branched $(d(d+1)/2)$-manifolds. We have that $(\Omega^{\mathcal{T}_{\text{rot}}},E^+(d))$ is a dynamical system, where the group $E^+(d)$ acts on $\Omega^{\mathcal{T}_{\text{rot}}}$ by rigid motions of tilings.

The final pattern that we associated to a tiling of $(\mathbb{R}^d,d_{euc})$ was $\mathcal{T}_0$. We have that $x R_{\mathcal{T}_0} y$ if and only if $T-x$ and $T-y$ agree at the origin to radius $R$, perhaps after a further rotation at origin. The uniformity $\mathcal{U}_{\mathcal{T}_0}$ considers $x$ and $y$ to be ``close'' if and only if each have nearby points such that the tilings centred there agree to a large radius, perhaps after some rotation. This uniformity is usually only suitable for tilings which have FLC with respect to rigid motions. In this case, the tiling space as the completion of this uniform space is homeomorphic to the inverse limit of approximants. We have that $\Omega^{\mathcal{T}_0} \cong \Omega^{\mathcal{T}_{\text{rot}}}/SO(d)$ where $SO(d)$ acts on $\Omega^{\mathcal{T}_{\text{rot}}}$ by rotations. In case the tiling $T$ was FLC with respect to translations as well, we have that $\Omega^{\mathcal{T}_0}$ is a quotient of $\Omega^{\mathcal{T}_1}$ given by identifying tilings of the hull which are rotates of each other.

None of the constructions above required our tiling to possess FLC. However, it is usually the case that for non-FLC tilings the induced uniformities will not be suitable. For example, we may wish for two tilings to be ``close'' if they agree to a large radius up to some small perturbation which is not just a rigid motion. A metric which is capable of dealing with a rather general class of non-FLC tilings of $\mathbb{R}^d$ is given in \cite{PF}.

Our motivations here are not to provide a good setting with which to study non-FLC tilings. Constructions such as the pattern-equivariant cohomology rely on the tiling space being given as an inverse limit of approximants which are quotients of the ambient space of the tiling. So one certainly should not expect such techniques to be applicable to non-FLC tilings for which the tiling space possesses extra dimensions to the ambient space of the tiling, arising from small perturbations of the tiling. There may in certain specific settings, however, be ad hoc techniques with which one may represent these extra dimensions on an ambient space for the tiling c.f., the pattern $\mathcal{T}_{\text{rot}}$ defined on the Euclidean group $E^+(d)$.

\subsubsection{Collages of Hierarchical Tilings}

Recall our definition of a hierarchical tiling $T_\omega = (T_0,T_1,\ldots)$ as given in the previous subsection. We have that $x R_{\mathcal{T}_\omega} y$ if and only if (given our condition on the ambient metric space of the hierarchical tiling) there is an allowed isometry $\Phi \in (\mathcal{T}_{k(R)})_{x,y}^R$ between $x$ and $y$ of radius $R$ preserving patches of $T_{k(R)}$. So two points are ``close'' if there is a tiling $T_n$ which has large patches about $x$ and $y$ which agree up to an allowed partial isometry to some large radius for some large $n$.

Note that the collage uniformity and inverse limit of approximants $\Omega^{\mathcal{T}_\omega}$ are independent of the function $k$ (in the language to follow, the induced collages for different choices of $k$ are equivalent). Let $T_\omega$ be some hierarchical tiling coming from a substitution rule on $(\mathbb{R}^d,d_{euc})$ with inflation factor $\lambda$. Then taking some $\epsilon>0$ (typically small relative to the size of the tiles) it is convenient to pick $k(t):=\max \{\lceil \log_\lambda (t /\epsilon) \rceil,0\}$; that is $k(t)$ is the smallest $n \in \mathbb{N}_0$ for which $t/\lambda^n \le \epsilon$. We have that $(\mathcal{T}_\omega)_{x,y}^{\lambda^n\epsilon} \neq \emptyset$ if and only if the patches of tiles within distance $\lambda^n \epsilon$ of $x$ and $y$ agree up to an isometry taking $x$ to $y$ in the $n$-th supertile decomposition $T_n$. Then (when the allowed partial isometries are given as translations or rigid motions) each of the approximants $K_{\lambda^n\epsilon}$ for $n \in \mathbb{N}_0$ are homeomorphic and in a way which makes all of the connecting maps between them the same -- these are just the BDHS-complexes for the tiling with a fixed parameter and the connecting maps are those induced by substitution \cite{BDHS}.

Suppose that we allowed, and then defined $k$ to be the constant function $t \mapsto 0$. In this case $\Omega^{\mathcal{T}_\omega} \cong \Omega^\mathcal{T}$, the tiling space of $T_0$. For any function $k$ there will always exist a (surjective) continuous map $f \colon \Omega^{\mathcal{T}_\omega} \rightarrow \Omega^\mathcal{T}$. Under certain circumstances one can also find an inverse. One can define a continuous map in the other direction if one can determine the local configuration of supertiles given enough information about the local configuration of tiles; the notion to consider here is that of ``recognisability''. In such a situation, the hierarchical tiling is determined by $T_0$. But where the substitution is not invertible, the spaces $\Omega^{\mathcal{T}_\omega}$ and $\Omega^\mathcal{T}$ will differ. One may like to view the pattern $\mathcal{T}_\omega$ in such a case as instead a tiling which ``knows'' where its tiles are as well as how to combine them into supertiles, how to combine those into super$^2$-tiles and so on. We illustrate this in the following example.

\begin{exmp}\label{ex: DS} The $d$-dimensional dyadic solenoid $\mathbb{S}_2^d$ is the inverse limit of the $d$-torus $\mathbb{R}^d/ \mathbb{Z}^d$ under the times two map induced from the map $x \mapsto 2x$ in $\mathbb{R}^d$. Although the dyadic solenoid is not the translational hull for any tiling of $\mathbb{R}^d$ because of its equicontinuous $\mathbb{R}^d$-action \cite{CH}, it can be realised as the translational hull of a hierarchical tiling. That is, we can define a hierarchical tiling $T_\omega$ such that, with the collection of allowed partial isometries given as translations, the space $\Omega^{\mathcal{T}_\omega}$ is homeomorphic to the dyadic solenoid. It comes from the non-recognisable substitution of a single $d$-cube prototile, which subdivides to $2^d$ $d$-cubes in the obvious way. Such a hierarchical tiling consists of a sequence $T_\omega= (T_0,T_1,\ldots )$ of periodic tilings $T_i$ of $d$-cubes with sides length $2^i$ such that $T_i$ subdivides to $T_{i-1}$. For example, one may pick each $T_i$ as the periodic tiling with vertex of a cube at the origin.

To explain our comment on the failure of the ``glueing'' of partial isometries, consider the pattern associated to the above hierarchical tiling with $k(t) := \max \{\lceil \log_2 (t) \rceil,0\}$. We see that the partial translations $x \mapsto x + y$ restricted to open balls of radius $2^n$ with $y \in 2^n \mathbb{Z}^d$ are elements of $\mathcal{T}_n$ for all $n \in \mathbb{N}_0$ and hence are also elements of $\mathcal{T}_\omega$. However, translation by such a $y$ defined on the whole of $\mathbb{R}^d$ is not an element of $\mathcal{T}_\omega$, since the larger domain and range of this global isometry of $\mathbb{R}^d$ ``sees'' that the translation does not preserve the tilings of $T_i$ for $i$ large enough.\end{exmp}

\subsubsection{Inverse Limits of Approximants for General Collages}

We shall now consider general collages and show that, under relatively general conditions, we have that the inverse limit of approximants $\Omega^{\tilde{\mathcal{P}}}$ is homeomorphic to a completion of $(X,\mathcal{U}_{\tilde{\mathcal{P}}})$.

\begin{lemma} For a collage $\tilde{\mathcal{P}}$ we have that $\pi^{\tilde{\mathcal{P}}}_{\lambda,X}(U) = \pi^{\tilde{\mathcal{P}}}_{\lambda,X}(\lambda_{\tilde{\mathcal{P}}} \circ U) = \pi^{\tilde{\mathcal{P}}}_{\lambda,X}(U \circ \lambda_{\tilde{\mathcal{P}}})$. We have that $\pi^{\tilde{\mathcal{P}}}_{\lambda,X}(U)^2 = \pi^{\tilde{\mathcal{P}}}_{\lambda,X}(U \circ \lambda_{\tilde{\mathcal{P}}} \circ U)$.\end{lemma}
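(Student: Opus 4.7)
The plan is to unwind definitions, using only that $\lambda_{\tilde{\mathcal{P}}}$ is an equivalence relation and that $\pi^{\tilde{\mathcal{P}}}_{\lambda,X}$ is precisely the quotient identifying those pairs of points related by $\lambda_{\tilde{\mathcal{P}}}$. Both assertions reduce to elementary manipulations of relations, keeping in mind the composition convention $U \circ V = \{(x,z) \mid \exists y,\ (x,y) \in U,\ (y,z) \in V\}$ stated earlier.

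For the first chain of equalities I would show $\pi^{\tilde{\mathcal{P}}}_{\lambda,X}(U) = \pi^{\tilde{\mathcal{P}}}_{\lambda,X}(\lambda_{\tilde{\mathcal{P}}} \circ U)$ by two inclusions. The inclusion $\subset$ is immediate from reflexivity $\Delta_X \subset \lambda_{\tilde{\mathcal{P}}}$, which gives $U \subset \lambda_{\tilde{\mathcal{P}}} \circ U$. For the reverse, take $(a,c) \in \lambda_{\tilde{\mathcal{P}}} \circ U$ with witness $b$; since $(a,b) \in \lambda_{\tilde{\mathcal{P}}}$ we have $\pi^{\tilde{\mathcal{P}}}_{\lambda,X}(a) = \pi^{\tilde{\mathcal{P}}}_{\lambda,X}(b)$, and hence $\pi^{\tilde{\mathcal{P}}}_{\lambda,X}(a,c) = \pi^{\tilde{\mathcal{P}}}_{\lambda,X}(b,c) \in \pi^{\tilde{\mathcal{P}}}_{\lambda,X}(U)$. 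The equality $\pi^{\tilde{\mathcal{P}}}_{\lambda,X}(U) = \pi^{\tilde{\mathcal{P}}}_{\lambda,X}(U \circ \lambda_{\tilde{\mathcal{P}}})$ follows by the mirror argument.

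For the squared identity I would again argue directly, abbreviating $\pi = \pi^{\tilde{\mathcal{P}}}_{\lambda,X}$. For $\supset$, if $(x,z) \in U \circ \lambda_{\tilde{\mathcal{P}}} \circ U$ with intermediate witnesses $y,w$, then $(\pi(x),\pi(y))$ and $(\pi(w),\pi(z))$ both lie in $\pi(U)$ while $\pi(y) = \pi(w)$, so $(\pi(x),\pi(z)) \in \pi(U)^2$. For $\subset$, if $(\alpha,\gamma) \in \pi(U)^2$ with intermediate $\beta$, I would lift to find $(x_1,y_1),(x_2,y_2) \in U$ with $\pi(x_1)=\alpha$, $\pi(y_1)=\beta=\pi(x_2)$, $\pi(y_2)=\gamma$; then $(y_1,x_2) \in \lambda_{\tilde{\mathcal{P}}}$ forces $(x_1,y_2) \in U \circ \lambda_{\tilde{\mathcal{P}}} \circ U$, mapping to $(\alpha,\gamma)$ under $\pi$.

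I do not anticipate a substantive obstacle: the lemma is a formal consequence of $\pi^{\tilde{\mathcal{P}}}_{\lambda,X}$ being the quotient map for $\lambda_{\tilde{\mathcal{P}}}$, so either coordinate of a pair may freely be replaced by a $\lambda_{\tilde{\mathcal{P}}}$-equivalent point without altering its image. The only care required is to keep the relational composition order straight when threading witnesses through the intermediate $\lambda_{\tilde{\mathcal{P}}}$ step in the second identity.
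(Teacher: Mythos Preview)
Your proposal is correct and matches the paper's approach essentially step for step: both use reflexivity for one inclusion and replace a coordinate by its $\lambda_{\tilde{\mathcal{P}}}$-equivalent preimage for the other. The only cosmetic difference is that for the squared identity the paper first rewrites $\pi(U)^2 = \pi(U)\circ\pi(\lambda_{\tilde{\mathcal{P}}}\circ U)$ using the first part, whereas you argue the two inclusions directly; the underlying witness-chasing is identical.
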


\begin{proof} Since $\lambda$ is reflexive we have that $\pi_{\lambda,X}(U) \subset \pi_{\lambda,X}(\lambda \circ U)$. For the other inclusion, take $(x',z') \in \pi_{\lambda,X}(\lambda \circ U)$. That is, there exists $x,y,z \in X$ with $(x,y) \in \lambda$, $(y,z) \in U$, $\pi_{\lambda,X}(x)=x'$ and $\pi_{\lambda,X}(z)=z'$. But then $\pi_{\lambda,X}(x)=\pi_{\lambda,X}(y)$ so $\pi_{\lambda,X}(y,z)=(x',z') \in \pi_{\lambda,X}(U)$. The other equality is proved analogously.

It follows that $\pi_{\lambda,X}(U)^2 = \pi_{\lambda,X}(U) \circ \pi_{\lambda,X}(\lambda \circ U)$. This latter set is equal to $\pi_{\lambda,X}(U \circ \lambda \circ U)$. Take $(x',z') \in \pi_{\lambda,X}(U) \circ \pi_{\lambda,X}(\lambda \circ U)$. That is, there exists $(x,y_1) \in U$ and $(y_2,z) \in \lambda \circ U$ with $\pi_{\lambda,X}(x)=x'$, $\pi_{\lambda,X}(y_1)=\pi_{\lambda,X}(y_2)$ and $\pi_{\lambda,X}(z)=z'$. But then $y_1 \lambda y_2$ so $(y_1,z) \in \lambda \circ U$ and hence $(x,z) \in U \circ \lambda \circ U$ so $(x',z') \in \pi_{\lambda,X}(U \circ \lambda \circ U)$. For the other inclusion, take $(x',z') \in \pi_{\lambda,X}(U \circ \lambda \circ U)$. Then there exists $x,y,z$ such that $(x,y) \in U$, $(y,z) \in \lambda \circ U$, $\pi_{\lambda,X}(x)=x'$ and $\pi_{\lambda,X}(y)=y'$. So then $(x',z') \in \pi_{\lambda,X}(U) \circ \pi_{\lambda,X}(\lambda \circ U)$. \end{proof}

The above may be seen as giving an obstruction to the pushforwards of entourages defining uniformities on the approximants. Given some $\lambda$, to ensure approximate transitivity for the pushforward of entourages from $X$ to $K_\lambda$, we would want to find, for each entourage $U \in \mathcal{U}_X$, some entourage $V \in \mathcal{U}_X$ such that $V \circ \lambda \circ V \subset \lambda \circ U \circ \lambda$, for then by the above we have that $\pi_{\lambda,X}(V)^2 = \pi_{\lambda,X}(V \circ \lambda \circ V) \subset \pi_{\lambda,X}(\lambda \circ U \circ \lambda) = \pi_{\lambda,X}(U)$. Since $\mathcal{U}_X$ is a uniformity there exists some $W$ such that $W \circ W \subset U$ and we could then use any $V \subset W$ such that $V \circ \lambda \subset \lambda \circ W$, if it existed. Axiom $2$ of a collage forces something very much like this condition, and indeed the problem disappears when we push forward to the whole inverse limit instead of just the approximants:

\begin{proposition} Define $\mathcal{U}_{\Omega^{\tilde{\mathcal{P}}}}$ to be the upwards closure of the collection of sets of the form $\pi_{\lambda,\infty}^{-1}(\pi_{\lambda,X}(U)) \subset \Omega^{\tilde{\mathcal{P}}} \times \Omega^{\tilde{\mathcal{P}}}$ where $U \in \mathcal{U}_X$. Then $\mathcal{U}_{\Omega^{\tilde{\mathcal{P}}}}$ is a uniformity on $\Omega^{\tilde{\mathcal{P}}}$. \end{proposition}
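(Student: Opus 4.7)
The plan is to show that the collection $\mathcal{B}:=\{\pi_{\lambda,\infty}^{-1}(\pi_{\lambda,X}(U)) \mid \lambda \in \Lambda, U \in \mathcal{U}_X\}$ satisfies the four axioms of a uniformity base; since $\mathcal{U}_{\Omega^{\tilde{\mathcal{P}}}}$ is by definition the upwards closure of $\mathcal{B}$, this suffices. Throughout, the key observation is that $\pi_{\lambda,\infty}=\pi_{\lambda,\mu}\circ\pi_{\mu,\infty}$ and $\pi_{\lambda,X}=\pi_{\lambda,\mu}\circ\pi_{\mu,X}$ for $\lambda\leq\mu$, so information can be transported between levels freely.

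Axioms 1, 2, and 4 are the routine ones I would dispatch first. For axiom 1 (reflexivity), since $\Delta_X \subset U$ and $\pi_{\lambda,X}$ is surjective we get $\Delta_{K_\lambda}\subset\pi_{\lambda,X}(U)$, hence $\Delta_{\Omega^{\tilde{\mathcal{P}}}}\subset\pi_{\lambda,\infty}^{-1}(\pi_{\lambda,X}(U))$. For axiom 2 (symmetry), one checks that $\pi_{\lambda,X}(U)^{\mathrm{op}}=\pi_{\lambda,X}(U^{\mathrm{op}})$ and $(\pi_{\lambda,\infty}^{-1}(A))^{\mathrm{op}}=\pi_{\lambda,\infty}^{-1}(A^{\mathrm{op}})$, reducing symmetry to closure of $\mathcal{U}_X$ under $U\mapsto U^{\mathrm{op}}$. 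For axiom 4 (intersection), given basic entourages at levels $\lambda_1,\lambda_2$ with entourages $U_1,U_2$, use directedness of $\Lambda$ to pick $\nu\geq\lambda_1,\lambda_2$ and take $W=U_1\cap U_2 \in \mathcal{U}_X$; a short diagram chase using $\pi_{\lambda_i,\nu}$ shows $\pi_{\nu,\infty}^{-1}(\pi_{\nu,X}(W))$ is contained in both given entourages.

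The main obstacle is axiom 3, the weak transitivity condition, and this is where the two defining axioms of a collage are actually needed. Given $\lambda\in\Lambda$ and $U\in\mathcal{U}_X$, I would seek $\mu \in \Lambda$ and $V\in\mathcal{U}_X$ such that $\pi_{\mu,\infty}^{-1}(\pi_{\mu,X}(V))\circ\pi_{\mu,\infty}^{-1}(\pi_{\mu,X}(V))\subset\pi_{\lambda,\infty}^{-1}(\pi_{\lambda,X}(U))$. If $(a,b)$ lies in the left-hand side, taking an intermediate point and applying $\pi_{\mu,\infty}$ yields $(\pi_{\mu,\infty}(a),\pi_{\mu,\infty}(b))\in\pi_{\mu,X}(V)^2$, which by the preceding lemma equals $\pi_{\mu,X}(V\circ\mu_{\tilde{\mathcal{P}}}\circ V)$. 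Pushing down by $\pi_{\lambda,\mu}$ gives $(\pi_{\lambda,\infty}(a),\pi_{\lambda,\infty}(b))\in\pi_{\lambda,X}(V\circ\mu_{\tilde{\mathcal{P}}}\circ V)$, so it suffices to arrange
\[
V\circ\mu_{\tilde{\mathcal{P}}}\circ V\subset U\circ\lambda_{\tilde{\mathcal{P}}},
\]
since then the lemma gives $\pi_{\lambda,X}(V\circ\mu_{\tilde{\mathcal{P}}}\circ V)\subset\pi_{\lambda,X}(U\circ\lambda_{\tilde{\mathcal{P}}})=\pi_{\lambda,X}(U)$.

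To engineer the inclusion above I would first pick $U'\in\mathcal{U}_X$ with $U'\circ U'\subset U$, then invoke axiom 2 of a collage in its symmetric strengthening (established in the discussion immediately following the definition of a collage) to obtain $\mu\geq\lambda$ and $V\subset U'$ with $\mu_{\tilde{\mathcal{P}}}\circ V\subset U'\circ\lambda_{\tilde{\mathcal{P}}}$. Then
\[
V\circ\mu_{\tilde{\mathcal{P}}}\circ V \subset V\circ U'\circ\lambda_{\tilde{\mathcal{P}}} \subset U'\circ U'\circ\lambda_{\tilde{\mathcal{P}}} \subset U\circ\lambda_{\tilde{\mathcal{P}}},
\]
as required. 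This completes the verification of the four uniformity-base axioms and hence the proof that $\mathcal{U}_{\Omega^{\tilde{\mathcal{P}}}}$ is a uniformity on $\Omega^{\tilde{\mathcal{P}}}$.
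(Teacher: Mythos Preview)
Your proof is correct and follows essentially the same route as the paper's. The only cosmetic difference is in the verification of axiom~3: the paper uses the original form of the collage axiom, $W\circ\mu_{\tilde{\mathcal{P}}}\subset\lambda_{\tilde{\mathcal{P}}}\circ V$, to obtain $W\circ\mu_{\tilde{\mathcal{P}}}\circ W\subset\lambda_{\tilde{\mathcal{P}}}\circ U$, whereas you use the mirror form $\mu_{\tilde{\mathcal{P}}}\circ V\subset U'\circ\lambda_{\tilde{\mathcal{P}}}$ to reach $V\circ\mu_{\tilde{\mathcal{P}}}\circ V\subset U\circ\lambda_{\tilde{\mathcal{P}}}$; both conclude via the same lemma that $\pi_{\lambda,X}$ absorbs the $\lambda_{\tilde{\mathcal{P}}}$ factor.
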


\begin{proof} Since the diagonal $\Delta_X \subset U$ for each $U \in \mathcal{U}_X$ we have that $\Delta \subset \pi_{\lambda,X}(U)$ so $\Delta \subset \pi_{\lambda,\infty}^{-1}(\pi_{\lambda,X}(U))$.

For each $U \in \mathcal{U}_X$ we have that $U^{\text{op}} \in \mathcal{U}_X$ and $\pi_{\lambda,\infty}^{-1}(\pi_{\lambda,X}(U))^{\text{op}} = $ \linebreak $\pi_{\lambda,\infty}^{-1}(\pi_{\lambda,X}(U)^{\text{op}})$ $= \pi_{\lambda,\infty}^{-1}(\pi_{\lambda,X}(U^{\text{op}}))$ and hence $\mathcal{U}_{\Omega^{\tilde{\mathcal{P}}}}$ is closed under inverses.

Let $U,V \in \mathcal{U}_X$ and $\lambda,\mu \in \Lambda$. Since $\Lambda$ is directed there exists some $\nu \geq \lambda,\mu$. We see that \[\pi_{\nu,\infty}^{-1}(\pi_{\nu,X}(U \cap V)) \subset \pi_{\nu,\infty}^{-1}(\pi_{\nu,X}(U) \cap \pi_{\nu,X}(V)) =\] \[\pi_{\nu,\infty}^{-1}(\pi_{\nu,X}(U)) \cap \pi_{\nu,\infty}^{-1}(\pi_{\nu,X}(V)) \subset \pi_{\lambda,\infty}^{-1}(\pi_{\lambda,X}(U)) \cap \pi_{\mu,\infty}^{-1}(\pi_{\mu,X}(V))\] so $\mathcal{U}_{\Omega^{\tilde{\mathcal{P}}}}$ is closed under finite intersections.

Finally, given $\pi_{\lambda,\infty}^{-1}(\pi_{\lambda,X}(U))$, we need to show that there exists some $\mu$ and $W$ with $\pi_{\mu,\infty}^{-1}(\pi_{\mu,X}(W))^2 \subset \pi_{\lambda,\infty}^{-1}(\pi_{\lambda,X}(U))$. Since $\mathcal{U}_X$ is a uniformity there exists some $V \in \mathcal{U}_X$ with $V \circ V \subset U$. By the definition of a collage, there exists $\mu \geq \lambda$ and $W \in \mathcal{U}_X$ with $W \subset V$ such that $W \circ \mu \subset \lambda \circ V$. Hence \[\pi_{\mu,\infty}^{-1}(\pi_{\mu,X}(W))^2 = \pi_{\mu,\infty}^{-1}(\pi_{\mu,X}(W)^2) = \pi_{\mu,\infty}^{-1}(\pi_{\mu,X}(W \circ \mu \circ W)) \subset\] \[\pi_{\mu,\infty}^{-1}(\pi_{\mu,X}(\lambda \circ V \circ V)) \subset \pi_{\mu,\infty}^{-1}(\pi_{\mu,X}(\lambda \circ U))\subset \pi_{\lambda,\infty}^{-1}(\pi_{\lambda,X}(U)).\] \end{proof}

\begin{proposition} The induced topology on the uniform space $(\Omega^{\tilde{\mathcal{P}}},\mathcal{U}_{\Omega^{\tilde{\mathcal{P}}}})$ agrees with the original topology on $\Omega^{\tilde{\mathcal{P}}}$ as an inverse limit of quotient spaces. \end{proposition}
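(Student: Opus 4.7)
My plan is to show that the neighborhood filters at each $q \in \Omega^{\tilde{\mathcal{P}}}$ agree in both topologies, which will imply the topologies agree. By directedness of $\Lambda$, a neighborhood base of $q$ in the inverse limit topology $\tau_{\mathrm{il}}$ is given by the sets $\pi_{\lambda,\infty}^{-1}(V)$ for $V$ open in $K_\lambda$ containing $q_\lambda := \pi_{\lambda,\infty}(q)$, and a neighborhood base of $q$ in the uniform topology $\tau_{\mathcal{U}}$ is given by the entourage-neighborhoods $E(q)$ for basic entourages $E = \pi_{\lambda,\infty}^{-1}(\pi_{\lambda,X}(U))$. I would prove each filter contains the other.

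For $\tau_{\mathrm{il}} \subseteq \tau_{\mathcal{U}}$, given a subbasic $\pi_{\lambda,\infty}^{-1}(V)$ I would pick $x \in X$ with $\pi_{\lambda,X}(x) = q_\lambda$, use openness of $\pi_{\lambda,X}^{-1}(V) \ni x$ to obtain $U \in \mathcal{U}_X$ with $U(x) \subset \pi_{\lambda,X}^{-1}(V)$, and verify that any $r$ in the uniform neighborhood $\pi_{\lambda,\infty}^{-1}(\pi_{\lambda,X}(U))(q)$ satisfies $r_\lambda \in \pi_{\lambda,X}(U)(q_\lambda) = \pi_{\lambda,X}(U(x)) \subset V$; this uses the identity $\pi_{\lambda,X}(\lambda_{\tilde{\mathcal{P}}} \circ U) = \pi_{\lambda,X}(U)$ from the preceding lemma to see that any representative of $r_\lambda$ arising from the entourage relation can be pulled back into $U(x)$ modulo $\lambda_{\tilde{\mathcal{P}}}$-equivalence.

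For the reverse inclusion, given $E(q) = \pi_{\lambda,\infty}^{-1}(\pi_{\lambda,X}(U))(q)$, I invoke the second collage axiom to produce $\mu \geq \lambda$ and $V \in \mathcal{U}_X$ with $V \subset U$ and $V \circ \mu_{\tilde{\mathcal{P}}} \subset \lambda_{\tilde{\mathcal{P}}} \circ U$. Picking a representative $x$ of $q_\mu$, I aim to construct a $\mu_{\tilde{\mathcal{P}}}$-saturated open set $\tilde{V} \subset X$ with $x \in \tilde{V} \subset V \circ \mu_{\tilde{\mathcal{P}}}(x)$. Its image $\pi_{\mu,X}(\tilde{V})$ is then open in $K_\mu$ by definition of the quotient topology, contains $q_\mu$, and its $\pi_{\mu,\infty}$-preimage is a $\tau_{\mathrm{il}}$-neighborhood of $q$ contained in $E(q)$ because
\[
\pi_{\lambda,\mu}(\pi_{\mu,X}(\tilde{V})) = \pi_{\lambda,X}(\tilde{V}) \subset \pi_{\lambda,X}(V \circ \mu_{\tilde{\mathcal{P}}}(x)) \subset \pi_{\lambda,X}(\lambda_{\tilde{\mathcal{P}}} \circ U(x)) = \pi_{\lambda,X}(U)(q_\lambda).
\]

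The main obstacle is the construction of this $\mu_{\tilde{\mathcal{P}}}$-saturated open $\tilde{V}$, since the obvious candidate $V(\mu_{\tilde{\mathcal{P}}}(x))$ is open but not manifestly saturated. I expect to resolve this by iterating the collage axiom: applied to $\mu$ and $V$ it produces a further $\mu' \geq \mu$ and $W \in \mathcal{U}_X$ with $W \circ \mu'_{\tilde{\mathcal{P}}} \subset \mu_{\tilde{\mathcal{P}}} \circ V$ and its symmetric counterpart $\mu'_{\tilde{\mathcal{P}}} \circ W \subset V \circ \mu_{\tilde{\mathcal{P}}}$, as noted in the remarks following Definition \ref{def: collage}. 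Working at level $\mu'$ in place of $\mu$ and taking $\tilde{V} = W(\mu'_{\tilde{\mathcal{P}}}(x))$, one chases a $\mu'_{\tilde{\mathcal{P}}}$-equivalent point of a member of this union through the inclusions $\mu'_{\tilde{\mathcal{P}}} \circ W \subset V \circ \mu_{\tilde{\mathcal{P}}}$ and $V \circ \mu_{\tilde{\mathcal{P}}} \subset \lambda_{\tilde{\mathcal{P}}} \circ U$ to verify both the required saturation and the containment $\tilde{V} \subset \lambda_{\tilde{\mathcal{P}}} \circ U(x)$. This coherence between the uniformity $\mathcal{U}_X$ and the system of equivalence relations $\mu_{\tilde{\mathcal{P}}}$ is precisely the role that axiom 2 of a collage plays in tying the approximants' quotient topologies to the uniform structure on the inverse limit.
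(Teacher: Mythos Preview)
Your first direction ($\tau_{\mathrm{il}} \subseteq \tau_{\mathcal{U}}$) contains a genuine gap: the claimed equality $\pi_{\lambda,X}(U)(q_\lambda) = \pi_{\lambda,X}(U(x))$ is false in general. Only the inclusion $\supseteq$ holds. If $(r_\lambda,q_\lambda)\in\pi_{\lambda,X}(U)$ then there exist $a,b$ with $(a,b)\in U$, $[a]_\lambda=r_\lambda$ and $[b]_\lambda=q_\lambda=[x]_\lambda$; but $b$ need not equal $x$, so $a$ need only lie in $U(b)$ for some $b\sim_\lambda x$, and nothing forces $[a]_\lambda\in\pi_{\lambda,X}(U(x))$. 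The identity $\pi_{\lambda,X}(\lambda_{\tilde{\mathcal P}}\circ U)=\pi_{\lambda,X}(U)$ does not help here: it lets you absorb $\lambda$-equivalence on either side of $U$ \emph{after} pushing forward, but it cannot move the basepoint from $b$ to $x$ inside $U$ before pushing forward. This is precisely where axiom~2 of a collage is required. The paper's proof passes to the $\mu\geq\lambda$ furnished by that axiom, chooses a representative $x$ at level $\mu$ (hence also at level $\lambda$), and uses $V\circ\mu_{\tilde{\mathcal P}}\subset\lambda_{\tilde{\mathcal P}}\circ U$ to obtain $\pi_{\mu,\infty}^{-1}(\pi_{\mu,X}(V))(q)\subset\pi_{\lambda,\infty}^{-1}(N)$.

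Conversely, your second direction is over-engineered. There is no need to construct a saturated open set, and your proposed $\tilde V=W(\mu'_{\tilde{\mathcal P}}(x))$ is in fact not $\mu'$-saturated (its saturation is $\mu'_{\tilde{\mathcal P}}\circ W\circ\mu'_{\tilde{\mathcal P}}(x)$, which need not equal $\tilde V$), so $\pi_{\mu',X}(\tilde V)$ is not obviously open in $K_{\mu'}$. The paper handles this direction without invoking axiom~2 at all: for any representative $x$ of $q_\lambda$ one has $U(x)\subset(U\circ\lambda_{\tilde{\mathcal P}})(x)\subset\pi_{\lambda,X}^{-1}\bigl(\pi_{\lambda,X}(U)(q_\lambda)\bigr)$, which shows directly that $\pi_{\lambda,X}(U)(q_\lambda)$ is a neighbourhood of $q_\lambda$ in the quotient topology on $K_\lambda$, hence its $\pi_{\lambda,\infty}$-preimage is an inverse-limit neighbourhood of $q$. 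In short, you have inverted where the collage axiom does its work: it is essential for $\tau_{\mathrm{il}}\subseteq\tau_{\mathcal U}$ and unnecessary for $\tau_{\mathcal U}\subseteq\tau_{\mathrm{il}}$.
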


\begin{proof} A neighbourhood base for the topology of $\Omega$ at $\underleftarrow{x} \in \Omega$ consists of subsets of the form $\pi_{\lambda,\infty}^{-1}(N)$ where $N$ is a neighbourhood of $\pi_{\lambda,\infty}(\underleftarrow{x})$ in $K_\lambda$. So let $\lambda$ and $N$ be so given; we need to show that there exists some $\mu$ and $V \in \mathcal{U}_X$ such that $\pi_{\mu,\infty}^{-1}(\pi_{\mu,X}(V))(\underleftarrow{x}) \subset \pi^{-1}_{\lambda,\infty}(N)$.

By the definition of a collage, there exists some $\mu \geq \lambda$ such that for all $U \in \mathcal{U}_X$ there is some $V \in \mathcal{U}_X$ with $V \circ \mu \subset \lambda \circ U$. We have that $N$ is a neighbourhood of $\pi_{\lambda,\infty}(\underleftarrow{x})$ if and only if $\pi_{\lambda,X}^{-1}(N)$ is a neighbourhood of each $x \in X$ with $\pi_{\lambda,X}(x) = \pi_{\lambda,\infty}(\underleftarrow{x})$. Take such an $x$, then there exists some entourage $U \in \mathcal{U}_X$ such that $U(x) \subset \pi^{-1}_{\lambda,X}(N)$. Set $V \in \mathcal{U}_X$ and $\mu$ so that $V \circ \mu \subset \lambda \circ U$. We claim that $\pi_{\mu,\infty}^{-1}(\pi_{\mu,X}(V))(\underleftarrow{x}) \subset \pi_{\lambda,\infty}^{-1}(N)$. Indeed, \[\pi_{\mu,\infty}^{-1}(\pi_{\mu,X}(V)) (\underleftarrow{x}) = \pi_{\mu,\infty}^{-1}(\pi_{\mu,X}(V \circ \mu))(\underleftarrow{x}) = \pi_{\mu,\infty}^{-1}(\pi_{\mu,X} (V \circ \mu)(\pi_{\mu,\infty} (\underleftarrow{x}))) = \] \[\pi_{\mu,\infty}^{-1}(\pi_{\mu,X} (V \circ \mu (x))) \subset \pi_{\mu,\infty}^{-1}(\pi_{\mu,X} (\lambda \circ U(x))) \subset \pi_{\mu,\infty}^{-1}(\pi_{\mu,X} (\pi_{\lambda,X}^{-1}(N))).\] This final set is of course equal to $\pi_{\lambda,\infty}^{-1}(N)$ since it consists of all sequences of the inverse limit whose projections to $K_\mu$ are in the set $\pi_{\mu,X}(\pi_{\lambda,X}^{-1}(N)) = \pi_{\mu,X} ((\pi_{\lambda,\mu} \circ \pi_{\mu,X})^{-1}(N)) = \pi_{\mu,X}(\pi_{\mu,X}^{-1}(\pi_{\lambda,\mu}^{-1}(N))) = \pi_{\lambda,\mu}^{-1}(N)$, which are precisely those sequences which project to $N$.

Conversely, suppose that $N$ is an open neighbourhood of $\underleftarrow{x}$ in the induced topology from $(\Omega,\mathcal{U}_\Omega)$. That is, there exists some entourage $U \in \mathcal{U}_X$ and $\lambda \in \Lambda$ with $\pi_{\lambda,\infty}^{-1}(\pi_{\lambda,X}(U))(\underleftarrow{x}) \subset N$. We claim that $\pi_{\lambda,\infty}^{-1}(\pi_{\lambda,X}(U))(\underleftarrow{x})$ is an open neighbourhood of $\underleftarrow{x}$ in the original topology of $\Omega$. Indeed, we have that \\* $\pi_{\lambda,\infty}^{-1}(\pi_{\lambda,X}(U))(\underleftarrow{x})$ $=$ $\pi_{\lambda,\infty}^{-1}(\pi_{\lambda,X}(U)(\pi_{\lambda,\infty}(\underleftarrow{x})))$ so we are done if \\* $\pi_{\lambda,X}(U)(\pi_{\lambda,\infty}(\underleftarrow{x}))$ is a neighbourhood of $\pi_{\lambda,\infty}(\underleftarrow{x})$ in $K_\lambda$. Given $x \in X$ with $\pi_{\lambda,X}(x) = \pi_{\lambda,\infty}(\underleftarrow{x})$ we have that $U(x)$ is a neighbourhood of $x$ and \[U(x) \subset U \circ \lambda(x) \subset \pi_{\lambda,X}^{-1}(\pi_{\lambda,X}(U \circ \lambda(x))) =\] \[ \pi_{\lambda,X}^{-1}(\pi_{\lambda,X}(U \circ \lambda)(\pi_{\lambda,\infty}(\underleftarrow{x}))) = \pi_{\lambda,X}^{-1}(\pi_{\lambda,X}(U)(\pi_{\lambda,\infty}(\underleftarrow{x}))).\] \end{proof}

\begin{lemma} The induced map between Kolmogorov quotients \[(\pi^{KQ} \circ \pi_{\infty,X})^{KQ} \colon (X,\mathcal{U}_{\tilde{\mathcal{P}}})^{KQ} \rightarrow (\Omega^{\tilde{\mathcal{P}}},\mathcal{U}_{\Omega^{\tilde{\mathcal{P}}}})^{KQ}\] is injective and a uniform isomorphism onto its image. \end{lemma}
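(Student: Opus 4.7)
The plan is to apply Lemma \ref{lem:unif iso} to the map $f := \pi_{\infty,X} \colon X \to \Omega^{\tilde{\mathcal{P}}}$. This reduces the task to verifying two assertions: (1) $f$ is uniformly continuous, which in particular sends topologically indistinguishable points to topologically indistinguishable points (so the induced map between Kolmogorov quotients is well-defined); and (2) for every $U \in \mathcal{U}_{\tilde{\mathcal{P}}}$ there exists $V \in \mathcal{U}_{\Omega^{\tilde{\mathcal{P}}}}$ with $f^{-1}(V) \subset U$, which will promote topological indistinguishability of $f(x), f(y)$ back to topological indistinguishability of $x,y$ and thereby furnish the required injectivity.

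The computational observation that drives everything is the identity $f^{-1}\bigl(\pi_{\lambda,\infty}^{-1}(\pi_{\lambda,X}(U))\bigr) = \pi_{\lambda,X}^{-1}(\pi_{\lambda,X}(U)) = \lambda_{\tilde{\mathcal{P}}} \circ U \circ \lambda_{\tilde{\mathcal{P}}}$, which exhibits the pullback of a basic entourage of $\mathcal{U}_{\Omega^{\tilde{\mathcal{P}}}}$ as the $\lambda_{\tilde{\mathcal{P}}}$-saturation of $U$. The whole argument thus reduces to comparing basic collage entourages of shape $U_0 \circ \lambda_{\tilde{\mathcal{P}}} \circ U_0$ with saturated sets of shape $\lambda_{\tilde{\mathcal{P}}} \circ U \circ \lambda_{\tilde{\mathcal{P}}}$, which is exactly what axiom $2$ of Definition \ref{def: collage} is engineered to do, in its symmetric form (both $V \circ \mu_{\tilde{\mathcal{P}}} \subset \lambda_{\tilde{\mathcal{P}}} \circ U$ and $\mu_{\tilde{\mathcal{P}}} \circ V \subset U \circ \lambda_{\tilde{\mathcal{P}}}$, as noted immediately after the definition). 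For (1), given $V = \pi_{\lambda,\infty}^{-1}(\pi_{\lambda,X}(U))$ I would first pick $U_0 \in \mathcal{U}_X$ with $U_0 \circ U_0 \subset U$ and then apply the axiom to $(\lambda, U_0)$ to extract $\mu \geq \lambda$ and $V_0 \in \mathcal{U}_X$ satisfying the two inclusions above; composing them yields $V_0 \circ \mu_{\tilde{\mathcal{P}}} \circ V_0 \subset \lambda_{\tilde{\mathcal{P}}} \circ U_0 \circ U_0 \circ \lambda_{\tilde{\mathcal{P}}} \subset \lambda_{\tilde{\mathcal{P}}} \circ U \circ \lambda_{\tilde{\mathcal{P}}} = f^{-1}(V)$, the required containment. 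For (2), given $U = U_0 \circ \lambda_{\tilde{\mathcal{P}}} \circ U_0$, the same axiom produces $\mu \geq \lambda$ and $V_0$ with $\mu_{\tilde{\mathcal{P}}} \circ V_0 \subset U_0 \circ \lambda_{\tilde{\mathcal{P}}}$; using $\mu_{\tilde{\mathcal{P}}} \subset \lambda_{\tilde{\mathcal{P}}}$ and the idempotency of the equivalence relation $\lambda_{\tilde{\mathcal{P}}}$ one obtains $\mu_{\tilde{\mathcal{P}}} \circ V_0 \circ \mu_{\tilde{\mathcal{P}}} \subset U_0 \circ \lambda_{\tilde{\mathcal{P}}} \subset U$, so the entourage $V := \pi_{\mu,\infty}^{-1}(\pi_{\mu,X}(V_0))$ does the job.

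The only genuine subtlety is noticing that the two sorts of basic entourage have different shapes (symmetric ``sandwich'' versus saturated set), and that the coherence axiom is precisely what lets one shuffle between them; once this is recognised, both (1) and (2) are short formal manipulations, and the conclusion then drops out directly from Lemma \ref{lem:unif iso}.
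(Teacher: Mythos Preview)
Your proposal is correct and follows essentially the same route as the paper: both arguments reduce to Lemma \ref{lem:unif iso} and verify its two entourage conditions via the collage axiom, using the identity $\pi_{\infty,X}^{-1}\bigl(\pi_{\mu,\infty}^{-1}(\pi_{\mu,X}(V))\bigr) = \mu_{\tilde{\mathcal{P}}} \circ V \circ \mu_{\tilde{\mathcal{P}}}$ and the estimate $\mu_{\tilde{\mathcal{P}}} \circ V \circ \mu_{\tilde{\mathcal{P}}} \subset U \circ \lambda_{\tilde{\mathcal{P}}} \circ U$. The only organisational difference is that the paper first argues injectivity directly (showing that distinguishable points map to distinguishable points) and then verifies the two conditions, whereas you observe that condition (2) already forces injectivity and so fold that step into the same computation; this is a mild streamlining but not a different approach.
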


\begin{proof} To show injectivity we must show that two topologically distinguishable points $x,y \in X$ are mapped to topologically distinguishable points. So suppose (without loss of generality) that there exists some uniformity $U \circ \lambda \circ U \in \mathcal{U}$ with $y \not\in U \circ \lambda \circ U (x)$. There exists some $\mu$ and $V \in \mathcal{U}_X$ with $V \subset U$ such that $V \circ \mu \subset \lambda \circ U$ and $\mu \circ V \subset U \circ \lambda$.

We claim that $\pi_{\mu,X}(y) \not\in \pi_{\mu,X}(V)(\pi_{\mu,X}(x))$. Indeed, \[V \subset \mu \circ V \circ \mu  \subset \mu \circ V \circ V \circ \mu \subset U \circ \lambda \circ U.\] So we would have otherwise that there exists $(y',x') \in \mu \circ V \circ \mu$ with $\pi_{\mu,X}(x')=\pi_{\mu,X}(x)$ and $\pi_{\mu,X}(y')=\pi_{\mu,X}(y)$. But then $(y,x) \in \mu \circ V \circ \mu$ also so $(y,x) \in U \circ \lambda \circ U$, which we assumed not to be the case. Hence $\pi_{\mu,X}(y) \not\in \pi_{\mu,X}(V)(\pi_{\mu,X}(x))$ but that means that $\pi_{\infty,X}(y) \not\in \pi_{\infty,\mu}^{-1}(\pi_{\mu,X}(V)(\pi_{\mu,X}(x))) = \pi_{\mu,\infty}^{-1}(\pi_{\mu,X}(V))(\pi_{\infty,X}(x))$. It follows that $\pi_{\infty,X}(x)$ and $\pi_{\infty,X}(y)$ are topologically distinguishable in $(\Omega,\mathcal{U}_\Omega)$ so the map $(\pi^{KQ} \circ \pi_{\infty,X})^{KQ}$ is injective.

Finally, we must show that $(\pi^{KQ} \circ \pi_{\infty,X})^{KQ}$ is a uniform isomorphism onto its image which, by Lemma \ref{lem:unif iso}, amounts to showing that arbitrary entourages contain images or preimages of entourages under $\pi_{\infty,X}$. So let $\pi_{\lambda,\infty}^{-1}(\pi_{\lambda,X}(U)) \in \mathcal{U}_\Omega$. We can find $\mu$ and $V \in \mathcal{U}_X$ such that $V \circ \mu \circ V \subset \lambda \circ U$. But then $\pi_{\lambda,X}(V \circ \mu \circ V) \subset \pi_{\lambda,X}(\lambda \circ U) = \pi_{\lambda,X}(U)$. Hence $\pi_{\infty,X}(V \circ \mu \circ V) \subset \pi_{\lambda,\infty}^{-1}(\pi_{\lambda,X}(U))$. It follows that $(\pi^{KQ} \circ \pi_{\infty,X})^{KQ}$ is uniformly continuous. In the other direction, take an entourage $U \circ \lambda \circ U \in \mathcal{U}_{\tilde{\mathcal{P}}}$. Then there exists some $\mu$ and $V \in \mathcal{U}_X$ such that $\mu \circ V \circ \mu \subset U \circ \lambda \circ U$. Then we have that $\pi_{\infty,X}^{-1}(\pi_{\mu,\infty}^{-1}(\pi_{\mu,X}(V))) = \pi_{\mu,X}^{-1}(\pi_{\mu,X}(V)) = \mu \circ V \circ \mu \subset U \circ \lambda \circ U$ and so the inverse of $(\pi^{KQ} \circ \pi_{\infty,X})^{KQ}$ is uniformly continuous. \end{proof}

\begin{corollary}\label{cor:comp=>completion} If $(\Omega^{\tilde{\mathcal{P}}},\mathcal{U}_{\Omega^{\tilde{\mathcal{P}}}})$ is complete then it is a completion of $(X,\mathcal{U}_{\tilde{\mathcal{P}}})$.\end{corollary}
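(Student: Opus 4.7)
The plan is to verify the four conditions of Proposition \ref{prop:kolmog compl} applied to the map $i = \pi^{\tilde{\mathcal{P}}}_{\infty,X} \colon (X,\mathcal{U}_{\tilde{\mathcal{P}}}) \rightarrow (\Omega^{\tilde{\mathcal{P}}},\mathcal{U}_{\Omega^{\tilde{\mathcal{P}}}})$. The completeness of $(\Omega^{\tilde{\mathcal{P}}},\mathcal{U}_{\Omega^{\tilde{\mathcal{P}}}})$ is precisely our hypothesis, giving condition $1$. Conditions $3$ and $4$, that $(\pi^{KQ} \circ \pi_{\infty,X})^{KQ}$ is injective and a uniform isomorphism onto its image, have already been established in the lemma immediately preceding this corollary. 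So the only remaining task is to verify condition $2$: that $\pi_{\infty,X}$ has dense image.

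To show density, I would work directly with the base of entourages $\pi_{\lambda,\infty}^{-1}(\pi_{\lambda,X}(U))$ generating $\mathcal{U}_{\Omega^{\tilde{\mathcal{P}}}}$. Fix a point $\underleftarrow{x} \in \Omega^{\tilde{\mathcal{P}}}$ and any basic entourage neighbourhood $N = \pi_{\lambda,\infty}^{-1}(\pi_{\lambda,X}(U))(\underleftarrow{x})$; I need to produce $x \in X$ with $\pi_{\infty,X}(x) \in N$. Since each quotient map $\pi_{\lambda,X} \colon X \rightarrow K_\lambda^{\tilde{\mathcal{P}}}$ is by construction surjective, I can choose $x \in X$ with $\pi_{\lambda,X}(x) = \pi_{\lambda,\infty}(\underleftarrow{x})$. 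Then $\pi_{\lambda,\infty}(\pi_{\infty,X}(x)) = \pi_{\lambda,X}(x) = \pi_{\lambda,\infty}(\underleftarrow{x})$, so the pair $(\pi_{\infty,X}(x),\underleftarrow{x})$ projects under $\pi_{\lambda,\infty} \times \pi_{\lambda,\infty}$ into the diagonal of $K_\lambda^{\tilde{\mathcal{P}}}$, which lies in $\pi_{\lambda,X}(U)$ since $\Delta_X \subset U$. Hence $\pi_{\infty,X}(x) \in N$ as required.

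The main obstacle, if any, is a notational rather than a conceptual one: one must be careful to keep track of whether the diagonal inclusion actually gives a pair in the pushforward $\pi_{\lambda,X}(U)$, but this is immediate from $\Delta_X \subset U$ together with the identity $\pi_{\lambda,X}(\Delta_X) = \Delta_{K_\lambda^{\tilde{\mathcal{P}}}}$. With density established, all four bullet points of Proposition \ref{prop:kolmog compl} are satisfied, and so $(\Omega^{\tilde{\mathcal{P}}},\mathcal{U}_{\Omega^{\tilde{\mathcal{P}}}})$ together with the map $\pi_{\infty,X}$ is a completion of $(X,\mathcal{U}_{\tilde{\mathcal{P}}})$ in the sense of the definition preceding Proposition \ref{prop:kolmog compl}.
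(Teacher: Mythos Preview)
Your proof is correct and follows essentially the same route as the paper's own argument: invoke Proposition \ref{prop:kolmog compl}, take completeness as the hypothesis, cite the preceding lemma for conditions $3$ and $4$, and then check density of the image of $\pi_{\infty,X}$. The paper dispatches density in one clause (``since $\pi_{\infty,X}$ is surjective onto each $K_\lambda$ it has dense image''), whereas you unpack this by working directly with a basic entourage neighbourhood; both amount to the same observation.
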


\begin{proof} By Proposition \ref{prop:kolmog compl} we have that $(Y,\mathcal{U}_Y)$ is a completion of $(X,\mathcal{U}_X)$ if $(Y,\mathcal{U}_Y)$ is complete and there exists a map $i \colon (X,\mathcal{U}_X) \rightarrow (Y,\mathcal{U}_Y)$ with dense image for which $(\pi^{KQ} \circ i)^{KQ}$ is injective and a uniform isomorphism onto its image. Since $\pi_{\infty,X}$ is surjective onto each $K_\lambda$ it has dense image and so by the above we have that $(\Omega,\mathcal{U}_\Omega)$ is a completion of $(X,\mathcal{U}_{\tilde{\mathcal{P}}})$. \end{proof}

So, in the cases where $(\Omega,\mathcal{U}_\Omega)$ can be shown to be complete, we have arrived at a generalisation of the BDHS description of the tiling space of an FLC tiling as an inverse limit of approximants. We consider now the question of when $(\Omega,\mathcal{U}_\Omega)$ is complete. The following definition and proposition show that, in fact, we need only consider this question up to a notion of equivalence of collages.

\begin{definition} \label{def: Equivalent} Let $\mathcal{P}$ and $\mathcal{Q}$ be two patterns on the metric space $(X,d_X)$. We shall say that $\mathcal{P}$ and $\mathcal{Q}$ are \emph{equivalent} if, for each $R_1 > 0$, there exists some $R_2 > 0$ such that $\mathcal{P}_{x,y}^{R_2} \subset \mathcal{Q}_{x,y}^{R_1}$ and $\mathcal{Q}_{x,y}^{R_2} \subset \mathcal{P}_{x,y}^{R_1}$ for all $x,y \in X$.

We shall say that two collages $\tilde{\mathcal{P}}$ and $\tilde{\mathcal{Q}}$ on $(X,\mathcal{U}_X)$ over directed sets $\Lambda$ and $M$, respectively, are \emph{equivalent} if for each $\lambda \in \Lambda$ there exists some $\mu \in M$ such that $\mu_{\tilde{\mathcal{Q}}} \subset \lambda_{\tilde{\mathcal{P}}}$ and for each $\mu \in M$ there exists some $\lambda \in \Lambda$ such that $\lambda_{\tilde{\mathcal{P}}} \subset \mu_{\tilde{\mathcal{Q}}}$. \end{definition}

\begin{exmp} It is easy to see that any two equivalent patterns have equivalent induced collages. A collage $\tilde{\mathcal{P}}$ over $(\mathbb{R}_{>0},\leq)$ restricted to a sub-poset $(S,\leq)$ where $S$ is unbounded is equivalent to $\tilde{\mathcal{P}}$ (for example, we may replace the uncountable set $\mathbb{R}_{>0}$ with the countable one $S=\mathbb{N}$). \end{exmp}

\begin{exmp} \label{ex: MLD} There is a well known MLD equivalence relation on tilings of $\mathbb{R}^d$. Loosely speaking, given two such tilings $T$ and $T'$, we say that $T'$ is \emph{locally derivable} from $T$ if the tiles of $T'$ intersecting a point $x$ are determined by the tiles of $T$ intersecting $B_{d_{euc}}(x,R)$ for some $R>0$ (see, e.g., \cite{Sadun2} for full details). We see that arbitrary finite patches of $T'$ are determined, to some finite radius, by the nearby tiles of $T$. If $T$ is locally derivable to $T'$ and vice versa, we say that $T$ and $T'$ are \emph{mutually locally derivable} or \emph{MLD}, for short. Equivalently, here, we have that $T$ is MLD to $T'$ if and only if the patterns $\mathcal{T}_1$ and $\mathcal{T}'_1$ (that is, the patterns taken with respect to translations) or their induced collages are equivalent.

MLD equivalence preserves many important properties of an FLC (with respect to translations) tiling. In essence, properties of tilings which only depend on MLD classes only depend on tilings ``up to local redecorations''. These local redecorations, however, need not respect rotational symmetries. For example, one may break symmetries of the square tiling of $\mathbb{R}^2$ by marking some (non-central) point of the square prototile, and yet the two tilings are MLD. This observation was made in \cite{BSJ} where the stricter notion of $S$-MLD equivalence was also introduced. In the language presented here, two tilings $T$ and $T'$ are $S$-MLD equivalent if and only if the patterns $\mathcal{T}_0$ and $\mathcal{T}_0'$, or $\mathcal{T}_{\text{rot}}$ and $\mathcal{T}_{\text{rot}}'$ (or their induced collages) are equivalent. Of course, one could easily allow for orientation-reversing isometries here by making the obvious modifications.

Given a tiling $T$ of $\mathbb{R}^d$, define $G^+_T$ to be the group of orientation-preserving isometries of $\mathbb{R}^d$ which preserve $T$. If $T$ and $T'$ are periodic, it is not too hard to see that $G_T^+ = G_{T'}^+$ if and only if $\mathcal{T}_0$ and $\mathcal{T}_0'$ are equivalent. Again, one makes the obvious modifications to accommodate orientation-reversing isometries. \end{exmp}

\begin{exmp} Given a pattern $\mathcal{T}_\omega$ associated to some hierarchical tiling $\mathcal{T}=(T_0,T_1,\ldots)$, we shall say that $T$ is \emph{recognisable} if $\mathcal{T}_\omega$ is equivalent to the pattern $\mathcal{T}$ associated to the tiling $T_0$ with the same collection of allowed partial isometries. Notice that one always has a continuous map $f \colon \Omega^{{\mathcal{T}_\omega}} \rightarrow \Omega^{\mathcal{T}}$, in the recognisable case this map is a homeomorphism. Hierarchical tilings are recognisable if, for any $t$ and radius $R_1$, there exists some (perhaps larger) radius $R_2$ such that an isometry preserving a patch of radius $R_2$ of $T_0$ also preserves a patch of radius $R_1$ of $T_t$. \end{exmp}

\begin{exmp} \label{ex: GC} The most natural approximants to associate to a tiling in our context are the BDHS approximants \cite{BDHS}, but one may also define the G\"{a}hler approximants as the approximants associated to some collage. Given a tile $t \in T$, define the \emph{$0$-corona} of $t$ to be the singleton patch $\{t\}$ and, for $N \in \mathbb{N}$, inductively define the \emph{$N$-corona} of $t$ to be the patch of tiles which intersect the $(N-1)$-corona of $t$. Say that $x$ and $y$ are \emph{$N$-collared equivalent} (with respect to allowed partial isometries $\mathcal{S}$) for $N \in \mathbb{N}_0$ if and only if $x$ and $y$ belong to tiles $t_x$ and $t_y$ for which their $N$-corona agree up to an allowed partial isometry, that is, there exists some $\Phi \in \mathcal{S}$ whose domain contains the $N$-corona of $t_x$ and takes it to the $N$-corona of $t_y$. Being $N$-collared equivalent is not necessarily an equivalence relation on $X$, transitivity may fail at points which lie at the intersection of tiles. Nevertheless, by taking the transitive closure of these equivalence relations, one defines equivalence relations $N_G$ on $X$ for each $N \in \mathbb{N}_0$. Under reasonable conditions the equivalence relations $N_G$ define a collage on $(X,\mathcal{U}_X)$ over the directed set $(\mathbb{N}_0,\leq)$ which is equivalent to $\tilde{\mathcal{T}_\mathcal{S}}$. The approximants $K_N:=X/N_G$ are the well-known \emph{G\"{a}hler approximants}, see for example \cite{Sadun2}.

Take for example an FLC tiling of $(\mathbb{R}^d,d_{euc})$ with $\mathcal{S}$ given by (partial) translations. The approximants are given by glueing representatives of $N$-collared tiles (that is, tiles along with the information of their $N$-corona) along their boundaries according to how these $N$-collared tiles are found in the tiling. See \cite{Sadun2} for a discussion of the G\"{a}hler approximants. We may similarly define a collage whose approximants are the G\"{a}hler approximants for $\mathcal{T}_{\text{rot}}$ by letting $N_G$ by the transitive closure of the relation given by setting $f \sim g$ whenever there exist tiles $t_f$ and $t_g$ at the origin of $f^{-1}(T)$ and $g^{-1}(T)$ with identical $N$-corona. The approximants are glued together from quotients of the spaces $t \times SO(d)$, where $t$ is an $N$-collared tile of $T$, where one quotients by the action of $SO(d)$ preserving the $N$-collared patch defined by the collared tile $t$.
\end{exmp}

\begin{proposition} Equivalent collages $\tilde{\mathcal{P}}$ and $\tilde{\mathcal{Q}}$ induce the same collage uniformities on $X$. Further, the uniform spaces $(\Omega^{\tilde{\mathcal{P}}},\mathcal{U}_{\Omega^{\tilde{\mathcal{P}}}})$ and $(\Omega^{\tilde{\mathcal{Q}}},\mathcal{U}_{\Omega^{\tilde{\mathcal{Q}}}})$ are uniformly isomorphic. \end{proposition}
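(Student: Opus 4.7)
The first claim is immediate from a comparison of bases. A basic entourage of $\mathcal{U}_{\tilde{\mathcal{P}}}$ has the form $U \circ \lambda_{\tilde{\mathcal{P}}} \circ U$ for some $U \in \mathcal{U}_X$ and $\lambda \in \Lambda$, and equivalence of the collages furnishes $\mu \in M$ with $\mu_{\tilde{\mathcal{Q}}} \subset \lambda_{\tilde{\mathcal{P}}}$. Then the basic entourage $U \circ \mu_{\tilde{\mathcal{Q}}} \circ U$ of $\mathcal{U}_{\tilde{\mathcal{Q}}}$ sits inside $U \circ \lambda_{\tilde{\mathcal{P}}} \circ U$. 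Interchanging the two collages, the bases are mutually refining, so their upwards closures $\mathcal{U}_{\tilde{\mathcal{P}}}$ and $\mathcal{U}_{\tilde{\mathcal{Q}}}$ coincide.

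For the second claim the plan is to construct a uniform isomorphism $h \colon \Omega^{\tilde{\mathcal{Q}}} \to \Omega^{\tilde{\mathcal{P}}}$ via the universal property of the inverse limit. For each $\lambda \in \Lambda$ I fix $\mu(\lambda) \in M$ with $\mu(\lambda)_{\tilde{\mathcal{Q}}} \subset \lambda_{\tilde{\mathcal{P}}}$. This inclusion means each $\lambda_{\tilde{\mathcal{P}}}$-class is a union of $\mu(\lambda)_{\tilde{\mathcal{Q}}}$-classes, so $\pi^{\tilde{\mathcal{P}}}_{\lambda,X}$ factors uniquely as $q_\lambda \circ \pi^{\tilde{\mathcal{Q}}}_{\mu(\lambda),X}$ for a continuous surjection $q_\lambda \colon K^{\tilde{\mathcal{Q}}}_{\mu(\lambda)} \to K^{\tilde{\mathcal{P}}}_\lambda$. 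Set $h_\lambda := q_\lambda \circ \pi^{\tilde{\mathcal{Q}}}_{\mu(\lambda),\infty}$. To produce $h$ I need compatibility $\pi^{\tilde{\mathcal{P}}}_{\lambda_1,\lambda_2} \circ h_{\lambda_2} = h_{\lambda_1}$ whenever $\lambda_1 \leq \lambda_2$; choosing $\mu \geq \mu(\lambda_1), \mu(\lambda_2)$ in the directed set $M$, both sides become continuous maps out of $\Omega^{\tilde{\mathcal{Q}}}$ which, precomposed with $\pi^{\tilde{\mathcal{Q}}}_{\infty,X}$, recover the common map $\pi^{\tilde{\mathcal{P}}}_{\lambda_1,X}$. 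Surjectivity of $\pi^{\tilde{\mathcal{Q}}}_{\mu,X}$ (equivalently, the universal property of the quotient topology) then forces the two maps to agree, and the universal property of $\Omega^{\tilde{\mathcal{P}}}$ assembles the family $(h_\lambda)$ into a continuous $h$.

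The symmetric construction produces a continuous candidate inverse $h' \colon \Omega^{\tilde{\mathcal{P}}} \to \Omega^{\tilde{\mathcal{Q}}}$ using choices $\nu(\mu) \in \Lambda$ with $\nu(\mu)_{\tilde{\mathcal{P}}} \subset \mu_{\tilde{\mathcal{Q}}}$ and associated quotients $q'_\mu$. To verify $h \circ h' = \mathrm{id}_{\Omega^{\tilde{\mathcal{P}}}}$ I check component-wise that $\pi^{\tilde{\mathcal{P}}}_{\lambda, \infty} \circ h \circ h' = \pi^{\tilde{\mathcal{P}}}_{\lambda, \infty}$: this unpacks to the composition $q_\lambda \circ q'_{\mu(\lambda)} \circ \pi^{\tilde{\mathcal{P}}}_{\nu(\mu(\lambda)), \infty}$, and picking (by directedness of $\Lambda$) some $\rho \geq \lambda, \nu(\mu(\lambda))$ allows one to identify $q_\lambda \circ q'_{\mu(\lambda)} \circ \pi^{\tilde{\mathcal{P}}}_{\nu(\mu(\lambda)),\rho}$ with the connecting map $\pi^{\tilde{\mathcal{P}}}_{\lambda,\rho}$ (again by surjectivity of $\pi^{\tilde{\mathcal{P}}}_{\rho,X}$), so the whole composition collapses to $\pi^{\tilde{\mathcal{P}}}_{\lambda,\rho} \circ \pi^{\tilde{\mathcal{P}}}_{\rho,\infty} = \pi^{\tilde{\mathcal{P}}}_{\lambda,\infty}$. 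The check that $h' \circ h = \mathrm{id}_{\Omega^{\tilde{\mathcal{Q}}}}$ is identical.

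Uniform continuity of $h$ and $h^{-1} = h'$ follows from the description of a base of entourages $(\pi^{\tilde{\mathcal{P}}}_{\lambda,\infty})^{-1}(\pi^{\tilde{\mathcal{P}}}_{\lambda,X}(U))$ on $\Omega^{\tilde{\mathcal{P}}}$ established in the previous proposition: its preimage under $h$ equals $(\pi^{\tilde{\mathcal{Q}}}_{\mu(\lambda),\infty})^{-1}(q_\lambda^{-1}(\pi^{\tilde{\mathcal{P}}}_{\lambda,X}(U)))$, which contains the basic entourage $(\pi^{\tilde{\mathcal{Q}}}_{\mu(\lambda),\infty})^{-1}(\pi^{\tilde{\mathcal{Q}}}_{\mu(\lambda),X}(U))$ since $q_\lambda \circ \pi^{\tilde{\mathcal{Q}}}_{\mu(\lambda),X} = \pi^{\tilde{\mathcal{P}}}_{\lambda,X}$. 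The main obstacle is purely bookkeeping: repeatedly invoking directedness of $\Lambda$ and $M$ to merge choices of indices, and exploiting surjectivity of the quotient maps $\pi_{\lambda,X}$ to pin down uniqueness of factorisations. The universal-property framework avoids ever writing $h$ as an explicit formula on sequences and keeps each verification to a short diagram chase.
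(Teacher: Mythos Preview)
Your proof is correct and follows essentially the same route as the paper's: construct maps between the two inverse limits by choosing, for each index on one side, a refining index on the other and factoring the quotient map, check compatibility via directedness, verify the two maps are mutual inverses, and read off uniform continuity from the explicit entourage bases. The only cosmetic differences are that you spell out the first claim (the paper dismisses it as ``clear from the definitions'') and you phrase the construction more explicitly in universal-property language, whereas the paper writes the map concretely as a product of components.
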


\begin{proof} That two equivalent collages induce the same uniformities is clear from the definitions. To define a map $f \colon \Omega^{\tilde{\mathcal{P}}} \rightarrow \Omega^{\tilde{\mathcal{Q}}}$ proceed as follows. We define $f_{\mu}$ for $\mu \in M$ by taking $\lambda \in \Lambda$ such that $\lambda_{\tilde{\mathcal{P}}} \subset \mu_{\tilde{\mathcal{Q}}}$ and define $f_{\mu} = \pi_{\mu,\lambda} \circ \pi^{\tilde{\mathcal{P}}}_{\lambda,\infty}$, where $\pi_{\mu,\lambda}$ is the quotient map taking $(x)_{\lambda_{\tilde{\mathcal{P}}}}$ to the unique equivalence class $(x)_{\mu_{\tilde{\mathcal{Q}}}}$ containing it. Notice that for $\mu_1 \leq \mu_2$ we have that $\pi^{\tilde{\mathcal{Q}}}_{\mu_1,\mu_2} \circ f_{\mu_2} = f_{\mu_1}$. Indeed, since $\Lambda$ is directed there exists some $\lambda \geq \lambda_1,\lambda_2$. Then \[\pi^{\tilde{\mathcal{Q}}}_{\mu_1,\mu_2} \circ f_{\mu_2} = \pi^{\tilde{\mathcal{Q}}}_{\mu_1,\mu_2} \circ \pi_{\mu_2,\lambda} \circ \pi^{\tilde{\mathcal{P}}}_{\lambda,\infty} = \pi_{\mu_1,\lambda} \circ \pi^{\tilde{\mathcal{P}}}_{\lambda,\infty} = f_{\mu_1}.\] It follows from universal properties that the map sending $x \in \Omega^{\tilde{\mathcal{P}}}$ to $\prod_{\mu \in M} f_\mu(x)$ is a continuous map to $\Omega^{\tilde{\mathcal{Q}}}$. In fact, it is also uniformly continuous since, given $U \in \mathcal{U}_X$, we see that $f^{-1}(\pi_{\lambda,\infty}^{-1}(\pi_{\lambda,X}(U))) \subset \pi_{\mu,\infty}^{-1}(\pi_{\mu,X}(U))$. We may analogously construct a uniformly continuous map $g \colon \Omega^{\tilde{\mathcal{Q}}} \rightarrow \Omega^{\tilde{\mathcal{P}}}$.

Finally, we must show that $f$ and $g$ are inverses of each other. Notice that to define the $\lambda$ component of $g \circ f$, one finds $\mu \in M$ and $\nu \in \Lambda$ such that $\lambda_{\tilde{\mathcal{P}}} \subset \mu_{\tilde{\mathcal{Q}}} \subset \nu_{\tilde{\mathcal{P}}}$ and projects an element of the inverse limit to $K_\nu$ and then to $K_\lambda$. Given such $\lambda$ and $\nu$, since $\Lambda$ is directed, there exists some $\xi \geq \lambda,\nu$. We have that the projection of $g \circ f(x)$ to $K_\lambda$ is equal to $\pi_{\lambda,\nu}(\pi_{\nu,\infty}(x)) = \pi_{\lambda,\nu}(\pi_{\nu,\xi}(\pi_{\xi,\infty}(x))) = \pi_{\lambda,\xi}(\pi_{\xi,\infty}(x)) = \pi_{\lambda,\infty}(x)$. Hence $g \circ f(x)$ preserves $\pi_{\lambda,\infty}(x)$ for each approximant and so is the identity map on the inverse limit. Analogously, $f \circ g = Id_{\Omega^{\tilde{\mathcal{Q}}}}$. \end{proof}

\begin{proposition}\label{prop:FLC+Haus} Suppose that $\tilde{\mathcal{P}}$ is FLC and has Hausdorff approximants. Then $(\Omega^{\tilde{\mathcal{P}}},\mathcal{U}_{\Omega^{\tilde{\mathcal{P}}}})$ is a completion of $(X,\mathcal{U}_{\tilde{\mathcal{P}}})$.\end{proposition}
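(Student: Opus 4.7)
The plan is to invoke Corollary \ref{cor:comp=>completion}, which reduces the statement to showing that $(\Omega^{\tilde{\mathcal{P}}},\mathcal{U}_{\Omega^{\tilde{\mathcal{P}}}})$ is complete. The strategy is to realise $\Omega^{\tilde{\mathcal{P}}}$ as a compact Hausdorff topological space, and then to appeal to the fact that compact Hausdorff spaces carry a unique compatible uniformity, with respect to which they are automatically complete.

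First I would observe that, by the FLC hypothesis, each approximant $K_\lambda^{\tilde{\mathcal{P}}}$ is compact, and by assumption each is also Hausdorff. Since inverse limits of compact Hausdorff spaces are compact Hausdorff (the inverse limit sits as a closed subspace of the Tychonoff product $\prod_\lambda K_\lambda^{\tilde{\mathcal{P}}}$, which is compact Hausdorff), the space $\Omega^{\tilde{\mathcal{P}}}$ is compact Hausdorff in its inverse limit topology. By the preceding proposition, the uniformity $\mathcal{U}_{\Omega^{\tilde{\mathcal{P}}}}$ induces this same topology on $\Omega^{\tilde{\mathcal{P}}}$.

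Next I would recall the standard fact that a compact Hausdorff space admits exactly one uniformity compatible with its topology, and that it is complete with respect to this uniformity (the unique uniformity is generated by the neighbourhoods of the diagonal in the product). Applying this to $\Omega^{\tilde{\mathcal{P}}}$, the uniformity $\mathcal{U}_{\Omega^{\tilde{\mathcal{P}}}}$ must coincide with this canonical uniformity, and so $(\Omega^{\tilde{\mathcal{P}}},\mathcal{U}_{\Omega^{\tilde{\mathcal{P}}}})$ is complete. Combined with Corollary \ref{cor:comp=>completion}, this yields that $(\Omega^{\tilde{\mathcal{P}}},\mathcal{U}_{\Omega^{\tilde{\mathcal{P}}}})$ is a completion of $(X,\mathcal{U}_{\tilde{\mathcal{P}}})$.

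The only genuine subtlety here is making sure that $\mathcal{U}_{\Omega^{\tilde{\mathcal{P}}}}$ is truly the canonical compact Hausdorff uniformity rather than merely a uniformity inducing the right topology; but this is exactly what the uniqueness statement for compact Hausdorff spaces guarantees, so no additional work is needed. If one preferred not to invoke this uniqueness result, a direct alternative would be to take a Cauchy net $(\underleftarrow{x}_\alpha)$ in $(\Omega^{\tilde{\mathcal{P}}},\mathcal{U}_{\Omega^{\tilde{\mathcal{P}}}})$, use compactness to extract a convergent subnet with limit $\underleftarrow{x}$, and then exploit the Cauchy condition (using the basic entourages $\pi_{\lambda,\infty}^{-1}(\pi_{\lambda,X}(U))$) together with the Hausdorff property of each approximant to conclude that the whole net converges to $\underleftarrow{x}$; but the compact Hausdorff argument above makes this unnecessary.
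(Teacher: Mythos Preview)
Your proposal is correct and follows essentially the same route as the paper: show that $\Omega^{\tilde{\mathcal{P}}}$ is compact (as a closed subspace of the Tychonoff product of compact Hausdorff approximants), deduce completeness, and apply Corollary~\ref{cor:comp=>completion}. The only cosmetic difference is that the paper uses directly that a compact uniform space is complete (every net has a convergent subnet, and a Cauchy net with a convergent subnet converges), so your detour through the uniqueness of the compatible uniformity on a compact Hausdorff space is not actually needed---your ``direct alternative'' in the last paragraph is precisely the argument that makes the uniqueness statement superfluous.
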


\begin{proof} It is well known that an inverse limit of compact Hausdorff spaces is compact, and hence is complete for any uniformity, and so the result follows.

For reference, we give a proof of the above claim that an inverse limit of compact Hausdorff spaces is compact. Suppose that $L:=\varprojlim(X_i,f_{i,j})$ where each $X_i$ is compact and Hausdorff. By Tychonoff's theorem we have that $\prod_{i \in \mathcal{I}} X_i$ is compact. Since closed subspaces of compact spaces are compact, we just need to show that $L$ is closed in $\prod X_i$. To see this, note that $L = \bigcap_{i \leq j} L_{i,j}$ where $L_{i,j} \subset \prod X_i$ consists of those elements for which $f_{i,j}(x_j) = x_i$ (that is, an element of the inverse limit is precisely an element which satisfies these consistency relations for each $i \leq j$). We have that $L_{i,j}$ is the preimage of the diagonal $\Delta_i \subset X_i \times X_i$ under the continuous map $(Id_{X_i} \times f_{i,j}) \circ (\pi_i \times \pi_j)$. But since $X_i$ is Hausdorff, we have that $\Delta_i$ is closed and so each $L_{i,j}$, and in particular their intersection $L$, is closed.\end{proof}

\begin{exmp} For a locally finite tiling $T$ of $(\mathbb{R}^d,d_{euc})$ with FLC with respect to translations or rigid motions its approximants are Hausdorff and so we have that $\Omega^{\tilde{\mathcal{T}}}$ homeomorphic to the (Hausdorff) completion of $(\mathbb{R}^d,\mathcal{U}_{\tilde{\mathcal{T}}})$, which is simply a restatement of the result that the tiling space is homemorphic to the inverse limit of its BDHS approximants of increasing patch radii \cite{BDHS}.

Suppose that in the definition of the tiling collage, one instead set $xRy$ if the pointed tilings agreed on their \emph{closed} balls of radius $R$. In the aperiodic case the approximants are non-Hausdorff at the branch points, which correspond to points where the boundaries of $R$-balls intersect the boundary of the local patch of radius $R$. However, this modified collage is equivalent to the usual one of Hausdorff approximants and so the non-Hausdorffness of the approximants is not an issue. \end{exmp}

That the approximants were Hausdorff was essential in the above proof for showing that the inverse limit is closed in the product of approximants. Although for the most part we shall only be interested in collages with Hausdorff approximants, we shall consider here the general case in some more detail.

It is easy to construct inverse limits of compact but non-Hausdorff spaces which are non-compact. Determining whether or not an inverse limit of compact but non-Hausdorff spaces is compact (or even non-empty) can be subtle, for some positive results see \cite{Stone}. For example, if the inverse limit of compact spaces is taken over $(\mathbb{N},\leq)$ and the connecting maps are closed then the inverse limit is compact. Our goal here was to show that $(\Omega^{\tilde{\mathcal{P}}},\mathcal{U}_{\Omega^{\tilde{\mathcal{P}}}})$ is complete, not necessarily compact. The following theorem shows that the inverse limit being closed in the product of approximants is sufficient (in fact, the theorem only requires a weaker condition, see the example of \ref{ex: non-Haus} below). Note that $A \subset X$ is closed if any only if for all nets $(x_\alpha)$ of $A$ with $(x_\alpha) \rightarrow x$ in $X$ then in fact $x \in A$.

\begin{theorem} \label{thm: inverse limit of approximants} Let $\tilde{\mathcal{P}}$ be a collage on $(X,\mathcal{U}_X)$. Suppose that there exists some $U \in \mathcal{U}_X$ for which $U(x)$ is precompact for each $x \in X$. Then we have that $(\Omega^{\tilde{\mathcal{P}}},\mathcal{U}_{\tilde{\mathcal{P}}})$ is a completion of $(X,\mathcal{U}_{\tilde{\mathcal{P}}})$ if, for each net $(x_\alpha)$ with $x_\alpha \in \Omega^{\tilde{\mathcal{P}}}$ and $(x_\alpha) \rightarrow x$ for some $x \in \prod_{\lambda \in \Lambda} K_\lambda$, we have that $(x_\alpha) \rightarrow l$ for some $l \in \Omega^{\tilde{\mathcal{P}}}$. In particular, if the approximants of $\tilde{\mathcal{P}}$ are Hausdorff then $(\Omega^{\tilde{\mathcal{P}}},\mathcal{U}_{\Omega^{\tilde{\mathcal{P}}}})$ is a completion of $(X,\mathcal{U}_{\tilde{\mathcal{P}}})$. \end{theorem}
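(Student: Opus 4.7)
The plan is to apply Corollary \ref{cor:comp=>completion}, which reduces the statement to showing that $(\Omega^{\tilde{\mathcal{P}}},\mathcal{U}_{\Omega^{\tilde{\mathcal{P}}}})$ is complete. Let $(x_\alpha)$ be a Cauchy net in $\Omega := \Omega^{\tilde{\mathcal{P}}}$; the task is to produce a limit.

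First, for each $\lambda \in \Lambda$ I would show that $(\pi_{\lambda,\infty}(x_\alpha))$ is eventually contained in a compact subset of $K_\lambda$. Applying the collage axiom to $\lambda$ and the precompact entourage $U$ of the hypothesis yields some $\mu \geq \lambda$ and $V \in \mathcal{U}_X$ with $V \circ \mu_{\tilde{\mathcal{P}}} \subset \lambda_{\tilde{\mathcal{P}}} \circ U$. The Cauchy condition applied to the entourage $\pi_{\mu,\infty}^{-1}(\pi_{\mu,X}(V))$ provides an index $\gamma$ such that $\pi_{\mu,\infty}(x_\alpha) \in \pi_{\mu,X}(V)(\pi_{\mu,\infty}(x_\gamma))$ for all $\alpha \geq \gamma$. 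Choosing a representative $y \in X$ for $\pi_{\mu,\infty}(x_\gamma)$, one unwinds the definition to see that $\pi_{\mu,X}(V)(\pi_{\mu,X}(y)) = \pi_{\mu,X}(V(\mu_{\tilde{\mathcal{P}}}(y)))$; the inclusion $V \circ \mu_{\tilde{\mathcal{P}}} \subset \lambda_{\tilde{\mathcal{P}}} \circ U$ then yields $V(\mu_{\tilde{\mathcal{P}}}(y)) \subset \lambda_{\tilde{\mathcal{P}}}(U(y))$, and applying $\pi_{\lambda,\mu}$ collapses the outer $\lambda_{\tilde{\mathcal{P}}}$ to give
\[
\pi_{\lambda,\infty}(x_\alpha) \;\in\; \pi_{\lambda,X}(U(y)) \;\subset\; \pi_{\lambda,X}(\overline{U(y)}) =: C_\lambda,
\]
with $C_\lambda$ a compact subset of $K_\lambda$ as the continuous image of the compact set $\overline{U(y)}$.

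The second step is to pass to a convergent subnet of $(x_\alpha)$ in the product $\prod_{\lambda \in \Lambda} K_\lambda$. By the ultrafilter lemma every net admits a universal subnet, so take a universal subnet $(y_\beta)$ of $(x_\alpha)$ viewed in $\prod_\lambda K_\lambda$. For each $\lambda$, the projection $(\pi_{\lambda,\infty}(y_\beta))$ is itself universal and eventually in the compact $C_\lambda$; since any universal net in a compact space converges to each of its cluster points, we obtain $\pi_{\lambda,\infty}(y_\beta) \to z_\lambda$ in $K_\lambda$ for some $z_\lambda$. This says precisely that $(y_\beta) \to (z_\lambda)_{\lambda \in \Lambda}$ in the product topology. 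The standing hypothesis then supplies some $l \in \Omega$ with $(y_\beta) \to l$, and a standard argument shows that a Cauchy net in a uniform space with a subnet converging to $l$ must itself converge to $l$; hence $(x_\alpha) \to l$.

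For the ``in particular'' clause, when each $K_\lambda$ is Hausdorff so is $\prod_\lambda K_\lambda$, and the consistency relations $\pi_{\lambda,\mu}(z_\mu) = z_\lambda$ pass to limits by continuity of the connecting maps, forcing $(z_\lambda) \in \Omega$ and making the hypothesis automatic. The main obstacle is the first step: a priori $\pi_{\lambda,X}(U)(\pi_{\lambda,X}(y))$ is the image under $\pi_{\lambda,X}$ of $U$ applied to the entire (potentially large) $\lambda_{\tilde{\mathcal{P}}}$-equivalence class of $y$, and it is precisely the collage axiom that permits us to refine to $\mu$ and replace this with something controlled by the single precompact ball $U(y)$.
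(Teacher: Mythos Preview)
Your proof is correct and follows the same overall architecture as the paper's: reduce to completeness via Corollary~\ref{cor:comp=>completion}, use the precompact entourage $U$ together with the collage axiom to trap the projections $\pi_{\lambda,\infty}(x_\alpha)$ in compact subsets of each $K_\lambda$, and then invoke the hypothesis to obtain a limit in $\Omega$.

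The one point of divergence is how you pass from ``projections eventually in compacts'' to an actual limit. The paper does this by hand: it first extracts, for each $\lambda$, a subnet converging in $K_\lambda$, and then uses a second application of the collage axiom (with a chain $\lambda \leq \mu \leq \nu$) to argue directly that the \emph{full} Cauchy net converges in each $K_\lambda$, hence in the product, whereupon the hypothesis applies to the net itself. You instead take a single universal subnet, observe that its projections converge in every coordinate simultaneously, apply the hypothesis to this subnet, and finish with the standard ``Cauchy net with convergent subnet converges'' lemma. Your route is shorter and avoids the second, more delicate collage computation; the paper's route is more explicit and does not invoke the ultrafilter lemma. Either is perfectly adequate here.
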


\begin{proof} By Corollary \ref{cor:comp=>completion} we just need to show that $(\Omega,\mathcal{U}_\Omega)$ is complete. So let $(x_\alpha)$ be a Cauchy net in $(\Omega,\mathcal{U}_\Omega)$. We shall show that a subnet of $(x_\alpha)$, projected to $K_\lambda$, converges to a point of $K_\lambda$.

Let $U \in \mathcal{U}_X$ be such that $U(x)$ is precompact for every $x \in X$, that is, each $U(x)$ has compact closure. There exists some $\mu \in \Lambda$ and $V \in \mathcal{U}_X$ such that $V \circ \mu \subset \lambda \circ U$. Since $\pi_{\mu,\infty}^{-1}(\pi_{\mu,X}(V)) \in \mathcal{U}_\Omega$ and $(x_\alpha)$ is Cauchy, there exists some $\gamma$ for which $(x_\alpha,x_\beta) \in \pi_{\mu,\infty}^{-1}(\pi_{\mu,X}(V))$ for all $\alpha, \beta \geq \gamma$. Take $y \in X$ with $\pi_{\mu,X}(y) = \pi_{\mu,\infty}(x_\gamma)$. Then we have that \[\pi_{\mu,X}(V)(\pi_{\mu,\infty}(x_\gamma)) = \pi_{\mu,X}(V \circ \mu)(\pi_{\mu,\infty}(x_\gamma)) = \pi_{\mu,X}(V \circ \mu(y)) \subset \pi_{\mu,X}(\lambda \circ U(y)).\] It follows that $\pi_{\lambda,\mu}(\pi_{\mu,X}(V)(\pi_{\mu,\infty}(x_\gamma))) \subset \pi_{\lambda,\mu}(\pi_{\mu,X}(\lambda \circ U(y))) = \pi_{\lambda,X}(U(y)) \subset \pi_{\lambda,X}(\overline{U(y)})$. This last space is the image of a compact space and so is compact. It follows that projections of the Cauchy sequence to $K_\lambda$ are contained in a compact subspace and so there exists a subnet of $(y_\beta)$ which converges to some point $x_\lambda \in K_\lambda$.

It now follows from being Cauchy that the net, not just a subnet, converges in $K_\lambda$. Take $\lambda \leq \mu \leq \nu$, as in axiom $2$ of a collage. As above, there exists a subnet $(y_\beta)$ which converges to some $x_\nu$ in $K_\nu$. Define $x_\mu:=\pi_{\mu,\nu}(x_\nu)$ and $x_\lambda:=\pi_{\lambda,\nu}(x_\nu)=\pi_{\lambda,\mu}(x_\mu)$. We claim that $(\pi_{\lambda,\infty}(x_\alpha))$ converges to $x_\lambda$. Let $N$ be some neighbourhood of $x_\lambda$ in $K_\lambda$. One may find $V,W \in \mathcal{U}_X$ such that $\pi_{\nu,\infty}^{-1}(\pi_{\nu,X}(W))^2 \subset \pi_{\mu,\infty}^{-1}(\pi_{\mu,X}(V))$ and $\pi_{\mu,\infty}^{-1}(\pi_{\mu,X}(V)(x_\mu)) \subset \pi_{\lambda,\infty}^{-1}(N)$.

Now, since $\pi_{\nu,\infty}^{-1}(\pi_{\nu,X}(W))$ is an entourage of $(\Omega,\mathcal{U}_\Omega)$, there exists some $\gamma$ such that $(x_\alpha,x_\beta) \in\pi_{\nu,\infty}^{-1}(\pi_{\nu,X}(W))$ for all $\alpha,\beta \geq \gamma$. Notice that the set $\pi_{\nu,X}(W)(x_\nu)$ is a neighbourhood of $x_\nu$ in $K_\nu$. Indeed, we have that $W(x) \subset W \circ \nu(x) \subset \pi_{\nu,X}^{-1}(\pi_{\nu,X}(W \circ \nu(x))) = \pi_{\nu,X}^{-1}(\pi_{\nu,X}(W)(x_\nu))$ for any $x \in X$ with $\pi_{\nu,X}(x)=x_\nu$. Since $(\pi_{\nu,\infty}(x_\alpha))$ has a subnet converging to $x_\nu$, there exists some $\beta \geq \gamma$ such that $(\pi_{\nu,\infty}(x_\beta),x_\nu) \in \pi_{\nu,X}(W)$. That is, we have that $(x_\beta,\underleftarrow{x}) \in \pi_{\infty,\nu}^{-1}(\pi_{\nu,X}(W))$ for all $\underleftarrow{x}$ with $\pi_{\nu,\infty}(\underleftarrow{x})=x_\nu$. It follows that $(x_\alpha,x_\beta), (x_\beta,\underleftarrow{x}) \in \pi_{\nu,X}^{-1}(\pi_{\nu,X}(W))$ for $\alpha \geq \gamma$. That is, we have that $(x_\alpha,\underleftarrow{x}) \in \pi_{\nu,X}^{-1}(\pi_{\nu,X}(W))^2 \subset \pi_{\mu,\infty}(\pi_{\mu,X}(V))$ so that $\pi_{\mu,\infty}(x_\alpha) \in \pi_{\mu,\infty}^{-1}(\pi_{\mu,X}(V)(x_\mu)) \subset \pi_{\lambda,\infty}^{-1}(N)$. Hence, we have that $\pi_{\lambda,\infty}(x_\alpha) \in N$, so $(x_\alpha)$ converges to $x_\lambda$.

It follows that $(x_\alpha)$ converges in each approximant $K_\lambda$ and so converges in the product $\prod K_\lambda$. By assumption, this means that $(x_\alpha) \rightarrow l$ for some $l \in \Omega$ and so $(\Omega,\mathcal{U}_\Omega)$ is complete and hence, by Corollary \ref{cor:comp=>completion}, is a completion of $(X,\mathcal{U}_{\tilde{\mathcal{P}}})$. Note that the required property holds if $\Omega$ is closed in $\prod K_\lambda$, which is the case when $K_\lambda$ are Hausdorff, as in the proof of Proposition \ref{prop:FLC+Haus}.
\end{proof}

\begin{exmp} \label{ex: non-Haus} Let $(X,\mathcal{U}_X)$ satisfy the properness condition of the above theorem. It is clear that the condition on $\tilde{\mathcal{P}}$ is necessary for $\Omega^{\tilde{\mathcal{P}}}$ to be a completion of $(X,\mathcal{U}_{\tilde{\mathcal{P}}})$ (for, given a net $(x_\alpha)$ in $\Omega$ which converges in the product of approximants, it must be Cauchy and so should have a limit in $\Omega$ for $\Omega$ complete). It is not obvious to the author whether or not all collages satisfy the condition of the theorem (although we conjecture that counter-examples exist). It is certainly not necessary for the approximants to be Hausdorff. For example, consider the (non uniformly discrete) point pattern $\mathbb{Q} \subset \mathbb{R}$. Then each approximant is $\mathbb{R}/\mathbb{Q}$ since $x$ and $y$ are equivalent to any given radius if and only if $x-y \in \mathbb{Q}$. The connecting maps are the identity maps. It follows that $\Omega$ is homeomorphic to $\mathbb{R}/\mathbb{Q}$ which is compact and hence also complete (this is a topological space with the indiscrete topology). On the other hand, the induced collage uniformity on $\mathbb{R}$ is clearly the indiscrete one, consisting only of the entourage $\mathbb{R} \times \mathbb{R}$. Its Hausdorff completion is the one point space, which is in agreement with the fact that the Kolmogorov quotient and Hausdorff completion of $\mathbb{R}/\mathbb{Q}$ is also the one point space. \end{exmp}

\section{Patterns and Collages Revisited}
\subsection{Metric Collages} \label{subsect: Metric Collages}

The collages defined in the previous section have as ambient space some uniform space $(X,\mathcal{U}_X)$ and we have shown how one may associate a uniform space to one. The idea is based on the process of passing from a tiling to a tiling space defined by a certain metric induced by the tiling. We shall show here how one may restrict the notion of a collage to more closely mirror the classical set-up. For this we define a metric version of the collages introduced in Section \ref{sect: Collages}:

\begin{definition} A \emph{metric collage} $\tilde{\mathcal{P}}$ (on $(X,d_X)$) is a set of equivalence relations $R_{\tilde{\mathcal{P}}}$ on $X$, one for each $R \in \mathbb{R}_{> 0}$, satisfying the following properties: \begin{enumerate}

\item if $0 < r < R$ then $R_{\tilde{\mathcal{P}}} \subset r_{\tilde{\mathcal{P}}}$.

\item For all $0 < r < R$, if $x R_{\tilde{\mathcal{P}}} y$ and $x' \in B_{d_X}(x,r)$ then $x' (R-r)_{\tilde{\mathcal{P}}} y'$ for some $y'\in B_{d_X}(y,r)$.
\end{enumerate}
\end{definition}

It is not hard to show that the induced collage of a pattern is also a metric collage in the above sense. One may now attempt to associate a metric space to any metric collage. It turns out that one needs to introduce an extra restriction on the metric space $(X,d_X)$ to ensure that the triangle inequality is satisfied.

\begin{definition}\label{metric} Given a metric collage $\tilde{\mathcal{P}}$ of $(X,d_{X})$, define the function $d_{\tilde{\mathcal{P}}} \colon X\times X\rightarrow \mathbb{R}$ by $$d_{\tilde{\mathcal{P}}}(x,y):=\inf \left\{ \frac{1}{\sqrt{2}} \right\} \bigcup \left\{ \epsilon>0 \mid x'(\epsilon^{-1})_{\tilde{\mathcal{P}}} y' \text{ for } x'\in B_{d_X}(x,\epsilon),y'\in B_{d_X}(y,\epsilon) \right\}.$$ \end{definition}

\begin{exmp} One may define a metric collage associated to a tiling of $(X,d_X)$ analogously to the approach in the uniform space setting. Assuming the function $d_{\tilde{\mathcal{T}}}$ is a (pseudo)metric (see below) we see that two points $x,y$ of the tiling are considered as ``close'' under this metric if each have nearby points which agree on a large patch of tiles up to an allowed partial isometry. The function $d_{\tilde{\mathcal{T}}}$ is precisely the tiling metric defined in \cite{AP}. \end{exmp}

\begin{proposition} Suppose that the metric space $(X,d_X)$ satisfies the following: for all $x,z \in X$ with $d_X(x,z) < \epsilon_1+\epsilon_2 < 1/\sqrt{2}$ for $\epsilon_1,\epsilon_2>0$ there exists some $y \in X$ such that $d_X(x,y) < \epsilon_1$ and $d_X(y,z) < \epsilon_2$. Then for a metric collage $\tilde{\mathcal{P}}$ on $(X,d_X)$ the function $d_{\tilde{\mathcal{P}}}$ is a pseudometric on $X$.\end{proposition}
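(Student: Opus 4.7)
The plan is to verify in turn that $d_{\tilde{\mathcal{P}}}(x,x) = 0$, that $d_{\tilde{\mathcal{P}}}$ is symmetric, and that it satisfies the triangle inequality. The first two properties are formal consequences of each $R_{\tilde{\mathcal{P}}}$ being an equivalence relation, while all the substance sits in the triangle inequality, which is where the splitting hypothesis on $(X, d_X)$ and axiom $2$ of a metric collage play their decisive role.

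For $d_{\tilde{\mathcal{P}}}(x,x) = 0$: reflexivity of each $R_{\tilde{\mathcal{P}}}$ makes $x' = y' = x$ a witness for every $\epsilon \in (0, 1/\sqrt{2})$. Symmetry is immediate from symmetry of each $R_{\tilde{\mathcal{P}}}$. As a useful preliminary I would record that the defining set for $d_{\tilde{\mathcal{P}}}(x,y)$ is upward-closed in $(0, 1/\sqrt{2})$, by axiom $1$ of a metric collage combined with the inclusion $B_{d_X}(x, \epsilon') \subset B_{d_X}(x, \epsilon)$ for $\epsilon' \leq \epsilon$, so that any $\epsilon \in (d_{\tilde{\mathcal{P}}}(x, y), 1/\sqrt{2})$ is itself a witness.

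The heart of the proof is the triangle inequality. Set $d_1 = d_{\tilde{\mathcal{P}}}(x, y)$ and $d_2 = d_{\tilde{\mathcal{P}}}(y, z)$; the case $d_1 + d_2 \geq 1/\sqrt{2}$ is trivial. Otherwise, choose $\epsilon_i > d_i$ slightly with $\epsilon_1 + \epsilon_2 < 1/\sqrt{2}$ and extract witnesses $x' \in B_{d_X}(x, \epsilon_1)$, $y_1' \in B_{d_X}(y, \epsilon_1)$ with $x' (\epsilon_1^{-1})_{\tilde{\mathcal{P}}} y_1'$, together with $y_2' \in B_{d_X}(y, \epsilon_2)$, $z' \in B_{d_X}(z, \epsilon_2)$ with $y_2' (\epsilon_2^{-1})_{\tilde{\mathcal{P}}} z'$. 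Since $d_X(y_1', y_2') < \epsilon_1 + \epsilon_2 < 1/\sqrt{2}$, the splitting hypothesis produces $y^* \in X$ with $d_X(y_1', y^*) < \alpha$, $d_X(y^*, y_2') < \beta$ where $\alpha, \beta > 0$ may be chosen with $\alpha < \epsilon_2$ and $\beta < \epsilon_1$ (since $d_X(y_1', y_2') < \epsilon_1 + \epsilon_2$ leaves room). Axiom $2$ applied to $y_1' (\epsilon_1^{-1})_{\tilde{\mathcal{P}}} x'$ with the perturbation $y^* \in B_{d_X}(y_1', \alpha)$ --- permitted since $\alpha < 1/\sqrt{2} < \sqrt{2} < \epsilon_1^{-1}$ --- yields $x'' \in B_{d_X}(x', \alpha)$ with $y^* (\epsilon_1^{-1} - \alpha)_{\tilde{\mathcal{P}}} x''$, and symmetrically $z'' \in B_{d_X}(z', \beta)$ with $y^* (\epsilon_2^{-1} - \beta)_{\tilde{\mathcal{P}}} z''$. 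Transitivity and axiom $1$ then combine to give $x'' (R)_{\tilde{\mathcal{P}}} z''$ for $R = \min(\epsilon_1^{-1} - \alpha, \epsilon_2^{-1} - \beta)$.

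The inequality $R \geq (\epsilon_1 + \epsilon_2)^{-1}$ reduces, via the bounds $\epsilon_i^{-1} - \alpha > \epsilon_i^{-1} - \epsilon_{3-i}$, to the algebraic condition $\epsilon_i(\epsilon_1 + \epsilon_2) \leq 1$, which holds strictly because both factors are less than $1/\sqrt{2}$ and so the product is below $1/2$; this is precisely what the cut-off $1/\sqrt{2}$ in the definition of $d_{\tilde{\mathcal{P}}}$ is calibrated to secure. Combined with $d_X(x, x'') < \epsilon_1 + \alpha < \epsilon_1 + \epsilon_2$ and $d_X(z, z'') < \epsilon_1 + \epsilon_2$, this shows $\epsilon_1 + \epsilon_2$ witnesses $d_{\tilde{\mathcal{P}}}(x, z)$, and letting $\epsilon_i \searrow d_i$ concludes. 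The main obstacle is the bookkeeping of this final step: the splitting hypothesis is essential because, without it, axiom $2$ would force the perturbation radius up to $d_X(y_1', y_2')$, which can exceed either of $\epsilon_1, \epsilon_2$ individually and so push $x''$ or $z''$ out of the required $(\epsilon_1 + \epsilon_2)$-neighbourhoods.
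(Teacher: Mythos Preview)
Your proof is correct and follows essentially the same strategy as the paper's: both use the splitting hypothesis to find an intermediate point between the two $y$-witnesses, then apply axiom~2 of a metric collage to transport the equivalences, and finally invoke the elementary inequality $a^{-1}-b \ge (a+b)^{-1}$ for $0<a,b\le 1/\sqrt{2}$ to land inside the $(\epsilon_1+\epsilon_2)$-ball. Your bookkeeping is in fact a little more careful than the paper's (you explicitly arrange $\epsilon_1+\epsilon_2<1/\sqrt{2}$ before invoking the splitting hypothesis, and introduce the auxiliary $\alpha<\epsilon_2$, $\beta<\epsilon_1$ rather than using $\epsilon_2,\epsilon_1$ directly), but these are cosmetic refinements of the same argument.
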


\begin{proof} That $d_{\tilde{\mathcal{P}}}(x,x)=0$ for all $x \in X$ follows from the fact that each $R_{\tilde{\mathcal{P}}}$ is reflexive and that $d_{\tilde{\mathcal{P}}}$ is symmetric follows from the fact that each $R_{\tilde{\mathcal{P}}}$ is symmetric.

Let $x,y,z\in X$. We intend to show that $d_{\tilde{\mathcal{P}}}(x,y)+d_{\tilde{\mathcal{P}}}(y,z)\ge d_{\tilde{\mathcal{P}}}(x,z)$. If one of $d_{\tilde{\mathcal{P}}}(x,y)$ or $d_{\tilde{\mathcal{P}}}(y,z)$ are $\frac{1}{\sqrt{2}}$ then we are done. So suppose not. Let $x_1 (\epsilon_1^{-1})_{\tilde{\mathcal{P}}} y_1$ and $y_2 (\epsilon_2^{-1})_{\tilde{\mathcal{P}}} z_2$ for some $\epsilon_1,\epsilon_2 < \frac1{\sqrt{2}}$ and $x_1 \in B_{d_X}(x,\epsilon_1),y_1 \in B_{d_X}(y,\epsilon_1)$, $y_2\in B_{d_X}(y,\epsilon_2),z_2 \in B_{d_X}(z,\epsilon_2)$. Then, by our assumption on $(X,d_X)$, there exists some $y'$ for which $d_X(y_1,y')<\epsilon_2$ and $d_X(y_2,y')<\epsilon_1$. Hence, there exist $x',z'$ for which $x' (\epsilon_1^{-1}-\epsilon_2)_{\tilde{\mathcal{P}}} y' (\epsilon_2^{-1}-\epsilon_1)_{\tilde{\mathcal{P}}} z'$ for $x'\in B_{d_X}(x,\epsilon_1+\epsilon_2)$, $z' \in B_{d_X}(z,\epsilon_1+\epsilon_2)$ since $\tilde{\mathcal{P}}$ is a metric collage.

We have $x' (\epsilon^{-1})_{\tilde{\mathcal{P}}} z'$ where
\[ \epsilon=min(\epsilon_{1}^{-1}-\epsilon_{2},\epsilon_{2}^{-1}-\epsilon_{1}).\]
For any numbers $0<a,b\le\frac{1}{\sqrt{2}}$, we have that $a^{-1}-b\ge(a+b)^{-1}$ so, in particular, $\epsilon^{-1} \ge (\epsilon_1+\epsilon_2)^{-1}$ and hence $x' ((\epsilon_1+\epsilon_2)^{-1})_{\tilde{\mathcal{P}}} z'$, proving the triangle inequality. \end{proof}

A metric space $(X,d_X)$ satisfying the condition of the above proposition (removing the condition $\epsilon_1+\epsilon_2 < 1/\sqrt{2}$) has \emph{approximate midpoints}. Without this condition, the triangle inequality is not necessarily satisfied for $d_{\tilde{\mathcal{P}}}$. It follows from the Hopf-Rinow theorem (see \cite{Roe}) that a complete and locally compact metric space with approximate midpoints is in fact a geodesic space, that is, the distance between points may be realised exactly as the length of a geodesic path between the two points. The following shows that our previous treatment of collages using uniformities is indeed a generalisation of the approach using metrics:

\begin{proposition} Let $\tilde{\mathcal{P}}$ be a metric collage on a metric space $(X,d_X)$ which satisfies the condition of the above proposition. Then the equivalence relations $R_{\tilde{\mathcal{P}}}$ defining $\tilde{\mathcal{P}}$ also define a collage on $(X,\mathcal{U}_X)$ over the directed set $(\mathbb{R}_{>0},\leq)$. The collage uniformity of this collage is equal to the uniformity induced by the metric $d_{\tilde{\mathcal{P}}}$. \end{proposition}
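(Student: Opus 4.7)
The plan is to verify the two axioms of Definition~\ref{def: collage} directly, and then to exhibit mutual containment between bases of the collage uniformity $\mathcal{U}_{\tilde{\mathcal{P}}}$ and of the pseudometric uniformity induced by $d_{\tilde{\mathcal{P}}}$.

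First, axiom~1 of a uniform collage is literally axiom~1 of a metric collage once the directed set is taken to be $(\mathbb{R}_{>0},\leq)$. For axiom~2, I would, given $\lambda>0$, fix $\mu:=\lambda+1$. For an arbitrary $U\in\mathcal{U}_X$ choose $r\in(0,1]$ small enough that $U_r\subset U$, and set $V:=U_r$. If $(z,x)\in V\circ\mu_{\tilde{\mathcal{P}}}$, pick $y$ with $d_X(z,y)<r$ and $y\,\mu_{\tilde{\mathcal{P}}}\,x$; then axiom~2 of the metric collage yields $x'\in B_{d_X}(x,r)$ with $z\,(\mu-r)_{\tilde{\mathcal{P}}}\,x'$. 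Since $\mu-r=\lambda+1-r\geq\lambda$, axiom~1 upgrades this to $z\,\lambda_{\tilde{\mathcal{P}}}\,x'$, and $(x',x)\in U_r\subset U$ places $(z,x)$ in $\lambda_{\tilde{\mathcal{P}}}\circ U$.

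For the equality of uniformities I would compare bases: $\mathcal{U}_{\tilde{\mathcal{P}}}$ is generated by sets $U_r\circ\lambda_{\tilde{\mathcal{P}}}\circ U_r$ (as the $U_r$ form a base for $\mathcal{U}_X$), and the pseudometric uniformity by balls $B_s:=\{(y,x)\mid d_{\tilde{\mathcal{P}}}(x,y)<s\}$. For the containment $U_r\circ\lambda_{\tilde{\mathcal{P}}}\circ U_r\subseteq B_s$, given $s>0$ one picks $r<\min(s,1/\sqrt{2})$ and $\lambda\geq 1/r$; then for $(y,x)$ in the composition, witnesses $x'\in B_{d_X}(x,r)$ and $y'\in B_{d_X}(y,r)$ with $x'\,\lambda_{\tilde{\mathcal{P}}}\,y'$ satisfy $x'\,(r^{-1})_{\tilde{\mathcal{P}}}\,y'$ by axiom~1 (since $r^{-1}\leq\lambda$), so $\epsilon:=r$ is a valid value in the infimum defining $d_{\tilde{\mathcal{P}}}(x,y)$, giving $d_{\tilde{\mathcal{P}}}(x,y)\leq r<s$. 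For the reverse containment $B_s\subseteq U_r\circ\lambda_{\tilde{\mathcal{P}}}\circ U_r$, set $s:=\min(r,\lambda^{-1},1/\sqrt{2})$; if $d_{\tilde{\mathcal{P}}}(x,y)<s$, then (since $s\leq 1/\sqrt{2}$ excludes the trivial element of the infimum) there is a witness $\epsilon<s$ with $x'\in B_{d_X}(x,\epsilon)$, $y'\in B_{d_X}(y,\epsilon)$ and $x'\,(\epsilon^{-1})_{\tilde{\mathcal{P}}}\,y'$, and $\epsilon^{-1}>\lambda$ upgrades via axiom~1 to $x'\,\lambda_{\tilde{\mathcal{P}}}\,y'$, so $(y,x)\in U_r\circ\lambda_{\tilde{\mathcal{P}}}\circ U_r$.

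The hard part is nothing deep; it is entirely bookkeeping. One must keep straight that larger magnitude gives a smaller equivalence relation (the inclusion reversal in axiom~1 of a metric collage), that the inversion $\epsilon\mapsto\epsilon^{-1}$ reverses inequalities, and that the $1/\sqrt{2}$ truncation built into the definition of $d_{\tilde{\mathcal{P}}}$ is respected when choosing parameters. With these conventions tracked, every step is an elementary manipulation.
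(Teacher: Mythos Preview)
Your proposal is correct and follows essentially the same route as the paper: verify axiom~2 of a uniform collage by taking $\mu=\lambda+1$ and $V=U_r$ with $r\le 1$, then compare bases of the two uniformities by the obvious parameter choices. The only cosmetic difference is that the paper takes the cutoff parameter strictly below $\min\{\delta,\lambda^{-1},1/\sqrt{2}\}$ rather than equal to it, but your version with equality also works since the witness $\epsilon$ from $d_{\tilde{\mathcal{P}}}(x,y)<s$ is already strictly below $s$.
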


\begin{proof} It is clear that the induced collage on $(X,\mathcal{U}_X)$ satisfies axiom $1$ to be a collage. For axiom $2$, take some $r > 0$ and $U \in \mathcal{U}_X$. Since $U \in \mathcal{U}_X$, there exists some $\epsilon$ such that $U_\epsilon \subset U$ (recall the definition of $U_\epsilon$ from Example \ref{ex: met}), without loss of generality assume that $\epsilon < 1$ and set $V=U_\epsilon$. We claim that $V \circ (r+1)_{\tilde{\mathcal{P}}} \subset r_{\tilde{\mathcal{P}}} \circ U$. Indeed, given $(x,y) \in V \circ (r+1)_{\tilde{\mathcal{P}}}$ there exists some $x'$ with $d_X(x',x)<\epsilon$ and $x' (r+1)_{\tilde{\mathcal{P}}} y$. By the axioms for a metric collage, there exists some $y'$ such that $x r_{\tilde{\mathcal{P}}} y'$ and $d_X(y',y)<\epsilon$ so that $(y',y) \in U_\epsilon \subset U$. It follows that $(x,y) \in r_{\tilde{\mathcal{P}}} \circ U$ and so $\tilde{\mathcal{P}}$ is a collage over $(\mathbb{R}_{>0},\leq)$.

Let $0 < \epsilon < 1/\sqrt{2}$ and set $U = U_\epsilon$, $r=\epsilon^{-1}$. Then if $(x,y) \in U \circ r_{\tilde{\mathcal{P}}} \circ U$ we have that $d_{\tilde{\mathcal{P}}}(x,y) < \epsilon$. It follows that the collage uniformity is contained in the uniformity generated by the metric $d_{\tilde{\mathcal{P}}}$. Conversely, let $U \circ r_{\tilde{\mathcal{P}}} \circ U \in \mathcal{U}_{\tilde{\mathcal{P}}}$. Then there exists some $\delta$ with $U_\delta \subset U$. Set $\epsilon < \min \{\delta,r^{-1},1/\sqrt{2} \}$. Then if $d_{\tilde{\mathcal{P}}}(x,y) < \epsilon$ if follows that $(x,y) \in U \circ r_{\tilde{\mathcal{P}}} \circ U$. Hence, the collage uniformity is equal to the uniformity induced by the metric $d_{\tilde{\mathcal{P}}}$. \end{proof}

\subsection{Patterns of Uniform Spaces}

In the previous subsection, we considered a restriction of the notion of a collage from the setting of uniform spaces to that of metric spaces. We saw that complications arise in doing this, mostly from the need to satisfy the triangle inequality. For many applications, at least those of interest to us here, there is little advantage in having a metric to having a uniformity and so we find the uniformity approach more natural. This motivates a question in the opposite direction to that considered above: can one define the notion of a pattern on a uniform space? Such an object should be defined in terms of partial homeomorphisms on the underlying space and should in some way induce a collage of equivalence relations.

A notion of repetitivity was absent for collages of uniform spaces since a notion of relative density of the equivalence classes is not defined. If one were to induce a collage from a pattern on some uniform space, which directed set would the collage be indexed over? These problems are related: for a general uniform space there is no notion of a uniformly bounded covering and there is no way to say that a particular morphism has a ``large'' (co)domain. The correct structure with which to enrich the uniform space here is a \emph{coarse structure}, see \cite{Roe}. The axioms for a coarse structure are, in some sense, dual to those for a uniform structure and a coarse structure indeed has the dual job to a uniform structure, it describes the \emph{large}-scale properties of a space. This sort of notion is needed in our context to say that ``two points agree to some large magnitude''.

\subsection{Patterns as Inverse Semigroups} \label{subsect: Patterns as Inverse Semigroups}

The definition of a pattern is reminiscent to that of a \emph{pseudogroup}. A (classical) pseudogroup on a topological space consists of partial homeomorphisms on the space which satisfy some natural properties (such as having partial identities, restrictions, compositions and a form of ``glueing''). A more general definition is given in \cite{paro}, the patterns here are examples of such pseudogroups. Central to each of these structures is the idea of a partial mapping. If a group is the correct algebraic gadget with which to study the \emph{global} isomorphisms of an object of interest, then its counterpart for studying \emph{partial} isomorphisms is an inverse semigroup:

\begin{definition} A \emph{semigroup} is a set $S$ along with an associative binary operation $\cdot$ on $S$. An element $0 \in S$ is said to be a \emph{zero} for the semigroup if $0 \cdot s=s \cdot 0 = 0$ for all $s \in S$. An element $1 \in S$ is said to be an \emph{identity} in $S$ if $1 \cdot s = s \cdot 1 =s$ for all $s \in S$. An element $s \in S$ is \emph{regular} if it has at least one \emph{inverse}, that is, some $t \in S$ such that $s \cdot t \cdot s = s$ and $t \cdot s \cdot t = t$. A semigroup is \emph{regular} if every element of it is regular and \emph{inverse} if every element has a unique inverse. \end{definition}

Cayley's Theorem states that every group faithfully embeds into some symmetric group (that is, the group of bijections on some set) in a way which respects the group operation. This fact in some sense makes precise the notion that groups are suitable for the study of global isomorphisms. The analogous theorem for inverse semigroups is the Wagner-Preston representation theorem (see \cite{Law}):

\begin{theorem} Let $\mathcal{I}_X$ be the symmetric inverse semigroup on $X$, that is, the inverse semigroup of all partial bijections on $X$ with (partial) composition as binary operation. Then, given any inverse semigroup $S$, there exists some set $X$ and an injective map $f \colon S \rightarrow \mathcal{I}_X$ which respects the semigroup operation, that is, $f(s \cdot t)=f(t) \circ f(s)$. \end{theorem}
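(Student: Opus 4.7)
The plan is to adapt the classical Cayley-style argument, letting $S$ act on itself by partial right multiplication. Taking $X := S$, for each $s \in S$ I would define a partial map $\psi_s$ on $S$ by $\psi_s(x) := x \cdot s$, taken on the ``principal right ideal-like'' set
\[ \mathrm{dom}(\psi_s) := \{x \in S \mid x \cdot ss^{-1} = x\} = S \cdot ss^{-1}, \]
with range $\{y \in S \mid y \cdot s^{-1}s = y\} = S \cdot s^{-1}s$. The map $f \colon S \to \mathcal{I}_S$ of the theorem would then be $f(s) := \psi_s$.

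First I would verify that $\psi_s \colon S \cdot ss^{-1} \to S \cdot s^{-1}s$ is a well-defined bijection, with two-sided inverse given by $\psi_{s^{-1}}$. This reduces to the identities $xs \cdot s^{-1}s = xs$ for $x$ in the domain and $(xs) \cdot s^{-1} = x$, which follow from the defining relation $ss^{-1}s = s$ together with the idempotence of $ss^{-1}$ and $s^{-1}s$. Second I would check the anti-homomorphism identity $\psi_{s \cdot t} = \psi_t \circ \psi_s$. This is clear on elements, since both maps send $x$ to $xst$, but requires matching of domains: one must show that the natural domain of $\psi_t \circ \psi_s$ (namely $\{x \in S \cdot ss^{-1} \mid xs \in S \cdot tt^{-1}\}$) coincides with $S \cdot (st)(st)^{-1} = S \cdot stt^{-1}s^{-1}$. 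This is a direct manipulation using the identities above together with the fact that idempotents in an inverse semigroup commute, so the idempotents $s^{-1}s$ and $tt^{-1}$ can be freely rearranged.

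Finally I would establish injectivity of $s \mapsto \psi_s$. The key observation is that $ss^{-1}$ always lies in $\mathrm{dom}(\psi_s)$, with $\psi_s(ss^{-1}) = ss^{-1}s = s$, so $s$ is recoverable from $\psi_s$. If $\psi_s = \psi_t$, equality of domains gives $ss^{-1} \in S \cdot tt^{-1}$ and $tt^{-1} \in S \cdot ss^{-1}$, hence $ss^{-1} \cdot tt^{-1} = ss^{-1}$ and $tt^{-1} \cdot ss^{-1} = tt^{-1}$; commutativity of idempotents then forces $ss^{-1} = tt^{-1}$, and evaluating both maps at this element yields $s = t$.

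The main obstacle will be the injectivity step, since this is where the defining feature of an inverse semigroup --- the uniqueness of inverses, equivalently the commutativity of its idempotents --- enters in an essential way; without it one cannot conclude $ss^{-1} = tt^{-1}$ from the two one-sided relations above. The verification that each $\psi_s$ is a partial bijection and that $s \mapsto \psi_s$ reverses composition is otherwise purely formal semigroup algebra using only the relations $ss^{-1}s = s$ and $s^{-1}ss^{-1} = s^{-1}$.
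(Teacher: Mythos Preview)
Your proof is correct and is precisely the standard Wagner--Preston argument. However, there is nothing to compare against: the paper does not prove this theorem at all. It is stated as background (the Wagner--Preston representation theorem) with a reference to Lawson's book \emph{Inverse Semigroups: The Theory of Partial Symmetries}, and no proof is given in the thesis itself.

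For what it is worth, your sketch is essentially the textbook proof you would find in that reference: the right-regular representation $s \mapsto \psi_s$ with $\psi_s \colon S ss^{-1} \to S s^{-1}s$, $x \mapsto xs$. Your identification of the key steps is accurate --- the domain-matching for $\psi_t \circ \psi_s = \psi_{st}$ does need commutativity of idempotents (to handle $s^{-1}s$ and $tt^{-1}$), and injectivity hinges on the observation $\psi_s(ss^{-1}) = s$ together with the fact that equal domains force $ss^{-1} = tt^{-1}$ via commuting idempotents. One small point: in showing $S \cdot stt^{-1}s^{-1} \subset \mathrm{dom}(\psi_s)$ you should note that $stt^{-1}s^{-1} \cdot ss^{-1} = stt^{-1}s^{-1}$ follows already from $s^{-1}ss^{-1} = s^{-1}$, without needing idempotent commutativity there; the commutativity is genuinely needed only for the reverse inclusion and for injectivity, exactly as you say.
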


The patterns defined in Section \ref{sect: Patterns} can be seen as inverse semigroups in a natural way. Indeed, patterns on $(X,d_X)$ are, by definition, a collection of elements of $\mathcal{I}_X$. The second and third axioms for a pattern ensure that a pattern $\mathcal{P}$ is a \emph{sub-inverse semigroup} of $\mathcal{I}_X$, that is, $\mathcal{P}$ is an inverse semigroup with respect to the operation $\cdot$ defined by (partial) composition $\Phi_1 \cdot \Phi_2 := \Phi_2 \circ \Phi_1$. Denote by $S(\mathcal{P})$ the inverse semigroup thus defined. We have that $S(\mathcal{P})$ has a zero element (the element $Id_\emptyset$) and an identity (the element $Id_X$).

We note that inverse semigroups have been considered before in the context of tilings, see \cite{KL}, although our approach here is quite different. For a tiling $T$ of $\mathbb{R}^d$ and $\mathcal{S}$ the collection of all (partial) isometries of $\mathbb{R}^d$, one may consider $S(\mathcal{T}_\mathcal{S})$ as being a generalisation of the symmetry group $G_T$ of the tiling (sometimes known as the \emph{space group} of $T$). Indeed, we have that $G_T$ corresponds precisely to the morphisms of $S(\mathcal{T}_\mathcal{S})$ whose (co)domains are all of $\mathbb{R}^d$ (this is a property that can be checked algebraically, such elements being those which satisfy $\Phi \cdot \Phi^{-1}=Id_{\mathbb{R}^d}$, the unique identity in $S(\mathcal{T}_\mathcal{S})$). For an aperiodic tiling $T$, the space group $G_T$ is practically useless since there will be very few (if any) global isometries preserving the tiling, but for an aperiodic FLC tiling there is a rich supply of partial isometries preserving patches of the tiling. The inverse semigroups $S(\mathcal{T}_\mathcal{S})$ are the suitable generalisation of the space group $G_T$ to the aperiodic (FLC) setting.

For a semigroup $S$ say that $s \in S$ is an \emph{idempotent} in $S$ if $s^2 = s$. Notice that the idempotents of $\mathcal{I}_X$ are precisely the partial identities $Id_U$ for subsets $U \subset X$. For an inverse semigroup $S$ we have that the idempotents correspond to elements of the form $s \cdot s^{-1}$. To see this, note that for an idempotent $s$ we have that $s \cdot s \cdot s = s \cdot s = s$ and so is its own inverse and satisfies $s=s \cdot s^{-1}$. Conversely, each $s \cdot s^{-1}$ is an idempotent since $(s \cdot s^{-1}) \cdot (s \cdot s^{-1}) = (s \cdot s^{-1} \cdot s) \cdot s^{-1} = s \cdot s^{-1}$. Every inverse semigroup has a \emph{natural partial order} defined by setting $s \leq t$ if and only if there exists some idempotent $e \in S$ such that $s=e \cdot t$ (one could equivalently use the convention that $s=t \cdot e$). For $\mathcal{I}_X$, this partial order sets $f \leq g$ if and only if $f = g \circ Id_U = {g|}_U$ for some subset $U \subset X$, that is, if and only if $f$ is some restriction of $g$.

This same partial order applies to the inverse semigroups $S(\mathcal{P})$, we have that $\Phi_1 \leq \Phi_2$ in $S(\mathcal{P})$ if and only if $\Phi_1$ is a restriction of $\Phi_2$ to some open subset $U \subset X$. Notice that on the idempotents, that is, the morphisms $Id_U \in S(\mathcal{P})$ for open $U \subset X$, we have that $Id_U \leq Id_V$ if and only if $U \subset V$. So the partial order on the idempotents of $S(\mathcal{P})$ corresponds precisely to the partial order of inclusion on the topology of $X$, that is, the idempotents with the natural partial order form a \emph{locale}. In essence, a locale is a generalisation of a topological space where one considers the open sets, instead of the points of $X$, as fundamental to a space (see \cite{Loc} for a good introduction to locales). Foundational examples of locales are the lattices of open sets of topological spaces, with partial order given by inclusion, and in this regard one may consider a locale as a generalised topological space. Inverse semigroups whose idempotents with the natural partial order are locales (such as our pattern inverse semigroups $S(\mathcal{P})$) are sometimes referred to as \emph{abstract pseudogroups} (in the terminology of \cite{MR}, since we have partial identities on the open sets, they are \emph{full pseudogroups}).

We briefly mention an alternative way of viewing an inverse semigroup $S$ that may aid the reader in visualising the structure of inverse semigroups. One may consider a general element of an inverse semigroup $s \in S$ as mapping from some idempotent $dom(s):=s \cdot s^{-1}$ to another $ran(s):=s^{-1} \cdot s$. For example, for a partial bijection $f \colon U \rightarrow V$ of $\mathcal{I}_X$ we have that $dom(f)=Id_U$ and $ran(f)=Id_V$. So one may think of an inverse semigroup $S$ as a \emph{groupoid} (a small category with invertible morphisms), with object set the idempotents of $S$ and morphisms from idempotents $e$ to $f$ those $s \in S$ with $dom(s)=e$ and $ran(s)=f$. Elements $s$ and $t$, thought of as morphisms in the groupoid, are composable whenever $ran(s)=dom(t)$. Just as in the definition of partial composition, the semigroup operation on $S$ may be conducted by, firstly, taking an appropriate restriction of the morphisms, making the elements composable,  and then composing these elements. The idea of an \emph{inductive groupoid} makes this precise. Loosely, an inductive groupoid is a groupoid with a partial ordering on its morphisms (which ones may think of as corresponding to the natural partial order), these structures satisfying some natural compatibility conditions, such that certain unique (co)restrictions of the morphisms exist. This viewpoint of inverse semigroups, through inductive groupoids, is in a certain sense equivalent, this being made precise by the Ehresmann-Schein-Nambooripad theorem, which says that the categories of inverse semigroups and inductive groupoids (equipped with naturally defined morphisms) are isomorphic, see \cite{Law} for further details.

One can now ask some natural questions: does the algebraic data of the inverse semigroup $S(\mathcal{P})$ determine the pattern $\mathcal{P}$, the uniform space $(X,\mathcal{U}_{\mathcal{P}})$ or the inverse limit space $\Omega^{\mathcal{P}}$? What is meant here is whether in fact the \emph{isomorphism class} of $S(\mathcal{P})$ determines these structures. In fact, this is too much to hope for. Consider an FLC (with respect to translations) aperiodic tiling $T$ of $\mathbb{R}^d$ with respect to allowed partial isometries given by translations. We have that $\mathcal{T}_1$ is a pattern on $(\mathbb{R}^d,d_{euc})$. It follows easily from the axioms, however, that it also defines a pattern on $(\mathbb{R}^d,d_c)$ where $d_c$ is the equivalent ``capped'' Euclidean metric defined by $d_c(x,y):=\min \{d_{euc}(x,y),1 \}$ (note that a translation is still an isometry with respect to $d_c$). However, the induced collage $\tilde{\mathcal{T}_1}$ (and hence the uniform space $(\mathbb{R}^d,\mathcal{U}_{\tilde{\mathcal{T}_1}})$ and space $\Omega^{\tilde{\mathcal{T}_1}}$) depends vitally on the metric: for $x (2)_{\tilde{\mathcal{T}_1}} y$ with respect to the Euclidean metric, we require that there is a translation taking the patch of tiles intersecting $B_{d_{euc}}(x,2)$ to the similar such patch at $y$, whereas for the capped metric there should be such a translation preserving the entire tiling, since $B_{d_c}(x,2)=\mathbb{R}^d$. So $2_{\tilde{\mathcal{T}_1}}$ is the trivial equivalence relation with respect to $d_c$, whereas we know that for an FLC tiling that this equivalence relation is far from trivial with respect to $d_{euc}$. The issue here is that the equivalence relations of the induced collage are dramatically altered by a change in the metric and so we would like to enrich the structure of $S(\mathcal{P})$ using the metric $d_X$. We present now one approach to this.

We have a natural partial order $\leq$ on $S(\mathcal{P})$ defined by setting $\Phi_1 \leq \Phi_2$ if and only if $\Phi_1$ is a restriction of $\Phi_2$. Extending this idea, for $R \in \mathbb{R}_{\geq 0}$, let $\Phi_1 \leq_R \Phi_2$ if and only if $\Phi_1$ is a restriction of $\Phi_2$ for which $dom(\Phi_2)$ contains all points of distance less than or equal to $R$ of a point of $dom(\Phi_1)$ and, similarly, $ran(\Phi_2)$ contains all points of distance less than or equal to $R$ of a point of $ran(\Phi_1)$. We have the following properties for $\leq_R$:

\begin{proposition} \label{prop: grad poset}
The inverse semigroup $S(\mathcal{P})$ equipped with $\leq_R$ for $R \in \mathbb{R}_{\geq 0}$ as above satisfies:
\begin{enumerate}
	\item $\Phi_1 \leq_0 \Phi_2$ if and only if $\Phi_1$ is a restriction of $\Phi_2$, that is, $\leq_0$ corresponds to the natural partial order on $S(\mathcal{P})$.
	\item The idempotents of $S(\mathcal{P})$ are precisely the morphisms $Id_U$ for open $U \subset X$ and $Id_U \leq_R Id_V$ if and only if every point of distance less than or equal to $R$ of a point of $U$ is contained in $V$.
	\item $\Phi_1 \leq_R \Phi_2$ if and only if $\Phi_1=Id_{dom(\Phi_1)}\cdot \Phi_2 = \Phi_2 \cdot Id_{ran(\Phi_1)}$ with $Id_{dom(\Phi_1)} \leq_R Id_{dom(\Phi_2)}$ and $Id_{ran(\Phi_1)} \leq_R Id_{ran(\Phi_2)}$.
	\item If $\Phi_1 \leq_R \Phi_2$ then $\Phi_1 \leq_r \Phi_2$ for all $r \leq R$.
	\item If $\Phi_1 \leq_R \Psi_1$ and $\Phi_2 \leq_R \Psi_2$ then $\Phi_1 \cdot \Phi_2 \leq_R \Psi_1 \cdot \Psi_2$.
	\item If $\Phi_1 \leq_R \Phi_2$ then $\Phi_1^{-1} \leq_R \Phi_2^{-1}$.
	\item If $\Phi_1 \leq_{R_1} \Phi_2 \leq_{R_2} \Phi_3$ then $\Phi_1 \leq_{\max \{R_1,R_2\}} \Phi_3$.
\end{enumerate}
\end{proposition}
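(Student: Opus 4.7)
The plan is to verify each of the seven items by direct unpacking of the definitions, noting that the nontrivial content sits almost entirely in item (5), where we must handle how the auxiliary distance conditions interact with composition on suitable restricted domains.

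I would begin with the easy book-keeping. For item (1), observe that $\leq_0$ requires only that $dom(\Phi_2)$ contains the $0$-neighbourhood of $dom(\Phi_1)$ (similarly for ranges), which gives back exactly the restriction relation, i.e.\ the natural partial order. For item (2) I would recall that idempotents of an inverse semigroup are exactly elements of the form $s \cdot s^{-1}$, identify these inside $\mathcal{I}_X$ with partial identities $Id_U$, and then read off the meaning of $Id_U \leq_R Id_V$ directly. Item (3) is then a translation into semigroup notation: if $\Phi_1$ is a restriction of $\Phi_2$ to the open set $dom(\Phi_1)$, both expressions $Id_{dom(\Phi_1)} \cdot \Phi_2 = \Phi_2 \circ Id_{dom(\Phi_1)}$ and $\Phi_2 \cdot Id_{ran(\Phi_1)} = Id_{ran(\Phi_1)} \circ \Phi_2$ equal $\Phi_1$, and the distance conditions $Id_{dom(\Phi_1)} \leq_R Id_{dom(\Phi_2)}$ and $Id_{ran(\Phi_1)} \leq_R Id_{ran(\Phi_2)}$ correspond via item (2) exactly to the conditions built into $\leq_R$. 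Item (4) is an immediate weakening of the neighbourhood conditions.

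Item (6) follows from two observations: taking inverses swaps domain and range, and so swaps the two halves of the condition defining $\leq_R$; and a restriction of $\Phi_2$ is sent by inversion to a restriction of $\Phi_2^{-1}$. For item (7) I would do a case split. Write $R = \max\{R_1,R_2\}$ and take $x$ within distance $R$ of a point $y \in dom(\Phi_1)$. If $d_X(x,y) \leq R_1$, the condition from $\Phi_1 \leq_{R_1} \Phi_2$ places $x$ in $dom(\Phi_2) \subset dom(\Phi_3)$. Otherwise $R_2 \geq d_X(x,y)$ and $y \in dom(\Phi_1) \subset dom(\Phi_2)$, so the condition from $\Phi_2 \leq_{R_2} \Phi_3$ gives $x \in dom(\Phi_3)$ directly. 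An identical argument handles the range side, and transitivity of the restriction property is automatic.

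The main work is item (5), which I expect to be the only genuine obstacle; the subtlety is that the domain of $\Phi_1 \cdot \Phi_2$ is $\Phi_1^{-1}(ran(\Phi_1) \cap dom(\Phi_2))$, so the distance condition has to be checked across a composite. Let $D$ and $D'$ denote the domains of $\Phi_1 \cdot \Phi_2$ and $\Psi_1 \cdot \Psi_2$ respectively. I would first show $D \subset D'$ and that the two partial isometries agree on $D$, using only that each $\Phi_i$ is a restriction of $\Psi_i$; this establishes the restriction half of $\leq_R$. For the neighbourhood half, given $x$ within $R$ of some $y \in D \subset dom(\Phi_1)$, the hypothesis $\Phi_1 \leq_R \Psi_1$ places $x$ in $dom(\Psi_1)$. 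The key geometric step is that $\Psi_1$ is an isometry on its domain, so $d_X(\Psi_1(x),\Psi_1(y)) = d_X(x,y) \leq R$, while $\Psi_1(y) = \Phi_1(y) \in dom(\Phi_2)$; the hypothesis $\Phi_2 \leq_R \Psi_2$ then puts $\Psi_1(x) \in dom(\Psi_2)$, so that $x \in D'$. The range side is dual: given $z$ within $R$ of a point $\Phi_2(w)$ with $w \in ran(\Phi_1) \cap dom(\Phi_2)$, choose $w' \in dom(\Psi_2)$ with $\Psi_2(w')=z$, and use that $\Psi_2$ is an isometry to transfer the distance bound to $d_X(w',w) \leq R$; then $w' \in ran(\Psi_1)$ by the range condition of $\Phi_1 \leq_R \Psi_1$, so $z \in ran(\Psi_1 \cdot \Psi_2)$. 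Together these assemble $\Phi_1 \cdot \Phi_2 \leq_R \Psi_1 \cdot \Psi_2$.
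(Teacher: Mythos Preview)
Your proof is correct and follows essentially the same route as the paper: items 1--4, 6, 7 are dispatched from the definitions, and item 5 is the only one argued in detail, by pushing a point $x$ within distance $R$ of $dom(\Phi_1 \cdot \Phi_2)$ through the isometry $\Psi_1$ to land in $dom(\Psi_2)$. The paper treats only the domain side of item 5 and declares the range side analogous; your explicit range argument is fine, though you might note one small elided step: the existence of $w'$ with $\Psi_2(w')=z$ requires $z \in ran(\Psi_2)$, which follows from the range half of $\Phi_2 \leq_R \Psi_2$ applied to $z$ near $\Phi_2(w)$.
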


\begin{proof} Parts $1$, $2$, $3$, $4$, $6$ and $7$ follow easily from the definitions and our previous discussions on inverse semigroups. For part $5$, take $\Phi_1 \leq_R \Psi_1$ and $\Phi_2 \leq_R \Psi_2$. We must show that every point within distance less than or equal to $R$ of $dom(\Phi_2 \circ \Phi_1) = \Phi_1^{-1}(ran(\Phi_1) \cap dom(\Phi_2))$ is contained within $dom(\Psi_2 \circ \Psi_1) = \Psi_1^{-1}(ran(\Psi_1) \cap dom(\Psi_2))$ (the discussion for the ranges of the functions is analogous). So let $x$ be within distance $R$ of $y \in \Phi_1^{-1}(ran(\Phi_1) \cap dom(\Phi_2))$. It follows that $x,y \in dom(\Psi_1)$ and so $\Psi_1(x)$ is within distance $R$ of $\Psi_1(y) \in ran(\Phi_1) \cap dom(\Phi_2)$. Hence $\Phi_1(x)$ is in $ran(\Psi_1) \cap dom(\Psi_2)$, that is, $x \in \Phi_1^{-1}(ran(\Psi_1) \cap dom(\Psi_2))$. \end{proof}

The above proposition shows that the relations $\leq_R$ possess similar properties to the natural partial order on $S(\mathcal{P})$, that is, they respect multiplication and inversion (note that they are not themselves partial orders, since usually $\Phi \nleq_R \Phi$). They get ``finer'' as $R$ increases by item $4$ of the above proposition, that is, $\leq_R \subset \leq_r$ for $r \leq R$. 

\begin{definition} We shall say that the patterns $\mathcal{P}_X$, defined on $(X,d_X)$, and $\mathcal{P}_Y$, defined on $(Y,d_Y)$, are \emph{isomorphic} if and only if there exists some surjective isometry $f \colon (X,d_X) \rightarrow (Y,d_Y)$ such that $f$ and its inverse preserve the semigroup structures of $\mathcal{P}_X$ and $\mathcal{P}_Y$. That is, we have that for all $\Phi \in \mathcal{P}_X$ the partial isometry $f(\Phi) \colon f(dom(\Phi)) \rightarrow f(ran(\Phi))$ defined by $f(\Phi)(y) := (f \circ \Phi \circ f^{-1})(y)$ is an element of $\mathcal{P}_Y$ and, similarly, for all $\Psi \in \mathcal{P}_Y$, we have that $f^{-1} \circ \Psi \circ f \in \mathcal{P}_X$. \end{definition}

In effect, isomorphic patterns are equal ``modulo a relabelling of points of $(X,d_X)$''. The inverse semigroup $S(\mathcal{P})$ equipped with the relations $\leq_R$ is able to recover $\mathcal{P}$ up to isomorphism:

\begin{proposition} Let $\mathcal{P}$ be a pattern. Then the pair $(S(\mathcal{P}),\leq_R)$ determines the pattern $\mathcal{P}$ up to isomorphism. \end{proposition}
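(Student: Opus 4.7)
The plan is to reconstruct, in a way functorial under isomorphisms of the enriched inverse semigroup $(S(\mathcal{P}),\leq_R)$, the underlying metric space $(X,d_X)$ and then each partial isometry in $\mathcal{P}$. The starting point is to identify the set $E$ of idempotents algebraically (as those $e$ with $e \cdot e = e$, equivalently $e = e^{-1}$, equivalently $e = s \cdot s^{-1}$ for some $s$). By item $2$ of Proposition \ref{prop: grad poset}, together with axiom $1$ in the definition of a pattern (which forces $Id_U \in \mathcal{P}$ for every open $U \subset X$), the pair $(E,\leq_0)$ is isomorphic to the frame of open sets of $X$ ordered by inclusion. Since $(X,d_X)$ is metric and hence Hausdorff, it is sober, so the topological space $X$ can be recovered as the set of points of this locale, i.e.\ as the set of completely prime filters of $(E,\leq_0)$; concretely, each $x \in X$ corresponds to $\mathcal{N}(x) = \{U \in E : x \in U\}$.

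Next I would recover the metric using the refined relations $\leq_R$. By item $2$ of Proposition \ref{prop: grad poset} we have $Id_U \leq_R Id_V$ exactly when the closed $R$-neighbourhood $N_R(U)$ is contained in $V$. The key observation is an internal characterisation of disjointness with a thickening: for open $U,V$,
\[ U \cap N_R(V) = \emptyset \ \Longleftrightarrow\ \exists W \in E \text{ with } Id_V \leq_R Id_W \text{ and } Id_U \cdot Id_W = Id_\emptyset, \]
where the forward direction relies on the fact that a metric space is normal, so the closed set $N_R(V)$ and the open set $U$ (being disjoint) can be separated by an open $W$ containing $N_R(V)$ with $W \cap U = \emptyset$ (explicitly, $W = \{z : d(z,V) < R + \epsilon\}$ for small $\epsilon$), and the reverse is immediate. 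Since $Id_U \cdot Id_V = Id_{U \cap V}$ and the zero of $S(\mathcal{P})$ is $Id_\emptyset$, both sides are purely internal to $(S(\mathcal{P}),\leq_R)$. One then recovers the metric as
\[ d_X(x,y) \;=\; \sup \Big\{ R \ge 0 : \exists\, U \in \mathcal{N}(x),\ V \in \mathcal{N}(y),\ W \in E\ \text{with}\ Id_V \leq_R Id_W,\ Id_U \cdot Id_W = Id_\emptyset \Big\}, \]
the inner existential witnessing precisely that $U \cap N_R(V) = \emptyset$, whose supremum over shrinking neighbourhoods of $x,y$ is $d_X(x,y)$.

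With $(X,d_X)$ recovered, each $\Phi \in S(\mathcal{P})$ is identified with its underlying partial isometry: for $W \in E$ with $Id_W \leq_0 dom(\Phi)$, the element $Id_W \cdot \Phi$ is the restriction $\Phi|_W$, whose range $ran(Id_W \cdot \Phi)$ recovers $\Phi(W)$ as an idempotent. Thus $\Phi$ induces a frame isomorphism between the open sets of $dom(\Phi)$ and of $ran(\Phi)$; passing to locale points this frame isomorphism becomes the underlying bijection $\Phi \colon dom(\Phi) \to ran(\Phi)$, which is an isometry by the metric-recovery step. The whole construction is natural: any isomorphism $(S(\mathcal{P}_X),\leq_R) \cong (S(\mathcal{P}_Y),\leq_R)$ respecting the semigroup operation, inverses and every $\leq_R$ induces a frame isomorphism of idempotent lattices, hence a homeomorphism $f \colon X \to Y$, which the metric formula forces to be an isometry and which conjugates $\mathcal{P}_X$ onto $\mathcal{P}_Y$ by the partial-isometry reconstruction; this yields the isomorphism of patterns in the sense of the definition preceding the proposition. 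The step I expect to require the most care is the metric-recovery formula: one must be confident that the separation argument producing the witness $W$ is available under only the hypothesis that $(X,d_X)$ is a metric space, and that the nested suprema genuinely converge to $d_X(x,y)$ rather than to some smaller quantity induced by sparse or exotic neighbourhood behaviour near $x$ or $y$.
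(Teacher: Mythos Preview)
Your broad strategy matches the paper's: recover the frame of opens from the idempotents under $\leq_0$, pass to points via sobriety, then recover the metric from the relations $\leq_R$, and finally read off each $\Phi$ from its action on idempotents. The only substantive difference is the metric-recovery step. The paper builds open balls directly: for a point $x$ (identified with its filter $F_x$) and $r>0$ it sets $U_x^r$ to be the interior of $\bigcap\{U : Id_{V} \leq_r Id_U \text{ for some } V \in F_x\}$, checks $B_{d_X}(x,r) \subset U_x^r \subset \{y : d_X(x,y) \le r\}$, and hence obtains $B_{d_X}(x,R) = \bigcup_{r<R} U_x^r$, from which $d_X(x,y) = \inf\{R : y \in B_{d_X}(x,R)\}$. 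Your separation-based supremum formula is a legitimate alternative and arguably more symmetric in $x$ and $y$.

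However, your ``key observation'' equivalence is false as stated. Take $X=\mathbb{R}$, $V=\{0\}$, $R=1$, $U=(1,2)$: then $U \cap N_R(V)=\emptyset$, yet any open $W \supset N_R(V)=[-1,1]$ must contain a neighbourhood of $1$ and hence meet $U$. The forward implication cannot be rescued by normality (you need an open set squeezed between a closed set and a larger \emph{closed} set, which need not exist), and your suggested $W=\{z:d(z,V)<R+\epsilon\}$ fails for the same reason. Fortunately your metric formula does not actually require this equivalence. The reverse implication alone gives $\sup \le d_X(x,y)$, and for the other inequality one argues directly: given $R<d_X(x,y)$, set $\epsilon=(d_X(x,y)-R)/3$, $V=B(y,\epsilon)$, $W=B(y,R+2\epsilon)$, $U=B(x,\epsilon)$; then $N_R(V)\subset W$ and $U\cap W=\emptyset$ by the triangle inequality. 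So the step you flagged as delicate is indeed where the argument as written breaks, but the repair is exactly to drop the general $\iff$ and verify the formula directly along these lines. The paper's ball-building approach sidesteps this issue entirely by never needing to separate a closed thickening from an adjacent open set.
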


\begin{proof} Above, we saw how the lattice of open sets of $(X,d_X)$ may be recovered from $S(\mathcal{P})$ algebraically as the set of idempotents of $S(\mathcal{P})$ with partial order given by the natural partial order $\leq_0$. The map sending an open set $U$ to the morphism $Id_U$ is an isomorphism between the locale associated to $(X,d_X)$ and the lattice of idempotents of $S(\mathcal{P})$ with partial order $\leq_0$. Given $\Phi \in S(\mathcal{P})$, we have that $dom(\Phi)$ corresponds to the map $Id_{dom(\Phi)}=\Phi \cdot \Phi^{-1}$, $ran(\Phi)$ corresponds to the map $Id_{ran(\Phi)}=\Phi^{-1} \cdot \Phi$ and the preimage of an open set $V$ in the range of $\Phi$ is the open set $U$ satisfying $Id_U= \Phi \cdot Id_V \cdot \Phi^{-1}$. Since all of the spaces involved are Hausdorff and hence sober \cite{Loc}, there is a natural homeomorphism from $(X,d_X)$ to its space of points, so we may recover the topological space $(X,d_X)$ and the partial isometries of $\mathcal{P}$ defined on it up to homeomorphism preserving this structure of partial isometries. Hence, we just need to determine the metric on $X$.

Points of $(X,d_X)$ may be identified as the completely prime filters of the locale associated to $(X,d_X)$ (see \cite{Loc}). These correspond precisely to filters $F_x$ of the form $U \in F_x$ if and only if $x \in U$ for some choice of $x \in X$. For $r \in \mathbb{R}_{>0}$ consider the open set $U_x^r$ which is defined to be the interior of $\bigcap U$ where the intersection is taken over all open sets $U$ with $Id_{V_x} \leq_r Id_U$ for some $V_x \in F_x$. That is, $U_x^r$ is the meet of these open sets in the locale of $(X,d_X)$. It is easy to see that $B_{d_X}(x,r) \subset U_x^r$ and if $y \in U_x^r$ then $d_X(x,y) \leq r$. It follows that $\bigcup_{r < R} U_x^r=B_{d_X}(x,R)$. Since $d_X(x,y)=\inf \{R \mid y \in B_{d_X}(x,R)\}$, we may recover the metric on $(X,d_X)$. \end{proof}

We shall conclude with one final structure on $S(\mathcal{P})$ which allows one to recover the space $\Omega^{\mathcal{P}}$ more directly. Recall that two points $x,y \in X$ are equivalent to a ``large'' radius in the pattern if and only if there is some partial isometry $\Phi \in \mathcal{P}$ with ``large'' domain and range taking $x$ to $y$. Restricting $\Phi$ to some small open set about $x$, we see that $x$ and $y$ are equivalent to a ``large'' radius if and only if there exists some $\Phi \in \mathcal{P}$ taking $x$ to $y$ which extends to some $\Phi' \in \mathcal{P}$ which has much larger domain about $x$ and range about $y$ than $\Phi$. This motivates the following: given $\Phi \in S(\mathcal{P})$ let $\Phi \in S^R(\mathcal{P})$ if and only if there exists some element $\Phi^R \in \mathcal{P}$ with $\Phi \leq \Phi^R$. By Proposition \ref{prop: grad poset} we have that the collection of elements of each $S^R(\mathcal{P})$ is a pattern on $(X,d_X)$, that $S^0(\mathcal{P})=S(\mathcal{P})$ and that if $r \leq R$ then $S^R(\mathcal{P})$ is a sub-inverse semigroup of $S^r(\mathcal{P})$.

Let us write $x R_{[\mathcal{P}]} y$ if there exists some $\Phi \in S^R(\mathcal{P})$ with $\Phi(x)=y$. Notice that if $x R_{[\mathcal{P}]} y$ then $x R_{\tilde{\mathcal{P}}} y$ in the induced collage of $\mathcal{P}$. Conversely, it is not too hard to see that if $x (R+\epsilon)_{\tilde{\mathcal{P}}} y$ then $x R_{[\mathcal{P}]} y$ for any $\epsilon > 0$. As in the proof of the above proposition, the equivalence relations $R_{[\mathcal{P}]}$ on $X$ may be recovered from the sub-inverse semigroups $S^R(\mathcal{P})$. We see that the inverse limit space $\Omega^{\tilde{\mathcal{P}}}$ may be recovered from the purely algebraic data of this decreasing sequence of sub-inverse semigroups. Note that in fact one only needs a cofinal subsequence of values of $R$ here. Such a sequence of sub-inverse semigroups recovers the induced collage of $\mathcal{P}$ up to equivalence.

\chapter{Pattern-Equivariant Homology}

\label{chap: Pattern-Equivariant Homology}

Given an FLC tiling of $\mathbb{R}^d$ with associated continuous hull $\Omega$ one may ask how topological invariants of this space may be interpreted with respect to the properties of the original tiling. A useful topological invariant in this context is the \v{C}ech cohomology, which may be effectively computed for substitution tilings \cite{AP,BDHS} or projection tilings \cite{FHK}. Ian Putnam and Johannes Kellendonk developed a way of visualising these topological invariants using cochains which ``live on the tiling'', reference to the space $\Omega$ is not necessary in describing them. In more detail, one considers differential forms of $\mathbb{R}^d$ (the space in which the tiling ``lives'') which respect the tiling, that is, differential forms which are equal at any two points of the tiling whenever there is agreement in the local patches of tiles about these points to some given radius, see \cite{Kellendonk} for more details. This cochain complex is a subcomplex of the usual de Rham complex of $\mathbb{R}^d$ and its cohomology is isomorphic to the (real-valued) \v{C}ech cohomology of the continuous hull $\Omega$. This shows how rather abstract invariants of a rather abstract space may be described in a more familiar and visual setting.

If the tiling has a CW-structure, one may consider a similar approach by taking cellular cochains of the tiling which respect the tiling, that is, cellular cochains which agree on any two cells which have identical patches of tiles about them to some given radius. It is easy to show \cite{Sadun1} that the cohomology of the cochain complex of pattern-equivariant cochains is isomorphic to the \v{C}ech cohomology of the tiling space (over general coefficients). We consider here, instead of cochains, Borel-Moore chains which respect the pattern. These \emph{pattern-equivariant homology groups} are defined for any pattern $\mathcal{P}$ as introduced in the previous chapter. They are defined in terms of singular chains but we shall show that, when our pattern may be equipped with some ``compatible'' CW-structure, one may also define cellular groups which agree with the singular versions. The cellular groups, in the case of a pattern induced by a tiling, are analogous to the cellular pattern-equivariant cohomology groups defined in \cite{Sadun1} but where one uses the cellular boundary instead of coboundary maps. In Chapter \ref{chap: Poincare Duality for Pattern-Equivariant Homology} we shall link these groups to the pattern-equivariant cohomology, and hence \v{C}ech cohomology groups of the continuous hull $\Omega$, via a Poincar\'{e} duality result.

\section{Singular PE Homology} Given a pattern $\mathcal{P}$ a Borel-Moore chain $\sigma=\sum_{i}c_{i}\sigma_{i}\in C_{n}^{\text{BM}}(X)$ will be called \emph{$\mathcal{P}$-equivariant} ($\mathcal{P}$E for short or \emph{pattern-equivariant} when the pattern is understood) \emph{to radius $R$} if each $\sigma_{i}$ has radius of support bounded by\footnote{We say that $A \subset (X,d_X)$ has radius bounded by $\alpha$ if there exists some $x$ with $A \subset B_{d_X}(x,\alpha)$. A function $f \colon Y \rightarrow (X,d_X)$ has radius of support bounded by radius $\alpha$ if its image has radius bounded by $\alpha$.} $R/2$ and, for each pair of elements $x,y\in X$ and each element $\Phi \in \mathcal{P}_{x,y}^R$, we have that $\Phi_{*}(\sigma^x)=\sigma^y$ (recall the notation of a chain restricted to some subset of $X$ defined in Subsection \ref{subsect: Borel-Moore Homology and Poincare Duality}). One may like to say that such a chain \emph{looks the same locally at $x$ and $y$ via the isometries $\Phi \in \mathcal{P}_{x,y}^R$}.

We denote by $C_n^{\mathcal{P};R}(X)$ the group of all $n$-chains with $\mathcal{P}$E radius $R$. A Borel-Moore chain is said to be \emph{$\mathcal{P}$-equivariant} if it is $\mathcal{P}$-equivariant to some radius $R$ and we denote the set of all $\mathcal{P}$E $n$-chains by $C_n^\mathcal{P}(X)$. The lemma below will establish that
\[C_{\bullet}^{\mathcal{P}}(X)=(0\underset{\partial_{0}}{\longleftarrow}C_{0}^{\mathcal{P}}(X)\underset{\partial_{1}}{\longleftarrow}C_{1}^{\mathcal{P}}(X)\underset{\partial_{2}}{\longleftarrow}C_{2}^{\mathcal{P}}(X)\underset{\partial_{3}}{\longleftarrow}\cdots) \]

\noindent
is a subchain complex of $C_{\bullet}^{\text{BM}}(X)$, the homology of which, called the \emph{$\mathcal{P}$-equivariant homology}, we shall denote by $H_\bullet^\mathcal{P}(X)$ (and similarly for $H_\bullet^{\mathcal{P};R}(X)$, the \emph{$\mathcal{P}$E homology to radius $R$}).

It will be useful in our consideration of a cellular $\mathcal{P}$E homology to restrict attention to $\mathcal{P}$E chains contained in some subset. Given a pattern $\mathcal{P}$ and any subset $A\subset X$, one can form the chain complex
\[C_{\bullet}^{\mathcal{P}}(A)=(0\underset{\partial_{0}}{\longleftarrow}C_{0}^{\mathcal{P}}(A)\underset{\partial_{1}}{\longleftarrow}C_{1}^{\mathcal{P}}(A)\underset{\partial_{2}}{\longleftarrow}C_{2}^{\mathcal{P}}(A)\underset{\partial_{3}}{\longleftarrow}\cdots)\]

\noindent
of the chain groups $C_i^\mathcal{P}(A)$ whose elements are those $\mathcal{P}$E chains of $X$ for which the support of the singular simplexes are contained in the subset $A$ (and similarly for $C_i^{\mathcal{P};R}(A)$). The boundary maps are the restrictions of the usual boundary maps to these chain groups. The homology of this chain complex is denoted by $H_{\bullet}^{\mathcal{P}}(A)$.

For $B \subset A \subset X$, one can form the quotient complex $C_\bullet^\mathcal{P}(A,B) :=$ $C_\bullet^\mathcal{P}(A)/C_\bullet^\mathcal{P}(B)$, for which we denote the homology $H_\bullet^\mathcal{P}(A,B)$. Notice that two $\mathcal{P}$E chains of $A$ represent the same element of $C_\bullet^\mathcal{P}(A,B)$ if and only if their difference belongs to $B$. Of course, we have the usual short exact sequence of chain complexes and long exact sequence in homology
\[0 \leftarrow H_0^\mathcal{P}(A,B) \leftarrow H_0^\mathcal{P}(A) \leftarrow H_0^\mathcal{P}(B) \leftarrow H_1^\mathcal{P}(A,B) \leftarrow \cdots.\]

\noindent
We emphasise that $C_\bullet^\mathcal{P}(A)$ is defined as a subgroup of $C_\bullet^\mathcal{P}(X)$. When $A$ is not closed in $X$, it is not necessarily true that Borel-Moore chains of $A$ are Borel-Moore chains of $X$; in our notation, elements of $C_\bullet^\mathcal{P}(A)$ are both.

\begin{lemma}\label{boundary} Given any $\sigma\in C_n^{\mathcal{P};r}(A)$, the following holds: \begin{itemize}

\item $\sigma\in C_{n}^{\mathcal{P};R}(A)$ for any $R \geq r$,
\item $\partial_n(\sigma)\in C_n^{\mathcal{P};r}(A)$.\end{itemize} \end{lemma}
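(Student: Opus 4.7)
For the first bullet the support condition is immediate since $r/2 \le R/2$. For equivariance, suppose $\Phi \in \mathcal{P}_{x,y}^R$. I would produce an element of $\mathcal{P}_{x,y}^r$ agreeing with $\Phi$ on the relevant support by restricting: the partial isometry $\Phi' := Id_{B_{d_X}(y,r)} \circ \Phi \circ Id_{B_{d_X}(x,r)}$ lies in $\mathcal{P}$ by axioms 1 and 3, has domain $B_{d_X}(x,r)$ and range $B_{d_X}(y,r)$ (as $\Phi$ is a surjective isometry sending $x$ to $y$), so $\Phi' \in \mathcal{P}_{x,y}^r$. Since every singular simplex of $\sigma^x$ has radius of support $\le r/2$ and contains $x$, its image lies in $B_{d_X}(x,r) = \mathrm{dom}(\Phi')$ by the triangle inequality. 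Thus $\Phi_*(\sigma^x) = \Phi'_*(\sigma^x) = \sigma^y$ by hypothesis.

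For the second bullet (which should read $\partial_n(\sigma) \in C_{n-1}^{\mathcal{P};r}(A)$), the radius-of-support bound passes to each face since a face of $\sigma_i$ has image contained in $\sigma_i(\Delta^n)$. The key observation I would establish is the identity
\[
(\partial \sigma)^x = \bigl(\partial(\sigma^x)\bigr)^x,
\]
which holds because a boundary face $\sigma_i|_{F_k}$ contains $x$ in its image if and only if it contains $x$ \emph{and} $x \in \sigma_i(\Delta^n)$; the latter condition is automatic from the former. Given this, for $\Phi \in \mathcal{P}_{x,y}^r$ the whole of $\partial(\sigma^x)$ has support in $\mathrm{dom}(\Phi) = B_{d_X}(x,r)$, so pushforward by $\Phi$ commutes both with the standard singular boundary (a formal consequence of $\Phi \circ \sigma_i|_{F_k} = (\Phi \circ \sigma_i)|_{F_k}$) and with the restriction-to-$x$ operation (because $\Phi$ is a bijection sending $x$ to $y$). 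Chaining these,
\[
\Phi_*\bigl((\partial\sigma)^x\bigr) = \Phi_*\bigl((\partial(\sigma^x))^x\bigr) = \bigl(\partial \Phi_*(\sigma^x)\bigr)^y = \bigl(\partial(\sigma^y)\bigr)^y = (\partial\sigma)^y,
\]
using the hypothesis $\Phi_*(\sigma^x) = \sigma^y$ in the middle and the identity above (with $y$ in place of $x$) at the end.

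The only subtle point is that $(\partial\sigma)^x$ is \emph{not} equal to $\partial(\sigma^x)$ in general, since the latter may contain faces that do not themselves meet $x$. Once one notes that restricting once more to $x$ discards exactly the excess terms, everything reduces to the fact that $\Phi_*$ commutes with both $\partial$ and restriction to a point within its domain, which is immediate from the definitions.
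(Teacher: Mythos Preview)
Your proof is correct and follows essentially the same route as the paper: restrict $\Phi\in\mathcal{P}_{x,y}^R$ to obtain an element of $\mathcal{P}_{x,y}^r$ for the first part, and for the second part use the identity $(\partial\sigma)^x=(\partial(\sigma^x))^x$ together with the facts that $\Phi_*$ commutes with $\partial$ and with restriction-to-a-point (by injectivity), yielding the same chain of equalities. You are also right that the statement contains a typo and should read $\partial_n(\sigma)\in C_{n-1}^{\mathcal{P};r}(A)$.
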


\begin{proof} The first item follows from the fact that restrictions of elements of $\mathcal{P}_{x,y}^R$ to $B_{d_X}(x,r)$ are elements of $\mathcal{P}_{x,y}^r$, so the requirements for $\sigma$ to be $\mathcal{P}$E to radius $R$ are weaker than for it to be $\mathcal{P}$E to radius $r$. For the second note that, since the transformations $\Phi \in \mathcal{P}_{x,y}^r$ are injective, $\Phi_{*}(\sigma^x)=(\Phi_{*}(\sigma))^y$ for chains $\sigma$ with support contained in $dom(\Phi)$. We also have that $\partial_n(\sigma)^x=\partial_n(\sigma^x)^x$ and so $\Phi_*(\partial_n(\sigma)^x) = \Phi_*(\partial_n(\sigma^x)^x) = \Phi_{*}(\partial_n(\sigma^x))^y = \partial_n (\Phi_*(\sigma^x))^y = \partial_n(\sigma^y)^y = \partial_n(\sigma)^y$. \end{proof}

\begin{remark} Note that for two equivalent patterns $\mathcal{P}$ and $\mathcal{Q}$ (see Definition \ref{def: Equivalent}) we have that a chain $\sigma$ is $\mathcal{P}$-equivariant if and only if it is $\mathcal{Q}$-equivariant, so equivalent patterns have isomorphic pattern-equivariant homology. In particular, MLD tilings have isomorphic $\mathcal{T}_1$E homology (and similarly $S$-MLD tilings have isomorphic $\mathcal{T}_0$ and $\mathcal{T}_{\text{rot}}$E homology) and for a recognisable hierarchical tiling $T_\omega$ we have that the $\mathcal{T}$E homology of $T_0$ is isomorphic to the $\mathcal{T}_\omega$E homology as a hierarchical tiling. \end{remark}

By a \emph{$\mathcal{P}$-set}, we shall mean a set $U=\pi_{R,X}^{-1}(S)$ for some $R \ge 0$, $S\subset K_R$. A \emph{collection of $\mathcal{P}$-sets $\mathcal{U}=\{U_i \mid i \in \mathcal{I}\}$} of $X$ is simply a set of $\mathcal{P}$-sets but with the restriction that each set of the collection is a pullback from $K_R$ for some fixed $R$ for the collection (of course this restriction is not needed when the collection is finite), we shall say that the \emph{$\mathcal{P}$E radius of $\mathcal{U}$} is $R$. Note that finite unions, finite intersections and complements of $\mathcal{P}$-sets are also $\mathcal{P}$-sets and the same is true for arbitrary unions and intersections when the $\mathcal{P}$-sets are from some collection of $\mathcal{P}$-sets with $\mathcal{P}$E radius $R$.

For $D>0$, temporarily denote by $C_{\bullet}^{\mathcal{P}|D}$ the chain complex of $\mathcal{P}$E chains for which the singular simplexes have radius of support bounded by $D$ and, for an open cover $\mathcal{U}$ of $X$, denote by $C_{\bullet}^{\mathcal{P}^\mathcal{U}}$ the chain complex of $\mathcal{P}$E chains for which the singular simplexes are contained in the cover $\mathcal{U}$.

\begin{lemma}\label{mv} For any pattern $\mathcal{P}$ on $X$ and $Y\subset X$ we have: \begin{enumerate}
\item $C_{\bullet}^{\mathcal{P}}(Y)$ is a subchain complex of $C_{\bullet}^{\text{BM}}(Y)$.

\item $C_{\bullet}^{\mathcal{P};R}(Y)$ is a subchain complex of $C_{\bullet}^{\mathcal{P}}(Y)=\varinjlim_{R}C_{\bullet}^{\mathcal{P};R}(Y)$.

\item The inclusion $\iota \colon C_{\bullet}^{\mathcal{P}|D}(Y)\rightarrow C_{\bullet}^{\mathcal{P}}(Y)$ is a quasi-isomorphism.

\item For a collection of $\mathcal{P}$-sets $\mathcal{U}$ whose interiors cover $Y$ (with the subspace topology, relative to $Y$), the inclusion $\iota \colon C_{\bullet}^{\mathcal{P}^{\mathcal{U}}}(Y)\rightarrow C_{\bullet}^{\mathcal{P}}(Y)$ is a quasi-isomorphism.

\item Under the same conditions above, with $\mathcal{U}=\{A,B\}$, the obvious inclusion of chain complexes $\iota \colon C_{\bullet}^{\mathcal{P}}(B,A\cap B)\rightarrow C_{\bullet}^{\mathcal{P}}(Y,A)$ is a quasi-isomorphism. Hence, $H_n^{\mathcal{P}}(Y-Z,A-Z)\cong H_{n}^{\mathcal{P}}(Y,A)$ for $\mathcal{P}$-sets $Z\subset A\subset Y$ with (in the subspace topology relative to $X$) the closure of $Z$ contained in the interior of $Y$.

\item Under the same conditions as above, we have the following long exact sequence:
\[ \cdots \leftarrow H_{n-1}^\mathcal{P}(A\cap B) \leftarrow H_n^\mathcal{P}(Y) \leftarrow  H_n^\mathcal{P}(A)\oplus H_{n}^{\mathcal{P}}(B)\leftarrow H_{n}^{\mathcal{P}}(A\cap B) \leftarrow \cdots\]\end{enumerate} \end{lemma}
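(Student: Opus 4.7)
Parts $1$ and $2$ are essentially immediate from Lemma \ref{boundary}: its second bullet states that $\partial_n$ sends $C_n^{\mathcal{P};R}(Y)$ into $C_{n-1}^{\mathcal{P};R}(Y)$, giving the subchain complex property, while the first bullet yields inclusions $C_\bullet^{\mathcal{P};R}(Y)\hookrightarrow C_\bullet^{\mathcal{P};R'}(Y)$ whenever $R\leq R'$. Since by definition $C_\bullet^{\mathcal{P}}(Y)=\bigcup_R C_\bullet^{\mathcal{P};R}(Y)$ and the radii are directed by $\leq$, this union is the direct limit.

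For part $3$, the plan is to adapt the classical barycentric subdivision argument. Let $S$ be the usual subdivision operator on singular chains and $T$ the standard chain homotopy satisfying $\partial T + T \partial = 1 - S$. Both operators are \emph{natural} with respect to continuous maps (so they commute with pushforwards by the partial isometries $\Phi$) and both \emph{preserve supports}: every sub-simplex produced by $S$ or $T$ is supported inside the support of its parent. These two properties together imply that $S$ and $T$ carry $C_\bullet^{\mathcal{P};R}(Y)$ into itself, because for $\Phi\in\mathcal{P}_{x,y}^R$ one has $(S\sigma)^x=(S(\sigma^x))^x$, and naturality then gives $\Phi_*(S\sigma)^x=(S\sigma)^y$, with the radius-of-support bound $R/2$ only improving under subdivision. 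Since each singular simplex $\sigma_i$ is uniformly continuous on the compact domain $\Delta^n$, iterating $S$ finitely often makes every sub-simplex have image of radius less than $D$. The key subtlety here, which I expect to be the main obstacle, is choosing the number of subdivisions \emph{uniformly} across the infinitely many simplices of a Borel-Moore chain; I would handle this by letting the number of iterations $m_i$ depend only on the $\mathcal{P}$-equivalence class of $\sigma_i$ (making the construction $\mathcal{P}$E), and then assembling the simplex-wise chain homotopies $\sum_{k=0}^{m_i-1}T(S^k\sigma_i)$ into a $\mathcal{P}$E chain homotopy witnessing the quasi-isomorphism.

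Part $4$ uses the same machinery, with one extra input. Since $\mathcal{U}$ is a collection of $\mathcal{P}$-sets of some finite common $\mathcal{P}$E radius, its elements are pullbacks from a single approximant $K_R$, and hence compatible with $\mathcal{P}$-equivalence to radius $R$. After enough subdivisions, each sub-simplex has support of diameter smaller than the Lebesgue number of $\mathcal{U}$ on a suitable compact fundamental region, and $\mathcal{P}$-equivariance of $\mathcal{U}$ propagates this Lebesgue number across all of $Y$. The iterated subdivision therefore lands in $C_\bullet^{\mathcal{P}^\mathcal{U}}(Y)$, and the corresponding chain homotopy remains $\mathcal{P}$E by the argument of part $3$.

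For part $5$, I plan to mimic the classical excision proof. Given a relative cycle $\sigma\in C_\bullet^{\mathcal{P}}(Y)$ with $\partial\sigma\in C_\bullet^{\mathcal{P}}(A)$, apply part $4$ to the cover $\{A,B\}$ to replace $\sigma$, modulo a $\mathcal{P}$E boundary, by a decomposition $\sigma_A+\sigma_B$ with $\sigma_A\in C_\bullet^{\mathcal{P}}(A)$ and $\sigma_B\in C_\bullet^{\mathcal{P}}(B)$. The point that makes this $\mathcal{P}$E is that the assignment of each simplex to $A$ or $B$ respects the $\mathcal{P}$-action, since $A$ and $B$ are $\mathcal{P}$-sets (ties being broken via a $\mathcal{P}$-equivariant rule on basepoints). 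Then $\sigma_B$ represents $[\sigma]$ modulo $C_\bullet^{\mathcal{P}}(A)$, and $\partial\sigma_B\in C_\bullet^{\mathcal{P}}(A)\cap C_\bullet^{\mathcal{P}}(B)=C_\bullet^{\mathcal{P}}(A\cap B)$, yielding surjectivity on homology; the injectivity argument is symmetric. The excision corollary then falls out by applying this quasi-isomorphism to the cover $\{A,Y-Z\}$, whose interiors cover $Y$ under the stated closure/interior hypothesis. Finally, part $6$ is a formal consequence: the snake lemma applied to
\[0\longrightarrow C_\bullet^{\mathcal{P}}(A\cap B)\longrightarrow C_\bullet^{\mathcal{P}}(A)\oplus C_\bullet^{\mathcal{P}}(B)\longrightarrow C_\bullet^{\mathcal{P}}(A)+C_\bullet^{\mathcal{P}}(B)\longrightarrow 0\]
produces the Mayer--Vietoris sequence, where the rightmost term equals $C_\bullet^{\mathcal{P}^\mathcal{U}}(Y)$ and is replaced by $C_\bullet^{\mathcal{P}}(Y)$ via the quasi-isomorphism of part $4$.
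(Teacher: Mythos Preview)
Your overall strategy matches the paper's almost exactly: parts $1$--$2$ from Lemma~\ref{boundary}, parts $3$--$4$ via the classical subdivision operator $\rho$ with a simplex-dependent iteration count $m(\sigma_i)$, excision and Mayer--Vietoris as formal consequences. The one place where your argument diverges, and where there is a genuine gap, is part~$4$.

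You propose obtaining a Lebesgue number for $\mathcal{U}$ on ``a suitable compact fundamental region'' and then propagating it by $\mathcal{P}$-equivariance. But the lemma is stated for an \emph{arbitrary} pattern, with no FLC hypothesis, so there need not be any compact fundamental region, and the cover need not have a Lebesgue number at all. The paper explicitly addresses this: if a Lebesgue number exists, your argument works; if not, one proceeds in two steps. First apply part~$3$ to subdivide so that every simplex has support of radius $<\epsilon/2$; this keeps the chain $\mathcal{P}$E to radius $R$. Then, for each such small simplex $\sigma_i$, let $m(\sigma_i)$ be the least number of further subdivisions needed to land in $\mathcal{U}$. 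The point is that $m(\sigma_i)=m(\Phi(\sigma_i))$ for every $\Phi\in\mathcal{P}_{x,y}^{R+\epsilon}$, because the support of $\sigma_i$ lies in $B_{d_X}(x,\epsilon)$ and on that ball $\pi_{R,X}=\pi_{R,X}\circ\Phi$ (since $R\geq R_{\mathcal{U}}$), so $\Phi$ carries cover-membership to cover-membership. The resulting subdivided chain is therefore $\mathcal{P}$E to radius $R+\epsilon$, with no compactness needed. This is the missing idea in your part~$4$; once you have it, the rest of your outline is fine and essentially identical to the paper's.
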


\begin{proof} The first and second items follow trivially from the above lemma.

To prove the third, we can follow the classical approach (see \cite{Hat}). One defines an operator $\rho$ on the singular chain complex, defined in terms of the barycentric subdivision operator and the number $m(\sigma_i)$ which, for each singular simplex $\sigma_i$, is the smallest natural number such that subdividing the simplex $m(\sigma_i)$ times yields a chain whose singular simplexes have radius of support bounded by $D$. Going through all of the details again would be tedious, but note that $m(\sigma_i)$ is invariant under the isometries $\mathcal{P}_{x,y}^R$ which are applied to $\sigma_i$. So $\rho$ ``commutes'' with the maps $\Phi_*$ and, since $\rho(\sigma_i^x)^x=\rho(\sigma_i)^x$, $\rho$ will send a $\mathcal{P}$E chain of radius $R$ to a $\mathcal{P}$E chain of radius $R$, mimicking the proof of the Lemma \ref{boundary}.

For the fourth item, one may show that given a $\mathcal{P}$E chain $\sigma$ of $\mathcal{P}$E radius $R$ (pick $R \geq R_\mathcal{U}$, the $\mathcal{P}$E radius of the cover $\mathcal{U}$), it has a subdivision $\rho(\sigma)$ contained in the cover with $\mathcal{P}$E radius $R+\epsilon$ for any $\epsilon>0$. The proof is similar to the above. If the cover has a Lebesgue number, one can of course subdivide the chains as above so that their radius of supports are smaller than the Lebesgue number of the cover by $3$. If it does not, pick $\epsilon>0$ and first subdivide to a $\mathcal{P}$E chain with simplexes of support bounded by radius $\epsilon/2$ as described above. Let $m(\sigma_i)$ be the smallest number such that subdividing $\sigma_i$ $m(\sigma_i)$ times produces a singular chain contained in the cover. Then $m(\sigma_i) = m(\Phi(\sigma_i))$ for every $\Phi \in \mathcal{P}_{x,y}^{R+\epsilon}$ since $\pi_{R,X}=\pi_{R,X} \circ \Phi$ on $B_{d_X}(x,\epsilon)$, which contains the support of $\sigma_i$. It follows that the chain given by subdividing each singular simplex $\sigma_i$ $m(\sigma_i)$ times is $\mathcal{P}$E to radius $R+\epsilon$.

The proof of excision, again, follows from a simple adaptation of the classical case. We note that the constructions above allow us to canonically define a quasi-isomorphism $C_\bullet^{\mathcal{P}^\mathcal{U}}(Y)/C_\bullet^\mathcal{P}(A) \rightarrow C_\bullet^\mathcal{P}(Y)/C_\bullet^\mathcal{P}(A)$. There is an obvious isomorphism $C_\bullet^\mathcal{P}(B)/C_\bullet^\mathcal{P}(A\cap B) \rightarrow C_\bullet^{\mathcal{P}^\mathcal{U}} (Y)/C_\bullet^\mathcal{P}(A)$ (elements of both can be identified with $\mathcal{P}$E chains contained in $B$ but not $A$), so we obtain the quasi-isomorphism as desired.

Given $\mathcal{P}$-sets $Z\subset A\subset Y$, we have that $Y-Z$ is also a $\mathcal{P}$-set, so we obtain the result that $H_n^\mathcal{P}(Y-Z,A-Z) \cong H_n^\mathcal{P}(Y,A)$ by setting $B=Y-Z$ and applying the above.

Finally, the Mayer-Vietoris sequence follows, since the above gives us that the inclusion $C_\bullet^{\mathcal{P}^\mathcal{U}}(Y) \hookrightarrow C_\bullet^\mathcal{P}(Y)$ is a quasi-isomorphism, which implies the result from long exact sequence associated to the short exact sequence of chain complexes
\[0\rightarrow C_{\bullet}^{\mathcal{P}}(A\cap B)\xrightarrow{\phi}C_{\bullet}^{\mathcal{P}}(A)\oplus C_{\bullet}^{\mathcal{P}}(B)\xrightarrow{\psi}C_{\bullet}^{\mathcal{P}^{\mathcal{U}}}(X)\rightarrow0\]

\noindent
where $\phi(\sigma)=(\sigma,-\sigma)$ and $\psi(\sigma,\tau)=\sigma+\tau$.\end{proof}

It follows that the pattern-equivariant homology groups $H_\bullet^\mathcal{P}(X)$ are \linebreak well-defined and are isomorphic to the direct limits $\varinjlim_R H_\bullet^{\mathcal{P};R}(X)$. The Mayer-Vietoris exact sequence of the above lemma will be useful when showing the equivalence of the singular and cellular versions of $\mathcal{P}$E homology, as well as in Chapter \ref{chap: Poincare Duality for Pattern-Equivariant Homology} for Poincar\'{e} duality.

\section{Cellular PE Homology} \label{sect: Cellular PE Homology} To make explicit computations, we require a cellular version of $\mathcal{P}$E homology. For this, we need the notion of a cellular pattern, which may represent a given pattern in a certain sense. Much like for PE cohomology, one must be careful when dealing with rotational symmetry. We show that, with certain conditions on either the divisibility of the coefficient group or the CW-decomposition with respect to the symmetry of the pattern, singular and cellular $\mathcal{P}$E homology coincide. We shall often identify a closed cell of $(X,d_X)$ with the image of its characteristic map from the closed $d$-disc $c \colon D^d \rightarrow X$.

\begin{definition}\label{CPatDef} A \emph{cellular pattern} $\mathcal{C}$ (on $(X,d_X)$) consists of the following data: \begin{itemize}
	\item A finite-dimensional and locally finite CW-decomposition of $(X,d_X)$ for which the cells are bounded in radius.
	\item For each $R>0$ a groupoid $\mathcal{C}^R$. 
\end{itemize}

The groupoids $\mathcal{C}^R$ are required to satisfy the following:

\begin{enumerate}
	\item The set of objects of each $\mathcal{C}^R$ is the set of (closed) cells of the CW-\linebreak decomposition. The set of morphisms between $k$-cells $c$ and $d$, denoted $\mathcal{C}^R_{c,d}$, is a finite set of cellular isometries of $c$ onto $d$, where composition is given by the usual composition of isometries. Further, these sets of isometries should be \emph{tame}: for each $c$ there exists some closed neighbourhood in $c$ of $\partial c$, the boundary of $c$, which deformation retracts to $\partial c$ equivariantly with respect to each of the isometries of $\mathcal{C}_{c,c}^R$. That is, for a $k$-cell $c$, $k>0$, there exists a deformation retraction $F \colon [0,1] \times U \rightarrow U$ of some closed neighbourhood $U \subset c$ of $\partial c$ with $F(t,x)= x$ for all $x \in \partial c$, $F(0,-)=Id_U$, $F(1,x) \subset \partial c$ and $F(t,-) \circ \Phi = \Phi \circ F(t,-)$ for all $\Phi \in \mathcal{C}_{c,c}^R$ and $t \in [0,1]$.
	\item For $r \le R$, if $\Phi \in \mathcal{C}^R_{c,d}$ then $\Phi \in \mathcal{C}^r_{c,d}$. One may like to say that $\mathcal{C}^R \subset \mathcal{C}^r$.
	\item For every $R>0$ there exists some $R'>0$ satisfying the following: for every $\Phi \in \mathcal{C}_{c,d}^{R'}$ between $k$-cells $c$ and $d$ there exists a bijection $f$ between the $(k+1)$-cells containing $c$ and the $(k+1)$-cells containing $d$ for which, for each $(k+1)$-cell $c'$ bounding $c$, there exists some $\Phi' \in \mathcal{C}_{c',f(c')}^R$ which restricts to $\Phi$.
\end{enumerate}
\end{definition}

For the ``tameness'' of the isometries it is sufficient, for example, that there exists some such neighbourhood $U$ of $\partial c$ which is fibred as $U \cong \partial c \times [0,\epsilon]$ in a way such that $\Phi(x,t)=(\Phi(x),t)$ for all $\Phi \in \mathcal{C}_{c,c}^R$. Whilst the tameness condition seems reasonable, it is not obvious to the author that this condition must always hold. Group actions of closed disks, even of finite groups, can have properties which depart from intuition sufficiently to cause some doubt; for example, there exist finite group actions of disks which have no fixed points for every element of the group \cite{FR}. In most situations of interest here, the tameness of the isometries will not be an issue. For example, a sufficient condition is that the open cells are isometric to star domains of Euclidean space. It is not too hard to show that the centre of rotation $p$ for a star domain is a star-centre, that is, a point for which the line segments between $p$ and any other point of the domain are contained in the domain. Given such a point, the required homotopy is given by ``flowing away from $p$''.

Choosing an orientation for each $k$-cell $c$ with boundary $\partial c$, we may identify the singular homology $H_i(c,\partial c;G)$ as $G$ for $i=k$ and as being trivial otherwise. A map $\Phi \in \mathcal{C}_{c,d}^R$ induces an isomorphism between the coefficient groups of the cells $c$ and $d$, the isomorphism being determined by the choice of orientation of the cells and whether or not $\Phi$ preserves or reverses orientations. We may then define the cellular pattern-equivariant homology chains as those chains which are ``equivariant'' with respect to these isomorphisms of cellular groups:

\begin{definition} Let $\mathcal{C}$ be a cellular pattern on $(X,d_X)$. A cellular Borel-Moore $k$-chain $\sigma$ on $(X,d_X)$ is called \emph{pattern-equivariant} (\emph{with PE radius $R$}, with respect to $\mathcal{C}$) if there exists some $R>0$ such that for any $k$-cell $c$ and $\Phi \in \mathcal{C}_{c,d}^R$ then $\Phi_*(\sigma)(c)=\sigma(d)$. The homology of this chain complex, the \emph{cellular pattern-equivariant homology of $\mathcal{C}$}, will be denoted by $H_\bullet^\mathcal{C}(X;G)$. \end{definition}

It is easy to see that condition $3$ in the definition of a cellular pattern ensures that the usual cellular boundary map of cellular Borel-Moore chains restricts to a well-defined boundary map on cellular PE chains.

\begin{definition} Let $\mathcal{P}$ be a pattern and $\mathcal{C}$ be a cellular pattern of $(X,d_X)$. We shall say that $\mathcal{C}$ is a \emph{cellular representation of $\mathcal{P}$} if: \begin{itemize}
\item For all $R>0$ there exists some $R'>0$ such that if $\Phi \in \mathcal{P}^{R'}_{x,y}$ then $\Phi|_{c_x} \in \mathcal{C}^R_{c_x,\Phi(c_x)}$ for each cell $c_x$ containing $x$.
\item For all $R>0$ (with $R/2$ greater than the radii of the cells) there exists some $R'>0$ such that if $\Phi \in \mathcal{C}_{c,d}^{R'}$ then there exists some $\Phi' \in \mathcal{P}_{x,y}^R$ with $x$ contained in $c$ and $\Phi'|_c=\Phi$.
\end{itemize}
\end{definition}

We may loosely summarise the above by saying that for arbitrarily large $R$ there exists some $R'$ with $\mathcal{P}^{R'} \subset \mathcal{C}^R$ and $\mathcal{C}^{R'} \subset \mathcal{P}^R$.

\begin{exmp} Let $S$ be a (discrete) group which acts on $(X,d_X)$ by isometries. As described in Example \ref{ex: Allowed Partial Isometries} we can define a pattern $\mathcal{S}$ on $(X,d_X)$ by setting $\Phi \in \mathcal{S}$ if and only if $\Phi = {\Phi_g|}_U \colon U \rightarrow \Phi_g(U)$ for some isometry $\Phi_g$ of the action and open set $U \subset X$. The $S$-action on $X$ induces an $S$-action on the chain complex of (bounded) singular Borel-Moore chains on $X$. It is not too hard to see that the singular $\mathcal{S}$E chain complex will correspond precisely to the complex of bounded $S$-invariant singular Borel-Moore chains on $X$.

Suppose that $X$ is equipped with a CW-decomposition of cells of bounded radius which is invariant under the action of $S$ (that is, if $c$ is a cell of $X$ then so is $\Phi_g(c)$). Let $\mathcal{C}_{c,d}^R$ be the set of isometries between $c$ and $d$ which are restrictions of the isometries $\Phi_g$. Then (given that the isometries are tame on each cell, and each $\mathcal{C}_{c,d}^R$ is finite) $\mathcal{C}$ is a cellular representation of $\mathcal{S}$. Of course, the cellular $\mathcal{S}$E chains are precisely cellular Borel-Moore chains which are invariant under the action of $S$. This example indicates that ``pattern-\emph{invariant}'' would be a justifiable adjective for the homology groups defined here. The name ``pattern-equivariant'' was chosen due to the pre-established term as used in the context of tilings, which shall be discussed in Section \ref{sect: Pattern-Equivariant Homology of Tilings}. \end{exmp}

\begin{exmp} \label{ex: Cellular Tiling} Let $T$ be a cellular tiling of $\mathbb{R}^d$. The CW-decomposition of $\mathbb{R}^d$ given by the tiling can easily be incorporated into cellular patterns that represent $\mathcal{T}_1$ or $\mathcal{T}_0$ (see the following section). With a little more effort, one can also find a CW-decomposition of $E^+(d)$ and make it a cellular pattern representing $\mathcal{T}_{\text{rot}}$ (this CW-decomposition is described in \cite{Sadun2}). The $\mathcal{T}_i$E cellular homology groups (for $i=0,1$) are discussed in detail below in Section \ref{sect: Pattern-Equivariant Homology of Tilings}. \end{exmp}

\begin{exmp} Let $P \subset \mathbb{R}^d$ be a Delone set of $(\mathbb{R}^d,d_{euc})$. One may consider a pattern $\mathcal{P}$ associated to $P$ (see Remark \ref{rem}). There is a CW-decomposition associated to $P$ given by the Voronoi tiling of $P$ (see Subsection \ref{subsect: Examples of Tilings and Delone Sets}); naturally it can be made into a cellular pattern which represents $\mathcal{P}$. \end{exmp}

Let $G$ be a commutative ring with identity $1 \in G$. The group structure on $G$ is a $\mathbb{Z}$-module in the usual way; we shall say that $G$ has \emph{division by $n$} if $n.1$ is invertible in $G$. This defines, for each $g \in G$, a unique element $g/n:=n^{-1}g$ for which $n(g/n)=g$. \footnote{More generally, one may define the notion of an abelian group $G$ having division by $n$ whenever $nG=G$. Of course, elements $g/n$ which satisfy $n(g/n)=g$ are not necessarily unique, but with care the arguments here could be adapted to this setting.}
We may now state the main theorem of this section:

\begin{theorem}\label{thm: sing=cell} Let $\mathcal{P}$ be a pattern on $(X,d_X)$ and $\mathcal{C}$ be a cellular pattern that represents it. Suppose further that $G$ has division by $|\mathcal{C}^R_{c,d}|$ (for $|\mathcal{C}^R_{c,d}| \neq 0$) for each $c,d$ for sufficiently large $R$. Then $H_\bullet^\mathcal{P}(X,G) \cong H_\bullet^\mathcal{C}(X,G)$. \end{theorem}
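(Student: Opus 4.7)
The plan is to adapt the classical argument identifying singular and cellular homology, applied to the skeletal filtration $\emptyset = X^{-1} \subset X^0 \subset X^1 \subset \cdots$ coming from the CW-decomposition underlying $\mathcal{C}$. The essential computation is to show that the relative PE homology groups $H_i^\mathcal{P}(X^k, X^{k-1}; G)$ vanish for $i \neq k$ and are isomorphic to the cellular PE chain group $C_k^\mathcal{C}(X; G)$ for $i = k$, in a way compatible with boundary maps. Once this is established, the standard comparison (either via the spectral sequence of the filtration, or by inducting on $k$ through the long exact sequences of the pairs $(X^k, X^{k-1})$) identifies $H_\bullet^\mathcal{P}(X; G)$ with $H_\bullet^\mathcal{C}(X; G)$.

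First I would carry out a local computation around each cell. For a $k$-cell $c$, the tameness axiom furnishes a closed collar neighborhood $U \subset c$ of $\partial c$ equivariantly deformation retracting onto $\partial c$; assembling these over all cells produces a $\mathcal{P}$-saturated open set $N \subset X^k$ containing $X^{k-1}$ and deformation retracting onto $X^{k-1}$, and $X^k \setminus X^{k-1}$ decomposes as a disjoint union of open cells. Using the excision and Mayer--Vietoris results of Lemma \ref{mv}, the computation of $H_\bullet^\mathcal{P}(X^k, X^{k-1}; G)$ reduces to a product over representatives $c$ of the $\mathcal{C}^R$-orbits of $k$-cells of the $H_c^R$-equivariant singular homology of the pair $(c, \partial c) \cong (D^k, S^{k-1})$, where $H_c^R := \mathcal{C}^R_{c,c}$ is the (finite) self-symmetry group of $c$. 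Here I am using that because $\mathcal{C}$ represents $\mathcal{P}$, the requirement ``PE to radius $R'$'' for some large $R'$ is equivalent on chains supported near $c$ to ``$H_c^R$-equivariant'' for an appropriate $R$, so passing to the direct limit over $R$ erases the distinction between the two representation radii.

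Next I would show that this local $H_c^R$-equivariant singular homology is $G^{H_c^R}$ in degree $k$ and zero elsewhere, with the $H_c^R$-action on $H_k(c,\partial c; G) \cong G$ by orientation twist. This is the step requiring the divisibility hypothesis: the averaging operator $\pi_c := |H_c^R|^{-1} \sum_{h \in H_c^R} h_*$ is well-defined on $G$-chains precisely when $G$ admits division by $|H_c^R|$, and is a chain-level retraction of $S_\bullet(c, \partial c; G)$ onto its $H_c^R$-invariant subcomplex. A routine transfer argument then shows this inclusion is a quasi-isomorphism, giving the claimed identification. Reassembling the local pieces and taking the direct limit in $R$, the product of $G^{H_c^R}$ over $\mathcal{C}^R$-orbits of $k$-cells is exactly the definition of $C_k^\mathcal{C}(X; G)$, and naturality shows the connecting homomorphism of the triple $(X^k, X^{k-1}, X^{k-2})$ recovers the cellular PE boundary.

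The main obstacle will be the careful bookkeeping of radii: the axioms of a cellular representation only give inclusions $\mathcal{P}^{R'} \subset \mathcal{C}^R$ and $\mathcal{C}^{R'} \subset \mathcal{P}^R$ after shrinking, so one must track how these radii shift when translating between singular PE chains, averaged chains, and cellular PE chains, and ensure the averaging operator does not inflate the PE radius in a way that breaks the direct-limit argument. The tameness axiom, allowing equivariant collaring of cell boundaries inside every skeleton uniformly over all cells, together with the local finiteness of the CW-decomposition, is what makes the excision step above function at every stage of the filtration.
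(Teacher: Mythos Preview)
Your approach is essentially the same as the paper's: both filter by skeleta, use the tameness axiom to build equivariant collars and obtain PE excision for the pairs $(X^k,X^{k-1})$ (the paper's Lemma~\ref{def} and the lemma following it), compute the relative groups via an averaging/transfer argument exploiting the divisibility hypothesis (the paper's Lemma~\ref{avg}), and finish with the standard diagram chase; the careful bookkeeping of radii you flag is exactly where the paper's proof of Lemma~\ref{avg} spends its effort, tracking constants $R_1,R_2,R_3$ through the interleaving $\mathcal{P}^{R'}\subset\mathcal{C}^R$, $\mathcal{C}^{R'}\subset\mathcal{P}^R$. One terminological slip: the inclusion of $H_c^R$-invariant chains into all chains on $(c,\partial c)$ is \emph{not} a quasi-isomorphism (the target has homology $G$ in degree $k$, not $G^{H_c^R}$); what the transfer actually gives is that $H_*$ of the invariant subcomplex is isomorphic to the invariants $(H_*)^{H_c^R}$ of the full homology, which is the statement you need and evidently intend.
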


One should draw attention to the requirement in the above of having an appropriate coefficient group or CW-decomposition if the pattern has local symmetries. To see the need for these restrictions in a simple example, consider the unit disc $D^2$ together with its boundary $S^1$ and rotation $\tau$ about its centre by $\pi$ radians. This defines a group action of $\mathbb{Z}_2$ on the relative singular chain group of $(D^2,S^1)$ which commutes with the boundary map and so one may construct a chain complex consisting of those chains which are invariant under the induced action of $\tau$.

Over $\mathbb{Z}$-coefficients, there is two-torsion in the degree zero relative homology group -- one may only ``remove'' singular $0$-simplexes at the origin in multiples of two. The rotation invariant homology calculations will be incorrect unless the coefficient groups have division by two or the CW-decomposition has the origin as a $0$-cell.

\subsubsection{Proof of Theorem \ref{thm: sing=cell}}

The remainder of this section will be devoted to proving the above theorem. The following technical lemma is used to show that the pairs $(X^k,X^{k-1})$ of the $k$ and $(k-1)$-skeleta are ``good'' in a $\mathcal{P}$-equivariant sense:

\begin{lemma}\label{def} Let $\mathcal{P}$ be a pattern. Suppose that $F \colon [0,1]\times A \rightarrow A$ is a proper homotopy with $A \subset X$ for which:

\begin{itemize}

\item $F_0=Id_X$.

\item $\sup\{d_X(F_t(x),F_{t'}(x)) \mid t,t'\in[0,1]\}<D_F$ for some $D_F>0$.

\item $\Phi \circ F_t=F_t \circ \Phi$ on $B_{d_X}(x,D_F+\epsilon) \cap A$ for all $x \in A$, $\Phi \in \mathcal{P}_{x,y}^{R_F}$ for some $\epsilon,R_F>0$ (in particular, $R_F$ should be chosen so that these maps are well defined on $B_{d_X}(x,D_F+\epsilon) \cap A$). \end{itemize}

Then, for all $n$-chains $\sigma$ with $\mathcal{P}$E radius $R \geq R_F$ with radius of support bounded by $\epsilon /2$ and contained in $A$ we have that ${F_1}_*(\sigma)$ is $\mathcal{P}$E to radius $R+D_F+\epsilon$ and ${F_1}_*(\sigma)$ is homologous to $\sigma$ in $C_n^\mathcal{P}(X)$.\end{lemma}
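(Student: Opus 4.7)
The plan is to adapt the classical prism-operator proof that a homotopy $F$ induces a chain homotopy between $(F_0)_*$ and $(F_1)_*$. Concretely, for a singular $n$-simplex $\sigma_i \colon \Delta^n \to X$ one triangulates the prism $\Delta^n \times [0,1]$ into $n+1$ canonical $(n+1)$-simplices in the usual way and defines
\[
P(\sigma_i) \;=\; \sum_{j=0}^n (-1)^j \bigl(F \circ (\sigma_i \times Id_{[0,1]})\bigr)\big|_{[v_0,\ldots,v_j,w_j,\ldots,w_n]},
\]
where $v_k=(e_k,0)$ and $w_k=(e_k,1)$. Extending $\mathbb{Z}$-linearly and checking that the image of each prism simplex sits in a ball of radius $D_F+\epsilon/2$ about the ``centre'' $z_i$ of $\sigma_i$'s support, properness of $F$ together with the Borel-Moore condition on $\sigma$ ensures that $P(\sigma)$ is again a Borel-Moore chain. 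The standard combinatorial identity then gives $\partial P(\sigma) + P(\partial \sigma) = (F_1)_*(\sigma) - \sigma$; granted that both $P(\sigma)$ and $(F_1)_*(\sigma)$ are $\mathcal{P}$-equivariant, this exhibits $(F_1)_*(\sigma)$ as homologous to $\sigma$ inside $C_\bullet^{\mathcal{P}}(X)$.

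The work then reduces to verifying the PE radii. Fix $\sigma$ of PE radius $R \geq R_F$ with each $\sigma_i$ supported in a ball $B_{d_X}(z_i,\epsilon/2)$. For $(F_1)_*(\sigma)$, take $\Phi \in \mathcal{P}_{x,y}^{R+D_F+\epsilon}$. If $F_1 \circ \sigma_i$ touches $x$ then $d_X(z_i,x) < D_F+\epsilon/2$, so the restriction $\Psi := \Phi|_{B_{d_X}(z_i,R)}$ lies in $\mathcal{P}_{z_i,\Phi(z_i)}^R \supset \mathcal{P}_{z_i,\Phi(z_i)}^{R_F}$. Condition 3 yields $\Phi \circ F_1 = F_1 \circ \Phi$ on the support of $\sigma_i$ (which lies in $B_{d_X}(z_i,D_F+\epsilon)$), giving $\Phi_*(F_1 \circ \sigma_i) = F_1 \circ \Psi_*(\sigma_i)$. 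By PE of $\sigma$ to radius $R$, the chain $\Psi_*(\sigma^{z_i}) = \sigma^{\Phi(z_i)}$, and since $\Phi(z_i)$ is within $D_F+\epsilon/2$ of $y$, this pushes $(F_1)_*(\sigma)^x$ onto $(F_1)_*(\sigma)^y$. The converse inclusion is obtained symmetrically by applying the same argument to $\Phi^{-1}$, so $(F_1)_*(\sigma)$ is PE to radius $R+D_F+\epsilon$.

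For $P(\sigma)$ the reasoning is essentially identical: each individual prism simplex $\tau$ arising from $\sigma_i$ has support inside $B_{d_X}(z_i, D_F+\epsilon/2)$, so if $\tau$ touches $x$ then again $z_i$ is within $D_F+\epsilon/2$ of $x$. Since the restriction of $\Phi$ to $B_{d_X}(z_i,R)$ lies in $\mathcal{P}_{z_i,\Phi(z_i)}^R$, condition 3 gives $\Phi \circ F_t = F_t \circ \Phi$ pointwise on the prism image, and hence $\Phi_*(\tau)$ is precisely the corresponding prism simplex built from $\Phi \circ \sigma_i$; the PE property of $\sigma$ pairs it up with its counterpart near $y$ in $P(\sigma)^y$. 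Thus $P(\sigma)$ is PE to a radius comparable to $R+D_F+\epsilon$, so the prism identity takes place inside $C_\bullet^{\mathcal{P}}(X)$ as required.

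The main obstacle is bookkeeping: one must carefully track which ball each partial isometry is restricted to, so that condition 3's commutation hypothesis applies simultaneously to every simplex of $\sigma$ and every ``time slice'' appearing in the prism. The $\epsilon$ slack in condition 3 (the domain $B_{d_X}(x,D_F+\epsilon)$ rather than $B_{d_X}(x,D_F)$) is exactly what absorbs the $\epsilon/2$ bound on the radii of support of the $\sigma_i$, which is why this quantity appears in the conclusion. Once the radii are tracked cleanly, both the fact that $P$ and $(F_1)_*$ land in $C_\bullet^{\mathcal{P}}(X)$ and the prism identity are formal consequences of the classical construction.
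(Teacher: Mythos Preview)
Your proposal is correct and follows essentially the same route as the paper: both arguments use the classical prism operator to produce the chain homotopy, then verify that the prism chain (and hence $(F_1)_*\sigma$) remains $\mathcal{P}$-equivariant by exploiting the commutation hypothesis in condition~3 together with the PE property of $\sigma$ at shifted centres. The paper packages this slightly differently---working with the restriction $\sigma^S$ to a ball $S=B_{d_X}(x,D_F)$ and the ``singular prism chain'' $F(\sigma)$ as a whole rather than simplex-by-simplex---but the bookkeeping of radii and the use of restrictions $\Phi|_{B_{d_X}(z_i,R)}\in\mathcal{P}_{z_i,\Phi(z_i)}^R$ is the same. One minor slip: the inclusion $\mathcal{P}_{z_i,\Phi(z_i)}^R \supset \mathcal{P}_{z_i,\Phi(z_i)}^{R_F}$ is not literally correct (the domains differ); what you mean and use is that an element of the former \emph{restricts} to an element of the latter, which is exactly what condition~3 requires.
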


\begin{proof} We have that $({Id_A})_* \simeq {F_1}_{*}$ on Borel-Moore chains. We need to show that the singular chain resulting from the prism operator \cite{Hat} given by the homotopy is $\mathcal{P}$E.

The homotopy gives, for each singular simplex $\omega \colon \Delta^n\rightarrow A$, a singular prism $F(\omega) \colon [0,1] \times \Delta^n \rightarrow A$. We could consider, given some chain $\sigma$, the ``singular prism chain'' $F(\sigma)$, where each singular prism inherits a coefficient from $\sigma$ and the homotopy $F$ in the obvious way. Using analogous notation as for singular chains, if $\Phi_*(F(\sigma)^x)=F(\sigma)^y$ for a $\mathcal{P}$E chain $\sigma$ the result follows since the same will hold for the corresponding singular chain after we apply the prism operator.

Let $\sigma$ have $\mathcal{P}$E radius $R \geq R_F$ and singular simplexes with radius of support bounded by $\epsilon/2$. Set $S=B_{d_X}(x,D_F)$. Then $\Phi_* (\sigma^S)=\sigma^{\Phi(S)}$ for all $\Phi \in \mathcal{P}_{x,y}^{R+D_F+\epsilon}$. To see this, note that $\Phi |_{B_{d_X}(x',R)} \in \mathcal{P}^R_{x',\Phi(x')}$ with $x' \in B_{d_X}(x,D_F+\epsilon)$. If $\Phi_*(\sigma^S) \neq \sigma^{\Phi(S)}$ then there would exist some singular simplex $\tau$ with non-zero coefficient in $\Phi_*(\sigma^S) - \sigma^{\Phi(S)}$ with support contained in $B_{d_X}(y,D_F+\epsilon)$. But then $\Phi_*(\sigma^{x'}) \neq \sigma^{\Phi(x')}$ for $x'$ in the support of $\Phi_*^{-1}(\tau)$.

The second assumption in the above lemma implies that $(F(\sigma^S))^x=(F(\sigma))^x$. The third implies that $\Phi_*(F(\sigma^S))=F(\Phi_*(\sigma^S))$ since the singular simplexes of $\sigma^S$ with non-zero coefficients are contained in $B_{d_X}(x,D_F+\epsilon)$, on which $F$ and $\Phi$ ``commute''.

Hence, we have that $\Phi_*(F(\sigma)^x) = \Phi_*(F(\sigma^S)^x) = (\Phi_*(F(\sigma^{S})))^y =$ \linebreak $(F(\Phi_* (\sigma^S)))^y = (F(\sigma^{\Phi(S)}))^y = (F(\sigma))^y$, so we get the required result after an application of the prism operator. \end{proof}

For a CW-complex, the pair $(X^k,X^{k-1})$ is good, that is, there exists a deformation retraction of some closed neighbourhood $U \subset X^k$ of $X^{k-1}$ onto $X^{k-1}$. It follows that $H_\bullet(X^k,X^{k-1}) \cong H_\bullet(X^k-X^{k-1},U-X^{k-1})$ by excision. Cellular patterns will have the same property with respect to the pattern:

\begin{lemma} Let $\mathcal{P}$ be a pattern on $X$ which has some cellular representation $\mathcal{C}$ with CW-decomposition $X^\bullet$. Then $H_\bullet^\mathcal{P}(X^k,X^{k-1}) \cong H_\bullet^\mathcal{P}(X^k-X^{k-1},U^{k-1}-X^{k-1})$ for $U^{k-1} \subset X^k$ some neighbourhood of $X^{k-1}$.\end{lemma}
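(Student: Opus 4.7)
The plan is to mimic the classical proof that a CW-pair $(X^k, X^{k-1})$ is a good pair, but carried out equivariantly so that the homotopies and neighbourhoods produced are compatible with the pattern. This will then reduce the statement to applications of Lemma~\ref{def} and the excision portion of Lemma~\ref{mv} (item 5).

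First I would construct, for each sufficiently large radius $R$, a closed neighbourhood $U^{k-1}_R \subset X^k$ of $X^{k-1}$ together with an equivariant deformation retraction onto $X^{k-1}$. By axiom~1 of Definition~\ref{CPatDef}, for every closed $k$-cell $c$ there is a closed neighbourhood $W_c \subset c$ of $\partial c$ and a deformation retraction $F_c\colon [0,1]\times W_c \to W_c$ onto $\partial c$ that commutes with each element of $\mathcal{C}^{R_0}_{c,c}$, where $R_0$ is chosen large enough that condition 3 of Definition~\ref{CPatDef} applies. For a fixed reference cell in each of the finitely many $\mathcal{C}^{R_0}$-equivalence classes of $k$-cells I choose such $(W_c, F_c)$ once and for all, and then transport these choices across each class using the morphisms in $\mathcal{C}^{R_0}$; well-definedness follows from the invariance of $F_c$ under $\mathcal{C}^{R_0}_{c,c}$. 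Taking the union gives a closed neighbourhood $U^{k-1}$ of $X^{k-1}$ in $X^k$ and a homotopy $F\colon [0,1]\times X^k \to X^k$ which equals the identity outside $U^{k-1}$ and retracts $U^{k-1}$ onto $X^{k-1}$; the local finiteness and bounded radii of the cells keep the displacement $\sup_{t,t'}d_X(F_t(x),F_{t'}(x))$ globally bounded and make $F$ proper. Condition $3$ of Definition~\ref{CPatDef} together with the fact that $\mathcal{C}$ represents $\mathcal{P}$ guarantees that for any prescribed ``tolerance'' radius there exists some larger $\mathcal{P}$E radius $R_F$ such that $\Phi \circ F_t = F_t\circ \Phi$ for all $\Phi\in \mathcal{P}^{R_F}_{x,y}$ on the relevant neighbourhood; in particular $U^{k-1}$ is a $\mathcal{P}$-set.

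Next, Lemma~\ref{def} applied to $F$ shows that ${F_1}_*$ is chain-homotopic to the identity on $C^{\mathcal{P}}_\bullet(X^k)$ through $\mathcal{P}$E chains, and carries $C^{\mathcal{P}}_\bullet(U^{k-1})$ into $C^{\mathcal{P}}_\bullet(X^{k-1})$. Passing to the obvious short exact sequences of pairs and applying the five-lemma (after first checking, by the same prism argument, that the inclusion $C^{\mathcal{P}}_\bullet(X^{k-1}) \hookrightarrow C^{\mathcal{P}}_\bullet(U^{k-1})$ is a quasi-isomorphism) yields
\[
H_\bullet^{\mathcal{P}}(X^k,X^{k-1}) \;\cong\; H_\bullet^{\mathcal{P}}(X^k,U^{k-1}).
\]
Finally I would apply the excision statement in Lemma~\ref{mv}(5) with $Y = X^k$, $A = U^{k-1}$, $Z = X^{k-1}$. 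The subsets $X^{k-1}$, $U^{k-1}$ and $X^k - X^{k-1}$ are all $\mathcal{P}$-sets (of a common PE radius), the interiors of $A = U^{k-1}$ and $B = X^k - X^{k-1}$ cover $X^k$, and $X^{k-1}$ sits in the interior of $U^{k-1}$ by construction. Hence
\[
H_\bullet^{\mathcal{P}}(X^k,U^{k-1}) \;\cong\; H_\bullet^{\mathcal{P}}(X^k - X^{k-1}, U^{k-1} - X^{k-1}),
\]
which combined with the previous isomorphism gives the claim.

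The main obstacle will be the construction in the first paragraph: assembling a \emph{single} deformation retraction $F$ that is simultaneously well-defined, globally proper, equivariant to arbitrarily large $\mathcal{P}$E radius, and compatible across the finitely many equivalence classes of $k$-cells. The tameness hypothesis in Definition~\ref{CPatDef} is precisely what makes the choices on a representative cell transportable along morphisms in $\mathcal{C}^R_{c,c}$; without it, the local retractions could fail to glue into a pattern-equivariant homotopy.
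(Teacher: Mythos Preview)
Your proposal is correct and follows essentially the same route as the paper: build an equivariant deformation retraction of a collar $U^{k-1}$ onto $X^{k-1}$ by choosing a tame retraction on one representative per $\mathcal{C}^{R_0}$-class of $k$-cell and transporting it, verify the hypotheses of Lemma~\ref{def}, and then apply excision from Lemma~\ref{mv}. One small correction: the commutation $\Phi\circ F_t = F_t\circ\Phi$ comes purely from the representation relation $\mathcal{P}^{R'}\subset\mathcal{C}^{R_0}$ together with the way $F$ was built cell-by-cell; axiom~3 of Definition~\ref{CPatDef} (extension of morphisms to bounding $(k{+}1)$-cells) is not the relevant ingredient here.
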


\begin{proof} Since $\mathcal{P}$ represents $\mathcal{C}$ we have that elements $\Phi_{x,y} \in \mathcal{P}_{x,y}^R$ induce isometries between the cells intersecting $B_{d_X}(x,D+\epsilon)$ and $B_{d_X}(y,D+\epsilon)$ for sufficiently large $R$, these isometries on each cell being elements of $\mathcal{C}^r$ for some $r > 0$. By the ``tameness'' of the isometries of $\mathcal{C}$ we have deformation retractions from some closed neighbourhood $U \subset c$ of the boundary of each closed cell $c$ onto the boundary of $c$ which commute with the maps $\mathcal{C}_{c,c}^r$.

We define our deformation retraction by choosing, for each class of $k$-cell (under the equivalence relations defined by $\mathcal{C}_{c,d}^r$), a deformation retraction as above. Performing the deformation retraction on each cell induces a deformation retraction $F \colon [0,1] \times U^{k-1} \rightarrow U^{k-1}$ of some closed neighbourhood $U \subset X^k$ of $X^{k-1}$ onto $X^{k-1}$, which we claim satisfies the properties of the above lemma. It is easy to check that the homotopy is proper. Since each cell is bounded, points are moved only by some bounded distance. It also commutes with the maps $\Phi \in \mathcal{P}_{x,y}^R$ for sufficiently large $R$ since, by construction, it restricts to elements of $\mathcal{C}_{c,d}^r$ on each cell, each of which commute with the homotopy.

Every $\mathcal{P}$E chain is homologous to one with radius of support of arbitrarily small size by barycentric subdivision (see Lemma \ref{mv}). It follows, by an application of the homotopy constructed above, that every $\mathcal{P}$E chain of $U^{k-1}$ is homologous to one contained in $X^{k-1}$, so the inclusion $X^{k-1} \hookrightarrow U^{k-1}$ induces an isomorphism $H_\bullet^\mathcal{P}(X^k,X^{k-1}) \cong H_\bullet^\mathcal{P}(X^k,U^{k-1})$. By excision (Lemma \ref{mv}), we have that $H_\bullet^\mathcal{P}(X^k,X^{k-1}) \cong H_\bullet^\mathcal{P}(X^k-X^{k-1},U^{k-1}-X^{k-1})$. \end{proof}

\begin{lemma}\label{avg} Let $\mathcal{P}$ be a pattern with cellular representation $\mathcal{C}$. Suppose that $G$ has division by $|\mathcal{C}_{c,d}^R|$ (for $|\mathcal{C}^R_{c,d}| \neq 0$) for sufficiently large $R$. Then $H_i^\mathcal{P}(X^k-X^{k-1},U^{k-1}-X^{k-1};G)$ is trivial for $i \neq k$ and for $i=k$ is canonically isomorphic to the group of cellular PE $k$-chains of $\mathcal{C}$, that is, the group given by assigning coefficients to each oriented $k$-cell in a way which is equivariant with respect to the isometries $\Phi \in \mathcal{C}^R$ for some sufficiently large $R$. \end{lemma}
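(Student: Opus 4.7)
The plan is to decompose the pair along orbits of $k$-cells and reduce to an equivariant computation on a single cell. First, I would fix a PE radius $R$ so large that, by the hypothesis that $\mathcal{C}$ represents $\mathcal{P}$, any singular chain which is PE to radius $R$ and supported in $X^k - X^{k-1}$ restricts cellularly in the following sense: writing such a chain as $\sigma = \sum_d \sigma_d$ with $\sigma_d$ supported in $\mathrm{int}(d)$, the PE condition forces $\Phi_* \sigma_c = \sigma_d$ for every $\Phi \in \mathcal{C}^R_{c,d}$. Grouping cells into orbits $[c]$ under the groupoid $\mathcal{C}^R$ and writing $H_{[c]} := \mathcal{C}^R_{c,c}$ (a finite group of isometries acting tamely on $c$), this yields a chain-level isomorphism
\[
C_\bullet^{\mathcal{P}}(X^k - X^{k-1}, U^{k-1} - X^{k-1}; G) \;\cong\; \prod_{[c]} C_\bullet(c - \partial c, (U \cap c) - \partial c; G)^{H_{[c]}},
\]
the chain on each orbit being determined by an $H_{[c]}$-invariant chain on a chosen representative $c$.

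Next, I would compute the homology of each factor in two steps. Using the tameness hypothesis to produce an $H_{[c]}$-equivariant deformation retraction of a closed neighbourhood of $\partial c$ in $c$ onto $\partial c$, excision together with homotopy invariance yields an $H_{[c]}$-equivariant isomorphism $H_\bullet(c - \partial c, (U \cap c) - \partial c; G) \cong H_\bullet(c, \partial c; G)$, which is concentrated in degree $k$ and equal to $G$ there, with $H_{[c]}$ acting through the orientation character $\epsilon : H_{[c]} \to \{\pm 1\}$. Then I would pass from invariants-of-homology to homology-of-invariants via the averaging chain map $A(\sigma) := |H_{[c]}|^{-1} \sum_{h \in H_{[c]}} h_*\sigma$, which is defined precisely because $G$ has division by $|H_{[c]}|$. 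It is a chain map landing in the invariant subcomplex, it is the identity on that subcomplex, and it represents a chain-level splitting of the inclusion; a short diagram chase then forces $H_\bullet(C_\bullet^{H_{[c]}}) \cong H_\bullet(C_\bullet)^{H_{[c]}}$, hence $0$ in degrees $i \ne k$ and $G^{H_{[c]}}$ in degree $k$.

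Finally, taking the product over orbits, $H_i^{\mathcal{P}}$ vanishes for $i \ne k$ and $H_k^{\mathcal{P}} \cong \prod_{[c]} G^{H_{[c]}}$. On the other hand, unpacking the definition, a cellular PE $k$-chain of $\mathcal{C}$ at radius $R$ is exactly an assignment of coefficients to oriented $k$-cells which is equivariant under $\mathcal{C}^R$, hence determined by a choice from $G^{H_{[c]}}$ on one representative of each orbit, giving the same product. The identification is canonical: given $g \in G^{H_{[c]}}$, place a small singular $k$-disc of coefficient $g$ at an interior point of $c$, average under $H_{[c]}$, and transport it by elements of $\mathcal{C}^R$ to the other cells of the orbit; this produces a PE cycle of the pair whose class corresponds to $g$. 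Taking a direct limit in $R$ then concludes the proof.

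The main obstacle is the equivariant setup for the single-cell computation: one must verify that the tameness condition genuinely provides an $H_{[c]}$-equivariant deformation retraction and that the averaging operator interacts correctly with the relative structure of the pair, while keeping control of how the PE radius of a chain determines the radius at which the orbit decomposition and cellular equivariance stabilise. The division hypothesis on $G$ is precisely what makes the averaging step work; without it, cells with nontrivial self-isometries would contribute extra torsion in lower degrees and would fail to detect every invariant coefficient in degree $k$.
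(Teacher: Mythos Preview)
Your proposal is correct and rests on the same averaging idea as the paper, but the organisation differs. The paper argues globally: it shows that the inclusion $H_\bullet^{\mathcal{P}}(X^k - X^{k-1}, U^{k-1} - X^{k-1};G) \to H_\bullet^{\text{BM}}(X^k - X^{k-1}, U^{k-1} - X^{k-1};G)$ is injective by taking a $\mathcal{P}$E cycle $\sigma$ which bounds some Borel--Moore chain $\tau$, and replacing $\tau$ by an averaged version $\tau_A$ obtained by restricting $\tau$ to a representative cell of each orbit, dividing by the order of the stabiliser, and transporting by the morphisms of $\mathcal{C}^{R_2}$. The careful tracking of three radii $R_1, R_2, R_3$ (one for $\sigma$, one for the averaging, one for $\tau_A$) is exactly the bookkeeping your final paragraph flags. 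You instead first establish a chain-level product decomposition over orbits and then quote the standard fact that for a finite group $H$ with $|H|$ invertible in $G$, the averaging projector gives $H_\bullet(C^H) \cong H_\bullet(C)^H$ on each factor. Your packaging is a little cleaner conceptually; the paper's is more explicit about how the two notions of PE radius (that of $\mathcal{P}$ and that of $\mathcal{C}$) interact, which is precisely where your chain-level isomorphism needs the direct-limit caveat you note at the end.
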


\begin{proof} We shall show that the inclusion of chain complexes $i \colon C_\bullet^\mathcal{P}(X-X^{k-1},U^{k-1}-X^{k-1};G) \rightarrow C_\bullet^{\text{BM}}(X-X^{k-1},U^{k-1}-X^{k-1};G)$ induces inclusions on homology. That is, if $\sigma$ is a $\mathcal{P}$E relative chain and $\sigma=\partial(\tau)$ for some relative Borel-Moore chain $\tau$, then $\tau$ can in fact be chosen to be $\mathcal{P}$E.

Let $\sigma$ be $\mathcal{P}$E to radius $R_1$, with $R_1$ large enough so that $G$ has division by $|\mathcal{C}_{c,d}^{R_1}|$, and $\sigma=\partial(\tau)$ with $\tau$ not necessarily $\mathcal{P}$E. Let the cells of $\mathcal{C}$ have radius of support bounded by $D$. Since $\mathcal{C}$ is a cellular representation of $\mathcal{P}$, there exists some $R_2$ such that whenever $\Phi \in \mathcal{C}^{R_2}_{c,d}$ then there exists some $\Phi' \in \mathcal{P}^{R_1+2D}_{x,y}$ which restricts to $\Phi$. We replace $\tau$ by an ``averaged'' version $\tau_A$. First, for each orbit of $k$-cell in $\mathcal{C}^{R_2}$ pick some representative cell $c$. Restrict $\tau$ to it and replace each coefficient $g$ of singular chain on it by $g/|\mathcal{C}_{c,c}^{R_2}|$. Now transport it to each equivalent $k$-cell by each of the maps $\Phi \in \mathcal{C}_{c,d}^{R_2}$. Repeating for each orbit of $k$-cell, we denote the resulting chain by $\tau_A$. Of course, $\tau_A$ is defined by its restrictions $\tau_A^d$ to each $k$-cell $d$; for $c$ the representative of the orbit of $d$, we have that $\tau_A^d = \sum_{\Phi \in \mathcal{C}_{c,d}^{R_2}} \Phi_*(\tau^c/|\mathcal{C}_{c,c}^{R_2}|)$. We must check that $\tau_A$ is $\mathcal{P}$E and that $\sigma=\partial(\tau)=\partial(\tau_A)$.

Since $\mathcal{C}$ is a cellular representation of $\mathcal{P}$, there exists some $R_3$ such that whenever $\Phi \in \mathcal{P}^{R_3}_{x,y}$ then $\Phi|_{c_x} \in \mathcal{C}_{c_x,\Phi(c_x)}^{R_2}$ for $c_x \ni x$. We claim that $\tau_A$ is $\mathcal{P}$E to radius $R_3$. To see this, let $\Phi \in \mathcal{P}_{x,y}^{R_3}$ for $x \in X^k-X^{k-1}$ and $x \in c_x$, $y \in c_y$ and $c$ the representative of $c_x$. We shall show that $\Phi_*(\tau_A^{c_x})=\tau_A^{c_y}$, from which it follows that $\tau_A$ is $\mathcal{P}$E. Since (the restriction of) $\Phi$ is in $\mathcal{C}_{c_x,c_y}^{R_2}$, we have that the set of isometries $\Phi \circ \mathcal{C}_{c,c_x}^{R_2}=\mathcal{C}_{c,c_y}^{R_2}$. Hence \[\Phi_*(\tau_A^{c_x})=\Phi_*\sum_{\Psi \in \mathcal{C}_{c,c_x}^{R_2}} \Psi_*(\tau^c /|\mathcal{C}_{c,c}^{R_2}|)=\sum_{\Psi \in \mathcal{C}_{c,c_x}^{R_2}} (\Phi \circ \Psi)_*(\tau^c /|\mathcal{C}_{c,c}^{R_2}|)= \] \[\sum_{\Psi \in \mathcal{C}_{c,c_y}^{R_2}} \Psi_*(\tau^c /|\mathcal{C}_{c,c}^{R_2}|)=\tau_A^{c_y}.\]

To show that $\partial(\tau_A)=\sigma$, it is sufficient to prove that $\partial(\tau_A)^d=\sigma^d$ for each $k$-cell $d$. Let $\Phi \in \mathcal{C}_{c,d}^{R_2}$. Since $\Phi=\Phi'|_c$ for some $\Phi' \in \mathcal{P}_{x,y}^{R_1+2D}$ with $x \in c$, it follows that $\Phi$ is a restriction of some $\Phi' \in \mathcal{P}_{x,\Phi(x)}^{R_1}$ for each $x \in c$. Hence, $\Phi_*(\sigma^c)=\sigma^d$ for each $\Phi \in \mathcal{C}_{c,d}^{R_2}$ since $\sigma$ is $\mathcal{P}$E to radius $R_1$. It follows that \[\partial(\tau_A)^d=\partial \sum_{\Phi \in \mathcal{C}_{c,d}^{R_2}}\Phi_*(\tau^c/|\mathcal{C}_{c,c}^{R_2}|)=\sum_{\Phi \in \mathcal{C}_{c,d}^{R_2}} \Phi_*(\sigma^c)/ |\mathcal{C}_{c,c}^{R_2}| = \] \[\sum_{\Phi \in \mathcal{C}_{c,d}^{R_2}} \sigma^d/ |\mathcal{C}_{c,c}^{R_2}|=(|\mathcal{C}_{c,c}^{R_2}|\sigma^d)/|\mathcal{C}_{c,c}^{R_2}|=\sigma^d.\]

Hence, we have an inclusion $i_* \colon H_\bullet^\mathcal{P}(X-X^{k-1},U-X^{k-1};G) \rightarrow H_\bullet^{\text{BM}}(X-X^{k-1},U-X^{k-1};G)$. It follows that $H_i^\mathcal{P}(X-X^{k-1},U-X^{k-1};G) \cong 0$ for $i \neq k$. For $i=k$, it is not too hard to see that the image of $i_*$ consists precisely of those chains of the form described in the lemma (one can construct arbitrary such equivariant relative singular chains similarly to the construction of $\tau_A$).\end{proof}

The remainder of the proof that the singular and cellular PE homology groups are isomorphic is now an algebraic affair. One uses the usual diagram chases, associated to the long exact sequences of pairs $(X^k,X^{k-1})$, as one usually uses to show the equivalence of singular and cellular homology (see, for example, \cite{Hat} pg.\ 139). Crucially, the vanishing of $H_i^\mathcal{P}(X^k,X^{k-1})$ for $i \neq k$ allows one to show that there is an isomorphism between the singular homology and the homology groups induced by the pairs $H_k^\mathcal{P}(X^k,X^{k-1})$. On the other hand, the above lemma also shows that this chain complex corresponds precisely to the cellular chain complex defining $H_k^\mathcal{C}$, which completes the proof.

\section{Pattern-Equivariant Homology of Tilings} \label{sect: Pattern-Equivariant Homology of Tilings} We postponed giving an explicit description of a cellular representation $\mathcal{C}_i$ for the pattern $\mathcal{T}_i$ ($i=0,1$) of a cellular tiling of $\mathbb{R}^d$ in Example \ref{ex: Cellular Tiling}; we shall provide one here.

For two $k$-cells $c_1,c_2$ of the tiling $T$, let $\Phi \in (\mathcal{C}_1)_{c_1,c_2}^R$ if and only if $\Phi$ is a translation taking $c_1$ to $c_2$ which also takes all of the tiles within distance $R$ of every point of $c_1$ to the equivalent patch at $c_2$ (of course preserving decorations of the tiles, if necessary). It is easy to see that $\mathcal{C}_1$ is a cellular representation of $\mathcal{T}_1$. Since $|(\mathcal{C}_1)_{c_1,c_2}^R|=0$ or $1$, it follows from the previous section that $H_\bullet^{\mathcal{T}_1}(\mathbb{R}^d;G) \cong H_\bullet^{\mathcal{C}_1}(\mathbb{R}^d;G)$ for any coefficient group $G$. Let two cells of the tiling $c_1 \sim^1_R c_2$ if $|(\mathcal{C}_1)^R_{c_1,c_2}| \neq 0$. A cellular $k$-chain $\sigma \in \mathcal{C}_1$ is a cellular Borel-Moore $k$-chain of the tiling with the following property: there exists some $R$ such that if $c_1 \sim_R^1 c_2$ then $\sigma(c_1) = \sigma(c_2)$. We shall say that $\sigma$ is a chain of $\mathcal{C}_1$E radius $R$. Then the group of $k$-chains with $\mathcal{C}_1$E radius $R$, denoted $C_k^{\mathcal{C}_1;R}(\mathbb{R}^d;G)$, can be identified with an assignment of coefficient from $G$ to each equivalence class of $k$-cell $[c]_R^1$.

A useful feature of this choice of $\mathcal{C}_1$ is that if $\sigma \in C_k^{\mathcal{C}_1;R}$ then $\partial(\sigma) \in C_{k-1}^{\mathcal{C}_1;R}$, so we may write $H_\bullet^{\mathcal{C}_1} \cong \varinjlim_R H_\bullet^{\mathcal{C}_1;R}$. We see from the above that these chain groups have the analogous description to the PE cohomology groups of the tiling as described in \cite{Sadun1}, except with the cellular coboundary maps replaced with the cellular boundary maps.

Of course, we may analogously define a pattern $\mathcal{C}_0$, replacing translations by rigid motions. It is not necessarily the case that $|(\mathcal{C}_0)_{c,d}^R|=0$ or $1$ since some cells may have rotational symmetries preserving their patches of tiles. Also note that for $\sigma \in C_k^{\mathcal{C}_0;R}$ here, we require that $\sigma(c)=-\sigma(c)$ if there exists some $\Phi \in (\mathcal{C}_0)^R_{c,c}$ reversing the orientation of $c$. If $|(\mathcal{C}_0)_{c,d}^R|$ may have order larger than $1$ then it is not necessarily true that $H_\bullet^{\mathcal{T}_1}(\mathbb{R}^d;G) \cong H_\bullet^{\mathcal{C}_1}(\mathbb{R}^d;G)$ for a general coefficient group $G$. To obtain this isomorphism for a general coefficient group, one should use a finer CW-decomposition for the tiling. For example, for a tiling of $\mathbb{R}^2$, one should take a CW-decomposition for the tiling so that points of local rotational symmetry are contained in the $0$-skeleton.

An alternative way of defining a cellular representation for a cellular tiling is to consider the patches of all tiles within an $R$-neighbourhood of a cell. Let $T$ be a cellular tiling on $(X,d_X)$ with allowed partial isometries $\mathcal{S}$. Then $\mathcal{T}_\mathcal{S}$ has a cellular representation, with CW-decomposition given by the original tiling, where we set $\Phi \in \mathcal{C}_{c,d}^R$ if and only if $\Phi$ is a restriction of some $\Phi' \in (\mathcal{T}_\mathcal{S})_{x,y}^R$ for every $x \in c$. Provided that the isometries fixing a cell are tame (and finite), it is easily checked that $\mathcal{C}$ as so defined is a cellular representation for $\mathcal{T}_\mathcal{S}$.

\subsection{Rotationally Invariant PE chains} Let $T$ be an FLC (with respect to translations) cellular tiling of $\mathbb{R}^d$. Suppose that a rotation group $\Theta < SO(d)$ acts on the translation classes of patches of the tiling. That is, for any finite patch of the tiling, the rotate of that patch by $\theta \in \Theta$ is a (translate of) a patch of the tiling. Then $\Theta$ canonically acts on the chain groups $C_k^{\mathcal{C}_1;R}(\mathbb{R}^d;G)$ in a way which commutes with the boundary maps, and similarly induces a well-defined action on $C_\bullet^{\mathcal{C}_1}(\mathbb{R}^d;G)$. We assume further that if two sufficiently large patches are equivalent up to a rigid motion, then they are so for some rotation in $\Theta$ followed by a translation. Then the cellular $\mathcal{T}_0$E chain groups can be identified with those $\mathcal{T}_1$E chains which are invariant with respect to the action of $\Theta$. Just as for PE cohomology, the $\mathcal{T}_0$E homology groups can be identified as the subgroups of the $\mathcal{T}_1$E homology groups of elements represented by rotationally invariant elements:

\begin{lemma} Let $G$ have division by $|\Theta|$. Then the inclusion of chain complexes $i \colon C_\bullet^{\mathcal{C}_0}(\mathbb{R}^d;G) \rightarrow C_\bullet^{\mathcal{C}_1}(\mathbb{R}^d;G)$ induces an injective map on homology. \end{lemma}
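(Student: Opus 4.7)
The plan is to use an averaging argument, which is standard in the presence of a finite group action on a chain complex over a ring with division by the order of the group. Suppose $\sigma \in C_k^{\mathcal{C}_0}(\mathbb{R}^d;G)$ represents a homology class whose image under $i_*$ is zero in $H_k^{\mathcal{C}_1}(\mathbb{R}^d;G)$. Then there exists some $\tau \in C_{k+1}^{\mathcal{C}_1}(\mathbb{R}^d;G)$ with $\partial \tau = \sigma$; the task is to modify $\tau$ to a $\Theta$-invariant chain $\tau'$ (hence an element of $C_{k+1}^{\mathcal{C}_0}$) which still bounds $\sigma$.

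To do this, I would first recall the $\Theta$-action on $C_\bullet^{\mathcal{C}_1}(\mathbb{R}^d;G)$: given the identification of a $\mathcal{C}_1$E chain with an assignment of coefficients to translation classes of cells (equivalent to sufficiently large radii), $\theta \in \Theta$ acts by permuting these translation classes via its action on patch equivalence classes of the tiling. This action commutes with the cellular boundary map $\partial$ because $\partial$ is defined combinatorially from the incidence relations of cells, which are preserved by the rotations in $\Theta$; and the identification of $C_\bullet^{\mathcal{C}_0}$ with the $\Theta$-fixed subcomplex of $C_\bullet^{\mathcal{C}_1}$ is precisely the hypothesis stated before the lemma.

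Next, I would define the averaged chain
\[
\tau' := \frac{1}{|\Theta|} \sum_{\theta \in \Theta} \theta \cdot \tau,
\]
which makes sense in $C_{k+1}^{\mathcal{C}_1}(\mathbb{R}^d;G)$ because $G$ has division by $|\Theta|$ and $\mathcal{C}_1$E radius is preserved under each $\theta$ (possibly after enlarging the radius to one at which $\Theta$ acts on patch classes). By construction $\tau'$ is $\Theta$-invariant, so $\tau' \in C_{k+1}^{\mathcal{C}_0}(\mathbb{R}^d;G)$. Finally, since $\sigma$ is itself $\Theta$-invariant, and since $\partial$ commutes with the $\Theta$-action,
\[
\partial \tau' = \frac{1}{|\Theta|} \sum_{\theta \in \Theta} \theta \cdot \partial \tau = \frac{1}{|\Theta|} \sum_{\theta \in \Theta} \theta \cdot \sigma = \frac{1}{|\Theta|} \cdot |\Theta| \cdot \sigma = \sigma.
\]
Hence $[\sigma] = 0$ already in $H_k^{\mathcal{C}_0}(\mathbb{R}^d;G)$, proving injectivity.

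The main technical point to verify carefully is that the $\Theta$-action on $C_\bullet^{\mathcal{C}_1}$ is well defined and preserves $\mathcal{C}_1$-equivariance; I expect this to be essentially a bookkeeping check using the assumption that $\Theta$ acts on translation classes of patches and that rigid-motion equivalence of sufficiently large patches factors as a rotation from $\Theta$ followed by a translation. Nothing here is genuinely hard once the identification of $C_\bullet^{\mathcal{C}_0}$ with the $\Theta$-invariants of $C_\bullet^{\mathcal{C}_1}$ is in hand; divisibility of $G$ by $|\Theta|$ is exactly what is needed to make the averaging land back in $C_\bullet^{\mathcal{C}_1}(\mathbb{R}^d;G)$.
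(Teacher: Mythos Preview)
Your proof is correct and is essentially identical to the paper's: both take a $\mathcal{C}_1$E chain $\tau$ bounding the $\mathcal{C}_0$E cycle $\sigma$ and replace it by the averaged chain $\tau_A = |\Theta|^{-1}\sum_{\theta\in\Theta}\theta_*\tau$, then verify $\partial\tau_A=\sigma$ using that $\partial$ commutes with the $\Theta$-action and $\sigma$ is $\Theta$-invariant. The technical points you flag about the well-definedness of the $\Theta$-action on $C_\bullet^{\mathcal{C}_1}$ are precisely the assumptions the paper puts in place in the paragraph preceding the lemma.
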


\begin{proof} Suppose that $\tau$ is $\mathcal{C}_1$E and $\sigma=\partial(\tau)$ is $\mathcal{C}_0$E. Similarly as to in the proof of Lemma \ref{avg}, by ``averaging'' $\tau$ one can show that there is a $\mathcal{T}_0$E chain $\tau_A$ with the same boundary as $\tau$.

In more detail, since $\tau$ is $\mathcal{T}_1$E, there is some $R$ such that $\tau \in C_k^{\mathcal{C}_1;R}$. Replace $\tau$ by the chain $\tau_A$, which assigns to the equivalence class $[c]_R^1$ the average $\sum_{\theta \in \Theta} \tau(\theta[c]_R^1)/|\Theta|$. Then $\tau_A$ is $\mathcal{T}_0$E since it assigns the same coefficient to any two cells $c \sim_R^0 d$. Further, since $\theta_*\partial(\tau)=\partial(\tau)$ as $\partial(\tau)$ is $\mathcal{T}_0$E, we have that \[\partial(\tau_A)=\partial \sum_{\theta \in \Theta} \theta_* \tau /|\Theta|  = \sum_{\theta \in \Theta}\theta_*\partial(\tau)/|\Theta|=\sum_{\theta \in \Theta}\partial(\tau) /|\Theta|=(|\Theta|\sigma)/|\Theta|=\sigma.\]

It follows that if $i(\tau)$ is the boundary of a $\mathcal{T}_1$E chain, then it is also the boundary of a $\mathcal{T}_0$E chain, and so the induced map on homology is injective.\end{proof}

\begin{exmp}\label{ex: Periodic} We shall discuss a simple periodic example, the periodic tiling of equilateral triangles of the plane. For periodic tilings, the (singular and cellular) $\mathcal{T}_1$ and $\mathcal{T}_0$-equivariant homology groups will be the homology of the chain complexes of (singular and cellular) chains invariant under the action of translations and rigid motions, resp., preserving the tiling.

The cellular chain complex associated to the pattern $\mathcal{T}_1$ for the periodic tiling of equilateral triangles (with the obvious CW-decomposition of $\mathbb{R}^2$) is $0 \leftarrow \mathbb{Z} \leftarrow \mathbb{Z}^3 \leftarrow \mathbb{Z}^2 \leftarrow 0$, as one has one vertex-type, three edge-types and two face-types, up to translation. One may compute that $H_0^{\mathcal{T}_1}(\mathbb{R}^2) \cong \mathbb{Z}$, $H_1^{\mathcal{T}_1}(\mathbb{R}^2) \cong \mathbb{Z}^2$ and $H_2^{\mathcal{T}_1}(\mathbb{R}^2) \cong \mathbb{Z}$. In the next chapter, we shall see that we have Poincar\'{e} duality between the PE homology and cohomology of this pattern. The PE cohomology agrees with cohomology of the tiling space of $\mathcal{T}_1$ which in this case is the $2$-torus, agreeing with the (regraded) PE homology groups here.

Now consider the pattern $\mathcal{T}_0$, for which chains should be invariant with respect to rigid motions. Due to complications with rotationally symmetric points, one must either choose to use coefficients with division by $6$, or a finer CW-decomposition of $\mathbb{R}^2$ for which the rotationally symmetric points are contained in $0$-cells. For the former case, using $\mathbb{R}$-coefficients, there is only one vertex and face-type. There are no generators associated to $1$-cells; for such a cellular $1$-chain $\sigma$, the rotational symmetry of the $1$-cells means that $\sigma=-\sigma$, and hence is zero over $\mathbb{R}$-coefficients. It follows that $H_0^{\mathcal{T}_0}(\mathbb{R}^2; \mathbb{R}) \cong \mathbb{R}$, $H_1^{\mathcal{T}_0}(\mathbb{R}^2;\mathbb{R}) \cong 0$ and $H_2^{\mathcal{T}_0}(\mathbb{R}^2;\mathbb{R}) \cong \mathbb{R}$, which are precisely the rotationally invariant parts of $H_{\bullet}^{\mathcal{T}_1}(\mathbb{R}^2;\mathbb{R})$.

To calculate the correct homology with $\mathbb{Z}$-coefficients, one must choose a finer CW-decomposition. A barycentric subdivision of the original complex is sufficient. One has three vertex and edge types and two face types. We calculate $H_0^{\mathcal{T}_0}(\mathbb{R}^2) \cong \mathbb{Z} \oplus \mathbb{Z}_6$, $H_1^{\mathcal{T}_0}(\mathbb{R}^2) \cong0$ and $H_2^{\mathcal{T}_0}(\mathbb{R}^2) \cong \mathbb{Z}$. To compute $H_0$, consider the map $H_0 \rightarrow \mathbb{Z}$ which takes the value $S=A+2B+3C$ on a class represented by assigning coefficients $A,B,C$ to the vertices of rotational symmetry order $6,3$ and $2$, respectively. It is easy to see that this is a well defined homomorphism (in fact, $S$ can be seen as the (rescaled) ``density'' of the cycle, which must be invariant under boundaries). One may always represent such a class by one for which $0 \le B < 3$ and $0 \le C < 2$ and, given such a class, since $A$ is determined by $S,B$ and $C$, one sees that the map sending this class to $(S,[C]_2,[B]_3) \in \mathbb{Z} \oplus \mathbb{Z}_2 \oplus \mathbb{Z}_3 \cong \mathbb{Z} \oplus \mathbb{Z}_6$ is an isomorphism. The generator of $\mathbb{Z}$ may be given as $(A,B,C)=(1,0,0)$, of $\mathbb{Z}_2$ as $(-3,0,1)$ and of $\mathbb{Z}_3$ as $(-2,1,0)$. Note that $H_0^{\mathcal{T}_0}$ does not correspond to a subgroup of $H_0^{\mathcal{T}_1}$ generated by rotationally invariant elements, the torsion elements are nullhomologous only by non-rotationally-invariant $\mathcal{T}_1$E boundaries.

This confirms our above calculation over $\mathbb{R}$-coefficients using the universal coefficient theorem. It seems common that torsion groups are picked up when one has two or more points of rotational symmetry in the pattern. Repeating the above calculations for the square tiling, one obtains $H_0^{\mathcal{T}_0} \cong \mathbb{Z} \oplus \mathbb{Z}_2 \oplus \mathbb{Z}_4$, $H_1^{\mathcal{T}_0} \cong 0$ and $H_0^{\mathcal{T}_0} \cong \mathbb{Z}$. Note that both of the spaces $\Omega^{\mathcal{T}_0}$ for the square and triangle tilings are homeomorphic to the $2$-sphere, despite the different $\mathcal{T}_0$E homologies (over $\mathbb{Z}$-coefficients) which seem to retain extra information about the different symmetries of the two tilings. \end{exmp}

\subsection{Uniformly Finite Homology} Much of the motivation for the definition of the PE chains is derived from an effort to describe topological invariants of tiling spaces in a highly geometric way. As an aside, we shall briefly mention here a possible alternative route. In \cite{BW}, Block and Weinberger construct aperiodic tilings on non-amenable spaces (see also \cite{MN}). The idea is to construct so called ``unbalanced tile sets''. This is made possible through the vanishing of certain homology groups, the (degree zero) uniformly finite homology groups. These homology groups are coarse invariants: if two metric spaces are coarsely quasi-isometric then these metric spaces have isomorphic uniformly finite homology groups.

It should be possible to construct the PE homology groups here from uniformly finite homology chains, instead of singular Borel-Moore chains. Indeed, given a tiling $T$ and a $\mathcal{T}$E homology chain on it, under reasonable conditions, this chain may be considered as a uniformly finite homology chain by mapping a simplex to the image of its vertices and extending linearly. This defines a chain map into the uniformly finite homology chain complex, with image those chains with coefficients assigned in a way which ``only depends on the local tile arrangement to some radius''. In fact, many of the uniformly finite chains constructed in \cite{BW} are of this nature: the ``impurity'' of a tiling determines a uniformly finite $0$-chain which is defined by the local tiles, and this impurity is homologous to zero via the boundary of a uniformly finite $1$-chain which is locally determined by the way pairs of tiles meet on their boundaries.

\chapter{Poincar\'{e} Duality for Pattern-Equivariant Homology}
\label{chap: Poincare Duality for Pattern-Equivariant Homology}

In this chapter we shall relate the PE homology and cohomology groups of an FLC tiling of $\mathbb{R}^d$ through Poincar\'{e} duality. There are two approaches to this, which correspond to two classical approaches to proving Poincar\'{e} duality, both of which we shall present here. The first is to consider cellular (co)chain complexes associated to a cellular tiling and a dual tiling, which the tiling is MLD to. The second approach is to consider the singular complexes and induce duality by capping with a PE fundamental class for the tiling.

These methods induce Poincar\'{e} duality isomorphisms for the patterns associated to the translational and rigid hull of an FLC tiling. However, we do not have duality for the pattern $\mathcal{T}_0$ associated to the space $\Omega^{\mathcal{T}_0}$, since it fails at points of local rotational symmetry. We consider a modified $\mathcal{T}_0$E chain complex which allows only multiples of chains at points of rotational symmetry. One regains duality by restricting to this subcomplex of PE chains.

\section{The Dual Cell Approach}

Let $T$ be an FLC tiling of $\mathbb{R}^d$. For our considerations, we may consider $T$ to be a tiling of polygonal tiles such that cells have trivial local isometries in the tiling, since every FLC tiling is $S$-MLD to such a tiling via the Voronoi method (see Subsection \ref{subsect: Examples of Tilings and Delone Sets}).

One may consider a dual CW-complex of this tiling, which is $S$-MLD to the original tiling. Each $k$-cell of the tiling is identified with a $(d-k)$-cell of the dual tiling and this identification induces an isomorphism between the chain complex of $\mathcal{T}_1$E cellular chains of the tiling and the $\mathcal{T}_1$E cellular cochains of the dual tiling. Hence, we have Poincar\'{e} duality $H^k_{\mathcal{T}_1}(\mathbb{R}^d) \cong H_{d-k}^{\mathcal{T}_1}(\mathbb{R}^d)$ between the cellular PE (co)homology groups.

We may use a similar method to deduce duality between the $\mathcal{T}_{\text{rot}}$E (co)homology groups, for which one needs to consider dual-cell decompositions of $E^+(d)$. However, we do not obtain duality for the $\mathcal{T}_0$E groups. The problem is that one may not use the dual complex of the tiling to evaluate the PE (co)homology groups. As we shall see, one may repair the duality isomorphism by only considering cellular chains which assign coefficients to cells in multiples of the order of symmetry of the cells in the tiling.

\section{Singular Pattern-Equivariant Cohomology}

\begin{definition} Let $C_\mathcal{P}^\bullet(X)$ be the subcomplex of $C^\bullet(X)$ (of singular cochains on $X$) of pullback cochains i.e., $C_\mathcal{P}^i(X):=\varinjlim_R \pi_{R,X}^i (C^i(K_R))$, taken with the usual coboundary map. The cohomology of this cochain complex, $H_\mathcal{P}^\bullet(X)$, is called the (\emph{singular}) \emph{pattern-equivariant cohomology} of $\mathcal{P}$.\end{definition}

Since $\pi_{R,X}^* \circ \delta (\psi)= \delta \circ \pi_{R,X}^*(\psi)$ we have that coboundaries of pullback cochains are pullbacks and so $C_\mathcal{P}^\bullet(X)$ is a well-defined cochain complex. Defining $C_{\mathcal{P};R}^\bullet(X) = \pi_{R,X}^\bullet(C^\bullet(K_R))$, we have that $C_\mathcal{P}^\bullet(X) = \varinjlim C_{\mathcal{P};R}^\bullet(X)$.

Let $U$ be a $\mathcal{P}$-set, that is $U=\pi_{R,X}^{-1}(S)$ for some set $S \subset K_R$. Then we define $H_\mathcal{P}^\bullet(U)$ by letting $C_\mathcal{P}^\bullet(U):= \varinjlim_{R' \ge R} \pi_{R',X}^\bullet(C^\bullet(\pi_{R,R'}^{-1}(S))$ and taking the coboundary map to be the usual singular coboundary map in $U$. As above, the coboundary of such a cochain is still a pullback and so this is a well-defined coboundary map on this subcomplex of the singular cochain complex of $U$. 

We may also define relative complexes. For $\mathcal{P}$-sets $U$ and $V$ define the \emph{relative PE cochain groups} $C^\bullet_\mathcal{P}(U,V)$ by letting $\psi \in C^\bullet_\mathcal{P}(U,V)$ if and only if $\psi$ is a cochain of $C^\bullet_\mathcal{P}(U)$ which evaluates to zero on any singular simplex contained in $V$. The coboundary maps are defined to be the usual singular coboundary maps.

\begin{lemma} \label{lem: Singular PE Cohomology} Let $\mathcal{P}$ be a pattern and suppose that, for some unbounded sequence $R_1 < R_2 < \ldots$, each approximant $K_{R_i}$ is a CW-complex for which singular simplexes on $K_R$ with support contained in some cell lift to $(X,d_X)$. Then there is a canonical isomorphism $\check{H}^\bullet(\Omega^\mathcal{P}) \cong H^\bullet_\mathcal{P}(X)$. \end{lemma}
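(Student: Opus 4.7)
The strategy is to reduce the statement to a level-by-level comparison via the continuity of \v{C}ech cohomology on inverse limits. First I would apply the earlier proposition giving $\check{H}^\bullet(\Omega^\mathcal{P}) \cong \varinjlim_i \check{H}^\bullet(K_{R_i})$, and use that each $K_{R_i}$ is a CW-complex to replace \v{C}ech by singular cohomology. Since by definition $H_\mathcal{P}^\bullet(X) = \varinjlim_i H^\bullet(C_{\mathcal{P}; R_i}^\bullet(X))$, it suffices to exhibit, for each $R = R_i$, a natural isomorphism $H^\bullet(K_R) \cong H^\bullet(C_{\mathcal{P}; R}^\bullet(X))$ compatible with the connecting maps induced by $\pi_{R_i, R_{i+1}}$.

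For fixed $R$, I would then study the pullback cochain map $\pi_{R,X}^* \colon C^\bullet(K_R) \twoheadrightarrow C^\bullet_{\mathcal{P}; R}(X)$, which is surjective by the very definition of the PE cochains. The key identification is that $C^\bullet_{\mathcal{P}; R}(X)$ is naturally isomorphic, via $\pi_{R,X}^*$, to the cochain complex $\mathrm{Hom}(\mathrm{im}(\pi_{R,X})_*, \mathbb{Z})$ on the image subcomplex $\mathrm{im}(\pi_{R,X})_* \subseteq S_\bullet(K_R)$, equipped with its inherited singular boundary (the computation $\delta(\pi_{R,X}^* \phi)(\sigma) = \phi(\partial \pi_{R,X,*}\sigma)$ makes this a chain map). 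Hence $\pi_{R,X}^*$ induces an isomorphism on cohomology provided the inclusion $\mathrm{im}(\pi_{R,X})_* \hookrightarrow S_\bullet(K_R)$ is a quasi-isomorphism; since both are free chain complexes bounded below, any such quasi-isomorphism is a chain homotopy equivalence and therefore dualises for arbitrary coefficients.

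To verify this quasi-isomorphism, I would sandwich $\mathrm{im}(\pi_{R,X})_*$ between the subcomplex $S^{\mathrm{cell}}_\bullet(K_R) \subseteq S_\bullet(K_R)$ of chains each of whose simplices has support in a single closed cell, and the full singular complex. The lifting hypothesis of the lemma gives $S^{\mathrm{cell}}_\bullet(K_R) \subseteq \mathrm{im}(\pi_{R,X})_*$ immediately. The substantive step is to show that $S^{\mathrm{cell}}_\bullet(K_R) \hookrightarrow S_\bullet(K_R)$ is a quasi-isomorphism, by a small-simplex argument analogous to Lemma \ref{mv}(3): choose an open cover $\mathcal{U}$ of $K_R$ refining the cell partition (for instance by open stars of cells of the barycentric subdivision), apply iterated barycentric subdivision to obtain a chain homotopy equivalence $S^{\mathcal{U}}_\bullet(K_R) \simeq S_\bullet(K_R)$, and observe that any $\mathcal{U}$-small simplex can be subdivided further into cell-supported pieces. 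The sandwich then forces $\mathrm{im}(\pi_{R,X})_* \hookrightarrow S_\bullet(K_R)$ to be a quasi-isomorphism, giving the desired cohomology isomorphism at each $R$.

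Compatibility with the direct limits is then essentially formal: the factorisation $\pi_{R_i,X} = \pi_{R_i,R_{i+1}} \circ \pi_{R_{i+1},X}$ intertwines, via $\pi^*$, the inclusion $C^\bullet_{\mathcal{P}; R_i}(X) \hookrightarrow C^\bullet_{\mathcal{P}; R_{i+1}}(X)$ with the pullback $\pi_{R_i, R_{i+1}}^*$ that appears in $\varinjlim \check{H}^\bullet(K_{R_i})$. Taking direct limits over $i$ yields $\check{H}^\bullet(\Omega^\mathcal{P}) \cong H^\bullet_\mathcal{P}(X)$. The main obstacle, as indicated, is the small-simplex step: open cells of a CW-complex are generally not open in the space, so some care is needed to interpose an honest open cover between the cell partition of $K_R$ and the classical subdivision machinery before the argument of Lemma \ref{mv}(3) can be applied verbatim.
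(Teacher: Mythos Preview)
Your approach is essentially identical to the paper's: introduce the subcomplex $\overline{C_\bullet(K_R)} = \mathrm{im}(\pi_{R,X})_*$ of liftable chains, identify its dual with $C^\bullet_{\mathcal{P};R}(X)$, show the inclusion into $C_\bullet(K_R)$ is a quasi-isomorphism via the cell-supported small-simplex argument, dualise using the standard fact for free chain complexes (the paper cites \cite{Hat}, Corollary~3.4), and pass to direct limits using naturality under $\pi_{R_i,R_{i+1}}$. One small caution on your sandwich step: having $A \subseteq B \subseteq C$ with $A \hookrightarrow C$ a quasi-isomorphism does not by itself force $B \hookrightarrow C$ to be one (take $A=0$, $C$ acyclic, $B$ not), so rather than relying on the sandwich you should observe that the barycentric subdivision operator and its chain homotopy send liftable simplices to liftable chains, which gives the quasi-isomorphism for $\mathrm{im}(\pi_{R,X})_*$ directly.
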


\begin{proof} Denote by $\overline{C_\bullet(K_R)}$ the chain complex of singular chains on $K_R$ which lift to $X$. Since every singular $k$-chain on $K_{R_i}$ is homologous to a chain with simplexes contained in $k$-cells, we have that the inclusion of free abelian chain complexes $\iota_\bullet \colon \overline{C_\bullet(K_{R_i})} \rightarrow C_\bullet(K_{R_i})$ is a quasi-isomorphism. By a result of homological algebra (see Corollary 3.4 of \cite{Hat}), the dual of this map $\iota^\bullet \colon C^\bullet(K_{R_i}) \rightarrow \overline{C^\bullet(K_{R_i})}$ is also a quasi-isomorphism. Notice that this quasi-isomorphism is natural with respect to the connecting maps, in that $\iota^* \circ \pi_{R_i,R_{i+1}}^* = \pi_{R_i,R_{i+1}}^* \circ \iota^*$.

It is easily inspected that the restriction $\pi_{R_i,X}^\bullet \colon \overline{C^\bullet(K_{R_i})} \rightarrow C_{\mathcal{P};R}^\bullet(X)$ of the usual pullback is a cochain map. It is also a cochain isomorphism: it is surjective (by definition) and is injective since each chain of $\overline{C_\bullet(K_{R_i})}$ lifts. It follows that $\pi_{R_i,X}^\bullet=\pi_{R_i,X}^\bullet \circ \iota^\bullet \colon C^\bullet(K_{R_i}) \rightarrow C^\bullet_{\mathcal{P};R}(X)$ is a quasi-isomorphism.

Since each $K_{R_i}$ is a CW-complex, we have that $\check{H}^\bullet(K_{R_i})$ is naturally isomorphic to $H^\bullet(K_{R_i})$. Since \v{C}ech cohomology is a continuous contravariant functor, we have that $\check{H}^\bullet(\varprojlim K_R) \cong \varinjlim \check{H}^\bullet(K_R)$. It follows that \[\check{H}^\bullet(\Omega^\mathcal{P}) \cong \varinjlim_i H^\bullet(K_{R_i}) \cong \varinjlim_i H^\bullet_{\mathcal{P};R_i}(X) \cong H^\bullet_{\mathcal{P}}(X).\]\end{proof}

Notice that the above isomorphism will extend to the ring structures on $\check{H}^\bullet (\Omega^\mathcal{P})$ and $H^\bullet_\mathcal{P}(X)$ since the cup product commutes with the pullback maps.

\begin{corollary} Let $T$ be a tiling of $\mathbb{R}^d$ with FLC with respect to rigid motions. We have that $\check{H}^\bullet(\Omega^{\mathcal{T}_i}) \cong H^\bullet_{\mathcal{T}_i}(X)$ for $i=0,rot$ (with $X= \mathbb{R}^d,E^+(d)$, respectively). If additionally $T$ has FLC with respect to translations then $\check{H}^\bullet(\Omega^{\mathcal{T}_1}) \cong H^\bullet_{\mathcal{T}_1}(\mathbb{R}^d)$. \end{corollary}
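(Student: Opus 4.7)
The plan is to deduce all three isomorphisms directly from Lemma \ref{lem: Singular PE Cohomology}, so the task reduces to exhibiting, for each pattern $\mathcal{T}_i$ in question, an unbounded sequence of radii $R_1 < R_2 < \cdots$ such that the approximants $K_{R_n}$ are CW-complexes with the property that any singular simplex whose support lies inside a single (closed) cell of $K_{R_n}$ lifts through $\pi_{R_n, X}$ to a singular simplex on $X$.

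First I would treat the translational case $\mathcal{T}_1$. Under the assumption of FLC with respect to translations the approximants $K_R$ of $\mathbb{R}^d / R_{\mathcal{T}_1}$ are precisely the BDHS branched $d$-manifolds described in Subsection~\ref{subsect: Examples of Tilings and Delone Sets} and recalled in the preamble to Section~\ref{sect: Collages}. For a cofinal sequence of patch radii $R_n \to \infty$ (chosen so that each $R_n$ strictly exceeds the diameters of the prototile patches of the previous stage), $K_{R_n}$ is assembled from finitely many compact pieces obtained by quotienting $R_n$-neighbourhoods of representative points in $\mathbb{R}^d$ by the equivalence of patch-equality, glued along faces. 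This yields an explicit finite CW-decomposition of $K_{R_n}$ in which the interior of each open cell is canonically the image, under $\pi_{R_n, X}$, of an open subset of $\mathbb{R}^d$ on which $\pi_{R_n, X}$ restricts to a homeomorphism. The lifting property is then immediate: a singular simplex whose support is contained in one closed cell lifts by choosing any such local section on the interior and extending continuously to the boundary, possible because closed cells themselves lift by construction (they arise as quotients of closed subsets of $\mathbb{R}^d$ by identifications that glue along boundary faces). So Lemma \ref{lem: Singular PE Cohomology} applies and gives $\check{H}^\bullet(\Omega^{\mathcal{T}_1}) \cong H^\bullet_{\mathcal{T}_1}(\mathbb{R}^d)$.

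The case $\mathcal{T}_{\text{rot}}$ is structurally identical, only carried out on the rigid hull: the ambient space is $E^+(d)$ with its left-invariant metric, and, given FLC with respect to rigid motions, the approximants of $E^+(d)/R_{\mathcal{T}_{\text{rot}}}$ are again the BDHS-type branched $(d(d+1)/2)$-manifolds mentioned earlier. The same local-section argument produces the CW-structure and the lifting property, and Lemma \ref{lem: Singular PE Cohomology} concludes. The case $\mathcal{T}_0$ requires a little more care, because two patches can be related by a nontrivial orientation-preserving isometry fixing the basepoint, and so the equivalence class of a point with local rotational symmetry has isotropy and the approximant $K_R$ is a quotient of the $\mathcal{T}_{\text{rot}}$-approximant by the residual $SO(d)$-action. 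Provided one first refines the tiling so that cells of rotational symmetry are $0$-cells (invoking the MLD-invariance of PE cohomology, cf.\ the dual-cell remark in the previous section) the resulting CW-structure on $K_R$ still has the property that each open cell is the homeomorphic image of an open set in $\mathbb{R}^d$, restoring the lifting condition.

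The routine ingredients in each case are standard. The step I expect to be delicate, and therefore the one I would spell out most carefully, is the lifting property at branch points of the approximants: although a cell of $K_R$ may sit inside a locus where $\pi_{R, X}$ fails to be injective in any open neighbourhood, what the hypothesis of Lemma \ref{lem: Singular PE Cohomology} actually requires is only that simplexes supported \emph{inside one closed cell} lift, and this uses only the local section provided by the construction of each cell as the image of a patch in $X$. For $\mathcal{T}_0$ the possible presence of isotropy at rotationally symmetric cells is the one place where one cannot simply copy the $\mathcal{T}_1$ argument, and the cleanest resolution is to pass to a sufficiently refined CW-structure on the tiling as above; once this is done, all three isomorphisms follow uniformly from Lemma \ref{lem: Singular PE Cohomology}.
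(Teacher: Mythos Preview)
Your proposal is correct and follows essentially the same route as the paper: reduce via MLD equivalence to a cellular tiling whose cells have no nontrivial rotational symmetry, and then verify the hypotheses of Lemma~\ref{lem: Singular PE Cohomology} for a cofinal family of approximants. The one difference worth noting is that the paper, after mentioning the BDHS approximants, points to the G\"{a}hler approximants (Example~\ref{ex: GC}) as an alternative for which the cell-wise lifting property is \emph{trivially} satisfied, since each closed cell of a G\"{a}hler complex is by construction the homeomorphic image of a collared tile in $X$; this sidesteps the slightly delicate discussion you give of lifting simplexes near branch loci of the BDHS complexes.
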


\begin{proof} We may assume that our tiling is cellular and is such at each cell has trivial rotational symmetry in the tiling (since an FLC tiling of $\mathbb{R}^d$ is always MLD to such a tiling). Each $K_R$ is a BDHS complex \cite{BDHS} which satisfies the above. Alternatively, one may use the G\"{a}hler approximants to define the inverse limit space $\Omega^\mathcal{P}$ (see \cite{Sadun2} and Example \ref{ex: GC}). Since the collage of equivalence relations defining the G\"{a}hler complexes is equivalent to the original induced collage of $\mathcal{T}_i$, pullback cochains are unchanged by using these approximants. It is trivial to see that these approximants satisfy the required properties of the $K_R$ of the above lemma. \end{proof}

\begin{lemma} The cap product of a $\mathcal{P}$E chain (with singular simplexes of support bounded by $\epsilon/2$) and cochain with $\mathcal{P}$E radius $R$ is a $\mathcal{P}E$ chain (with $\mathcal{P}$E radius $R+\epsilon$, for any $\epsilon>0$). \end{lemma}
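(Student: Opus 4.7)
The plan is to verify, for each partial isometry $\Phi \in \mathcal{P}_{x,y}^{R+\epsilon}$, the identity $\Phi_*((\sigma \frown \psi)^x) = (\sigma \frown \psi)^y$; we may assume $\sigma$ itself is PE of radius $R$ by Lemma \ref{boundary}. Writing $\sigma = \sum c_i \sigma_i$ with each $\sigma_i$ having support of radius at most $\epsilon/2$, the cap product takes the form
\[ \sigma \frown \psi = \sum_i c_i\,\psi\bigl(\sigma_i|_{[e_0,\ldots,e_q]}\bigr)\,\sigma_i|_{[e_q,\ldots,e_p]}, \]
which is a Borel-Moore chain inheriting local finiteness from $\sigma$. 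The key geometric observation is that if $x$ lies in the image of a back face $\sigma_i|_{[e_q,\ldots,e_p]}$ then the entire image of $\sigma_i$ lies in $B_{d_X}(x, \epsilon)$, and in particular inside $\mathrm{dom}(\Phi) = B_{d_X}(x, R+\epsilon)$; this is exactly what the $\epsilon/2$ support bound buys us and is what accounts for the $\epsilon$-inflation of the PE radius.

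With that localisation in hand, I would invoke two separate equivariances. First, for any point $z$ in the image of such a $\sigma_i$, the inclusion $B_{d_X}(z, R) \subset B_{d_X}(x, R+\epsilon)$ forces $\Phi|_{B_{d_X}(z, R)} \in \mathcal{P}_{z, \Phi(z)}^R$, so $\pi_{R,X}(z) = \pi_{R,X}(\Phi(z))$. Since $\psi$ is the pullback along $\pi_{R,X}$ of a cochain on $K_R$, this yields $\psi(\sigma_i|_{[e_0,\ldots,e_q]}) = \psi((\Phi_*\sigma_i)|_{[e_0,\ldots,e_q]})$. Second, the pattern equivariance of $\sigma$ combined with the same restriction trick applied at vertices $x' \in B_{d_X}(x,\epsilon)$ (mirroring the argument in the proof of Lemma \ref{def}) shows that $\Phi_*$ induces a coefficient-preserving bijection between those simplexes of $\sigma$ supported in $B_{d_X}(x, \epsilon)$ and those supported in $B_{d_X}(y, \epsilon)$.

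Assembling these, I would compute $\Phi_*((\sigma \frown \psi)^x)$ term by term: each summand $c_i\psi(\sigma_i|_{[e_0,\ldots,e_q]})\sigma_i|_{[e_q,\ldots,e_p]}$ is sent to $c_i\psi((\Phi_*\sigma_i)|_{[e_0,\ldots,e_q]})(\Phi_*\sigma_i)|_{[e_q,\ldots,e_p]}$ by the first equivariance, and since $x$ lies in the image of $\sigma_i|_{[e_q,\ldots,e_p]}$ iff $y$ lies in the image of $(\Phi_*\sigma_i)|_{[e_q,\ldots,e_p]}$, reindexing via the bijection of the second equivariance produces exactly the expression for $(\sigma \frown \psi)^y$. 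The main delicacy is the bookkeeping of restrictions: the cap product splits a single simplex into front and back faces, so one must ensure that the \emph{full} $\sigma_i$, not merely its back face, lies in $\mathrm{dom}(\Phi)$ in order to transport both the $\psi$-values and the remaining back face coherently under $\Phi$; this is precisely where the $\epsilon/2$ support bound is used.
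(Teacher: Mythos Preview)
Your argument is correct and follows essentially the same route as the paper's own proof: both hinge on the observation that for $\Phi \in \mathcal{P}_{x,y}^{R+\epsilon}$ one has $\pi_{R,X} = \pi_{R,X} \circ \Phi$ on $B_{d_X}(x,\epsilon)$, so the pullback cochain $\psi$ evaluates identically on a simplex and its $\Phi$-image, while the $\epsilon/2$ support bound ensures that every simplex contributing to $(\sigma \frown \psi)^x$ lies entirely in that ball. Your write-up is simply more explicit about the bookkeeping (the restriction trick $\Phi|_{B_{d_X}(z,R)} \in \mathcal{P}_{z,\Phi(z)}^R$ and the coefficient-preserving bijection on simplexes supported in the $\epsilon$-ball), which the paper compresses into a single sentence.
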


\begin{proof} Since $\mathcal{P}$ is a pattern we have that $\pi_{R,X} = \pi_{R,X} \circ \Phi$ on the open ball of radius $\epsilon$ at $x$ for all $\Phi \in \mathcal{P}_{x,y}^{R+\epsilon}$. Hence, if $x \sim_{R+\epsilon} y$ then $\Phi$ sends the front and back faces of $\sigma$ at $x$ to the front and back faces of $\sigma$ at $y$, resp., and since these chains have support bounded by $\epsilon/2$ the cochain will evaluate to the same value on each. Hence $\Phi$ sends the cap product at $x$ to the cap product at $y$. \end{proof}

The above shows that the cap product defined at the level of chains and cochains $\frown \colon C_p^{\text{BM}}(X) \times C^q(X) \rightarrow C_{p-q}^{\text{BM}}(X)$ restricts to a cap product on the pattern-equivariant complexes $\frown \colon C_p^\mathcal{P}(X) \times C^q_\mathcal{P}(X) \rightarrow C_{p-q}^\mathcal{P}(X)$. Of course, the usual formula $\partial(\sigma \frown \psi) = (-1)^q(\partial \sigma \frown \psi - \phi \frown \delta \psi)$ holds, so the cap product defines a homomorphism $\frown \colon H_p^\mathcal{P}(X) \times H^q_\mathcal{P}(X) \rightarrow H_{p-q}^\mathcal{P}(X)$ (and in fact makes $H_\bullet^\mathcal{P}(X)$ a right $H^\bullet_\mathcal{P}(X)$-module).

\section{Pattern-Equivariant Poincar\'{e} Duality} For $D\subset X$, let $H^\bullet(X | D)=H^\bullet(X,X-D)$. The compactly supported cohomology of a space is defined by the cochain complex with cochain groups $C_c^\bullet(X)=\varinjlim C^\bullet(X|K)$ over all compact $K \subset X$, with inclusions as connecting maps. In other words, one restricts to those cochains which vanish outside of some compact set.

The classical proof of Poincar\'{e} duality (see, for example, \cite{Hat}) is to show that capping with a fundamental class induces an isomorphism between compactly supported cohomology and homology, firstly for convex sets of $\mathbb{R}^d$. One then uses commutative diagrams of Mayer-Vietoris sequences to ``stitch together'' these isomorphisms to deduce duality for general manifolds. Analogous arguments will often follow in the pattern-equivariant setting.

Let $M$ be an oriented manifold. For $K \subset L \subset M$ we have the following diagram:

\vspace{5mm}
\centerline{
\xymatrix@C=0.1em@R=0.3em{H_d(M|L) \ar[dd]_{i_*} & \times & H^k(M|L) \ar[drr]_\frown & & & & \\
& & & & H_{d-k}(M) \\ H_d(M|K) & \times & H^k(M|K) \ar[uu]_{i^*} \ar[urr]_\frown & & & & }
}
\vspace{5mm}

\noindent
For each compact $L\subset M$, one can show that there is a unique element $\mu_{L}\in H_{d}(M|L)$ which restricts to some given orientation of $M$ so that $i_{*}(\mu_{L})=\mu_{K}$. One has, by naturality of the cap product, that $\mu_{K}\frown x=\mu_{L}\frown i^{*}(x)$. By varying $K$ over the compact sets of $M$, one has that the maps $-\frown\mu_{K}$ induce in the limit the duality homomorphism $D_M \colon H_c^k(M)\rightarrow H_{n-k}(M)$.

The main technicality of the proof of classical Poincar\'{e} duality is to show that there exists a commutative diagram of the form (see \cite{Hat}, Lemma 3.36):

\vspace{0.5mm}
\footnotesize
\xymatrix{\cdots \ar[r] & H_c^k(U \cap V) \ar[r] \ar[d]^{D_{U \cap V}} & H_c^k(U) \oplus H_c^k(V) \ar[r] \ar[d]^{D_U \oplus -D_V} &H_c^k(M) \ar[r] \ar[d]^{D_M} & H_c^{k+1}(U \cap V) \ar[r] \ar[d]^{D_{U \cap V}} & \cdots\\
\cdots \ar[r] & H_{d-k}(U \cap V) \ar[r] & H_{d-k}(U) \oplus H_{d-k}(V) \ar[r] &H_{d-k}(M) \ar[r] & H_{d-k-1}(U \cap V) \ar[r] &\cdots}
\normalsize
\vspace{5mm}

\noindent
where $M=U\cup V$ are manifolds. In more detail, one in fact has a diagram:

\vspace{5mm}

\xymatrix{\cdots \ar[r] & H^k(M | K \cap L) \ar[r] \ar[d] & H^k(M | K) \oplus H^k(M | L) \ar[r] \ar[d] &H^k(M | K \cup L) \ar[r] \ar[d] & \cdots\\
\cdots \ar[r] & H_{d-k}(U \cap V) \ar[r] & H_{d-k}(U) \oplus H_{d-k}(V) \ar[r] &H_{d-k}(M) \ar[r] & \cdots}

\vspace{5mm}

\noindent
for compact sets $K\subset U$ and $L\subset V$ which in the limit produces the required diagram. 

In cases where the maps to the approximants are well behaved we will have a similar diagram for the $\mathcal{P}$E homology and cohomology groups. In fact, to check the commutativity of the diagram it is easier to take cochains which only evaluate on chains with support bounded by the Lebesgue number of various covers, in which case the commutativity can be easily checked at the level of chains and cochains. As long as the pullbacks of cochains are well-behaved, we may restrict attention to such cochains without changing the $\mathcal{P}$E cohomology:

\begin{lemma} \label{lem: bounded cochains} Let $\epsilon>0$ and $\mathcal{P}$ be a pattern. Suppose that there exists some unbounded sequence $R_1 < R_2 < \ldots$ for which every cycle of each approximant $K_{R_i}$ has a barycentric subdivision for which the pullback chains have radius of support bounded by $\epsilon$. Suppose further that any chain of any $K_{R_i}$ which lifts to a chain of support bounded by $\epsilon$ does not also lift to a chain of support which is not bounded by $\epsilon$.

Then define $C \epsilon^\bullet_\mathcal{P}(X)$ to be the cochain complex of $\mathcal{P}$E cochains which evaluate to zero on any singular chain with radius of support not bounded by $\epsilon$, with coboundary map defined by the usual coboundary map followed by evaluating to zero on all chains with radius of support not bounded by $\epsilon$. Then this is a well-defined cochain complex and we have a natural cochain map $\iota \colon C_\mathcal{P}^\bullet \rightarrow C \epsilon_\mathcal{P}^\bullet(X)$ which is a quasi-isomorphism. \end{lemma}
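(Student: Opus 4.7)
The plan is to first check that $C\epsilon^\bullet_\mathcal{P}(X)$ is a well-defined cochain complex and that $\iota$ is a cochain map, and then to prove that $\iota$ is a quasi-isomorphism by reducing to a statement at each approximant $K_{R_i}$ and taking a direct limit, in the spirit of the proof of Lemma \ref{lem: Singular PE Cohomology}.

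For well-definedness, the key point is that the two hypotheses together guarantee that ``has radius of support bounded by $\epsilon$'' is a $\mathcal{P}$-invariant property of a singular simplex which, moreover, is detected after projection to $K_{R_i}$ for $i$ sufficiently large: the first hypothesis supplies enough small simplices via subdivision, while the second ensures the condition is intrinsic to the projected simplex and not merely an artefact of a choice of lift. From this it follows that $\delta_\epsilon(\psi)$ is again a PE cochain whenever $\psi$ is, and $\delta_\epsilon^2 = 0$ because faces of a small simplex are small: on a small $\sigma$, $\delta_\epsilon^2 \psi(\sigma) = \delta_\epsilon \psi(\partial \sigma) = \psi(\partial^2 \sigma) = 0$, and on a large $\sigma$ the value vanishes by definition. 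That $\iota$ is a cochain map is a direct verification: on small $\sigma$ all faces are small, so $\iota(\delta \psi)(\sigma) = \psi(\partial \sigma) = \delta_\epsilon(\iota \psi)(\sigma)$, while on large $\sigma$ both sides are zero.

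To prove the quasi-isomorphism I would imitate the strategy used for Lemma \ref{lem: Singular PE Cohomology}. Let $\overline{C_\bullet(K_{R_i})}$ denote the chains of $K_{R_i}$ that lift to $X$, and let $\overline{C_\bullet^{<\epsilon}(K_{R_i})}$ be the subcomplex of those chains whose lift has simplices of support bounded by $\epsilon$ (a subcomplex because faces of small simplices are small). The first hypothesis, together with the standard barycentric-subdivision chain-homotopy argument already used in the proof of Lemma \ref{mv}(3), shows that the inclusion $\overline{C_\bullet^{<\epsilon}(K_{R_i})} \hookrightarrow \overline{C_\bullet(K_{R_i})}$ is a quasi-isomorphism of free abelian chain complexes. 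Dualising via Corollary 3.4 of \cite{Hat}, the induced restriction map of dual cochain complexes is itself a quasi-isomorphism. Pullback identifications $(\overline{C_\bullet(K_{R_i})})^{\bullet} \cong C^\bullet_{\mathcal{P};R_i}(X)$ (as in the proof of Lemma \ref{lem: Singular PE Cohomology}) and $(\overline{C_\bullet^{<\epsilon}(K_{R_i})})^{\bullet} \cong C\epsilon^\bullet_{\mathcal{P};R_i}(X)$ (a cochain on small-lift chains pulls back to a PE cochain and is extended by zero on large chains, using hypothesis 2 for well-definedness) intertwine the dual restriction map with $\iota$ at level $R_i$. Taking the direct limit over $i$ and using that filtered colimits commute with cohomology yields the required isomorphism $H^\bullet_\mathcal{P}(X) \cong H^\bullet(C\epsilon^\bullet_\mathcal{P}(X))$.

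The main obstacle is in the quasi-isomorphism step at the approximants: verifying that the standard barycentric subdivision operator and its chain homotopy respect the subcomplex $\overline{C_\bullet^{<\epsilon}(K_{R_i})}$, i.e.\ that the process of subdividing a liftable chain until it is small, together with the intervening chain homotopy, only ever involves liftable chains whose lifts remain controlled in support. This hinges on the first hypothesis (providing uniformly the required subdivisions) and on the second hypothesis (making ``has small lift'' a property of chains of $K_{R_i}$ rather than of their particular lifts). Once this is in hand, the identifications and direct-limit argument are routine.
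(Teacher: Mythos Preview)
Your proposal is correct and follows essentially the same route as the paper: verify well-definedness of $C\epsilon^\bullet_\mathcal{P}(X)$ and of $\iota$ using the second hypothesis, then at each approximant show that the inclusion of liftable chains with $\epsilon$-small lifts into all liftable chains is a quasi-isomorphism via barycentric subdivision, dualise (Hatcher, Cor.~3.4), identify with the PE cochain complexes, and pass to the direct limit. Your presentation is in fact more explicit than the paper's, which compresses steps 3--7 into a single sentence invoking the proof of Lemma~\ref{lem: Singular PE Cohomology}; the ``main obstacle'' you flag is real but is handled exactly as in the proof of Lemma~\ref{mv}(3), since both the subdivision operator and its chain homotopy are defined simplex-by-simplex and hence preserve liftability.
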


\begin{proof} We must firstly check that the coboundary map of $C \epsilon^\bullet_\mathcal{P}(X)$ is well-defined. By definition the coboundary of a cochain is non-trivial only on chains bounded by $\epsilon$. It is also $\mathcal{P}$E. Indeed, before restricting to $\epsilon$-bounded chains the cochain was a pullback cochain $\psi = \pi_{R_i,X}^*(\psi')$. The required cochain is given by pulling back a modification of $\psi'$ which ignores those chains which pull back to chains with radius of support not bounded by $\epsilon$. By assumption, this does not alter the coefficients assigned to chains with support bounded by $\epsilon$. Hence, $C \epsilon^\bullet_\mathcal{P}(X)$ is a subcomplex of the dualisation of the singular complex of chains bounded by $\epsilon$.

We have a canonical map $\iota \colon C^\bullet_\mathcal{P}(X) \rightarrow C \epsilon^\bullet_\mathcal{P}(X)$ given by restricting cochains to singular simplexes bounded by $\epsilon$. By the above, this is a well-defined cochain map since it is simply a restriction of the dualisation of the map between singular chain complexes. We wish to show that it is a quasi-isomorphism. Similarly to in the proof of Lemma \ref{lem: Singular PE Cohomology}, we have that $C^\bullet_\mathcal{P}(X)$ is canonically isomorphic to the direct limit of singular cochain complexes of cochains of $K_R$ which lift. But this complex, in a similar manner, is quasi-isomorphic to the singular complex of chains which lift to singular chains bounded by $\epsilon$, since we may use barycentric subdivision and dualise. This cochain group is canonically isomorphic to $C \epsilon_\mathcal{P}^\bullet(X)$. The composition of these quasi-isomorphisms is $\iota$, and so the result follows. \end{proof}

So, in the cases where the approximants are well-behaved, we may replace the singular $\mathcal{P}$E cohomology groups with those based on chains with support bounded by some $\epsilon$. We may similarly define the complexes restricted to subsets and relative complexes. Using bounded cochains makes it simpler to prove the following:

\begin{lemma} Let $M$ be an orientable manifold and $\mathcal{P}$ be a pattern on it. Let $U$, $V$ be open $\mathcal{P}$-sets of $M$ and $K \subset U$, $L \subset V$ be closed $\mathcal{P}$-sets of $M$. Suppose that there exists some $\epsilon > 0$ such that every $\epsilon$-ball of $M$ is either contained in $U \cap V$ or doesn't intersect $K \cap L$, and similarly for the pairs $(U,K)$ and $(V,L)$ and $(U \cup V, K \cup L)$.

Suppose that $\epsilon$ and $\mathcal{P}$ satisfy the above lemma. Then the cap products with a fundamental class, of singular simplexes bounded by $\epsilon$, induces a diagram of Mayer-Vietoris sequences, commutative up to sign:

\vspace{5mm}

\xymatrix{\cdots \ar[r] & H_\mathcal{P}^k(M | K \cap L) \ar[r] \ar[d] & H_\mathcal{P}^k(M | K) \oplus H_\mathcal{P}^k(M | L) \ar[r] \ar[d] &H_\mathcal{P}^k(M | K \cup L) \ar[r] \ar[d] & \cdots\\
\cdots \ar[r] & H_{d-k}^\mathcal{P}(U \cap V) \ar[r] & H_{d-k}^\mathcal{P}(U) \oplus H_{d-k}^\mathcal{P}(V) \ar[r] &H_{d-k}^\mathcal{P}(M) \ar[r] & \cdots} 

\vspace{5mm}

\noindent
where the singular $\mathcal{P}$E homology and cohomology groups are based upon singular chains with radius of support bounded by $\epsilon$.
\end{lemma}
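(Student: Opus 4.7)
The plan is to mirror Hatcher's classical proof of Poincar\'e duality at the level of PE (co)chain complexes. The bottom row of the diagram already exists as the PE homology Mayer--Vietoris sequence of item 6 of Lemma \ref{mv}, applied to the open cover $\{U,V\}$ of $U \cup V$. For the top row, I would set up the short exact sequence of $\epsilon$-bounded relative PE cochain complexes
\[0 \to C\epsilon^\bullet_\mathcal{P}(M \mid K \cap L) \to C\epsilon^\bullet_\mathcal{P}(M \mid K) \oplus C\epsilon^\bullet_\mathcal{P}(M \mid L) \to C\epsilon^\bullet_\mathcal{P}(M \mid K \cup L) \to 0,\]
with the first map the diagonal inclusion and the second a difference of inclusions; working throughout with the bounded complex of Lemma \ref{lem: bounded cochains} costs nothing in (co)homology but lets one verify everything on the nose. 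Exactness at the right uses the hypothesis on $\epsilon$-balls: an $\epsilon$-bounded cochain vanishing outside $K \cup L$ can be split PE-equivariantly into one vanishing outside $K$ and one vanishing outside $L$, because each $\epsilon$-simplex meeting $K$ lies inside $U$ and similarly for $L$ and $V$, and the splitting can be imposed by pulling back a partition from a sufficiently fine approximant $K_R$.

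For the vertical maps, I would construct a PE Borel--Moore fundamental class $\mu \in C_d^\mathcal{P}(M)$, represented by a locally finite PE sum of small $d$-simplexes consistent with the orientation of $M$; this exists under the implicit assumption that the pattern preserves the chosen orientation, and by item 3 of Lemma \ref{mv} it may be chosen with simplexes of radius of support as small as we like, in particular less than $\epsilon/2$. For a closed PE-set $A$ containing the PE-set $K$, the cap product $\mu \frown -$ sends $C\epsilon^q_\mathcal{P}(M \mid K)$ into $C^\mathcal{P}_{d-q}(A)$: a simplex $\sigma$ of $\mu$ contributes to $\sigma \frown \psi$ only if its front face is paired non-trivially by $\psi$, i.e.\ touches $K$, and the hypothesis that every $\epsilon$-ball meeting $K$ lies in $U$ then forces the back face, and hence $\sigma \frown \psi$ itself, to lie in $A = U$. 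Applying this to $A = U, V, M, U \cap V$ with $K$ replaced by $K, L, K \cup L, K \cap L$ respectively defines the four vertical cap-product maps, and the three squares not involving connecting homomorphisms commute on the nose from the naturality of $\frown$ with respect to the relevant inclusions.

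The main obstacle, as always, is the square involving the connecting homomorphisms. Given $[\psi] \in H_\mathcal{P}^k(M \mid K \cup L)$, choose a PE-equivariant decomposition $\psi = \psi_K - \psi_L$ arising from the surjection in the top short exact sequence, and decompose $\mu = \mu_U + \mu_V$ by further $\epsilon$-subdivision, so that $\mu_U$ has PE support in $U$ and $\mu_V$ in $V$. The cohomology connecting map sends $[\psi]$ to $[\delta\psi_K] = [-\delta\psi_L] \in H_\mathcal{P}^{k+1}(M \mid K \cap L)$, while the homology connecting map sends $[\mu \frown \psi]$ to the class of $\partial(\mu_U \frown \psi)$. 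The Leibniz formula for the cap product gives $\partial(\mu_U \frown \psi) = (-1)^k(\partial\mu_U \frown \psi - \mu_U \frown \delta\psi)$, and since $\partial \mu = 0$ the chain $\partial\mu_U = -\partial\mu_V$ is PE-supported in $U \cap V$; a direct chain-level manipulation, using the hypothesis on $\epsilon$-balls for the pairs $(U \cap V, K \cap L)$ and $(U \cup V, K \cup L)$ to keep intermediate chains in the correct PE-sets, then identifies the two images up to the expected sign. Passing to the direct limit over larger PE radii $R$ yields the diagram commutative up to sign as stated.
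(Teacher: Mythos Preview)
Your proposal is correct and follows essentially the same approach as the paper: both reduce to Hatcher's Lemma 3.36 by working with $\epsilon$-bounded chains (via Lemma \ref{mv}) and $\epsilon$-bounded cochains (via Lemma \ref{lem: bounded cochains}), so that the commutativity of each square, including the one with the connecting homomorphisms, can be verified at the chain level. The paper's own argument is much terser, simply invoking Hatcher, whereas you have spelled out the construction of the vertical maps and the diagram chase for the boundary square; but the underlying strategy is the same.
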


\begin{proof} By Lemma \ref{mv}, by barycentric subdivision, we may restrict attention to singular chains in the $\mathcal{P}$E homology whose chains have radius of support bounded by $\epsilon$. 

The usual proof of the commutativity of the above diagram relies on representing cochains as cochains which evaluate to zero on appropriate singular chains based on the various open covers. By restricting to $\mathcal{P}$E cochains based upon singular chains with radius of support smaller than the required Lebesgue number $\epsilon$, the commutativity of the diagram may be checked on the nose, analogously to in the proof of Lemma 3.36 of \cite{Hat}. \end{proof}

Since the duality here is induced by the cap product, and is therefore defined by a restriction of the classical Poincar\'{e} duality isomorphism to certain subcomplexes, it should be possible to extend the duality to ring structures on these invariants. That is, one should have that the cup product on $\check{H}^\bullet(\Omega_{\mathcal{T}_1})$ corresponds via Poincar\'{e} duality to an intersection product on $H_{d-\bullet}^{\mathcal{T}_1}(\mathbb{R}^d)$.

\subsection{Poincar\'{e} Duality for Translational Tiling Spaces} As discussed in the introduction to this chapter, there is a rather direct argument for Poincar\'{e} duality between $\mathcal{T}_1$E homology and cohomology, by considering dual tilings. We present here an alternative argument, by inducing the isomorphism by capping with a $\mathcal{T}_1$E fundamental class, which is analogous to the usual general proof of Poincar\'{e} duality. This approach makes clear the reason for Poincar\'{e} duality failing for the pattern $\mathcal{T}_0$, since it fails locally at points of rotational symmetry. This suggests an alternative complex, for which we do obtain duality.

\begin{theorem} Let $T$ be an FLC tiling of $(\mathbb{R}^d,d_{euc})$ with respect to translations. Then we have Poincar\'{e} duality $$H_{\mathcal{T}_1}^{d-\bullet}(\mathbb{R}^d) \cong H_\bullet^{\mathcal{T}_1}(\mathbb{R}^d)$$ induced by taking the cap product with a $\mathcal{T}_1$E fundamental class for $\mathbb{R}^d$. \end{theorem}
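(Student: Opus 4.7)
The plan is to mirror the classical proof of Poincar\'{e} duality via cap products, with the Mayer-Vietoris diagram of the preceding lemma playing the central role. First, I would construct a $\mathcal{T}_1$-equivariant Borel-Moore fundamental class $[\mathbb{R}^d] \in H_d^{\mathcal{T}_1}(\mathbb{R}^d)$. Equip $\mathbb{R}^d$ with the standard unit-cube CW decomposition (with vertices on $\mathbb{Z}^d$); this is independent of the tiling $T$ and invariant under all translations. Assigning coefficient $+1$ to each consistently oriented $d$-cube yields a cellular Borel-Moore chain which is visibly translation-invariant, hence $\mathcal{T}_1$E to every radius, and which represents the usual fundamental class of the oriented manifold $\mathbb{R}^d$. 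By the lemma showing that the cap product of a $\mathcal{T}_1$E chain with a $\mathcal{T}_1$E cochain is $\mathcal{T}_1$E, the assignment $[\psi] \mapsto [\mathbb{R}^d] \frown [\psi]$ defines a map $D \colon H^k_{\mathcal{T}_1}(\mathbb{R}^d) \to H_{d-k}^{\mathcal{T}_1}(\mathbb{R}^d)$. More generally, for an open $\mathcal{T}_1$-set $U$ and a closed $\mathcal{T}_1$-subset $K \subset U$, restriction of $[\mathbb{R}^d]$ yields a relative class $\mu_K \in H_d^{\mathcal{T}_1}(U, U \setminus K)$ inducing a duality map $D_{U,K} \colon H^k_{\mathcal{T}_1}(U \mid K) \to H_{d-k}^{\mathcal{T}_1}(U)$; these assemble, upon passing to the limit over $K$ in a suitable exhausting family, into a duality map for $U$ itself.

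With the duality maps in hand, the bulk of the proof is an induction over a cover of $\mathbb{R}^d$ by $\mathcal{T}_1$-sets. The base case concerns ``convex'' $\mathcal{T}_1$-sets, i.e.\ open $\mathcal{T}_1$-sets of the form $U = \pi_{R,\mathbb{R}^d}^{-1}(S)$ where $S$ is a small convex neighbourhood in the approximant $K_R$; each such $U$ is a disjoint union of translation-equivalent convex regions of $\mathbb{R}^d$, on each of which classical Poincar\'{e} duality (induced by the same fundamental class) holds, and the translation-equivariance across components gives the duality at the PE level. Combining bases, I would use the commutative Mayer-Vietoris diagram of the preceding lemma (available after passing to the bounded-cochain subcomplex $C \epsilon^\bullet_\mathcal{P}$ of Lemma \ref{lem: bounded cochains}) and the five-lemma to propagate duality from $U$, $V$ and $U \cap V$ to $U \cup V$. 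An exhaustion of $\mathbb{R}^d$ by nested finite unions of convex $\mathcal{T}_1$-sets, combined with the fact that both $H^\bullet_{\mathcal{T}_1}$ and $H^{\mathcal{T}_1}_\bullet$ commute with the appropriate direct limits (the cohomology by construction as $\varinjlim_R H^\bullet(K_R)$, the Borel-Moore homology because each PE chain has finite support modulo the equivalence $\sim_R^1$), would then yield the global duality on $\mathbb{R}^d$.

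The main obstacle I expect is the compatibility between bounded cochains and the Mayer-Vietoris setup needed to invoke the diagram of the preceding lemma. Since $\mathcal{T}_1$-sets are necessarily infinite and $\mathbb{R}^d$ admits no global Lebesgue number for a cover of such sets, I would work, for each fixed $\mathcal{T}_1$E radius $R$, with a Lebesgue scale $\epsilon$ determined by a finite cover at the level of the approximant $K_R$, pull it back to $\mathbb{R}^d$, and use that PE cochains of radius $R$ vanishing on simplexes exceeding this scale form a quasi-isomorphic subcomplex. Ensuring that this bounding can be taken uniform in $R$ -- so that the stitching argument, the inductive step, and the direct limit all commute -- is the technical heart of the argument. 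Once this is in place, naturality of the cap product and uniqueness of the relative fundamental class $\mu_K$ (which, as the restriction of the translation-invariant $[\mathbb{R}^d]$, is automatic) are routine.
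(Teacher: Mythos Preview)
Your fundamental class construction is flawed. The unit-cube chain is invariant only under translations in $\mathbb{Z}^d$, whereas $\mathcal{T}_1$-equivariance requires $\Phi_*(\sigma^x)=\sigma^y$ for \emph{every} translation $\Phi$ by $y-x$ that preserves an $R$-patch of $T$. For an aperiodic FLC tiling these vectors $y-x$ are almost never in $\mathbb{Z}^d$ (think of the return vectors in a Penrose tiling, which involve the golden ratio), so translating the cubes at $x$ by $y-x$ does not land on the cubes at $y$. The correct fix, which the paper uses, is to take the CW-decomposition given by the tiling $T$ itself (after reducing to polygonal tiles): the chain assigning $+1$ to each tile is $\mathcal{T}_1$E by construction, since the morphisms of $\mathcal{T}_1$ are precisely those partial translations that carry tiles to tiles.

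Your exhaustion and direct-limit argument is unnecessary and obscures the point. Because $T$ is FLC, the G\"ahler approximants are \emph{compact} CW-complexes, so they admit a \emph{finite} cover by contractible open sets $U_{[c]}$ indexed by cell-types, with contractible intersections. Pulling this finite cover back to $\mathbb{R}^d$ already covers all of $\mathbb{R}^d$, and the Mayer--Vietoris/five-lemma step need only be applied finitely many times; no limit is required. Working on a single G\"ahler approximant also lets you fix $\epsilon$ once as a Lebesgue number for that finite cover, so the uniformity-in-$R$ issue you identify as the ``technical heart'' simply does not arise. The paper proceeds exactly this way: a finite stitching over cell-types of one approximant, with the base case being the pullback of one contractible region (a disjoint union of translation-equivalent discs on which duality is classical).
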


\begin{proof} By FLC, we may assume that our tiling of $\mathbb{R}^d$ has polygonal tiles. By consideration of the cellular groups, we see that there exists a singular $\mathcal{T}_1$E fundamental class for $\mathbb{R}^d$ (whose simplexes may be taken to be arbitrarily small in diameter). We consider here the G\"{a}hler approximants for the tiling, which does not change the pullback cochains. Due to the simple nature of the quotient maps from $\mathbb{R}^d$ to the approximants, it is easily verified that the condition of Lemma \ref{lem: bounded cochains} holds for small $\epsilon > 0$ for these approximants.

We then consider a covering of the first G\"{a}hler approximant with well-behaved contractible open sets $U_{[c]}$ associated to each cell $[c]$ of the approximant, where $[c]$ is an equivalence class of cell $c$ of the tiling. So, about the $0$-cells we take small disjoint and contractible open neighbourhoods. Similarly we have disjoint contractible open sets, each associated to some $1$-cell, which together with those open sets associated to the $0$-cells cover the $1$-skeleton. Continuing in this manner, one obtains a covering of the approximant. Since the tiles were polygonal, we may choose these regions so that their intersections are also contractible.

The pullbacks of these regions $U_{[c]}$ to $\mathbb{R}^d$ give a disjoint union of open regions $\coprod_{c \in [c]} V_c$, each $V_c$ associated to some cell $c$ of the tiling. One considers the $\mathcal{P}$E cohomology, based on small chains, relative to a small neighbourhood of the boundary of these regions. Then it is easy to see that the pullback cochains restricted to these regions correspond precisely to those cochains which are equal at any two such regions which correspond to cells which agree, up to a translation, on a sufficiently large patch of tiles. Using classical techniques, one may show that the relative singular cohomology on each region $V_c$ is trivial in all degrees except $d$ and in degree $d$ is isomorphic to $\mathbb{Z}$. Hence, the $\mathcal{T}_1$E relative cohomology of the preimage of $U_{[c]}$ relative to a small neighbourhood of the boundary is trivial in all degrees except $d$ where it corresponds to a PE choice of integer for each region $V_c$.

On the other hand, the $\mathcal{T}_1$E homology is trivial in all degrees except zero. The singular Borel-Moore homology of the pullback of $U_{[c]}$ corresponds to a choice of integer for each region $V_c$. The $\mathcal{T}_1$E homology, on the other hand, will make a PE choice of integer in each such region, so that two which agree on a sufficiently large radius of tiles are assigned the same integer. Classically, we have a Poincar\'{e} duality isomorphism at each such region $V_c$ and so the cap product with the Borel-Moore chain associated to the pullback of $U_{[c]}$ induces the desired Poincar\'{e} duality isomorphism for the region $U_{[c]}$.

The same analysis follows on the intersections of these regions. The result now follows by an application of the commutative diagram of the previous lemma. That is, we first choose a suitable Lebesgue number $\epsilon$ of the covering associated to our covering of the approximant and small neighbourhoods of the boundaries of the regions on it. The result follows from Poincar\'{e} duality holding for the simple contractible regions and an application of the five lemma. \end{proof}

\subsection{Poincar\'{e} Duality for Rotational Tiling Spaces} \label{Duality for Rotational Tiling Spaces}

An analogous argument to the above can also be used for the pattern $\mathcal{T}_{\text{rot}}$ defined on the Euclidean group $E^+(d)$. Example \ref{ex: Periodic} (and aperiodic examples in the next chapter), however, show that the argument cannot extend to the pattern $\mathcal{T}_0$ (for integral coefficients). Consider an FLC tiling $T$ of $\mathbb{R}^2$ with $n$-fold symmetry at the origin. Then (for sufficiently large $R$) the map $\pi_{R,X} \colon U \rightarrow K_R$ from a small open disk $U$ of $x$ into the approximant $K_R$ corresponds to the map $re^{i \theta} \mapsto re^{n i \theta}$ onto its image, with singular point at the origin. Hence, one may only realise $n$ times the usual generator of the compactly supported cohomology at $U$ in $\mathcal{T}_0$E cohomology. For the $\mathcal{T}_0$E homology, however, we may still realise the usual generator, by marking those points which are equivalent to radius $R$ to $x$. Hence, the duality map here will have cokernel $\mathbb{Z}/n\mathbb{Z}$ over $\mathbb{Z}$-coefficients.

The above discussion shows how we may retain duality by modifying the $\mathcal{T}_0$E homology groups. When the tiling has symmetry to radius $R$ at a point $x$ of order $n$, one should allow only rotationally invariant chains of $\mathcal{T}_0$E radius $R$ there to have coefficient divisible by $n$.

\begin{definition} Let $\sigma$ be an element of the \emph{modified singular pattern-equivariant chain group} $C_k^{\hat{\mathcal{P}}}(X)$ if and only if $\sigma$ is a $\mathcal{P}$E singular $k$-chain with $\mathcal{P}$E radius $R$ for which, for each $x \in X$, we have that $\sigma^x=\sum_{\Phi \in \mathcal{P}_{x,x}^R}\Phi_*(\tau)$ for some singular chain $\tau$. With the standard boundary map we thus define the \emph{modified singular pattern-equivariant homology groups} as the homology groups $H_\bullet^{\hat{\mathcal{P}}}(X)$ of this chain complex. \end{definition}

Equivalently, we allow a $\mathcal{P}$E chain $\sigma$ to be in the modified $\mathcal{P}$E chain group if the coefficient of some singular simplex $\tau$ of $\sigma$ is divisible by the order of the subgroup $F \le \mathcal{P}_{x,x}^R$ fixing that singular simplex. One may similarly define the \emph{modified cellular pattern-equivariant chain groups} $C_\bullet^{\hat{\mathcal{P}}}$ by taking those $\mathcal{P}$E chains of radius $R$ such that the coefficients assigned to cells $c$ are divisible by $|\mathcal{C}_{c,c}^R|$. One may verify that the results of Chapter \ref{chap: Pattern-Equivariant Homology} follow in their modified form, with only minor modifications to the proofs. For example, the deformation retractions used for showing the equivalence of singular and cellular PE homology apply to the modified complexes.

Returning to the example of a pattern $\mathcal{T}_0$ associated to an FLC tiling of $\mathbb{R}^2$, by considering the \emph{modified} $\mathcal{T}_0$ homology groups, we see that one removes the ``singular behaviour'' of chains at rotationally symmetric points. Following the above proof of Poincar\'{e} duality, we now see that we have Poincar\'{e} duality $H_{\mathcal{T}_0}^{d-\bullet}(\mathbb{R}^2) \cong {H_\bullet^{\hat{\mathcal{T}_0}}(\mathbb{R}^2)}$.

We may also see the Poincar\'{e} duality isomorphism directly on the cellular complexes. Let $T$ be a tiling of $\mathbb{R}^2$ with polygonal tiles such that points of local rotational symmetry are contained in the $0$-skeleton. One may consider a dual tiling which is $S$-MLD to the original. The dual-cell method for Poincar\'{e} duality does not work for the pattern $\mathcal{T}_0$ since to evaluate the $\mathcal{T}_0$E (co)homology we require that points of local rotational symmetry are contained in the $0$-skeleton, which will not be true of the dual complex. For example, applying the duality isomorphism to the $\mathcal{T}_0$E homology we may have coefficient one assigned to $2$-cells in the $\mathcal{T}_0$E cohomology which are fixed by non-trivial rotations. This fault is repaired by only allowing integer multiples of the order of rotational symmetry at the $0$-cells in the $\mathcal{T}_0$E cellular homology.

We have an inclusion of cellular chain complexes $C_\bullet^{\hat{\mathcal{T}_0}} \rightarrow C_\bullet^{\mathcal{T}_0}$. For a two-dimensional tiling the modified cellular chain complex differs from the unmodified one only in degree zero: a cellular $0$-chain in the modified complex should be a $\mathcal{T}_0$E cellular $0$-chain for which there exists some $R$ such that, if there is some rotation of order $n$ preserving some $0$-cell and its patch of tiles to radius $R$, then the coefficient assigned to that cell should be divisible by $n$. Consider the exact sequence of cellular chain complexes $$0 \rightarrow C_\bullet^{\hat{\mathcal{T}_0}} \rightarrow C_\bullet^{\mathcal{T}_0} \rightarrow C_\bullet^{\mathcal{T}_0}/C_\bullet^{\hat{\mathcal{T}_0}} \rightarrow 0.$$
\noindent
The quotient homology groups have a simple description:

\begin{proposition} The above quotient homology groups are trivial in degrees not equal to zero and the degree zero group is isomorphic to $\bigoplus_p \mathbb{Z}_p^{T(p)}$ where the direct sum is taken over all primes $p \in \mathbb{N}$ with $T(p)$ equal to the sum $\sum_{T_i} n_i$ over all tilings $T_i \in \Omega^{\mathcal{T}_{\text{rot}}}$ with rotational symmetry $k.p^{n_i}$ ($p \nmid k$) in case this sum is finite and $T(p)=|\mathbb{N}|$ otherwise.
\end{proposition}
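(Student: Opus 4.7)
The plan is to exploit the remark stated just above the proposition: for a two-dimensional tiling, $C_\bullet^{\hat{\mathcal{T}_0}}$ agrees with $C_\bullet^{\mathcal{T}_0}$ in every positive degree (the stabiliser of any $1$- or $2$-cell in the group of orientation-preserving partial isometries is trivial, once the CW-decomposition is fine enough to contain all points of local rotational symmetry in the $0$-skeleton). Consequently the quotient chain complex $Q_\bullet := C_\bullet^{\mathcal{T}_0}/C_\bullet^{\hat{\mathcal{T}_0}}$ is concentrated in degree zero, which immediately yields $H_i(Q_\bullet)=0$ for $i \neq 0$ and $H_0(Q_\bullet)=Q_0$.

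All that remains is to compute $Q_0$. At each pattern radius $R$ one has
\[
C_0^{\mathcal{T}_0;R}/C_0^{\hat{\mathcal{T}_0};R} \;\cong\; \bigoplus_{[c]_R} \mathbb{Z}/n_R([c]_R),
\]
indexed by equivalence classes $[c]_R$ of $0$-cells at radius $R$, with $n_R([c]_R):=|(\mathcal{C}_0)_{c,c}^R|$ the local stabiliser order. I would then pass to the direct limit in $R$. Since any isometry preserving an $R'$-patch also preserves the smaller $R$-patch whenever $R \le R'$, the integers $n_R(c)$ form a non-increasing sequence of positive integers, which therefore stabilises at some value $n_\infty(c) \ge 1$. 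The central identification is that $n_\infty(c)$ equals the order of the rotational stabiliser, fixing the origin, of the tiling $T_c$ obtained by translating $c$ to the origin, considered as a point of $\Omega^{\mathcal{T}_{\text{rot}}}$. This follows, in the FLC setting, by passing from the partial isometries preserving an $R$-patch to their limit acting on the full tiling $T_c$; repetitivity and compactness of approximants ensure the partial isometries aggregate to a genuine global rotational symmetry.

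With this dictionary in place, the classes with $n_\infty(c)=1$ die in the direct limit, and each class with $n_\infty(c)>1$ contributes a summand $\mathbb{Z}/n_\infty(c)$. These surviving classes correspond bijectively to the rotationally symmetric tilings in $\Omega^{\mathcal{T}_{\text{rot}}}$; for a symmetric tiling with stabiliser order $k\cdot p^{n_i}$ with $p \nmid k$, the primary decomposition of $\mathbb{Z}/(k\cdot p^{n_i})$ contributes $\mathbb{Z}/p^{n_i}$ to the $p$-primary part of $Q_0$, and summing over all rotationally symmetric tilings yields the stated decomposition $\bigoplus_p \mathbb{Z}_p^{T(p)}$. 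The main obstacle will be the stabilisation argument in the previous paragraph: one must verify not only that $n_R(c)$ is eventually constant but that the inherited constant value is the right geometric object, namely the rotational stabiliser of the limiting tiling in the rigid hull. A secondary subtlety, which handles the ``$T(p)=|\mathbb{N}|$'' case, is that for periodic tilings (or tilings with continuous rotational invariance) the family of symmetric tilings in $\Omega^{\mathcal{T}_{\text{rot}}}$ is an infinite union of $S^1$-orbits, so the sum over rotationally symmetric $T_i$ must be organised with care.
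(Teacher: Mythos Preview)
Your opening reduction is exactly right and matches the paper: the quotient complex is concentrated in degree zero because the modified and unmodified cellular groups agree above degree zero, so $H_0(Q_\bullet)=Q_0=C_0^{\mathcal{T}_0}/C_0^{\hat{\mathcal{T}_0}}$ and there is nothing to prove in higher degrees. Your treatment of the case of finitely many rotationally symmetric tilings is also essentially the paper's argument: for $R$ large enough the classes $[c]_R$ with $n_R([c]_R)>1$ are in bijection with those tilings, the stabiliser orders have stabilised, and $Q_0$ is the finite direct sum $\bigoplus_i \mathbb{Z}/n_\infty(T_i)$, whose primary decomposition gives the stated formula.

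The gap is in the infinite case. Your claim that the surviving classes in $\varinjlim_R \bigoplus_{[c]_R}\mathbb{Z}/n_R$ are in bijection with rotationally symmetric tilings of the hull is not correct: the connecting maps in this system are diagonal (a class splits and its generator maps to the sum of the generators of its refinements), so the direct limit is the group of $\mathbb{Z}/n$-valued functions on the inverse limit of class-sets that are determined at some finite level. When infinitely many classes persist, the set of ``threads'' (limiting tilings) can even be uncountable, yet the direct limit is always a countable group; there is no natural direct-sum decomposition indexed by the symmetric tilings themselves. Relatedly, your aside about periodic tilings and continuous rotational invariance is off target: the relevant phenomenon is simply that there may be infinitely many tilings in $\Omega^{\mathcal{T}_{\text{rot}}}$ with (discrete) rotational symmetry, and the proposition handles this by declaring $T(p)=|\mathbb{N}|$ rather than by counting them.

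The paper sidesteps the direct-limit computation entirely. It observes that by FLC only finitely many integers $n>1$ occur as symmetry orders, so $Q_0$ is a countable abelian group of bounded exponent; Pr\"ufer's first theorem then gives $Q_0\cong\bigoplus_p\mathbb{Z}_p^{r(p)}$ for some cardinals $r(p)$. One then checks $r(p)=T(p)$: for primes occurring in only finitely many symmetric tilings this is the finite case already handled, and for the remaining primes one exhibits infinitely many independent elements of order $p$. This structural shortcut is what your argument is missing; if you want to push the direct-limit route through, you would need to prove by hand that a countable direct limit of finite $p$-groups of bounded exponent with diagonal connecting maps is a countable direct sum of $\mathbb{Z}/p$'s, which amounts to reproving the relevant piece of Pr\"ufer's theorem.
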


\begin{proof} Since the groups $C_k^{\mathcal{T}_0}$ and $C_k^{\hat{\mathcal{T}_0}}$ have the same elements for $k \neq 0$, it is clear that the quotient homology groups are trivial for $k \neq 0$ and that the degree zero homology group is naturally isomorphic to $C_0^{\mathcal{T}_0} / C_0^{\hat{\mathcal{T}_0}}$.

The rigid hull $\Omega^{\mathcal{T}_{\text{rot}}}$ consists of tilings (modulo rotation) which are \emph{locally isomorphic} to $T$, that is, they are those tilings which have identical finite patches to $T$, up to rigid motion. If there are finitely many such tilings with rotational symmetry, the proposition is clear: each rotationally invariant tiling $T_i$ defines a generator of $C_0^{\mathcal{T}_0} / C_0^{\hat{\mathcal{T}_0}}$ which is given by assigning coefficient one to each vertex of $T$ whose patch of tiles agrees with the central patch of $T_i$ to radius $R$ (and every other vertex is assigned coefficient zero), where $R$ is chosen so that the patches of tiles of each $T_i$ to radius $R$ are pairwise distinct up to rotation.

Where there are infinitely many rotationally invariant tilings in the rigid hull, such an isomorphism is less obvious. By FLC, there are only finitely many integers $n > 1$ for which there are tilings $T_i \in \Omega^{\mathcal{T}_{\text{rot}}}$ with $n$-fold symmetry. We see that $C_0^{\mathcal{T}_0} / C_0^{\hat{\mathcal{T}_0}}$ is a countable abelian group of bounded exponent. It follows from Pr\"{u}fer's first theorem \cite{Fuchs} that $C_0^{\mathcal{T}_0} / C_0^{\hat{\mathcal{T}_0}}$ is isomorphic to a countable direct sum of cyclic groups $\bigoplus_p \mathbb{Z}_p^{r(p)}$. This group is isomorphic to $\bigoplus_p \mathbb{Z}_p^{T(p)}$ if and only if $r(p)=T(p)$ for each prime, which will follow from direct consideration of the group $C_0^{\mathcal{T}_0} / C_0^{\hat{\mathcal{T}_0}}$.

Let $f_1,\ldots,f_k$ be the list of primes appearing as a divisor of the order of rotational symmetry for only finitely many tilings in the hull. Let $R$ be such that the tilings having such symmetries are distinguishable to radius $R$. Then the subgroup generated by elements which assign coefficient $n.c$ with $n \in \mathbb{Z}$ to a patch (of radius larger than $R$) with rotational symmetry $c \cdot f_1^{a_1} \cdots  f_k^{a_k}$ (with $f_i \nmid c$) corresponds precisely to the direct sum of the $f_i$ parts of the group, which confirms that $r(f_i)=T(f_i)$. For the other primes, we have that $r(p) = |\mathbb{N}|$ when there are infinitely many tilings with rotational symmetry $p.k$ since one may easily construct infinitely many distinct elements of order $p$. Clearly there are no elements of order $p$ in $C_0^{\mathcal{T}_0} / C_0^{\hat{\mathcal{T}_0}}$ whenever $T(p)=0$, so the result follows. \end{proof}

It follows from the above that $H_i^{\mathcal{T}_0}(\mathbb{R}^d) \cong \check{H}^{2-i}(\Omega^{\mathcal{T}_0})$ for $i=1,2$, and in degree zero the $\mathcal{T}_0$E homology fits into the short exact sequence: $$0 \rightarrow \check{H}^2(\Omega^{\mathcal{T}_0}) \rightarrow H_0^{\mathcal{T}_0}(\mathbb{R}^d) \rightarrow \bigoplus_p \mathbb{Z}_p^{T(p)} \rightarrow 0.$$

\noindent
A similar statement will hold for two-dimensional hierarchical tilings using analogous arguments. For higher dimensional tilings, one would expect that it is possible for the homology groups of the quotient complex $C_\bullet^{\mathcal{T}_0} / C_\bullet^{\hat{\mathcal{T}}}$ to be non-trivial in degrees larger than zero. For example, a tiling of $\mathbb{R}^3$ may be invariant under rotations which fix a one-dimensional line, which will lead to non-trivial elements of $C_1^{\mathcal{T}_0}/C_1^{\hat{\mathcal{T}_0}}$. Hence, for tilings of $\mathbb{R}^d$ with $d > 2$, the torsion parts of $H_i^{\mathcal{T}_0}(\mathbb{R}^d)$ and $\check{H}^{d-i}(\Omega^{\mathcal{T}_0})$ may differ for degrees $i$ larger than zero.

\chapter{Pattern-Equivariant Homology of Hierarchical Tilings}

\label{chap: Pattern-Equivariant Homology of Hierarchical Tilings}

Given an FLC substitution tiling $T$ with substitution map $\omega$, its PE cohomology groups can be calculated using the fact that the tiling space is the inverse limit of the BDHS \cite{BDHS} or (collared) Anderson-Putnam  \cite{AP} complexes along with their self-maps induced by $\omega$. We give here an alternative method for carrying out these computations using PE homology.

\section{Computation of the Pattern-Equivariant Homology Groups}

We shall set a coefficient group $G$ and suppress its notation in the homology groups in this section. We assume throughout that $T_\omega = (T_0,T_1, \ldots )$ is an FLC hierarchical tiling of $(X,d_X)$, with respect to the allowed partial isometries $\mathcal{S}$, for which each $T_i$ is a cellular tiling for some common locally finite CW-decomposition $X^\bullet$ of $(X,d_X)$ with top dimension of cell equal to $d$.

Each of the tilings $T_i$ share the same CW-decomposition of $(X,d_X)$. To apply the main theorem of this section, we shall need the cells of $X^\bullet$ to get relatively small compared to the tiles of $T_i$ as $i \rightarrow \infty$ (c.f., the tiles of a substitution tiling relative to the supertiles). We assume that for each $i$ the tiles of $T_i$ are given an alternative CW-decomposition, making each $T_i$ a cellular tiling with this new CW-decomposition of the tiles, and in a way such that the $k$-skeleton of the induced CW-decomposition $T_i^k$ is contained in the original $k$-skeleton $X^k$ (for practical computations we shall usually want CW-decompositions which are ``combinatorially equivalent'' in a certain sense for each $T_i$).

\begin{definition} For a cell $c$ of $T_i^k$ let $P(c)$, the \emph{incident patch of $c$}, be the patch of tiles in $T_i$ which intersect the open cell of $c$. Given two $k$-cells $c_1,c_2$ of $T_i^k$, let them be considered as \emph{equivalent} (via the \emph{equivalence} $\Phi$) if $\Phi$ is an isometry from $P(c_1)$ to $P(c_2)$ sending $c_1$ to $c_2$ (and preserving decorations of tiles) which when restricted to the interior of the incident patch of $c_1$ is an element of $\mathcal{S}$. We call a class of $k$-cell under these equivalences a \emph{$k$-cell type}. For $k=0,1,2$ we call $k$-cell types \emph{vertex}, \emph{edge} and \emph{face types}, respectively.\end{definition}

For example, in $(\mathbb{R}^2,d_{euc})$ with $\mathcal{S}$ given by (restrictions of) translations, a vertex type consists of a $0$-cell along with all of the tiles which contain it, taken up to translation. An edge type consists of a $1$-cell along with those tiles intersecting its interior, taken up to translation. A face type is a $2$-cell of the tiling along with the tile it is contained in, taken up to translation. Notice that the composition of an equivalence $\Phi_1 \colon P(c_1) \rightarrow P(c_2)$ from $c_1$ to $c_2$ with an equivalence $\Phi_2 \colon P(c_2) \rightarrow P(c_3)$ from $c_2$ to $c_3$ is an equivalence from $c_1$ to $c_3$. For each $k$-cell type $[c]$ there is a well-defined number $n_{[c]}$, which is defined as the number of distinct equivalences of $c$ with itself when restricted to $c$, where $c$ is any representative of $[c]$. For example, $n_{[v]}=1$ for any vertex type $[v]$. We shall usually want $n_{[c]}=1$ for each $k$-cell type $[c]$; for example, for a tiling of $(\mathbb{R}^2,d_{euc})$ with $\mathcal{S}$ given as (restrictions of) rigid motions, this is satisfied if all points of local rotational symmetry in the tiling are contained in the $0$-skeleton.

The $k$-cell types induce a subcomplex of the cellular $\mathcal{T}_\omega$E chain complex, the complex of cellular chains which are determined locally by the nearby patch up to allowed partial isometry to some finite radius on some $T_i$. We denote by $C_\bullet^{\mathcal{A}_i}$, the \emph{$i^{th}$ approximant chain complex}, the chain complexes of cellular Borel-Moore chains of $T_i^\bullet$ which are invariant under equivalences of $k$-cell types. One may naturally identify a cellular $k$-chain of $T_i^\bullet$ with the obvious cellular chain of the (usually) finer CW-decomposition $X^\bullet$, the cellular chains here will always be assumed to be of the original CW decomposition $X^\bullet$. Given $\sigma \in H_k^{\mathcal{A}_i}$, we would like to say that it only depends on the local $k$-cell types of $T_{i+1}$ also. Unfortunately, the simple inclusion map is not necessarily cellular since the CW-decompositions $T_i^k$ vary as $i$ increases. However, we can always take a ``nice'' boundary which takes $\sigma$ to a cellular chain of $H_k^{\mathcal{A}_{i+1}}$ (see Figure \ref{fig: sub}).

\begin{figure}

\includegraphics{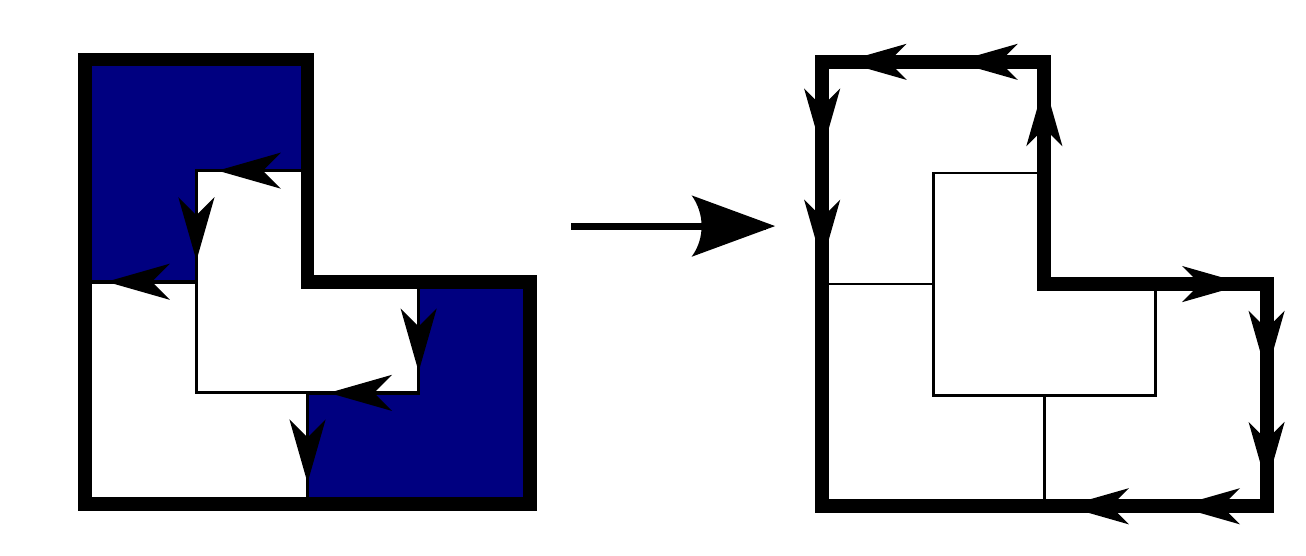} \caption{The substitution map between approximant complexes}

\label{fig: sub}

\end{figure}

\begin{proposition} Suppose that the coefficient group $G$ has division by $n_{[c]}$ for every $k$-cell type $[c]$ of each $T_i$. Let $[\sigma] \in H_k^{\mathcal{A}_i}$. Then there exists some cellular Borel-Moore $(k+1)$-chain $\tau$ such that $[\sigma + \delta(\tau)] \in H_k^{\mathcal{A}_{i+1}}$. The chain $\tau$ may be chosen so that $\tau(c_1)=\tau(c_2)$ for any two $(k+1)$-cells which are isometric via a restriction of an equivalence of $(k+1)$-cells of $T_{i+1}^{k+1}$. Further, given any other such choice $\tau'$ in replacement for $\tau$ satisfying this property, we have that $[\sigma+\tau]=[\sigma+\tau'] \in H_k^{\mathcal{A}_{i+1}}$. \end{proposition}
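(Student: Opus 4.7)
The plan is to construct $\tau$ locally on each equivalence class of $(k+1)$-cell of $T_{i+1}$ via a relative-homology vanishing argument, then patch together using averaging over self-equivalences. First, I would observe that, viewed as an $X^\bullet$-cellular chain, $\sigma$ restricts on each closed $(k+1)$-cell $c'$ of $T_{i+1}$ to a relative $k$-cycle in $(c', \partial c')$: every interior $(k-1)$-cell of $c'$ has all of its incident $k$-cells contained in $c'$, so $\partial(\sigma|_{c'})$ restricted to the interior of $c'$ equals $(\partial\sigma)|_{\mathrm{int}(c')} = 0$. Since $c'$ is a closed $(k+1)$-disk, $H_k(c', \partial c'; G) = 0$, so there is a $(k+1)$-chain $\tau'_{c'}$ supported in $c'$ with $\partial\tau'_{c'} + \sigma|_{c'}$ supported on $\partial c'$.

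To secure the equivariance property, pick a representative $c'$ for each equivalence class $[c']$ of $(k+1)$-cells of $T_{i+1}^{k+1}$ and replace $\tau'_{c'}$ by the average $\tau_{c'} := \frac{1}{n_{[c']}}\sum_{\Phi} \Phi_*(\tau'_{c'})$ over the self-equivalences $\Phi$ of $c'$, using the divisibility of $G$ by $n_{[c']}$. Since each such $\Phi$ is also an equivalence at level $i$ and $\sigma$ is PE at level $i$, averaging preserves the property that $\partial\tau_{c'} + \sigma|_{c'}$ is supported on $\partial c'$. Propagating $\tau_{c'}$ to every $(k+1)$-cell in $[c']$ via equivalences produces an equivariant $(k+1)$-chain $\tau$ on the relevant cells. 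The same argument, applied dimension-by-dimension to $T_{i+1}$-cells of dimension $>k+1$ (where $H_k(e', \partial e'; G) = 0$ continues to hold), extends $\tau$ so that $\sigma + \partial\tau$ vanishes on every $T_i$ $k$-cell interior to any $T_{i+1}$-cell of dimension $>k$. Combined with the equivariance and the fact that $\sigma$ is PE at level $i$ (hence at level $i+1$), this places $\sigma + \partial\tau$ in $C_k^{\mathcal{A}_{i+1}}$.

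For uniqueness, let $\tau, \tau'$ be two valid choices and write $\Delta = \tau - \tau'$. Then $\Delta$ inherits the equivariance on $(k+1)$-cells of $T_{i+1}^{k+1}$, and $\partial\Delta$ is the difference of two chains in $C_k^{\mathcal{A}_{i+1}}$, hence lies in $C_k^{\mathcal{A}_{i+1}}$. Restricted to each representative $c'$, $\Delta|_{c'}$ is a relative $(k+1)$-cycle; since $H_{k+1}(c', \partial c'; G) \cong G$ is generated by the fundamental class of $c'$, the chain $\Delta|_{c'}$ is a constant multiple of that fundamental class, hence is constant (up to orientation) on the $(k+1)$-cells of $X^\bullet$ inside $c'$. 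Together with the equivariance, this exhibits $\Delta$ as a cellular chain of $T_{i+1}^\bullet$ lying in $C_{k+1}^{\mathcal{A}_{i+1}}$, so $\partial\Delta$ is a boundary in the $\mathcal{A}_{i+1}$-complex, giving $[\sigma + \partial\tau] = [\sigma + \partial\tau']$ in $H_k^{\mathcal{A}_{i+1}}$.

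The main technical hurdle I anticipate is coordinating the inductive extension to $T_{i+1}$-cells of dimension larger than $k+1$ with the equivariance prescribed on the $(k+1)$-cells, and verifying carefully that $T_{i+1}^\bullet$-cellularity is preserved across the boundaries where distinct equivalence classes meet. The averaging step is essential and depends crucially on the divisibility hypothesis on $G$, precisely analogous to the role that divisibility plays in the proof of Theorem \ref{thm: sing=cell} identifying singular and cellular PE homology.
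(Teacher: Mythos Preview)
Your existence argument has the right local ingredients but processes the cells in the wrong order. You begin with the $(k+1)$-cells of $T_{i+1}$ and only afterwards move to cells of dimension $>k+1$; this fails because, when you subsequently handle an $m$-cell $e'$ with $m>k+1$, the boundary $\partial\tau_{e'}$ is a $k$-chain supported in the closed cell $\overline{e'}$ and will in general have nonzero coefficients on $X^\bullet$-$k$-cells lying in $\partial e'$ --- in particular on those interior to the $(k+1)$-cells you already cleared. (For a concrete obstruction: with $k=0$ and $e'$ a $2$-cell, pushing the points of $\sigma$ off the interior of $e'$ forces a net nonzero contribution onto $\partial e'$, since a cellular boundary in degree zero has total sum zero.) The paper runs the induction in the opposite direction: first push $\sigma$ off the interiors of the $d$-cells of $T_{i+1}$ into $T_{i+1}^{d-1}$, then off the $(d-1)$-cells into $T_{i+1}^{d-2}$, and so on down to $T_{i+1}^k$. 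Each stage deposits remainder only onto strictly lower skeleta, so nothing already arranged is disturbed; the averaging over self-equivalences is done at each stage, exactly as you describe.

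Your uniqueness argument has a genuine gap. From the fact that $\Delta|_{c'}$ is a relative $(k+1)$-cycle in $(c',\partial c')$ with $H_{k+1}(c',\partial c';G)\cong G$ you deduce that $\Delta|_{c'}$ \emph{equals} a constant multiple of the fundamental class, hence is constant on the $X^\bullet$-$(k+1)$-cells inside $c'$. But the homology computation only says it is \emph{homologous} to such a constant chain; the chain itself need not be constant. Moreover, $\Delta=\tau-\tau'$ may have support on $X^\bullet$-$(k+1)$-cells interior to $T_{i+1}$-cells of dimension larger than $k+1$, and your argument never touches that part. The paper's route is to observe that $\partial\Delta$ lies in $T_{i+1}^k$, so $\Delta$ is a relative $(k+1)$-cycle rel $T_{i+1}^k$, and then to apply the \emph{same} top-down pushing procedure (now one degree higher, using $(k+2)$-chains) to produce a chain $\rho$ supported in $T_{i+1}^{k+1}$, invariant under $(k+1)$-cell equivalences, with $\partial\rho=\partial\Delta$. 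Then $\rho\in C_{k+1}^{\mathcal{A}_{i+1}}$ and $(\sigma+\partial\tau)-(\sigma+\partial\tau')=\partial\rho$ is an $\mathcal{A}_{i+1}$-boundary.
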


\begin{proof} If $\sigma \in H_d^{\mathcal{A}_i}$ then, since $X^\bullet$ is $d$-dimensional, each $d$-cell of $T_i$ is contained in a $d$-cell of $T_{i+1}$ and so the claim is immediate. So let $\sigma \in H_k^{\mathcal{A}_i}$ with $k<d$. Away from $T_{i+1}^{d-1}$, the coefficients assigned to $k$-cells by $\sigma$ are invariant under equivalences of $d$-cell types. Hence, we may choose a cellular $(k+1)$-chain which takes the support of $\sigma$ to $T_{i+1}^{d-1}$ by making such a choice for each $d$-cell type of $T_{i+1}$. If a $d$-cell type possesses non-trivial symmetries, we can ensure that the boundary chain chosen is invariant with respect to them by ``averaging'' the chain over the equivalences (c.f, the proof of Lemma \ref{avg}). We may continue this process until $\sigma$ is made homologous to a chain contained in $T_{i+1}^k$. By construction, $\tau$ assigns coefficients in a way which only depends on the local $(k+1)$-cell types up to equivalence and $\sigma + \delta(\tau) \in C_k^{\mathcal{A}_{i+1}}$.

Take a possibly different such choice $\tau'$ of $(k+1)$-chain satisfying the above and consider the chain $\tau - \tau'$. Of course, $\tau-\tau'$ has trivial boundary and, as above, $\tau-\tau'$ is homologous to a $(k+1)$-chain $\rho$ contained in $T_{i+1}^{k+1}$ which is invariant under equivalences of $(k+1)$-cell types of $T_{i+1}$. But then $((\sigma-\partial(\tau'))-(\sigma-\partial(\tau))= \partial(\tau-\tau')=\partial(\rho)$ and so is homologous to $0$ in $H_k^{\mathcal{A}_{i+1}}$ \end{proof}

The above proposition induces a well-defined homomorphism $\omega_k^i: H_k^{\mathcal{A}_i} \rightarrow H_k^{\mathcal{A}_{i+1}}$, which we shall name the \emph{substitution map}.

It will be useful in the proof of the theorem below to consider singular and cellular PE chains. We define cellular PE chains for $T_i$ (with respect to the allowed partial isometries $\mathcal{S}$ and CW-decomposition $X^\bullet$) analogously as to in the discussion in Section \ref{sect: Pattern-Equivariant Homology of Tilings}. A cellular Borel-Moore $k$-chain will be considered to be $\mathcal{T}_\omega$E if it is $(\mathcal{T}_{k(R)})_\mathcal{S}$E to radius $R$ where $k: \mathbb{R}_{>0} \rightarrow \mathbb{N}_0$ is a non-decreasing unbounded function (the choice of such a $k$ is unimportant). By the results of Chapter \ref{chap: Pattern-Equivariant Homology} (given conditions on the divisibility of the coefficient group $G$ with respect to the local symmetries of cells), there is a canonical isomorphism between the singular and cellular $\mathcal{T}_\omega$E homology groups.

Now, note that each element of $C_\bullet^{\mathcal{A}_i}$ may be considered as a cellular $\mathcal{T}_\omega$E chain, since it has as elements chains which depend only on their incident patches of tiles in $T_i$. Since each of the substitution maps is defined by an inclusion followed by a $\mathcal{T}_\omega$E boundary, and each chain of $C_\bullet^{\mathcal{A}_i}$ is a $\mathcal{T}_\omega$E cellular chain, we have a homomorphism $f \colon \varinjlim (H_\bullet^{\mathcal{A}_i},\omega_\bullet^i) \rightarrow H_\bullet^{\mathcal{T}_\omega}(X)$. Under reasonable conditions, this map is in fact an isomorphism:

\begin{theorem} Let $T_\omega$ be a hierarchical tiling. Suppose that the coefficient group $G$ has division by $n_{[c]}$ for each $k$-cell type $[c]$ of each $T_i$ and that, for each $R>0$, for sufficiently large $i$ there exist deformation retractions $F_i^p \colon [0,1] \times A_i^p \rightarrow A_i^p$ for $0 \leq p \leq d-1$ of a neighbourhood $A_i^p \subset T_i^{p+1}$ of the $p$-skeleton $T_i^p$ onto $T_i^p$ satisfying the following:

\begin{enumerate}
	\item Each $F^p_i$ is equivariant with respect to equivalences of $p$-cell types. That is, $A^p_i$ and $F^p_i$ are defined cell-wise, for each $p$-cell type, and are invariant under self equivalences of a $p$-cell type onto itself.
	\item Points of $T_i^{p+1} - A_i^p$ have $R_p$-neighbourhoods intersecting only those tiles in the incident patch of the cell containing the point. We have that $R_{d-1}=R$ (for $R_p$ with $p \neq d$ see below).
	\item For each $F_i^p$ we have that $\sup \{d_X(F^p_i(t,x),F^p_i(t',x)) \mid t,t' \in [0,1] \} < D_p$ for some $D_p>0$.
	\item $D_p \geq R_p$ and $R_{p-1} \geq 3D_p+2 \epsilon$ for some $\epsilon > 0$.
\end{enumerate}

Then there exists an isomorphism: $$\varinjlim (H_\bullet^{\mathcal{A}_i},\omega_\bullet^i) \cong H_\bullet^{\mathcal{T}_\omega}(X).$$
\end{theorem}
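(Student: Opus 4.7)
The plan is to show that $f \colon \varinjlim (H_\bullet^{\mathcal{A}_i},\omega_\bullet^i) \to H_\bullet^{\mathcal{T}_\omega}(X)$ is both surjective and injective by using the deformation retractions $F_i^p$ to push arbitrary $\mathcal{T}_\omega$-equivariant cellular chains onto the skeleta $T_i^k$ of the approximants. The map $f$ is well-defined since each $\sigma \in C_\bullet^{\mathcal{A}_i}$ is automatically a cellular $\mathcal{T}_\omega$E chain, and since the substitution maps $\omega^i$ alter representatives only by $\mathcal{T}_\omega$E boundaries. Throughout I would use Theorem \ref{thm: sing=cell}, available under our divisibility hypothesis on $G$, to pass interchangeably between singular and cellular $\mathcal{T}_\omega$E chains.

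For surjectivity, I would take a cellular $\mathcal{T}_\omega$E $k$-chain $\sigma$ on $X^\bullet$, PE to some radius $R = R_{d-1}$, choose $i$ large enough that the hypotheses of the theorem hold for $T_i$, and then apply the prism operators of the retractions $F_i^{d-1}, F_i^{d-2}, \ldots, F_i^k$ in turn to $\sigma$. By Lemma \ref{def}, each such prism operator changes the chain by a $\mathcal{T}_\omega$E boundary while pushing its support from $T_i^{p+1}$ onto $T_i^p$. Condition 2 of the theorem ensures that chains which are PE to radius $R_p$ are already constant (up to incident-patch data) on the part of $T_i^{p+1}$ outside $A_i^p$, so the retraction needs only to modify the chain inside $A_i^p$, and condition 1 (equivariance of $F_i^p$ under $p$-cell-type equivalences) ensures the retracted chain continues to assign equal coefficients to equivalent cells. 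After the final retraction, the result lies in $T_i^k$, is $T_i$-cell-type equivariant, and therefore represents a class in $H_k^{\mathcal{A}_i}$ mapping to $[\sigma]$.

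For injectivity, suppose $[\sigma] \in H_k^{\mathcal{A}_i}$ satisfies $f[\sigma]=0$, so that $\sigma = \partial \tau$ for a cellular $\mathcal{T}_\omega$E $(k+1)$-chain $\tau$ of PE radius $R'$. Choosing $j \geq i$ large enough (determined by $R'$), I would apply the same retraction procedure to $\tau$, producing a chain $\tau' \in C_{k+1}^{\mathcal{A}_j}$ with $\tau' - \tau = \partial \rho$ for some $\mathcal{T}_\omega$E chain $\rho$. Then $\partial \tau' = \partial \tau = \sigma$ in $C_k^{\mathcal{A}_j}$. Since the iterated substitution $\omega^{j-i}_k [\sigma]$ is represented by the image of $\sigma$ after successive bring-down boundaries, this gives $\omega^{j-i}_k[\sigma] = [\partial \tau'] = 0$ in $H_k^{\mathcal{A}_j}$.

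The main obstacle will be maintaining sufficient PE radius as the prism operator is applied at each of the $d-k$ retraction stages. Lemma \ref{def} shows that applying the prism operator of $F_i^p$ to a chain PE to radius $R_p$, with simplex diameters at most $\epsilon/2$ after a preliminary barycentric subdivision, produces a chain PE only to radius $R_p + D_p + \epsilon$; the chain must then be subdivided again to diameters at most $\epsilon/2$ before applying $F_i^{p-1}$, which by Lemma \ref{def} requires the chain to be PE to radius at least $R_{p-1}$. The inequality $R_{p-1} \geq 3D_p + 2\epsilon$ of condition 4 is precisely what is needed for this chain of inequalities to close up, propagating PE control from step $p$ down to step $p-1$ throughout the descent. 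A secondary point is that the equivariance of the retractions guarantees that the output of each stage still lies in the subcomplex picked out by $T_i$-cell-type equivalences, so the final chain truly belongs to $C_\bullet^{\mathcal{A}_i}$ and not merely to the cellular $\mathcal{T}_\omega$E complex with support in $T_i^k$.
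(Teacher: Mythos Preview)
Your overall strategy matches the paper's: push a $\mathcal{T}_\omega$E $k$-chain successively down the skeleta $T_i^{d} \supset T_i^{d-1} \supset \cdots \supset T_i^k$ using the deformation retractions, then read off an element of $H_k^{\mathcal{A}_i}$; and for injectivity do the same to the bounding $(k+1)$-chain. However, there is a genuine gap in your description of each descent step.

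You write that ``the retraction needs only to modify the chain inside $A_i^p$'', but this is not enough. The retraction $F_i^p$ is defined only on $A_i^p$, so its prism operator can only move the part of the chain already supported in $A_i^p$. For $k \leq p$, the $k$-chain living in $T_i^{p+1}$ will in general have support in $T_i^{p+1}\setminus A_i^p$ (the ``deep interiors'' of the $(p{+}1)$-cells), and that part must be moved into $A_i^p$ \emph{before} the retraction can be applied. The paper handles this with a preliminary substep at each stage: condition~2 guarantees that on $T_i^{p+1}\setminus A_i^p$ the chain is already invariant under equivalences of $(p{+}1)$-cell types, so one can choose (and if necessary average) an equivariant $(k{+}1)$-chain $\rho$, cell-type by cell-type, whose boundary pushes the support into $A_i^{p}$. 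Only then is the retraction $F_i^p$ applied. You have conflated these two substeps into one.

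This also affects your radius bookkeeping. You invoke Lemma~\ref{def} verbatim to get PE radius $R_p + D_p + \epsilon$, but that lemma concerns commutation with pattern morphisms $\Phi\in\mathcal{P}_{x,y}^{R_F}$, whereas here the relevant equivariance of $F_i^p$ is with respect to \emph{cell-type equivalences}. The paper adapts the argument of Lemma~\ref{def} to this setting and tracks invariance under cell-type equivalences rather than raw PE radius; after the two substeps at level $p$ the resulting chain depends only on $(p{+}1)$-cell types to radius $3D_p + 2\epsilon$, and condition~4 ($R_{p-1}\geq 3D_p+2\epsilon$) combined with condition~2 is exactly what lets the induction continue. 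Once you insert the missing $\rho$-substep and track cell-type invariance instead of PE radius, your argument becomes the paper's.
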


\begin{proof} By our assumptions on the divisibility of the coefficient group $G$ and the results of Section \ref{sect: Cellular PE Homology}, we have canonical isomorphisms between the singular and cellular PE homology groups of $\mathcal{T}_i$ and $\mathcal{T}_\omega$. Let $[\sigma] \in H_k^{\mathcal{T}_\omega}(X)$ be represented by the singular $\mathcal{T}_\omega$E chain $\sigma$. Then there is some $T_j$ for which $\sigma$ is $\mathcal{T}_j$E to radius $R$. Pick $i \ge j$ for which $T_i$ has the above deformation retractions for value $R$.

By assumption $2$, away from $A_i^{d-1}$ we have that $\sigma$ is invariant up to equivalences of $d$-cell types since the chain is $\mathcal{T}_i$E to radius $R$. Suppose that $\sigma$ is a $k$-chain with $k<d$ (and, without loss of generality due to Lemma \ref{mv}, has singular simplexes of small radius of support). Then one may choose a boundary $\rho$ with $\sigma+\partial(\rho) \subset A_i^{d-1}$ for which $\rho$ is invariant under equivalences of $d$-cell types (if necessary we average the chain over self-equivalences of $d$-cell types, c.f., the proof of Lemma \ref{avg}). Again, by Lemma \ref{mv}, we may assume that the resulting cycle in $A_i^{d-1}$ has singular simplexes with radius of support bounded by $\epsilon/2$. As in Lemma \ref{def}, the deformation retraction induces a boundary taking $\sigma$ to a chain contained in $T_i^{d-1}$. We have that $\Psi \circ F_i^{d-1} = F_i^{d-1} \circ \Psi$ for equivalences $\Psi$ between $d$-cell types, and so for any $\Phi \colon B_{d_X}(x,2D+\epsilon) \rightarrow B_{d_X}(x,2D+\epsilon)$ which is a restriction of equivalences of $d$-cell types we have that $\Phi \circ F_i^{d-1} = F_i^{d-1} \circ \Phi$ on $B_{d_X}(x,D+\epsilon)$. As in Lemma \ref{def}, we see that $\sigma$ is homologous to a chain contained in $T_i^{d-1}$ which only depends on the $d$-cell types to radius $3D_{d-1}+ 2 \epsilon$.

By assumption $4$ we may inductively repeat the above process until $[\sigma]$ is represented by a chain contained in $T_i^k$ which only depends on the local tiles to radius $3D_k+2 \epsilon$. Since, away from $A_i^{k-1}$, the chain is locally determined by $k$-cell types, it follows that $[\sigma]$ is represented by a cellular chain of $T_i^k$ which is invariant under equivalences of $k$-cell types. Hence, the map $f$ is surjective.

Showing injectivity of the map is similar to the above. Let $\sigma$ be a $k$-cycle of the direct limit for which $\sigma=\partial(\tau')$ for some $\mathcal{T}_\omega$E $(k+1)$-chain $\tau'$. Then $\tau'$ may be represented as a singular chain which is $\mathcal{T}_j$E to radius $R$. Pick $i \ge j$ for which $T_i$ has the above deformation retractions for value $R$.

As in the construction of the substitution map, we may find a boundary $\rho$ which only depends on incident patches of $(k+1)$-cells and makes $\sigma$ homologous to a chain contained in $T_i^k$. Similarly to above, one can show that $\tau:=\tau'+\rho$ is represented by a chain of the direct limit; one inductively pushes $\tau$ to the $(k+1)$-skeleton ending with a $\mathcal{T}_i$E chain contained in $T_i^{k+1}$ which, away from $A_i^{k+2}$, only depends on local $(k+1)$-cells up to equivalences, and so the result follows. \end{proof}

The assumption of the existence of the deformation retractions in the above theorem is analogous to the assumption of the tameness of the isometries in Definition \ref{CPatDef} for cellular patterns. We require here, though, that the deformation retractions can be taken so as to retract arbitrarily large neighbourhoods of the skeleta. In particular the cells of $T_i^k$ should grow arbitrarily large in diameter as $i \rightarrow \infty$. This is a key assumption for the above theorem: chains whose coefficients may depend on a large diameter of tiles in some $T_j$ may be seen as chains whose coefficients only depend on a small diameter of tiles in some $T_i$, relative to the size of the tiles.

For a cellular substitution tiling of $(\mathbb{R}^d,d_{euc})$, with allowed partial isometries given by translations or rigid motions, the above theorem makes the $\mathcal{T}_\omega$E homology groups computable since the approximant homology groups are then all isomorphic and in a way such that the induced substitution maps between them are the same; see the examples below. Hence, by Poincar\'{e} duality, this gives an alternative way of computing the \v{C}ech cohomology groups $\check{H}^{d-\bullet}(\Omega^{\mathcal{T}_\omega})  \cong H_{\mathcal{T}_\omega}^{d-\bullet}(\mathbb{R}^d) \cong H_\bullet^{\mathcal{T}_\omega}(\mathbb{R}^d)$ when we take (restrictions of) translations as the allowed partial isometries. When we take our allowed partial isometries to be given by (restrictions of) rigid motions of $\mathbb{R}^d$, we do not in general obtain duality with the \v{C}ech cohomology $\check{H}^\bullet(\Omega^{\mathcal{T}_\omega})$ (over non-divisible coefficients). One may however use the modified homology groups, as in the discussion at the end of Chapter \ref{chap: Poincare Duality for Pattern-Equivariant Homology}, using only multiples of chains at rotationally invariant cells. The above methods may be adjusted in the obvious way so as to compute the modified homology groups, which are Poincar\'{e} dual to the PE cohomology groups.

It is interesting to note that the duals of the approximant complexes have been considered in the work of Gon\c{c}alves \cite{Gon}. There they were incorporated into a method for computation of the $K$-theory of the $C^*$-algebra associated to the stable equivalence relation of a substitution tiling. The calculations indicate that there is a duality present between the $K$-theory of the stable and unstable equivalence relations (which for low dimension tilings correspond simply to direct sums of the \v{C}ech cohomology groups of the tiling space). The appearance of these approximant complexes here indicate that this is indeed the case, although extra analysis of the substitution maps is needed to confirm the relationship.

\section{Examples}
\subsection{The Fibonacci tiling}

The Fibonacci tiling of $\mathbb{R}^1$ is given as a substitution on two intervals, named here $0$ and $1$ of lengths the golden ratio $\phi$ and one, resp., and the substitution $0 \mapsto 01$, $1 \mapsto 0$ (see Figure \ref{fig:Fibonacci}). We have $2$ edge types, corresponding to the two types of tiles, and $3$ vertex types (denoted by $0.1$, $1.0$ and $0.0$) up to translation. We set an orientation of our tiles all pointing to the right, so that the boundary of the $0$ tile is the formal sum $0.1-1.0$ (since the $0.1$ vertex can by found to the right of the $0$ tile, $1.0$ to the left and for a $0.0$ vertex type the contributions from the left and right tile cancel) and similarly the boundary operator maps the $1$ tile to the formal sum $1.0-0.1$.

\begin{figure}

\includegraphics{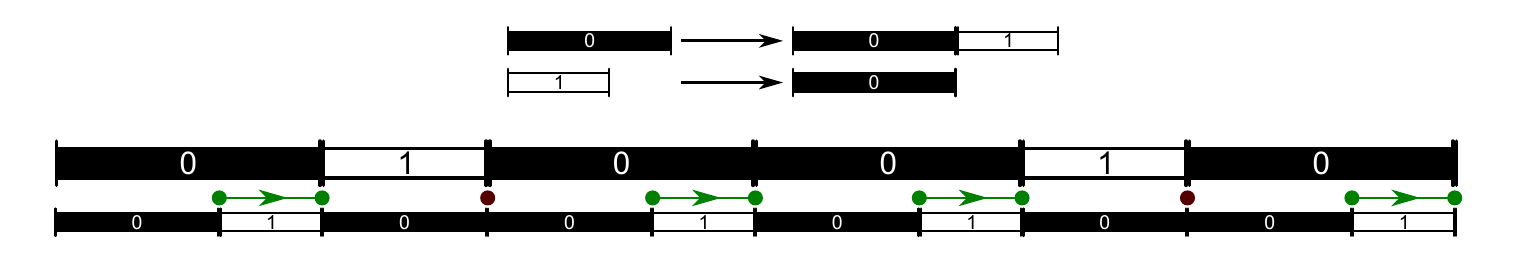} \caption{Fibonacci Tiling}

\label{fig:Fibonacci}

\end{figure}

Hence, the approximant homology is $H_0^\mathcal{A}=\mathbb{Z}^2$, $H_1^\mathcal{A}=\mathbb{Z}$, where $H_0^\mathcal{A}$ is generated by $a=0.1$ and $b=0.0$ ($0.1$ and $1.0$ are identified in homology) and $H_1^\mathcal{A}$ (as usual) is generated by the fundamental class of $\mathbb{R}^1$ by assigning coefficient one to each edge type.

For the substitution map $\omega_0 \colon H_0^\mathcal{A} \rightarrow H_0^\mathcal{A}$, note that all $0.1$ vertices are found in the interior of $1$ tiles (distance $\phi$ from the left endpoint of the interval) and all $0.0$ vertices lie on a $1.0$ vertex in the supertile decomposition of the tiling. The PE chain marking the interior point of the $0$ tiles is homologous to the formal sum $0.0+0.1$ (i.e., the sum of all vertex types with the $0$ tile to the left, see Figure \ref{fig:Fibonacci}) and the $1.0$ vertex type is homologous to $0.1$ as above. Hence, $\omega_0(a,b)=(a+b,a)$, which is an isomorphism so $H_0^{\mathcal{T}_\omega} = \varinjlim (\mathbb{Z}^2,\omega_0) = \mathbb{Z}^{2}$ and, as is usual, $H_1^{\mathcal{T}_\omega} = \mathbb{Z}$ is generated by the fundamental class for $\mathbb{R}^1$ given by assigning coefficient one to each tile.

\subsection{The Thue-Morse Tiling}

The Thue-Morse tiling of $\mathbb{R}^1$ is given as a substitution on two intervals $0$ and $1$ each of unit length, and substitution given by $0 \mapsto 01$, $1 \mapsto 10$. There are $2$ edge types, $0$ and $1$, and $4$ vertex types $0.0$, $0.1$, $1.0$ and $1.1$ up to translation. The boundary map will again send the 0 tile to the formal sum 0.1-1.0 and the 1 tile to 1.0-0.1, so that $H_0^\mathcal{A}=\mathbb{Z}^{3}$.

In the supertile decomposition, vertices of tiles can be found either on the vertices of the supertile decomposition, or in the centre of the supertiles. Applying the substitution map to the vertex types generating $H_0^\mathcal{A}$, we see that $\omega_0(0.1)=1.1+\dot{0}$, $\omega_0(0.0)=1.0$ and $\omega_0(1.1)=0.1$ where $\dot{0}$ is the PE chain marking the centres of the $0$ tiles. Since $\dot{0} \sim 0.1+0.0$ and $1.0 \sim 0.1$, we have the $\omega_0(a,b,c)=(a+b+c,a,a)$. This linear map has eigenvectors with eigenvalues $0$, $-1$ and $2$, although they do not span $\mathbb{Z}^3$. With some further careful analysis, the direct limit can be identified as $H_0^{\mathcal{T}_\omega}=\varinjlim (\mathbb{Z}^{3},\omega_0)=\mathbb{Z}\oplus\mathbb{Z}[1/2]$.

\subsection{The Dyadic Solenoid} \label{subsect: dyadic solenoid} One can put a hierarchical structure onto the periodic triangle tiling analogously to Example \ref{ex: DS}, from the substitution of the triangle into four of half the size. The approximant homology with respect to translations has already been computed and is the homology of the periodic tiling without the added hierarchical structure. It is easy to to compute the induced substitutions $\omega_\bullet$ as the times 4,2 and 1 maps in degree 0,1 and 2, respectively. Hence, we have that $H_0 \cong\mathbb{Z}[1/4]$, $H_1 \cong\mathbb{Z}[1/2]\oplus\mathbb{Z}[1/2]$ and $H_2 \cong \mathbb{Z}$ which, via Poincar\'{e} duality, agrees with the observation that $\Omega^{\mathcal{T}_\omega} \cong \mathbb{S}_2^2$, the two-dimensional dyadic solenoid.

One may also compute the homology with respect to rigid motions. Again, the approximant homology will be as for the homology of the triangle tiling, $H_0 \cong \mathbb{Z} \oplus \mathbb{Z}_2 \oplus \mathbb{Z}_3$, $H_1 \cong 0$ and $H_2 \cong \mathbb{Z}$. One finds that, with the generators as described in Example \ref{ex: Periodic}, $\omega_0 (a,b,c)=(4a,a+b,c)$. This map is conjugate via an automorphism of $\mathbb{Z} \oplus \mathbb{Z}_2 \oplus \mathbb{Z}_3$ to the map sending $(a,b,c) \mapsto (4a,b,c)$ and hence the PE homology is $H_0 \cong \mathbb{Z}[1/4] \oplus \mathbb{Z}_2 \oplus \mathbb{Z}_3$, $H_1 \cong 0$ and $H_2 \cong \mathbb{Z}$. One may similarly compute the homology with respect to rigid motions for the square model of the dyadic solenoid; the substitution map $\omega_0 \colon \mathbb{Z} \oplus \mathbb{Z}_2 \oplus \mathbb{Z}_4 \rightarrow \mathbb{Z} \oplus \mathbb{Z}_2 \oplus \mathbb{Z}_4$ maps $(a,b,c) \mapsto (4a,c,c)$, so $H_0 \cong \mathbb{Z}[1/4] \oplus \mathbb{Z}_4$.

\subsection{The Penrose Kite and Dart Tilings} \label{subsect: The Penrose Tiling}

We shall compute the PE homology of the Penrose kite and dart tilings with respect to rigid motions. There are $7$ vertex types (sun, star, ace, deuce, jack, queen and king, in Conway's notation) and $7$ edge types ($E1-7$) up to rigid motion, see Figure \ref{fig:VaETypes} where each of the vertex and edge types are listed in their respective orders. The two face types correspond to the kite and dart tiles on which there is defined the well-known kite and dart substitution.

\begin{figure}

\includegraphics[width=\linewidth]{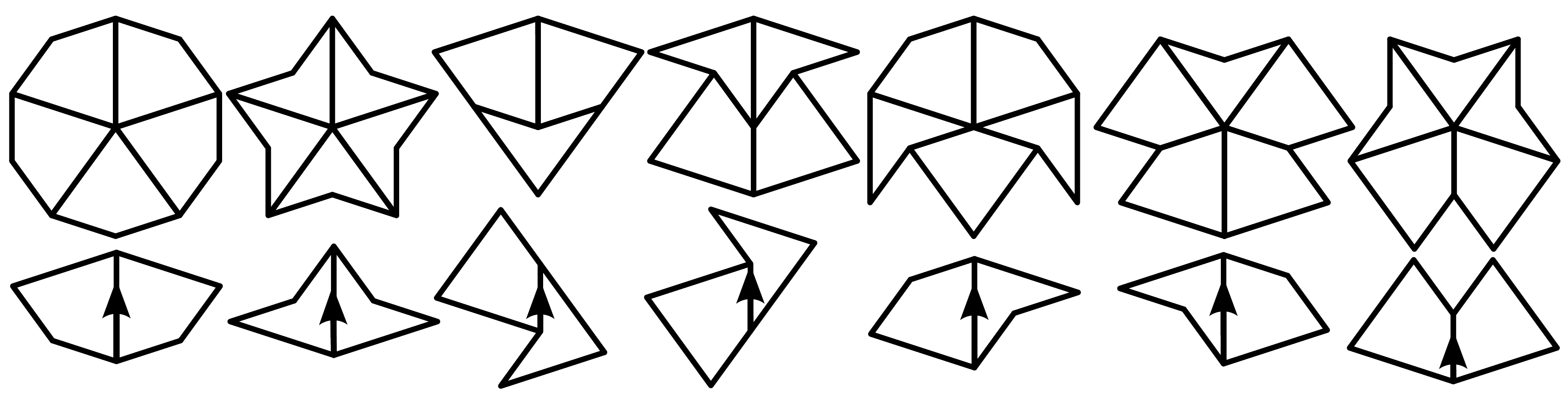} \caption{Vertex and edge types}

\label{fig:VaETypes}

\end{figure}

The $\partial_1$ boundary map, with ordered bases as listed above, is represented as a matrix by: \[ \left( \begin{array}{ccccccc}
5  & 0  & 0  & 0  & 0  & 0  & 0\\
0  & -5 & 0  & 0  & 0  & 0  & 0\\
-1 & 0  & -1 & 1  & 0  & 0  & 0\\
0  & 1  & 1  & -1 & 0  & 0  & 1\\
1  & 0  & 1  & -1 & -1 & -1 & 0\\
-1 & 0  & 0  & 0  & 1  & 1  & -2\\
0  & -2 & 0  & 0  & 1  & 1  & -1   \end{array} \right)\]

\noindent
The first column, for example, is $(5,0,-1,0,1,-1,0)^T$ since at any sun vertex there are $5$ incoming $E1$ edges, at an ace there is an outgoing $E1$, at a jack there is an incoming $E1$ and at a queen there is an outgoing $E1$.

Some simple calculations using the Smith normal form show that $H_0^{\mathcal{A}} \cong \mathbb{Z}^2 \oplus \mathbb{Z}_5$, with basis of the free abelian part given by $e_1=\text{sun}$ and $e_2=\text{star}$ and the torsion part generated by $t=\text{sun}+\text{star}-\text{queen}$. The torsion element is illustrated in Figure \ref{fig:S0TorBdy}; we have that $5t+\partial_1(-E1+E2-E4-2E7)=0$.

For the $\omega_0$ substitution map, we first note that each sun vertex lies on one of a star, queen or king vertex in the supertiling, each star lies on a sun and each queen lies on a deuce. Hence, up to homology, we compute that $\omega_0(e_1) \simeq 3e_1-e_2 +2t$, $\omega_0(e_2) \simeq e_1$ and $\omega_0(t)=t$. We see that the $\omega_0$ is an isomorphism on $H_0^\mathcal{A}$ and hence $H_0^{\mathcal{T}_\omega} \cong \mathbb{Z}^2 \oplus \mathbb{Z}_5$.

The degree one group $H_1^{\mathcal{A}}$ is generated by $E3+E4$, that is, the cycle which trails the bottom of the dart tiles (with appropriate orientation). This is illustrated in Figure \ref{fig:Penrose} in red. The analogous $1$-cycle trailing the bottoms of the superdarts in the supertiling is illustrated in green. It is evident from the figure that the two cycles (with opposite orientations) are homologous via the locally defined $2$-chain given as the boundary of the dart tiles. We see that $\omega_1$ is an isomorphism on $H_1^{\mathcal{A}}$ and so $H_1^{\mathcal{T}_\omega} \cong \mathbb{Z}$. As usual, $H_2^{\mathcal{T}_\omega} \cong \mathbb{Z}$ is generated by a fundamental class for $\mathbb{R}^2$ by assigning coefficient one to each $2$-cell.

Every star and sun vertex has five-fold rotational symmetry. One may consider the modified chain complexes, where one allows only multiples of $5$ for the coefficients of the star and sun vertices. One replaces the boundary map $\partial_1$ with the modified version which has entries $1$ and $-1$ instead of $5$ and $-5$ for the edge types terminating and initiating from the star and sun vertex types, respectively. One may repeat the above calculations for these modified groups. The resulting homology groups are Poincar\'{e} dual to the \v{C}ech cohomology groups $\check{H}^\bullet(\Omega^0)$ and, indeed, we calculate that $H_0^{\hat{\mathcal{T}_\omega}} \cong \mathbb{Z}^2$, $H_1^{\hat{\mathcal{T}_\omega}} \cong \mathbb{Z}$ and $H_2^{\hat{\mathcal{T}_\omega}} \cong \mathbb{Z}$. As in the discussion at the end of Chapter \ref{chap: Poincare Duality for Pattern-Equivariant Homology}, the modified and unmodified groups fit into an exact sequence which at degree zero is \[0 \rightarrow \mathbb{Z}^2 \underset{f}{\rightarrow} \mathbb{Z}^2 \oplus \mathbb{Z}_5 \underset{g}\rightarrow \mathbb{Z}_5^2 \rightarrow 0\]

\noindent
where $f(a,b)=(a+2b,a-3b,[4a+3b]_5)$ and $g(a,b,[c]_5)=([a+c]_5,[b+c]_5)$.

\begin{figure}
\centering

\includegraphics[width=4in]{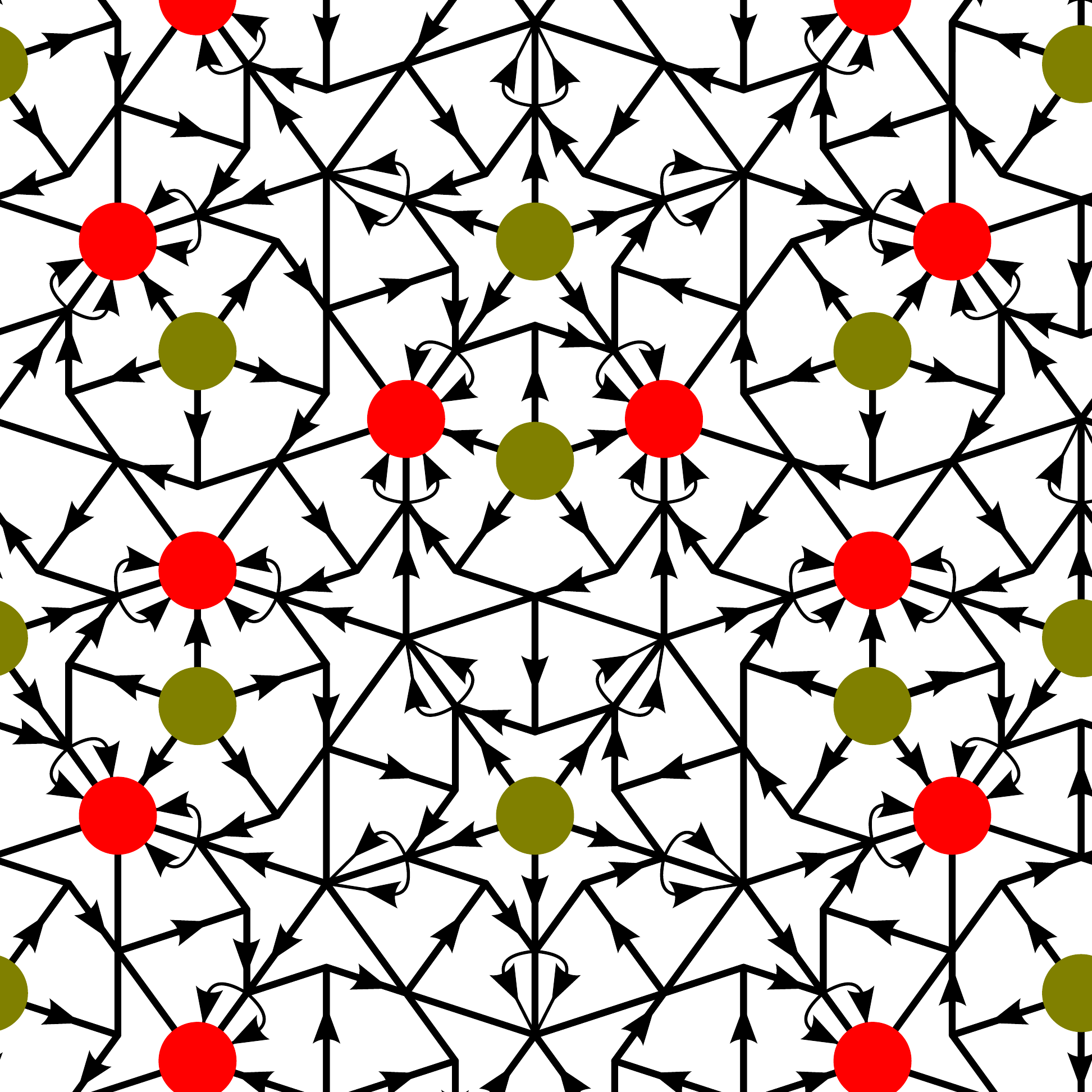}
\caption{Degree $0$ torsion element $t$ with $5t+\partial_1(-E1+E2-E4-2E7)=0$}
\label{fig:S0TorBdy}

\includegraphics[width=4in]{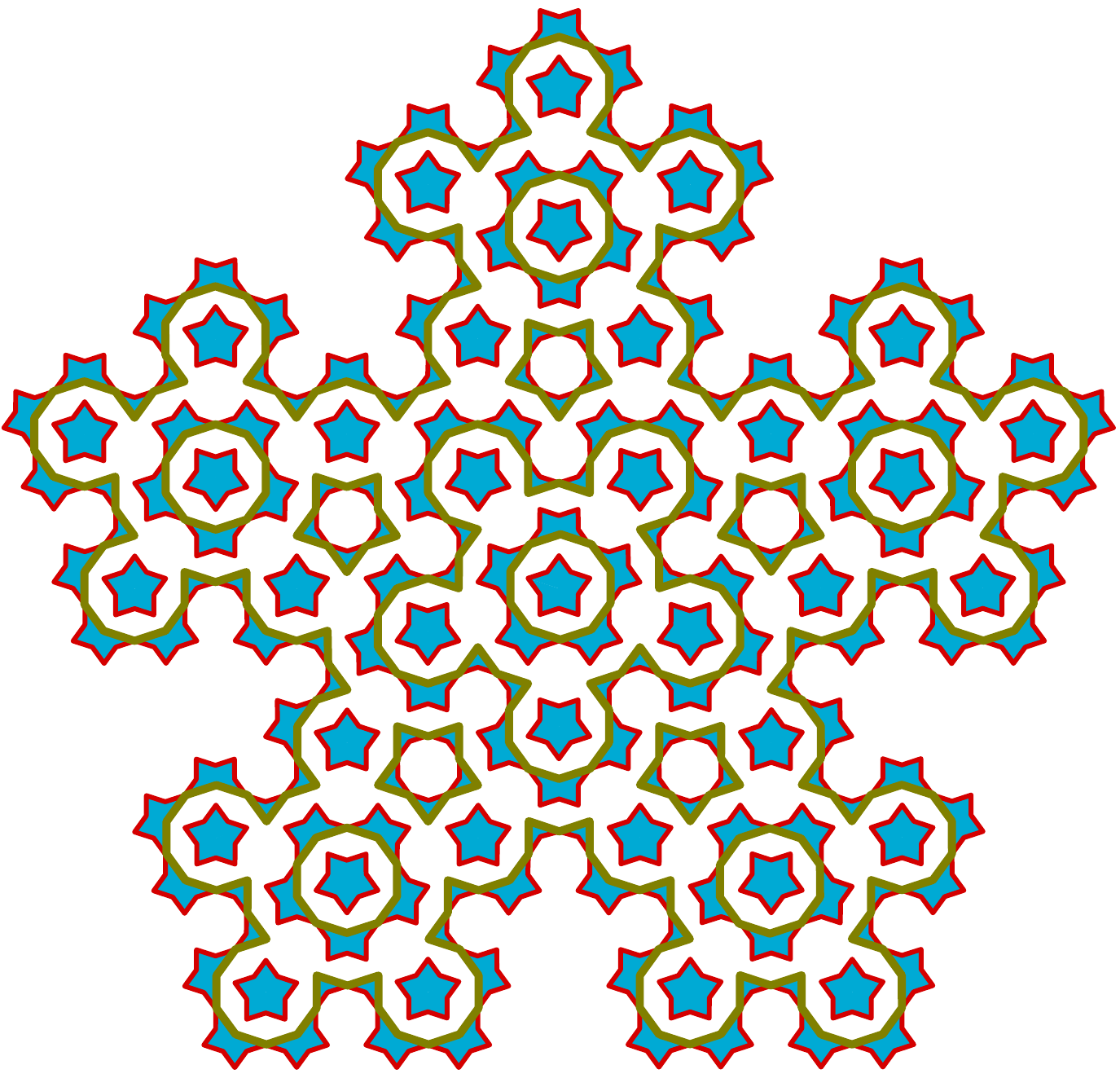}
\caption{Generating elements of $H_1^\mathcal{A}$ for the tiling (red) and supertiling (green) which are homologous, with opposite orientations, by the $2$-chain defined by the dart tiles (blue)}
\label{fig:Penrose}
\end{figure}

\subsection{Point Patterns on a Free Group} Consider the free group on two generators $F(a,b)$ generated by $a,b$. We may consider its associated Cayley graph $C$ (with these generators), which may be given a metric by extending the word metric on the $0$-cells to the $1$-skeleton (giving each edge unit length). The group $F(a,b)$ naturally acts on this space as a group of isometries.

One may construct ``periodic'' point patterns of this space in the following way. Consider a group $G$ along with a homomorphism $f \colon F(a,b) \rightarrow G$. Then the kernel of this map determines a point pattern of the Cayley graph (contained in the vertex set). If the group $G$ is finite then $ker(f)$ is of finite index and so the group (which of course is $ker(f)$) of isometries preserving the point pattern acts cocompactly on $C$.

We shall construct a hierarchical point pattern of a similar nature to the dyadic solenoid construction of Example \ref{ex: DS}. For each $i \in \mathbb{N}_0$ consider the map $f_i \colon F(a,b) \rightarrow \mathbb{Z}_{2^i}$ defined by sending $a,b$ to $[1]_{2^i}$. This defines point patterns $P_i$ on $C$ by marking points of the kernel. For each $i$, define a tile of $C$ as a ``cross'' of $1$-cells connecting the centre of the cross to $a^i$, $b^i$, $a^{-i}$ and $b^{-i}$. By placing such a tile with centre at each point of the point pattern, we form a tiling $T_i$; each $1$-cell is covered by precisely one tile, although tiles may overlap on more than just their boundaries. Each of the point patterns locally determines the corresponding tiling and vice versa. Note that the patch of tiles at a point of $T_i$ is determined by the patch of tiles at that point of $T_{i+1}$, so we may consider the collection $T_\omega=(T_0,T_1,\ldots )$ as a (non-recognisable) hierarchical tiling.

We have that the approximant groups $H_0^{\mathcal{A}_i}(C)$ are isomorphic to $\mathbb{Z}$, as represented by assigning coefficient $1$ to the vertices of $ker(f_i)$. Other $0$-chains whose coefficients are locally determined by the tiling are homologous to these by moving the non-zero coefficients of the $0$-chain locally, for example, to the nearest element of $ker(f_i)$ to their right-hand side (which, again, will be a $1$-chain whose coefficients are locally determined by the tiling). The connecting maps $\omega_0^i \colon \mathbb{Z} \rightarrow \mathbb{Z}$ are the times two maps, since a generator for $H_0^{\mathcal{A}_i}$ is mapped to the chain assigning coefficient one to the centres and boundaries of each cross in $T_{i+1}$, hence $H_0^{\mathcal{T}_\omega} \cong \mathbb{Z}[1/2]$. The group $H_1^{\mathcal{T}_\omega}$ will be free abelian, and it is not too hard to see that it has countable many generators. The space $\overline{(C,d_{\mathcal{T}_\omega})}$ is connected but not locally path-connected, and is such that the Cayley graph $C$ maps bijectively and continuously onto each path component (that is, the space has leaves homeomorphic to $C$). We do not obtain Poincar\'{e} duality here, since we do not have Poincar\'{e} duality for the space $C$; Poincar\'{e} duality fails locally at the vertices where the compactly supported cohomology is isomorphic to $\mathbb{Z}^3$.

\subsection{A ``Regular'' Pentagonal Tiling of the Plane} \label{subsect: Pent}

In \cite{Pent} Bowers and Stephenson define an interesting tiling coming from a combinatorial substitution of pentagons. Of course, one may not tile $(\mathbb{R}^2,d_{euc})$ by regular pentagons. However, the combinatorial substitution defines a CW-decomposition of $\mathbb{R}^2$ for which each $2$-cell has $5$ bounding $0$-cells and $1$-cells. One may assign the unit edge metric on the $1$-skeleton and extend this metric to the faces so that each face is isometric to a regular pentagon. Defining the distance between two points as the length of the shortest path connecting them, the resulting metric space (which is homeomorphic to $\mathbb{R}^d$) can be tiled by the regular pentagon. The tiling shares many properties of usual substitution tilings of Euclidean space, and in fact the techniques as described here are directly applicable to this tiling.

An interesting feature of this example is that the group of global isometries of the metric space $(P,d_P)$ constructed above is rather small. Of course, the metric space is locally non-Euclidean at the vertices of the tiling, and as one can easily imagine this allows the tiling, that is, the locations of the regular pentagons in $P$, to be determined by the metric. One sees that isometries of $K$ correspond precisely to combinatorial automorphisms preserving the CW-decomposition of $K$. However, the tiling is aperiodic in the following sense: $P/Iso(T)=P/Iso(P)$ is non-compact. Indeed, the substitution is recognisable, one may invert the subdivision process using only local information. Any isometry of $P$ must preserve this hierarchy of tilings and so there can be no cocompact group action preserving the space. Hence, one may not define a sensible tiling metric in terms of preservation of large patches by global isometries of the space $P$ as is usual, for example, for tilings of $(\mathbb{R}^d,d_{euc})$. However, this issue does not cause any complications here, we may define the tiling metric (or uniformity) in terms of partial isometries.

We shall let $\mathcal{S}$ be the collection of partial isometries between open sets of $P$ which preserve orientation. We define a hierarchical tiling on $(P,d_P)$ by setting $T_0=T$ and $T_{i+1}$ to be the unique tiling of $(P,d_P)$ of super$^{i+1}$-tiles (which are formed from glueing together $6^{i+1}$ regular pentagons, as described by the substitution rule) which subdivides to $T_i$. For $\mathbb{R}$-coefficients, since the coefficient group is divisible, one may take the CW-decomposition of $T_0$ to be that of the original tiling and the analogous such CW-decompositions for each subsequent $T_i$. We see that there are two vertex types, those which lie at the meeting of three or of four pentagons, one edge type and one face type. Since there is an equivalence of the edge type with itself reversing orientations, the approximant chain complex is $0 \leftarrow \mathbb{R}^2 \leftarrow 0 \leftarrow \mathbb{R} \leftarrow 0$. One may compute that the substitution maps induce isomorphisms in each degree and so $H_0^{\mathcal{T}_\omega} \cong \mathbb{R}^2$, $H_1^{\mathcal{T}_\omega} \cong 0$ and $H_2^{\mathcal{T}_\omega} \cong\mathbb{R}$.

To compute the groups over $\mathbb{Z}$-coefficients, one needs to take a finer CW-decomposition of $(P,d_P)$ for each tiling. Of course, each such CW-decomposition will be analogous for each tiling. One should include the centres of pentagons and edges in the $0$-skeleton of $T_0$ since the tiling has local symmetry at these points. We compute that the approximant homologies are $H_0^{\mathcal{A}} \cong \mathbb{Z}^2$, $H_1^{\mathcal{A}} \cong 0$ and $H_2^{\mathcal{A}} \cong \mathbb{Z}$. As usual, $H_2^{\mathcal{T}_\omega}$ is generated by the fundamental class of $P$, so we just need to compute the substitution map on $H_0^{\mathcal{T}_\omega}$. 

The matrix has eigenvalues $1$ and $6$ with eigenvectors which span $\mathbb{Z}^2$. Hence, we have that $H_0^{\mathcal{T}_\omega} \cong \mathbb{Z} \oplus \mathbb{Z}[1/6]$, $H_1^{\mathcal{T}_\omega} \cong 0$ and $H_2^{\mathcal{T}_\omega} \cong \mathbb{Z}$.

One may repeat these calculations for the modified homology groups. We calculate them as being isomorphic to the unmodified groups. As a result of the substitution being recognisable, we have that the inverse limit space $\Omega^{\mathcal{T}_\omega}$ is homeomorphic to the tiling space $\Omega^\mathcal{T} \cong \overline{(P,d_\mathcal{T})}$ of $T$. Hence, by Poincar\'{e} duality, we have that $\check{H}^0(\overline{(P,d_\mathcal{T})}) \cong \mathbb{Z}$, $\check{H}^1(\overline{(P,d_\mathcal{T})}) \cong 0$ and $\check{H}^2(\overline{(P,d_\mathcal{T})}) \cong \mathbb{Z} \oplus \mathbb{Z}[1/6]$.

We note that these topological invariants may be thought of as invariants for the metric space $(P,d_P)$. One may consider a metric on the pairs $(P,x)$ where $x \in P$ using a Gromov-Hausdorff type metric, see \cite{Calegari}. This metric will be equivalent to that of the tiling metric for tilings centred at points $x \in P$.

\chapter{A Spectral Sequence for the Rigid Hull of a Two-Dimensional Tiling}
\label{A Spectral Sequence for the Rigid Hull of a Two-Dimensional Tiling}

Let $T$ be a tiling of $\mathbb{R}^d$ with FLC with respect to rigid motions. To simplify notation we shall write $\Omega^{\text{rot}}:=\Omega^{\mathcal{T}_{\text{rot}}}$ for the rigid hull. Then $SO(d)$ acts on $\Omega^{\text{rot}}$ by rotations. Let $f \colon \Omega^{\text{rot}} \rightarrow \Omega^0 := \Omega^{\mathcal{T}_0}$ denote the quotient map. It turns out that $f$ is ``nearly'' a fibration with fibres $SO(d)$. The problem lies in the fact that this action is not always free: there may be tilings in the hull $\Omega^{\text{rot}}$ which are left fixed by some non-trivial rotations and at these points the map fails to be a fibration.

It was shown in \cite{BDHS} how one may modify the Serre spectral sequence for $f$ so as to obtain a spectral sequence converging to the cohomology of $\Omega^{\text{rot}}$ given the cohomology of $\Omega^0$ and the number of rotationally invariant tilings in the hull. In more detail, the spectral sequences applies to any FLC (with respect to rigid motions) tiling of $(\mathbb{R}^2,d_{euc})$ coming from a recognisable substitution for which $\Omega^0$ contains exactly $m$ tilings with $n$-fold rotational symmetry and no other rotationally symmetric tilings. A ``na\"{\i}ve Serre spectral sequence'', since the fibre is $SO(2) \cong S^1$, would have $E_2$ terms $E_2^{p,q} \cong \check{H}^p(\Omega^0;\check{H}^q(S^1)) \cong \check{H}^p(\Omega^0)$ for $q=0,1$ and be trivial otherwise. The spectral sequence of \cite{BDHS} modifies and repairs this with the addition of extra torsion $\mathbb{Z}_n^{m-1}$ in $E_2^{1,1}$.

We shall present here a similar spectral sequence applicable to any FLC (with respect to rigid motions) tiling of $(\mathbb{R}^2,d_{euc})$. In some sense, it is similar to that of \cite{BDHS} in that the inputs of the $E^2$ page are the \v{C}ech cohomology groups of $\Omega^0$ with the exception of a group altered with the addition of extra torsion which seems to reflect the existence of rotationally invariant tilings in the hull.

The $E^2$ page is not the end of the story, however, there is one potentially non-trivial homomorphism $d^2_{2,0}$ after the second page of the spectral sequence. We shall show that this map has a particularly nice description. In fact, the approach dovetails excellently with the methods outlined in Chapter \ref{chap: Poincare Duality for Pattern-Equivariant Homology} and we shall show how one may easily compute the \v{C}ech cohomology of the rigid hull of the Penrose tiling given the calculations conducted in \ref{subsect: The Penrose Tiling}.

\section{A Basic Introduction to Spectral Sequences}

Although spectral sequences are notoriously complicated gadgets, the ideas motivating them are rather elementary. Suppose that we have some topological space $X$ and a subspace $A \subset X$. We may consider this as a \emph{filtration} of the space $X$, with the space $A$ at an intermediate level. There is a well-known long exact sequence which one may use to compute the homology of $X$ in terms of the homology of $A$ and the pair $(X,A)$. The maps of this long exact sequence are determined by the way in which the subspace $A$ sits inside of $X$. Then, up to some extension problems, the homology of $X$ is determined by this information.

Suppose that we have a more complicated filtration with several choices of subspace. Then keeping track of the interactions of the pieces becomes more complicated and a more sophisticated gadget than a long exact sequence is required. A spectral sequence is an algebraic tool designed to do this job, described by J.\ F.\ Adams as ``\ldots like an exact sequence, but more complicated''. It should be emphasised that, like short exact sequences, they are a tool of homological algebra and need not be derived from topological considerations, despite this being their original motivation. They may be defined (for example) given a filtration of an algebraic object of interest; loosely speaking, when something is known about how the elements of this filtration ``fit together'', the spectral sequence induced by the filtration may provide information about the algebraic object itself (although, occasionally, information about the filtration or its elements can be derived from knowledge of the algebraic object in question).

We do not have the space here to give a thorough introduction to spectral sequences. We refer the reader to \cite{McCleary} for more information, or to \cite{HatcherSpSe} for a particularly accessible introduction to them. We shall at least describe the general form of a spectral sequence: it consists of a sequence of \emph{pages} $E^r_{\bullet,\bullet}$, one for each $r \in \mathbb{N}_0$, each such page given as a two-dimensional array of, say, $R$-modules $E^r_{p,q}$, one for each $(p,q) \in \mathbb{Z} \times \mathbb{Z}$. In many cases the non-trivial elements of $E^r_{\bullet, \bullet}$ are contained in the upper right quadrant $\mathbb{Z}_{\geq 0} \times \mathbb{Z}_{\geq 0}$. There are \emph{differentials} $d^r_{p,q} \colon E^r_{p,q} \rightarrow E^r_{p-r,q+r-1}$ for each page (our spectral sequence is of \emph{homological type}) and are required to satisfy $d^r \circ d^r =0$. So one may form homology groups at each $(p,q) \in \mathbb{Z} \times \mathbb{Z}$ and, indeed, these homology groups should correspond to the entries of the next page. In many cases, at any coordinate, the incoming and outgoing arrows $d^r$ are eventually zero; it is easy to see that this happens, for example, when the non-trivial modules are contained in the upper right quadrant. In this case the groups $E_{p,q}^r$ are eventually stable at each $(p,q) \in \mathbb{Z} \times \mathbb{Z}$ for sufficiently large $r$ (possibly depending on $(p,q)$) and so one has groups $E^\infty_{p,q}$. These stable groups are of primary interest: up to extension problems, these are what identify the algebraic object of interest.

Take, for example, a chain complex of $R$-modules $C_\bullet$ which one may like to compute the homology of. There are many instances where such a complex comes with the extra structure of a \emph{bounded filtration}, that is, subcomplexes $0 = F_{-1} C_\bullet \leq F_0 C_\bullet \ldots \leq F_n C_\bullet = C_\bullet$. Define $G_p C_\bullet:= F_p C_\bullet / F_{p-1} C_\bullet$, which is a chain complex with the obvious differential induced from that of $C_\bullet$. Then there is a spectral sequence associated to the filtered complex with first page given by $E_{p,q}^1:= H_{p+q}(G_p C_\bullet)$. The differentials of the first page are defined as follows. Given $[\sigma] \in H_{p+q}(G_p C_\bullet)$, it may be represented by some $\sigma \in F_p C_{p+q}$ and is such that $\partial(\sigma) \in F_{p-1}C_{p+q-1}$. One defines $\partial^1([\sigma]) = [\partial (\sigma)]$ (there is also an explicit description of the higher differentials, which we shall ignore for now). We have that the filtration of $C_\bullet$ induces filtrations $F_p H_k(C_\bullet)$ of the homology groups of $C_\bullet$, where $F_p H_k(C_\bullet)$ is given by taking those homology classes represented by an element of $F_p(C_k)$. Define $G_p H_k(C_\bullet) := F_p H_k(C_\bullet) / F_{p-1} H_k(C_\bullet)$. With some meticulous book-keeping, it is possible to show that the above spectral sequence converges to $E_{p,q}^\infty = G_p H_{p+q}(C_\bullet)$. That is, the diagonals of the $E^\infty$ page determine each of the homology groups up to a sequence of extension problems.

A famous example is given by the Serre spectral sequence. Let $f \colon X \rightarrow B$ be a fibration where $B$ is some (for simplicity) finite-dimensional CW-complex. The goal is to compute topological invariants of the total space $X$ in terms of invariants of the base space $B$ and fibre $F$ (which is uniquely determined up to homotopy equivalence for a path connected space), although, as mentioned above, occasionally information can be derived in the other direction. The CW-decomposition of $B$ induces a filtration of $X$. That is, one may define $X_p := f^{-1}(B^p)$, the pullback to the total space of the $p$-skeleton of $B$. This induces a filtration of the singular chain complex $S_\bullet(X)$ defined by $F_p(S_\bullet(X)) := S_\bullet(X_p)$ so that the first page of the spectral sequence associated to this filtration is $E_{p,q}^1 : = H_{p+q}(X_p,X_{p-1})$. It turns out that $H_{p+q}(X_p,X_{p-1}) \cong H_p(B^p,B^{p-1}) \otimes H_q(F)$, which are the cellular chain groups of $B$ over coefficients $H_q(F)$. In fact, the maps $d^1 \colon H_{p+q}(X_p,X_{p-1}) \rightarrow H_{p+q-1}(X_{p-1},X_{p-2})$ which define the second page of the spectral sequence correspond, at least ``locally'', to the cellular boundary maps for $B$. It follows from these considerations that there is a formula for the $E^2$ page of this \emph{Serre spectral sequence}, ``converging'' to $H_\bullet(X)$, whose $E^2$ page is given by $E^2_{p,q} \cong H_p(B,H_q(F))$, although the coefficients $H_q(F)$ need to be interpreted as a ``local-coefficient system'' over $B$ when $B$ is not simply connected.

\section{The Spectral Sequence for Two-Dimensional Tilings}

\begin{theorem} \label{thm: Spectral Sequence} Let $T$ be a cellular tiling of $(\mathbb{R}^2,d_{euc})$ with finite local complexity with respect to rigid motions. Then there is a spectral sequence (of homological type) converging to $H^{\mathcal{T}_{\text{rot}}}_\bullet(E^+(2)) \cong \check{H}^{3-\bullet}(\Omega^{\text{rot}})$ whose $E^2$ term is

\begin{tikzpicture}
  \matrix (m) [matrix of math nodes,
    nodes in empty cells,nodes={minimum width=5ex,
    minimum height=5ex,outer sep=-5pt},
    column sep=1ex,row sep=1ex]{
                &      &     &     & \\
          1     &  H_0^{\mathcal{T}_0}(\mathbb{R}^2) &  \check{H}^1(\Omega^0)  & \check{H}^0(\Omega^0) & \\
          0     &  \check{H}^2(\Omega^0)  & \check{H}^1(\Omega^0) &  \check{H}^0(\Omega^0)  & \\
    \quad\strut &   0  &  1  &  2  & \strut \\};
  \draw[thick] (m-1-1.east) -- (m-4-1.east) ;
\draw[thick] (m-4-1.north) -- (m-4-5.north) ;
\end{tikzpicture}

\end{theorem}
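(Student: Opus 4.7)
The plan is to produce the spectral sequence from a bounded filtration of the singular $\mathcal{T}_{\text{rot}}$E chain complex of $E^+(2)$, induced by the projection $p \colon E^+(2) \cong \mathbb{R}^2 \rtimes SO(2) \to \mathbb{R}^2$, $(b,A) \mapsto b$. First fix a CW-decomposition $X^\bullet$ of $\mathbb{R}^2$ (a barycentric subdivision of the tiling cell structure if necessary) so that every point of local rotational symmetry of $T$ lies in $X^0$ and so that $\mathcal{T}_0$ admits a cellular representation on $X^\bullet$. Setting $F_p := C_\bullet^{\mathcal{T}_{\text{rot}}}(p^{-1}(X^p))$ gives a bounded filtration of length three in the base, which produces a spectral sequence with $E^1_{p,q} = H_{p+q}^{\mathcal{T}_{\text{rot}}}(p^{-1}(X^p), p^{-1}(X^{p-1}))$ converging to $H_\bullet^{\mathcal{T}_{\text{rot}}}(E^+(2))$; the identification with $\check{H}^{3-\bullet}(\Omega^{\text{rot}})$ is furnished by the PE Poincar\'{e} duality of Chapter \ref{chap: Poincare Duality for Pattern-Equivariant Homology}, since $E^+(2)$ is an orientable $3$-manifold.

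The central computation is the identification of $E^1$. Applying excision (Lemma \ref{mv}) together with the tame deformation retractions of a neighbourhood of $p^{-1}(X^{p-1})$ onto itself (which may be chosen PE-equivariantly thanks to the cellular representation of $\mathcal{T}_{\text{rot}}$ inherited from $X^\bullet$), the group $E^1_{p,q}$ decomposes as a direct sum over $\mathcal{T}_0$-equivalence classes of $p$-cells $c$ of local PE homology on $p^{-1}(\mathring{c}) \cong \mathring{c} \times SO(2)$ relative to its topological boundary. On such a fibred neighbourhood the $\mathcal{T}_{\text{rot}}$-equivariance reduces to invariance under the stabiliser subgroup $C_{n_c} \leq SO(2)$ of the local patch at $c$, and a K\"{u}nneth-type argument then yields non-zero contributions only in $q \in \{0,1\}$. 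For $q=1$ the generating loop of $SO(2)$ is automatically $C_{n_c}$-invariant, and so the row $E^1_{\bullet, 1}$ is identified with the unmodified $\mathcal{T}_0$E cellular chain complex $C_\bullet^{\mathcal{T}_0}(\mathbb{R}^2)$; for $q=0$, the coefficient at a $0$-cell of rotational symmetry $n$ is forced to lie in $n\mathbb{Z}$, and so $E^1_{\bullet, 0}$ is the modified complex $C_\bullet^{\hat{\mathcal{T}}_0}(\mathbb{R}^2)$. A routine check shows that $d^1$ is the cellular PE boundary map in each case.

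Taking homology in $E^1$ then yields precisely the claimed $E^2$ page: by the Poincar\'{e} duality of Section \ref{Duality for Rotational Tiling Spaces}, $H_p^{\hat{\mathcal{T}}_0}(\mathbb{R}^2) \cong \check{H}^{2-p}(\Omega^0)$ for all $p$, giving the bottom row; the top row is $H_p^{\mathcal{T}_0}(\mathbb{R}^2)$, which coincides with $\check{H}^{2-p}(\Omega^0)$ in degrees $p=1,2$ (where the modified and unmodified PE homologies agree) and equals $H_0^{\mathcal{T}_0}(\mathbb{R}^2)$ at $p=0$. Convergence at $E^\infty$ is automatic from the boundedness of the filtration, and the $2 \times 3$ shape of the $E^2$ page shows that the only potentially non-zero higher differential is $d^2 \colon E^2_{2,0} \to E^2_{0,1}$, whose description is deferred. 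The main obstacle in executing this plan is the careful local verification of the $E^1$ terms at rotationally symmetric vertices --- in particular checking that the PE equivariance, interacting with a $C_{n_v}$-invariant cellular refinement of the fibre direction, introduces the factor of $n_v$ in $E^1_{0,0}$ while leaving $E^1_{0,1}$ untouched, and that the induced boundary maps mesh correctly with the PE cellular boundary on $\mathbb{R}^2$.
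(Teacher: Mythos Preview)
Your proposal is correct and follows essentially the same route as the paper: filter $E^+(2)$ by the preimages of the tiling skeleta under the projection to $\mathbb{R}^2$, identify the $E^1$ rows via excision and local analysis on $c \rtimes S^1$ as the modified (for $q=0$) and unmodified (for $q=1$) cellular $\mathcal{T}_0$E chain complexes, and then invoke Poincar\'{e} duality for $\Omega^0$ to obtain the displayed $E^2$ page. The paper's argument is slightly more hands-on in its description of the relative groups (paths in cylinders, winding numbers on fibres) where you phrase things in terms of stabiliser invariance and a K\"{u}nneth argument, but the substance is identical.
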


Note that the above spectral sequence is of homological type. The description of the $E^2$ page will follow from considerations of PE \emph{homology} groups, the \v{C}ech cohomology groups appear in the above theorem simply as Poincar\'{e} duals to these groups. The group in position $(0,1)$, as we saw in Chapter \ref{chap: Poincare Duality for Pattern-Equivariant Homology}, is an extension of $\check{H}^2(\Omega^0)$ over a torsion group defined in terms of the number of rotationally invariant tilings of the hull.

\begin{proof} As we have noted, the map $f \colon \Omega^{\text{rot}} \rightarrow \Omega^0$ is not a fibration. However, the spectral sequence will result from consideration of a certain fibration. Let $\pi \colon E^+(2) \rightarrow \mathbb{R}^2$ denote the fibre bundle with total space the Lie group $E^+(2)$ of rigid motions of the plane, $\pi$ is defined by projecting an orientation preserving isometry of the plane to its translation part. Topologically, this is a trivial fibre bundle with fibre $S^1$.

We shall assume that our tiling $T$ has a CW structure for which points of local rotational symmetry are contained in the $0$-skeleton. Any FLC (with respect to rigid motions) tiling is MLD to such a tiling, so this causes no loss in generality.

Recall the construction of the pattern $\mathcal{T}_{\text{rot}}$ on $E^+(2)$ (see Definition \ref{def: rot tiling pattern}) which defines the $\mathcal{T}_{\text{rot}}$E homology groups $H_\bullet^{\mathcal{T}_{\text{rot}}}(E^+(2))$ and the hull $\Omega^{\text{rot}}$. The spectral sequence will be defined similarly to the Serre spectral sequence. We consider a filtration of $E^+(2)$ into pieces $X_p:=\pi^{-1}(T^p)$ where $T^p$ denotes the $p$-skeleton of $T$. The idea is to restrict the $\mathcal{T}_{\text{rot}}$E homology groups to these subspaces. We have that $X_0$ is a disjoint union of circles, each associated to some vertex of the tiling, $X_1$ is a union of cylinders, one for each edge of the tiling and glued at the circles lying over the vertices of the tiling, and $X_2$ is all of $E^+(2)$. This simple description allows for simple descriptions of the relative $\mathcal{T}_{\text{rot}}$E homology groups and the boundary maps between them.

The pairs of subspaces $(X_p,X_{p-1})$ are ``good'' in a PE sense, see Lemma \ref{def} and the subsequent discussions. Although not necessary, one could argue using cellular groups here, by adding additional cells to $E^+(2)$, as described in \cite{Sadun2}. Either way, this allows us to analyse the homology groups $H_\bullet^{\mathcal{T}_{\text{rot}}}(X_p,X_{p-1})$ easily by excision to each $c \rtimes S^1$ for a cell $c$ and considering the relative homology at each such region.

So we shall consider the spectral sequence which converges to the $\mathcal{T}_{\text{rot}}$E homology $H_\bullet^{\mathcal{T}_{\text{rot}}}(E^+(2))$ with the filtration induced by the filtration $X_p$ of $E^+(2)$. So one must consider the relative groups $H_{p+q}^{\mathcal{T}_{\text{rot}}}(X_p,X_{p-1})$ which give the entries $E_{p,q}^1$ of the first page. The boundary maps $d^1$ of the first page take a chain of the relative group $H_{p+q}^{\mathcal{T}_{\text{rot}}}(X_p,X_{p-1})$ and applies the usual boundary operator to it, yielding a chain of $H_{p+q-1}^{\mathcal{T}_{\text{rot}}}(X_{p-1},X_{p-2})$.

Our claim is that the $E^2$ page, given as the homology of the $E^1$ page by $d^1$, has bottom row corresponding to the modified PE cellular chain complex for $\mathcal{T}_0$ and has top row the PE cellular chain complex for $\mathcal{T}_0$. That is, we have a chain isomorphism for the bottom row $E^1_{p,0}$:
\vspace{5mm}

\noindent
\xymatrix{0 & H_0^{\mathcal{T}_{\text{rot}}}(X_0) \ar[l] \ar[d]& H_1^{\mathcal{T}_{\text{rot}}}(X_1,X_0) \ar[l] \ar[d]& H_2^{\mathcal{T}_{\text{rot}}}(X_2,X_1) \ar[l] \ar[d]& 0 \ar[l] \\
0 & C^{\hat{\mathcal{T}_0}}_0(\mathbb{R}^2)  \ar[l] & C^{\hat{\mathcal{T}_0}}_1(\mathbb{R}^2) \ar[l] & C^{\hat{\mathcal{T}_0}}_2(\mathbb{R}^2) \ar[l] & 0 \ar[l] }

\vspace{5mm}

\noindent
and top row $E^1_{p,1}$:

\xymatrix{0 & H_1^{\mathcal{T}_{\text{rot}}}(X_0) \ar[l] \ar[d]& H_2^{\mathcal{T}_{\text{rot}}}(X_1,X_0) \ar[l] \ar[d]& H_3^{\mathcal{T}_{\text{rot}}}(X_2,X_1) \ar[l] \ar[d]& 0 \ar[l] \\
0  & C^{\mathcal{T}_0}_0(\mathbb{R}^2) \ar[l] & C^{\mathcal{T}_0}_1(\mathbb{R}^2) \ar[l]  & C^{\mathcal{T}_0}_2(\mathbb{R}^2) \ar[l] & 0 \ar[l]}
\vspace{5mm}

\noindent
where $C^{\mathcal{T}_0}_\bullet(\mathbb{R}^2)$ is the cellular $\mathcal{T}_0$E chain complex and $C^{\hat{\mathcal{T}_0}}_\bullet(\mathbb{R}^2)$ is the modified cellular $\mathcal{T}_0$E cellular complex (see Subsection \ref{Duality for Rotational Tiling Spaces}). The result then follows by Poincar\'{e} duality.

Let us first examine the lower $E^1_{p,0}$ row. The group $H_0^{\mathcal{T}_{\text{rot}}}(X_0)$ is generated by $0$-chains lying in the circular fibres above each vertex of the tiling. They must be such that there exists some radius $R$ for which, if two vertices have the same patch of tiles about them to radius $R$ up to rigid motion, this rigid motion induces an equivalence between the $0$-chains lying in these fibres. Note that, for example, if a vertex has rotational symmetry of order $n$ to radius $R$, then the $0$-chain at its fibre $S$ must be a multiple of $n$ times a usual generator of $H_0(S)$. We have an isomorphism $H_0^{\mathcal{T}_{\text{rot}}}(X_0) \cong C_0^{\hat{\mathcal{T}_0}}(\mathbb{R}^2)$ given by sending a $0$-chain to the cellular chain which assigns to a vertex of a tiling the sum of the coefficients of singular chains on its fibre.

The elements of $H_1^{\mathcal{T}_{\text{rot}}}(X_1,X_0)$ may be similarly described as a choice of oriented edge for each $1$-cell of the tiling in a way such that this assignment is PE to some radius. In more detail, for each $1$-cell $c$ of the tiling, we have the pair $(c \rtimes S^1, \partial c \rtimes S^1)$. The relative singular homology of this pair is generated by a path travelling from one end of the cylinder along $c$ to the other. For $\mathcal{T}_{\text{rot}}$E relative chains, we must choose such paths in each cylinder in a way which is equivariant with respect to the pattern to some radius. The boundary map simply takes the boundary of the $1$-chain in $X_0$. With the above identifications, this is precisely the same boundary map as in the modified complex.

The group $H_2^{\mathcal{T}_{\text{rot}}}(X_2,X_1)$ may be described as the group of PE assignments of oriented coefficients to the $2$-cells of $T$. The boundary of a $2$-chain has boundary in $(X_1,X_0)$ which, with the identifications above, corresponds to the usual cellular boundary map for $\mathcal{T}_0$E homology. We see that the bottom row of the spectral sequence with boundary map $d^1$ corresponds precisely to the modified $\mathcal{T}_0$E chain complex.

We now similarly geometrically examine the top row of $E^1$. We shall pay extra attention to the homology in degree $0$, which displays a distinct difference to the lower row, see Figure \ref{fig: Spectral Sequence}. Only a multiple of the usual degree-$0$ generator could be given for the fibres lying over the $0$-skeleton of rotationally symmetric points in the bottom row. For $H_1^{\mathcal{T}_{\text{rot}}}(X_0)$, however, note that one may realise a generator of $H_1(S^1)$ as a singular chain which is invariant under rotation of order $n$. We see that $H_1^{\mathcal{T}_{\text{rot}}}(X_0)$ may be described as the group of assignments of integers to each $0$-cell of $T$ for which there exists some $R$ such that, if two $0$-cells have the same patch of tiles to radius $R$, then they are assigned the same coefficient. There is no requirement here for $0$-cells of $n$-fold rotational symmetry to be given an integer some multiple of $n$ and we have an isomorphism $H_1^{\mathcal{T}_{\text{rot}}}(X_0) \cong C_0^{\mathcal{T}_0}(\mathbb{R}^2)$ given by simply calculating the winding number of the $1$-chain at the fibre of each vertex. The chains of $H_2^{\mathcal{T}_{\text{rot}}}(X_1,X_0)$ correspond to ``PE oriented cylinders'' and may be described as a PE assignment of integers to each $1$-cell of the tiling; the boundary map corresponds to the usual boundary map on $1$-cells. Similarly, $H_3^{\mathcal{T}_{\text{rot}}}(X_2,X_1)$ corresponds to the group of PE assignments of integers to each $2$-cell of $T$, again with the usual boundary map. Hence, the top row of the spectral sequence is given as the chain complex of $\mathcal{T}_0$E chains. The result now follows by Poincar\'{e} duality. \end{proof}

\begin{figure}
\begin{center}
\includegraphics[width = \textwidth]{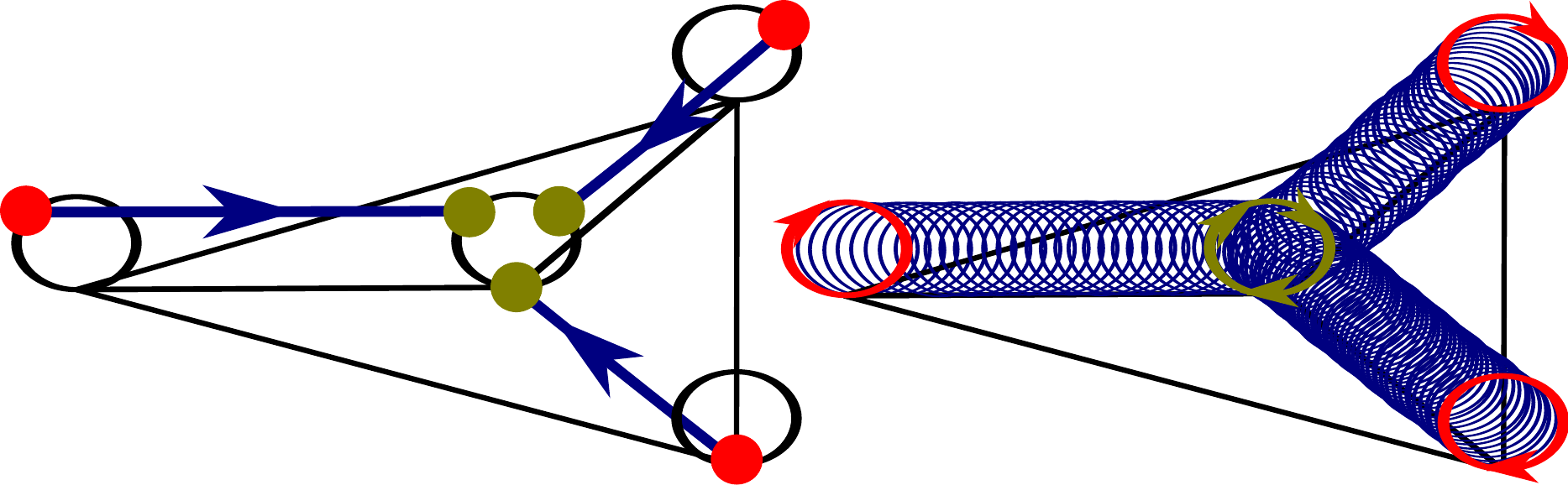}
\caption{Degree $0$ homology at $E^1_{0,0}$ and $E^1_{0,1}$}
\label{fig: Spectral Sequence}
\end{center}
\end{figure}

\subsection{The $d^2$ Map}

We now describe a method for evaluating the map $d^2$. Notice that the only (potentially) non-trivial map is $d^2_{2,0} \colon ker(d^1_{2,0}) \rightarrow coker(d^1_{1,1})$. As described in the above proof, we have that $ker(d^1_{2,0}) \cong H_2^{\hat{\mathcal{T}_0}}(\mathbb{R}^2) \cong \mathbb{Z}$ and $coker(d^1_{1,1}) \cong H_0^{\mathcal{T}_0}(\mathbb{R}^2)$, so we just need to find the image under $d^2$ of the element of $ker(d^1_{2,0})$ corresponding to the fundamental class for $\mathbb{R}^2$.

We assume throughout that our tiling is cellular and is such that points of local rotational symmetry are contained in the $0$-skeleton (which, as mentioned in the proof of Theorem \ref{thm: Spectral Sequence}, causes no loss of generality to us here). We may, in fact, assume that our $1$-cells are straight line segments and our $2$-cells are polygons. As in the previous chapter, we let a vertex type be an equivalence class (up to rigid motion) of a vertex of the tiling, along with each of the tiles which contain it in their boundary. An edge type is described similarly, where the $1$-cell is taken with those tiles intersecting the interior of the $1$-cell. A face type is an equivalence class of $2$-cell, taken with the tile which it is contained in.

For each face type of the tiling, pick some direction (that is, some embedding of it in $\mathbb{R}^2$, up to translation). Now, for each edge type, pick an ordering of the $2$-cells which bound it and a rotation which rotates the patch from an orientation in which the face type of the first chosen $2$-cell is in its chosen direction to an orientation for which the second $2$-cell agrees with its chosen direction. Notice that this number is well defined modulo $2 \pi$ since each $2$-cell of the tiling has trivial rotational symmetry.

We may now describe the image of $d^2$ in $H_1^{\mathcal{T}_{\text{rot}}}(X_0) \cong H_0^{\mathcal{T}_0}(\mathbb{R}^2)$. For each vertex type, pick some $2$-cell of it and traverse the $0$-cell in a clockwise fashion, crossing each $1$-cell with boundary containing the $0$-cell once. Let each crossing of a $1$-cell contribute its chosen rotation (or its negative, if it is traversed in the opposite direction) as described above to a sum of a single lap about the $0$-cell. The sum of these contributions provides a winding number for that vertex type (see Figure \ref{fig: Winding Numbers}). Indeed, the final orientation of the patch after performing the required rotations at each edge should be unchanged. Assign the vertex type the corresponding integer coefficient, with clockwise winding numbers contributing positive terms and anticlockwise ones negative terms. Name the corresponding pattern equivariant $0$-chain $\sigma$.

Perhaps an appealing alternative description is the following: define a non-zero vector field on the tiling away from the vertices of the tiling. It should only depend on the local configuration of tiles up to rigid motion; for example, we could ask that it should be determined at a point by the location of that point within the tile there up to rigid motion, or the patch of two nearest tiles when the point is close to a tile boundary. Then the coefficient assigned to each vertex is the index of the vector field at that vertex.

\begin{proposition} The PE $0$-chain $\sigma$ as constructed above is a generator of the image of $d^2$. \end{proposition}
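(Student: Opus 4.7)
The plan is to compute $d^2$ applied to the class of the fundamental cycle by the standard spectral sequence prescription: lift a representative, take its boundary, subtract a correction so the boundary lands in $X_0$, and then read off the resulting class in $E^2_{0,1}$. The choice of face-type directions in the construction of $\sigma$ is exactly the data needed to perform the lift, and the choice of edge-type rotations is exactly the data needed to construct the correction.

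First I would fix representatives. Under the identification $H_2^{\mathcal{T}_{\mathrm{rot}}}(X_2,X_1)\cong C_2^{\hat{\mathcal{T}_0}}(\mathbb{R}^2)$, a generator of $\ker(d^1_{2,0})\cong H_2^{\hat{\mathcal{T}_0}}(\mathbb{R}^2)\cong\mathbb{Z}$ corresponds to the assignment of $+1$ to every $2$-cell (as $2$-cells have trivial rotational stabiliser). For each $2$-cell $c$, the chosen direction of its face type singles out a preferred rotation $\theta_c\in S^1$, namely the one taking the chosen reference embedding of the face type to the actual placement of $c$. I set
\[
\tau_2 \;=\; \sum_{c} \;c\times\{\theta_c\} \;\in\; C_2^{\mathcal{T}_{\mathrm{rot}}}(E^+(2)),
\]
which is $\mathcal{T}_{\mathrm{rot}}$-equivariant because equivalent $2$-cells receive compatible rotations.

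Next I would compute the boundary. On the interior of each $1$-cell $e$ bordered by $2$-cells $c_1,c_2$, we get $\partial\tau_2\big|_e=\pm(e\times\{\theta_{c_1}\}-e\times\{\theta_{c_2}\})$. The chosen rotation $\rho_e\in\mathbb{R}/2\pi\mathbb{Z}$ associated to the edge type of $e$ satisfies $\rho_e\equiv \theta_{c_1}-\theta_{c_2}\pmod{2\pi}$, so it selects a preferred path $\gamma_e$ in $S^1$ from $\theta_{c_2}$ to $\theta_{c_1}$. Setting
\[
\tau_1 \;=\; \sum_{e} \;\pm\, e\times\gamma_e \;\in\; C_2^{\mathcal{T}_{\mathrm{rot}}}(X_1),
\]
the contribution $\pm e\times\partial\gamma_e$ in $\partial\tau_1$ cancels $\partial\tau_2\big|_{\mathrm{int}(e)}$ exactly, so $\partial(\tau_2-\tau_1)$ is supported in $X_0$. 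Moreover $\tau_1$ is PE because the assignment $e\mapsto\gamma_e$ depends only on edge types.

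The remaining step is to read off $d^2[\tau_2]=[\partial(\tau_2-\tau_1)]\in E^2_{0,1}$. At a vertex $v$, the $0$-chain $\partial(\tau_2-\tau_1)\big|_{\{v\}\times S^1}$ is the concatenation, as one circles $v$ clockwise, of the paths $\gamma_e$ (with the sign determined by the direction of crossing) for each $1$-cell $e$ with $v\in\partial e$. Because this $1$-chain in $\{v\}\times S^1$ is closed (it is part of a boundary), its class in $H_1(\{v\}\times S^1)\cong\mathbb{Z}$ is its winding number. Under the isomorphism $H_1^{\mathcal{T}_{\mathrm{rot}}}(X_0)\cong C_0^{\mathcal{T}_0}(\mathbb{R}^2)$ of the proof of Theorem~\ref{thm: Spectral Sequence}, which sends a fibrewise $1$-chain to its winding number on each fibre, this is exactly the coefficient that the construction of $\sigma$ assigns to the vertex type of $v$. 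Hence $d^2[\tau_2]=[\sigma]\in E^2_{0,1}\subset H_0^{\mathcal{T}_0}(\mathbb{R}^2)$, and as $\ker(d^1_{2,0})\cong\mathbb{Z}$ it follows that $\sigma$ generates the image of $d^2$.

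The only real obstacle is bookkeeping: verifying that the chosen rotations $\rho_e$ and the lifts $\theta_c$ are globally coherent and pattern-equivariant, and that the signs arising from orientations of $c\times\{\theta_c\}$ and $e\times\gamma_e$ combine so that ``clockwise traversal around $v$'' yields the winding number with the sign convention stated in the construction of $\sigma$. One should also check that the answer is independent of the choice of face-type directions and edge-type rotations: any two such choices differ by a PE $\mathcal{T}_0$-equivariant assignment of rotations to edges, which modifies $\tau_1$ by a cycle in $C_2^{\mathcal{T}_{\mathrm{rot}}}(X_1)$, changing $\partial(\tau_2-\tau_1)$ by a boundary in $C_0^{\mathcal{T}_0}(\mathbb{R}^2)$, and so leaves the class in $H_0^{\mathcal{T}_0}(\mathbb{R}^2)$ unchanged.
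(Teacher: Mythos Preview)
Your approach is essentially the same as the paper's: lift the fundamental class to a $\mathcal{T}_{\text{rot}}$-equivariant $2$-chain in $E^+(2)$ using the chosen face-type directions, correct its boundary in $X_1$ using $2$-chains determined by the chosen edge-type rotations, and identify the residual $1$-chain over each vertex with the winding number defining $\sigma$. Your write-up is somewhat more explicit in writing down $\tau_2$ and $\tau_1$ as formulae, and you additionally sketch the independence-of-choices check, which the paper leaves implicit; there is a small slip where you call $\partial(\tau_2-\tau_1)\big|_{\{v\}\times S^1}$ a ``$0$-chain'' (it is a $1$-chain in the fibre, whose class in $H_1(\{v\}\times S^1)$ gives the coefficient of $v$ in $C_0^{\mathcal{T}_0}$), but the surrounding text makes clear you understand this.
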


\begin{proof} The proof of this proposition follows from our identification of the $E^1$ page of the spectral sequence to the $\mathcal{T}_0$E chain complexes, as in the above proof, and an examination of the definition of the map $d^2$.

To define $\sigma$, a direction of each face type is chosen. This defines a generator $\psi$ of the kernel of $d^1_{2,0} \colon H_2^{\mathcal{T}_{\text{rot}}}(X_2,X_1) \rightarrow H_1^{\mathcal{T}_{\text{rot}}}(X_1,X_0)$ as follows. Firstly, for each face type $c$, pick a generator of $H_2(c,\partial c) \cong \mathbb{Z}$ agreeing with the standard orientation of $\mathbb{R}^2$. For a $2$-cell $t \subset \mathbb{R}^2$, let the $2$-chain on $t \rtimes S^1$ (with boundary in $\partial t \rtimes S^1$) be given by pulling back the $2$-chain from the face type to $(t,-\theta)$, where the $2$-cell there is found in direction $\theta$ relative to its chosen direction. Then with our identification of $d^1_{2,0} \colon H_2^{\mathcal{T}_{\text{rot}}}(X_2,X_1) \rightarrow H_1^{\mathcal{T}_{\text{rot}}}(X_1,X_0)$ with the cellular boundary map $\partial \colon C_2^{\mathcal{T}_0}(\mathbb{R}^2) \rightarrow C_1^{\mathcal{T}_0}(\mathbb{R}^2)$, we see that $\psi$ is $\mathcal{T}_{\text{rot}}$E and corresponds to a fundamental class for $\mathbb{R}^2$, that is, it is a generator of the kernel of $d^1_{2,0}$.

Now, the boundary of this $2$-chain may not be zero, but it certainly lies in $X_1$ and is homologous to a chain lying in $X_0$. Indeed, at some $1$-cell $c$ of the tiling, the boundary of $\psi$ in $c \rtimes S^1$ traverses $c \rtimes S^1$ between the bounding circles in two opposed directions. These two $1$-chains, running in opposed directions along the cylinder, are related by the rotation assigned to the edge type of the $1$-cell in the construction of $\sigma$. We see that there is a $2$-chain contained in $c \rtimes S^1$ whose boundary cancels the boundary of $\psi$ in $int(c) \rtimes S^1$ and leaves $1$-chains in $\partial c \rtimes S^1$ which correspond, at the fibre of each vertex, to the rotation contributed by the corresponding edge. It follows that this chain is homologous to $\sigma$. That is, $\partial(\psi)$ is homologous to the representation of $\sigma$ in the chain group $C_1^{\mathcal{T}_{\text{rot}}}(X_1)$. But by construction this element corresponds precisely to $d^2(\psi)$, where $\psi$ is the generator of $E_{2,0}^1$, and so the result follows.
\end{proof}

\subsection{Examples}

\subsubsection{Periodic Examples} We shall present computations for two periodic tilings, the periodic tiling of triangles and of squares. The corresponding tiling spaces $\Omega^{\text{rot}}$, as quotients of free and proper actions of the corresponding wallpaper groups (without reflections) on $E^+(2)$, are closed manifolds. The \v{C}ech cohomology groups $\check{H}^k(\Omega^{\text{rot}})$ computed are then simply the singular cohomology groups of these quotient manifolds.

Firstly, the triangle tiling. In Example \ref{ex: Periodic}, we computed that $H_0^{\mathcal{T}_0} \cong \mathbb{Z} \oplus \mathbb{Z}_2 \oplus \mathbb{Z}_3$. For the triangle (and square) tiling we have that $\Omega^0 \cong S^2$ and so the modified homology group in degree zero is $H_0^{\hat{\mathcal{T}_0}} \cong \mathbb{Z}$. We compute that the image of the generator of the $d^2$ map of the spectral sequence is $(1,1) \in \mathbb{Z}_2 \oplus \mathbb{Z}_3$, and so the torsion is killed on the $E^2$ page. Hence, $H^2(\Omega^{\text{rot}})  \cong \mathbb{Z}$, the remaining (co)homology groups being determined by Poincar\'{e} duality.

For the square tiling we have that $H_0^{\mathcal{T}_0} \cong \mathbb{Z} \oplus \mathbb{Z}_2 \oplus \mathbb{Z}_4$. In this case the image of the generator under $d^2$ is $(1,1) \in \mathbb{Z}_2 \oplus \mathbb{Z}_4$ so $H^2(\Omega^{\text{rot}}) \cong \mathbb{Z} \oplus \mathbb{Z}_2$.

\subsubsection{The Penrose Kite and Dart Tilings} Let $T$ be a Penrose kite and dart tiling of $\mathbb{R}^2$. In \ref{subsect: The Penrose Tiling} we calculated that $H^{\mathcal{T}_0}_0 \cong \mathbb{Z}^2 \oplus \mathbb{Z}_5$. The generators of the free parts here are given by the sun and star vertex types, and torsion group $\mathbb{Z}_5$ is generated by sun$+$star$-$queen. The spectral sequence at the $E^2$ page is as follows: 

\begin{tikzpicture}
  \matrix (m) [matrix of math nodes,
    nodes in empty cells,nodes={minimum width=5ex,
    minimum height=5ex,outer sep=-5pt},
    column sep=1ex,row sep=1ex]{
                &      &     &     & \\
          1     &  \mathbb{Z}^2 \oplus \mathbb{Z}_5 &  \mathbb{Z}  & \mathbb{Z} & \\
          0     &  \mathbb{Z}^2  & \mathbb{Z} &  \mathbb{Z}  & \\
    \quad\strut &   0  &  1  &  2  & \strut \\};
  \draw[thick] (m-1-1.east) -- (m-4-1.east) ;
\draw[thick] (m-4-1.north) -- (m-4-5.north) ;
\end{tikzpicture}

\begin{figure}

\includegraphics[width=\linewidth]{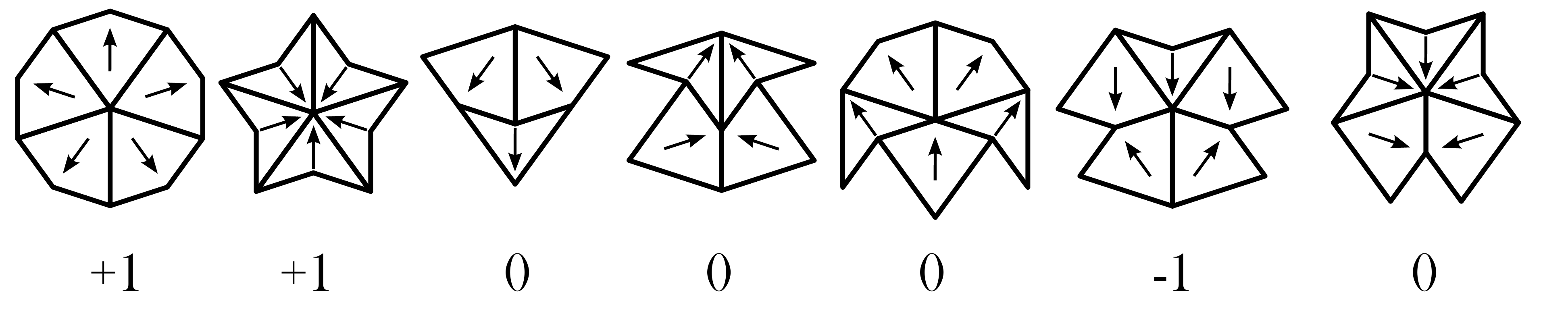} \caption{Winding numbers for the Penrose kite and dart tiling}

\label{fig: Winding Numbers}

\end{figure}

Let us now compute the image of $d^2$. We shall place the kite and dart face types so that they are facing ``upwards'', as indicated by the arrows of Figure \ref{fig: Winding Numbers}. We choose the rotations on each edge type going from the left to right tile, where the edge is pointing vertically upwards, by picking the smallest rotation taking the orientation of the left-hand tile to that of the right-hand tile. One may easily compute winding numbers, we see that the $\mathcal{T}_0$E chain that it represents is given by sun$+$star$-$queen. That is, the image of the generator under $d^2$ is precisely the generator of the torsion part of $H_0^{\mathcal{T}_0}$.

Hence, we see that the $E^\infty$ page loses the torsion group. Since the remaining groups are free-abelian there are no extension problems. It follows that $\check{H}^i(\Omega^{\text{rot}}) \cong \mathbb{Z}, \mathbb{Z}^2,\mathbb{Z}^3,\mathbb{Z}^2$ for $i=0,1,2,3$ respectively.

This is a new result for the cohomology of the Penrose tiling and is quite surprising. There is some justification to suggest that, when there are two tilings of $n$-fold symmetry in the hull of the tiling, there must necessarily be $n$-torsion in the degree $2$ \v{C}ech cohomology of the rigid hull. The argument goes as follows (see \cite{Sadun2}). There is a radius $R$ for which these two rotation invariant tilings are distinguishable. One may consider the approximant $K_R$ of $\Omega^{\text{rot}}$, which is a branched $3$-manifold defined as a quotient of $E^+(2)$ by identifying $f$ and $g$ whenever $f^{-1}(T)$ and $g^{-1}(T)$ agree at the origin to radius $R$. Then there are circular fibres of $K_R$ which represent the rotates of the central patches of these two tilings. These fibres are \emph{exceptional}, that is, they represent only $1/n$ of a full rotation of the nearby \emph{generic} fibres. So it seems that, by looping around one of the fibres clockwise and then around the other anticlockwise, this should represent an element of order $n$ in $H_1(K_R)$, and hence $H^2(K_R)$ has $n$-torsion by the universal coefficient theorem. Indeed, multiplying this chain by $n$ we may ``free'' the loops so that they may travel along the generic fibres and cancel each other out.

\begin{figure}
\begin{center}
\includegraphics[height=4in]{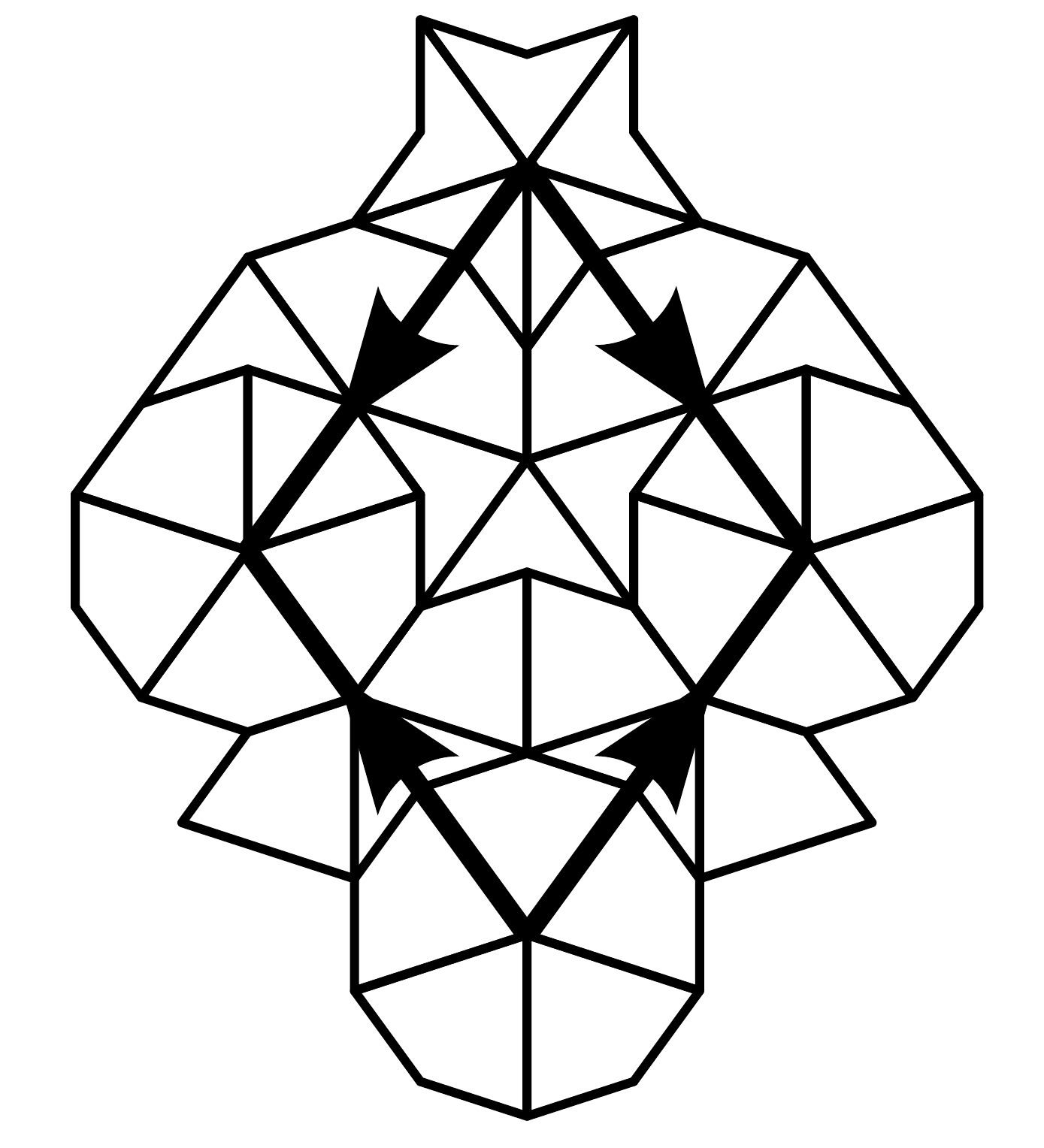} \caption{Homotopic exceptional fibres} \label{fig: Homotopic Exceptional Fibres}
\end{center}

\end{figure}

However, this argument only shows that it may be reasonable to expect torsion in many cases and is not a proof for it in general. The problem is that there is more freedom in homotoping loops around $K_R$ than by just moving the loops of a homotopy ``fibre-wise''. We shall show how this can be done for the Penrose tiling. The patch of tiles of Figure \ref{fig: Homotopic Exceptional Fibres} is a patch from a Penrose tiling, so a path on it (with $R$ small so that an $R$-neighbourhood of our path is contained within the patch) induces a path on $K_R$. We first consider the loop $a$ which winds anticlockwise once around the exception fibre at the sun vertex. We visualise this on the diagram by ``staying'' at the bottom sun vertex and rotating $2 \pi /5$ anticlockwise in the fibre direction. We shall now homotope our loop so that its start and end points, as represented on the diagram, are the left and right sun vertices at the centre of the patch, respectively. Each loop of the homotopy starts at a point of the lower left-hand arrowed edge. One firstly performs a $2\pi /5$ anticlockwise rotation at the start point of this left hand edge and then follows a horizontal path rightwards stopping at the right-hand arrowed edge (so each patch on the path after the rotation, on the approximant, is a $2 \pi /5$ anticlockwise rotate of the actual patch of tiles on the diagram). As one can see, after the $2\pi /5$ rotation, the local patch at the end of the path agrees with that at the starting point and so this path is a loop. We thus define a homotopy from $a$, the loop around the exceptional sun fibre to the loop which, firstly, performs $a$ and then follows the horizontal path $p$ from the left-hand sun in the middle of the patch to the right-hand sun, that is, $a = a \cdot p$ in homotopy.

We can do something similar for the star vertex at the top of the diagram. So we consider the loop $b$ given by a $2 \pi /5$ anticlockwise rotation about the exceptional fibre at this star vertex. Analogously to above, we may homotope this loop to a loop which, firstly, travels horizontally from right to left between the sun vertices and then performs an anticlockwise rotation at this sun vertex, that is, $b=p^{-1} \cdot a$. So then we have that $a \cdot b = a \cdot p \cdot p^{-1} \cdot a = a \cdot a$ which means that $a = b$ in homotopy (alternatively, we see from $a=a \cdot p$ that $p$ and hence $p^{-1}$ is nullhomotopic so $b = p^{-1} \cdot a = a$). It follows that the exceptional fibres are homotopic in $K_R$ and their difference does not represent a torsion element of $H_1(K_R)$. The same argument can be used for approximants with larger values of $R$ by performing the above homotopy on the equivalent patches of super$^i$-tiles of sufficiently many subdivisions, so the two exceptional fibres are homotopic on each approximant. It is bewildering that the famous Penrose tilings can still hold tricks up their sleeves!

\chapter{Conclusion}
\section{Summary}

The work of this thesis may be considered as being separated into two parts. Firstly, we considered a good setting with which to study finite local complexity patterns. Our abstract notion of a pattern (Definition \ref{def: pattern}) neatly encodes the essential features of a given FLC tiling or point pattern of interest, discarding its underlying decorations and replacing it with the crucial structure of which partial isometries preserve portions of it. Many constructions depend only on these partial isometries, indeed, the MLD class of a tiling or point pattern only depends on them, and there are indications that they could be a useful abstract formalism with which to further investigate geometric invariants of tilings and point patterns (and perhaps more), as we shall discuss later. An added advantage is that, with this point of view, the study of patterns on non-homogeneous spaces, such as the pentagonal tilings of Bowers and Stephenson \cite{Pent}, presents no extra difficulties.

Many constructions, such as the pattern-equivariant cohomology or the tiling space, may in fact be defined in terms of a collection of induced equivalence relations on the ambient space of a tiling. We introduced the notion of a collage (Definition \ref{def: collage}) to capture this idea. The axioms defining a collage are simple and geometrically inspired and allow for many constructions which parallel those from tiling theory, such as the tiling metric (analogous to the collage uniformity), or the inverse limit of approximants. The expression of a tiling space as an inverse limit of approximants has been an important tool in the study of FLC tilings, in particular in describing topological invariants of tiling spaces. We showed in Theorem \ref{thm: inverse limit of approximants} how, at least under mild conditions, we have an inverse limit description of the hull for general collages.

In the second part of this thesis, we introduced geometric invariants for our abstract patterns. When the pattern is given a compatible cellular structure, we showed in Theorem \ref{thm: sing=cell} that there is an agreement between the singular and cellular pattern-equivariant homology groups. In the case of a pattern coming from an FLC tiling, these homology groups have an analogous description (see Section \ref{sect: Pattern-Equivariant Homology of Tilings}) to the pattern-equivariant cohomology groups \cite{Sadun1}, which are isomorphic to the \v{C}ech cohomology of the tiling space. In Chapter \ref{chap: Poincare Duality for Pattern-Equivariant Homology} we showed that the PE homology groups of an FLC tiling of $\mathbb{R}^d$ may be linked to the PE cohomology groups (and hence to the \v{C}ech cohomology groups of the tiling space) through Poincar\'{e} duality. In Chapter \ref{chap: Pattern-Equivariant Homology of Hierarchical Tilings} we outlined an efficient method for computation of the PE homology groups for hierarchical tilings.

Interestingly, unlike the PE cohomology groups, the PE homology groups of the tiling with respect to rigid motions are not a topological invariant of the tiling space and seem to be sensitive to the presence of rotationally invariant tilings in the tiling space. We showed how one may modify the PE homology groups so as to restore duality. This allowed us, in Chapter \ref{A Spectral Sequence for the Rigid Hull of a Two-Dimensional Tiling}, to define a spectral sequence for a general two-dimensional FLC (with respect to rigid motions) tiling converging to the \v{C}ech cohomology of the rigid hull incorporating these groups. There is also a simple description of the $d^2$ map for the spectral sequence, which is readily computable following computations for a hierarchical tiling. These methods combined allowed for an original and remarkably simple computation for the \v{C}ech cohomology of the rigid hull of the Penrose kite and dart tilings, which prompted the surprising discovery that the two exceptional fibres of the approximants are homotopic.

\section{Future Directions}

Our methods here have focused on thinking about invariants of a tiling as being represented concretely on the tiling itself, as opposed to considering topological invariants of its tiling space. The definition of pattern-equivariant homology is geometrically inspired as a simple analogue of the PE cohomology. One may now ask, though, if the PE homology groups are invariants of some space, or other structure, related to the tiling. Can the Poincar\'{e} duality here be seen in a more abstract setting?

We firstly note that the translational hull of an FLC tiling of $\mathbb{R}^d$ has the structure of a \emph{weak homology manifold} (see Bredon \cite{Bredon} pg.\ 329). The local structure of the tiling space is given as a product of $\mathbb{R}^d$ with some totally disconnected set. This means that the local homology of the tiling space is free and concentrated in degree $d$, making the tiling space a weak homology manifold. There are Poincar\'{e} duality results for such spaces between the sheaf cohomology (isomorphic here to the \v{C}ech cohomology) and the Borel-Moore homology (defined differently to here as used in the definition of the PE homology groups, and instead in sheaf-theoretic formalism better suited to spaces which are not CW-complexes, see \cite{BM}). The duality requires one to take coefficients for the sheaf cohomology in the orientation sheaf, which for an aperiodic tiling space will be complicated. However, it seems reasonable to conjecture that with some modification of the Borel-Moore homology of the tiling space (perhaps taking into account the structure of local transversals) the Poincar\'{e} duality seen here may be seen topologically on the translational hull. Another possible approach would be to consider Poincar\'{e} duality at the level of approximants. The approximants are not manifolds but are branched manifolds and there are duality results for such spaces by considering them as \emph{stratified spaces}, see \cite{Banagl}.

For invariants of $\mathcal{T}_0$ the PE-homology groups are \emph{not} topological invariants of the space $\Omega^{\mathcal{T}_0}$. A natural suggestion is that they may be invariants of the space $\Omega^{\mathcal{T}_{\text{rot}}}$ along with the group action of $SO(d)$ on it. Consider the classical periodic setting, with $T$ a periodic tiling of $\mathbb{R}^2$. One may consider the space given as the quotient of $\mathbb{R}^2$ by the space group $G_T$ of isometries preserving $T$. Unfortunately, this quotient loses lots of information, for example, for the square and triangle tilings, without reflections, these quotient spaces are homeomorphic to $S^2$. However, one may enrich the structure of this \emph{coarse quotient} $\mathbb{R}^2/G_T$ to that of an \emph{orbifold} $[\mathbb{R}^2/G_T]$, by keeping track of the local isotropy groups of points fixed by rotations or reflections. In some sense, the quotient space is deficient since the action is not free at points of local symmetry; the orbifold corrects this deficiency. And, indeed, the $17$ space groups of $\mathbb{R}^2$ (sometimes known as wallpaper groups) correspond precisely to the $17$ orbifolds given by the quotients of these group actions, so these orbifolds exactly classify the wallpaper groups.

The above shows the utility of preserving extra structure on these spaces of periodic tilings. It seems reasonable to suggest that similar structures keeping track of symmetries for aperiodic tilings could also be advantageous. Indeed, a large motivation for the study of aperiodic tilings comes from their use in modelling quasicrystals, which were discovered precisely because they may possess interesting symmetries, ones forbidden for periodic crystals by the crystallographic restriction theorem.

The analogue of the orbifold $[\mathbb{R}^2/G_T] \cong [(E^+(2)/G_T)/SO(d)] \cong [\Omega^{\mathcal{T}_{\text{rot}}}/SO(d)]$ for the aperiodic case should be the \emph{quotient stack} $[\Omega^{\mathcal{T}_{\text{rot}}}/SO(d)]$, let us call it the \emph{stacky tiling space}. Just as how the tiling space of an FLC tiling may be expressed as an inverse limit of approximant branched manifolds $\mathbb{R}^d/ R_{\mathcal{T}_0}$, the stacky tiling space should be expressible as an inverse limit of branched orbifolds $[\mathbb{R}^d/R_{\mathcal{T}_0}]$ over increasing values of patch radii $R$. The observation that these approximants may be considered as orbifolds is noted in \cite{BDHS,Sadun2}, but to the author's knowledge this structure has not yet been utilised in the study of aperiodic tilings.

We have a quotient map $f \colon \Omega^{\mathcal{T}_{\text{rot}}} \rightarrow \Omega^{\mathcal{T}_0}$ from the rigid hull of a tiling to the space of tilings modulo rotations. This is not a fibration due to the existence of rotationally symmetric tilings where the action of $SO(d)$ on $\Omega^{\mathcal{T}_{\text{rot}}}$ is not free. However, the map $f \colon \Omega^{\mathcal{T}_{\text{rot}}} \rightarrow [\Omega^{\mathcal{T}_{\text{rot}}} / SO(d)]$ to the \emph{stacky} tiling space will be a fibration. The associated Serre spectral sequence converging to the cohomology of $\Omega^{\mathcal{T}_{\text{rot}}}$ incorporates the cohomology of the stacky tiling space, which will agree with the cohomology of $\Omega^{\mathcal{T}_0}$ modulo torsion. The spectral sequence of Theorem \ref{thm: Spectral Sequence} also converges to the cohomology of $\Omega^{\mathcal{T}_{\text{rot}}}$ so it is potentially the case that the extra torsion found in the $\mathcal{T}_0$E homology could be related to invariants of the stacky tiling space. A more precise examination of these ideas could allow for a spectral sequence analogous to that of Theorem \ref{thm: Spectral Sequence} for tilings of dimension larger than $2$.

We finally consider the perspective of viewing patterns as inverse semigroups, a topic which we discussed in Subsection \ref{subsect: Patterns as Inverse Semigroups}. In spirit it seems that passing from the pattern of partial isometries to the collage of induced equivalence relations corresponds to forgetting the extra structure of local isotropy and passing to the coarse quotient. One would expect that, analogously to the PE cohomology groups, invariants of the stacky tiling space may be expressible as invariants of these abstract pseudogroups of partial isometries defined on the tiling, without reference to the abstract stacky tiling space itself. An alternative (and equivalent) approach \cite{MR} would be to consider the ambient space of the tiling along with induced germ groupoids which specify not only \emph{where} two points are equivalent to a sufficient radius but also \emph{how} they are equivalent, that is, by which local transformations. The theme is clear: there is a rich and, as of yet, unexplored setting in which one may study the geometry of aperiodic tilings using structures which keep track of their symmetries.

\countnotes

\end{document}